\newcommand{\Hol}{\mathord{\mathrm{Hol}}}
\newcommand{\Aut}{\mathord{\mathrm{Aut}}}
\newcommand{\Fix}{\mathord{\mathrm{Fix}}}
\newcommand{\R}{\mathbb{R}}
\newcommand{\C}{\mathbb{C}}
\newcommand{\n}{|\!\|}
\newcommand{\mlog}{{\textstyle\frac{1}{2}}\log}
\let\phe=\varphi
\let\eps=\varepsilon
\let\de=\partial
\begin{document}

\title*{The Kobayashi distance in holomorphic dynamics and operator theory}
\author{Marco Abate}
\institute{Marco Abate \at Dipartimento di Matematica, Universit\`a di Pisa, Largo Pontecorvo 5, 56127 Pisa, Italy \email{marco.abate@unipi.it}}

%
%
\maketitle

\abstract*{The aim of this short course is to describe how to use a geometric structure (namely, a metric space structure) to explore and encode analytic properties of holomorphic functions and maps defined on complex manifolds. We shall first describe how to define the geometric structure, and then we shall present applications to holomorphic dynamics and to operator theory.}


\section{The Kobayashi distance}
\label{sec:1}
In this section we shall define the (invariant) distance we are going to use, and collect some of its main properties we shall need later on. It will not be a comprehensive treatise on the subject; much more informations can be found in, e.g., \cite{Abatebook, JarPflug, Kobayashibook}.

Before beginning, let us introduce a couple of notations we shall consistently use.

\begin{definition}
Let $X$ and $Y$ be two (finite dimensional) complex manifolds. We shall denote by $\Hol(X,Y)$ the set of all holomorphic maps from $X$ to $Y$, endowed with the compact-open topology (which coincides with the topology of uniform convergence on compact subsets), so that it becomes a metrizable topological space. Furthermore, we shall denote by $\Aut(X)\subset\Hol(X,X)$ the set of automorphisms, that is invertible holomorphic self-maps, of~$X$. More generally, if $X$ and $Y$ are topological spaces we shall denote by $C^0(X,Y)$ the space of continuous maps from $X$ to $Y$, again endowed with the compact-open topology.
\end{definition}

\begin{definition}
We shall denote by $\Delta=\{\zeta\in\mathbb{C}\mid |\zeta|<1\}$ the unit disk in the complex plane $\mathbb{C}$, by $B^n=\{z\in\mathbb{C}^n\mid\|z\|<1\}$ (where $\|\cdot\|$ is the Euclidean norm) the unit ball in the $n$-dimensional space $\mathbb{C}^n$, and by $\Delta^n\subset\mathbb{C}^n$ the unit polydisk in~$\mathbb{C}^n$.
Furthermore, $\langle\cdot\,,\cdot\rangle$ will denote the canonical Hermitian product on~$\C^n$.
\end{definition}

\subsection{The Poincar\'e distance}
\label{subsec:1.1}

The model for all invariant distances in complex analysis is the Poincar\'e distance on the unit disk of the complex plane; we shall then start recalling its definitions and main properties. 

\begin{definition}
The \emph{Poincar\'e} (or \emph{hyperbolic}) \emph{metric} on $\Delta$ is the Hermitian metric whose associated norm is given by
\[
\kappa_\Delta(\zeta;v)=\frac{1}{1-|\zeta|^2}|v|
\]
for all $\zeta\in\Delta$ and $v\in\mathbb{C}\simeq T_\zeta\Delta$. It is a complete Hermitian metric with constant Gaussian curvature~$-4$. 
\end{definition}

\begin{definition}
The \emph{Poincar\'e} (or \emph{hyperbolic}) \emph{distance}~$k_\Delta$ on~$\Delta$ is the integrated form of the Poincar\'e metric. It is a complete distance, whose expression is
\[
k_\Delta(\zeta_1,\zeta_2)=\mlog\frac{1+\left|\frac{\zeta_1-\zeta_2}{1-\overline{\zeta_1}\zeta_2}\right|}{1-\left|\frac{\zeta_1-\zeta_2}{1-\overline{\zeta_1}\zeta_2}\right|}\;.
\]
In particular,
\[
k_\Delta(0,\zeta)=\mlog\frac{1+|\zeta|}{1-|\zeta|}\;.
\]
\end{definition}

\begin{remark}
It is useful to keep in mind that the function 
\[
t\mapsto \mlog\frac{1+t}{1-t}
\]
is the inverse of the hyperbolic tangent $\tanh t=(\E^t-\E^{-t})/(\E^t+\E^{-t})$.
\end{remark}

Besides being a metric with constant negative Gaussian curvature, the Poincar\'e metric strongly reflects the properties of the holomorphic self-maps of the unit disk. For instance, the isometries of the Poincar\'e metric coincide with the holomorphic or anti-holomorphic automorphisms of~$\Delta$ (see, e.g., \cite[Proposition~1.1.8]{Abatebook}):

\begin{proposition}
\label{th:1.Pisom}
The group of smooth isometries of the Poincar\'e metric consists of all holomorphic and anti-holomorphic automorphisms of~$\Delta$.
\end{proposition}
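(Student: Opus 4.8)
The plan is to prove the two inclusions separately. First I would verify that every holomorphic and anti-holomorphic automorphism of $\Delta$ is a smooth isometry of the Poincar\'e metric. Recall that the holomorphic automorphisms of $\Delta$ are exactly the maps $\gamma(\zeta)=\E^{\I\theta}\frac{\zeta-a}{1-\overline{a}\zeta}$ with $\theta\in\R$ and $a\in\Delta$; the anti-holomorphic ones are obtained by composing these with conjugation $\zeta\mapsto\overline{\zeta}$. For a holomorphic $\gamma$ a direct computation gives the identity $|\gamma'(\zeta)|/(1-|\gamma(\zeta)|^2)=1/(1-|\zeta|^2)$, which is precisely the statement that $\gamma^*\kappa_\Delta=\kappa_\Delta$, so $\gamma$ preserves the metric norm and hence is an isometry; conjugation manifestly preserves $\kappa_\Delta$ as well, since $|\zeta|$ and $|v|$ are unchanged. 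I would present this computation only schematically, as it is the routine direction.

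For the reverse inclusion, suppose $f\colon\Delta\to\Delta$ is a smooth isometry of the Poincar\'e metric. The key normalization step is to reduce to the case where $f$ fixes the origin: composing $f$ with a suitable holomorphic automorphism (already known to be an isometry) I may assume $f(0)=0$. Then $df_0$ is a linear isometry of the tangent space $T_0\Delta\cong\C$ equipped with the norm $\kappa_\Delta(0;\cdot)=|\cdot|$, so $df_0$ is a real-linear norm-preserving map of $\C$, i.e. $df_0(v)=\E^{\I\theta}v$ or $df_0(v)=\E^{\I\theta}\overline{v}$ for some $\theta$. After further composing with a rotation I may assume $df_0=\mathrm{id}$ or $df_0=$ conjugation; in the first case I aim to show $f=\mathrm{id}_\Delta$, and in the second $f(\zeta)=\overline{\zeta}$.

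The heart of the argument is this last rigidity statement: a smooth metric isometry fixing the origin with $df_0=\mathrm{id}$ must be the identity. The cleanest route is via the geodesics of the Poincar\'e metric. Because $\kappa_\Delta$ is a complete metric of constant negative curvature, through each point and each direction there passes a unique (unit-speed) geodesic, and these geodesics are precisely the diameters and the circular arcs meeting $\de\Delta$ orthogonally; in particular the diameters through $0$ are geodesics. An isometry carries geodesics to geodesics preserving arclength, so $f$ maps each geodesic ray from $0$ to the geodesic ray from $0$ in the direction $df_0(v)=v$, parametrized identically by Poincar\'e arclength. Since every point of $\Delta$ lies on such a ray, this forces $f=\mathrm{id}$; the conjugation case is identical with $df_0(v)=\overline v$ yielding $f(\zeta)=\overline\zeta$. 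Tracing back through the normalizations then expresses the original $f$ as a composition of holomorphic automorphisms and possibly conjugation, i.e. a holomorphic or anti-holomorphic automorphism.

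The step I expect to be the main obstacle is the rigidity claim that fixing $0$ and $df_0$ forces $f$ to be the identity. Carrying it out rigorously requires knowing that the Poincar\'e metric is geodesically complete with uniquely geodesic structure, and that a smooth isometry is determined by its value and differential at a single point — a standard fact from Riemannian geometry, valid here because the metric has constant curvature and is simply connected. An alternative to invoking Riemannian machinery would be to argue analytically: an isometry fixing $0$ preserves Poincar\'e distance from $0$, hence preserves Euclidean circles $|\zeta|=r$, and combined with the differential condition one pins down $f$ on each circle; but the geodesic argument is conceptually the most transparent, so I would develop that, flagging the uniqueness-of-isometry principle as the fact that does the real work.
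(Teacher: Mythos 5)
Your proof is correct. Note first that the paper itself does not prove this proposition: it quotes it from \cite[Proposition~1.1.8]{Abatebook}, so there is no in-paper argument to match; the comparison below is with the standard proof given there. Your easy direction (the Schwarz--Pick identity $|\gamma'(\zeta)|/(1-|\gamma(\zeta)|^2)=1/(1-|\zeta|^2)$ for $\gamma\in\Aut(\Delta)$, plus invariance under conjugation) is the same in both treatments. For the converse you argue via Riemannian rigidity: normalize $f(0)=0$ and $\D f_0\in\{\mathrm{id},\ \hbox{conjugation}\}$, then use completeness (Hopf--Rinow, so $\exp_0$ is surjective) and the fact that an isometry satisfies $f(\exp_0(tv))=\exp_0(t\,\D f_0(v))$ to pin $f$ down on every geodesic ray from the origin. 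This works, with one small correction: the uniqueness-of-isometry principle you invoke holds on \emph{any} connected Riemannian manifold and needs neither constant curvature nor simple connectedness (those are relevant for \emph{existence} of isometries with prescribed $1$-jet, not uniqueness); you do, however, genuinely need completeness to know every point lies on a ray from $0$. The classical route, by contrast, stays inside complex analysis: since $\kappa_\Delta$ is a conformal rescaling of the Euclidean metric, at each point $\D f_z$ preserving $\kappa_\Delta$ forces it to be either $\C$-linear or $\C$-antilinear; these two conditions define disjoint closed subsets of $\Delta$ (as $\D f_z\ne0$), so by connectedness one holds globally, i.e.\ $f$ is holomorphic or anti-holomorphic, and then the equality case of the Schwarz--Pick lemma (Theorem~\ref{th:1.SPlemma} in the paper) immediately gives $f\in\Aut(\Delta)$ up to conjugation. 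Your version trades that argument for geodesic machinery: it avoids having to establish holomorphy directly, at the cost of importing Hopf--Rinow and the explicit description of geodesics, while the conformality argument uses only tools already present in the paper and makes the holomorphic/anti-holomorphic dichotomy the visible mechanism rather than a consequence traced back through normalizations.
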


More importantly, the famous \emph{Schwarz-Pick lemma} says that any holomorphic self-map of~$\Delta$ is nonexpansive for the Poincar\'e metric and distance (see, e.g., \cite[Theorem~1.1.6]{Abatebook}):

\begin{theorem}[Schwarz-Pick lemma]
\label{th:1.SPlemma}
Let $f\in\Hol(\Delta,\Delta)$ be a holomorphic self-map of $\Delta$. Then:
\begin{enumerate}
\item[\rm(i)] we have
\begin{equation}
\kappa_\Delta\bigl(f(\zeta);f'(\zeta)v\bigr)\le \kappa_\Delta(\zeta;v)
\label{eq:1.SPlemmau}
\end{equation}
for all $\zeta\in\Delta$ and $v\in\mathbb{C}$. Furthermore, equality holds for some $\zeta\in\Delta$ and $v\in\mathbb{C}^*$ if and only if equality holds for all $\zeta\in\Delta$ and all $v\in\mathbb{C}$ if and only if $f\in\Aut(\Delta)$;
\item[\rm(ii)] we have
\begin{equation}
k_\Delta\bigl(f(\zeta_1),f(\zeta_2)\bigr)\le k_\Delta(\zeta_1,\zeta_2)
\label{eq:1.SPlemmad}
\end{equation}
for all $\zeta_1$, $\zeta_2\in\Delta$. Furthermore, equality holds for some $\zeta_1\ne\zeta_2$ if and only if equality holds for all $\zeta_1$, $\zeta_2\in\Delta$ if and only if $f\in\Aut(\Delta)$. 
\end{enumerate}
\end{theorem}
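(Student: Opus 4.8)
The plan is to prove the Schwarz-Pick lemma by reducing it to the classical Schwarz lemma through the use of automorphisms, which we already know (by Proposition~\ref{th:1.Pisom}) are Poincar\'e isometries. Let me sketch how I would prove this.
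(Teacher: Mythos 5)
Your proposal announces a strategy but stops before carrying it out: the text ends at ``Let me sketch how I would prove this'' with no sketch following, so as written there is nothing to verify --- no inequality is derived, and neither the equality cases nor part (ii) are addressed. The strategy you name (reduction to the classical Schwarz lemma via automorphisms) is indeed the standard route, and it is essentially the one in the reference the paper cites for this statement (the paper itself gives no proof, quoting \cite[Theorem~1.1.6]{Abatebook}). To turn your plan into a proof you must actually execute the following steps: fix $\zeta_0\in\Delta$, set $\gamma_1(\zeta)=(\zeta-\zeta_0)/(1-\overline{\zeta_0}\zeta)$ and $\gamma_2(\zeta)=\bigl(\zeta-f(\zeta_0)\bigr)/\bigl(1-\overline{f(\zeta_0)}\zeta\bigr)$, and apply the classical Schwarz lemma to $g=\gamma_2\circ f\circ\gamma_1^{-1}$, which fixes the origin; the estimates $|g'(0)|\le 1$ and $|g(\zeta)|\le|\zeta|$ then transfer back to (\ref{eq:1.SPlemmau}) and (\ref{eq:1.SPlemmad}) once you know that $\gamma_1$ and $\gamma_2$ are isometries for $\kappa_\Delta$ and $k_\Delta$; for (\ref{eq:1.SPlemmad}) you also need the monotonicity of $t\mapsto\mlog\frac{1+t}{1-t}$. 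For the equality statements, equality at one point forces $g$ to be a rotation by the uniqueness part of Schwarz's lemma, whence $f=\gamma_2^{-1}\circ g\circ\gamma_1\in\Aut(\Delta)$; conversely, an automorphism gives equality everywhere since its inverse is also $1$-Lipschitz.

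A further caution about your appeal to Proposition~\ref{th:1.Pisom}: in the paper's logical order that proposition and Corollary~\ref{th:1.autDelta} (which classifies $\Aut(\Delta)$) sit downstream of the Schwarz-Pick lemma --- the corollary is explicitly deduced from it --- so invoking the full isometry classification here risks circularity. What you actually need is only the easy direction for the two explicit M\"obius maps above: that they are holomorphic bijections of $\Delta$ preserving $\kappa_\Delta$ and $k_\Delta$. Both facts follow from direct computation with the formulas, e.g.\ $1-|\gamma(\zeta)|^2=\frac{(1-|\zeta_0|^2)(1-|\zeta|^2)}{|1-\overline{\zeta_0}\zeta|^2}$ together with $|\gamma'(\zeta)|=\frac{1-|\zeta_0|^2}{|1-\overline{\zeta_0}\zeta|^2}$, and verifying them explicitly keeps the argument self-contained rather than resting on a statement whose proof may itself use Schwarz-Pick.
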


In other words, \emph{holomorphic self-maps of the unit disk are automatically $1$-Lipschitz, and hence equicontinuous, with respect to the Poincar\'e distance.}

As an immediate corollary, we can compute the group of automorphisms of~$\Delta$, and thus, by 
Proposition~\ref{th:1.Pisom}, the group of isometries of the Poincar\'e metric (see, e.g., \cite[Proposition~1.1.2]{Abatebook}):

\begin{corollary}
\label{th:1.autDelta}
The group $\Aut(\Delta)$ of holomorphic automorphisms of $\Delta$ consists in all the functions $\gamma\colon\Delta\to\Delta$ of the form
\begin{equation}
\gamma(\zeta)=\E^{\I\theta}\frac{\zeta-\zeta_0}{1-\overline{\zeta_0}\zeta}
\label{eq:1.aut}
\end{equation}
with $\theta\in\mathbb{R}$ and $\zeta_0\in\Delta$. In particular, for every pair $\zeta_1$,~$\zeta_2\in\Delta$ there exists $\gamma\in\Aut(\Delta)$ such that $\gamma(\zeta_1)=0$ and $\gamma(\zeta_2)\in[0,1)$.
\end{corollary}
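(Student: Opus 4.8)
The plan is to prove the two inclusions separately. First I would check that every map $\gamma$ of the form \eqref{eq:1.aut} really does belong to $\Aut(\Delta)$. Since $|\zeta_0|<1$, the denominator $1-\overline{\zeta_0}\zeta$ never vanishes on $\overline{\Delta}$, so $\gamma$ is holomorphic on a neighborhood of $\overline{\Delta}$; and the elementary identity
\[
1-|\gamma(\zeta)|^2=\frac{(1-|\zeta_0|^2)(1-|\zeta|^2)}{|1-\overline{\zeta_0}\zeta|^2}
\]
shows at once that $\gamma$ maps $\Delta$ into $\Delta$ (and $\de\Delta$ into $\de\Delta$). Solving $w=\gamma(\zeta)$ for $\zeta$ produces an inverse of exactly the same shape, so $\gamma$ is a biholomorphism and hence lies in $\Aut(\Delta)$.

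For the reverse inclusion, take an arbitrary $f\in\Aut(\Delta)$ and set $\zeta_0=f^{-1}(0)$. Using the map $\gamma_0(\zeta)=(\zeta-\zeta_0)/(1-\overline{\zeta_0}\zeta)$, which by the previous paragraph is an automorphism sending $\zeta_0$ to $0$, I would form $g=f\circ\gamma_0^{-1}\in\Aut(\Delta)$; by construction $g(0)=0$. The key step is then to show that $g$ is a rotation. Because $g\in\Aut(\Delta)$, the equality clause of the Schwarz--Pick lemma (Theorem~\ref{th:1.SPlemma}(ii)) gives $k_\Delta(g(\zeta_1),g(\zeta_2))=k_\Delta(\zeta_1,\zeta_2)$ for all $\zeta_1,\zeta_2\in\Delta$; choosing $\zeta_1=0$, recalling $g(0)=0$, and using the explicit value of $k_\Delta(0,\cdot)$ yields $|g(\zeta)|=|\zeta|$ for every $\zeta\in\Delta$. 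Then $h(\zeta)=g(\zeta)/\zeta$ extends holomorphically across the origin and has constant modulus~$1$, so by the maximum principle it is a unimodular constant $\E^{\I\theta}$; thus $g(\zeta)=\E^{\I\theta}\zeta$. Composing back, $f=g\circ\gamma_0$ is precisely of the form \eqref{eq:1.aut}.

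Finally, for the normalization statement, given $\zeta_1,\zeta_2\in\Delta$ I would take $\gamma_1(\zeta)=(\zeta-\zeta_1)/(1-\overline{\zeta_1}\zeta)\in\Aut(\Delta)$, which sends $\zeta_1$ to $0$, write $\gamma_1(\zeta_2)=r\E^{\I\psi}$ with $r=|\gamma_1(\zeta_2)|\in[0,1)$, and set $\gamma=\E^{-\I\psi}\gamma_1$. This $\gamma$ is again of the form \eqref{eq:1.aut}, and it satisfies $\gamma(\zeta_1)=0$ and $\gamma(\zeta_2)=r\in[0,1)$, as required.

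The only genuinely delicate point is the reduction to a rotation: everything hinges on having the equality case of Schwarz--Pick available, which is exactly what upgrades the a priori inequality $k_\Delta(g(\zeta_1),g(\zeta_2))\le k_\Delta(\zeta_1,\zeta_2)$ to an identity for automorphisms and thereby forces $|g(\zeta)|=|\zeta|$. The remaining ingredients --- the modulus identity, the inversion of $\gamma$, and the fact that a holomorphic function of constant modulus is constant --- are routine.
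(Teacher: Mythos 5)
Your proof is correct and follows exactly the route the paper intends: the paper states this result as an immediate corollary of the Schwarz--Pick lemma (citing \cite[Proposition~1.1.2]{Abatebook} without reproducing the argument), and your derivation --- verifying directly that the maps \eqref{eq:1.aut} are automorphisms, then using the equality case of Theorem~\ref{th:1.SPlemma}(ii) to force $|g(\zeta)|=|\zeta|$ for an automorphism fixing the origin and concluding via the maximum principle that $g$ is a rotation --- is precisely that standard argument, with no circularity since the easy inclusion is checked by direct computation. The normalization statement is also handled correctly.
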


\begin{remark}
\label{rem:1.homDelta}
More generally, given $\zeta_1$, $\zeta_2\in\Delta$ and $\eta\in[0,1)$, it is not difficult to see that there is $\gamma\in\Aut(\Delta)$ such that $\gamma(\zeta_1)=\eta$ and $\gamma(\zeta_2)\in[0,1)$ with $\gamma(\zeta_2)\ge\eta$.
\end{remark}

A consequence of (\ref{eq:1.aut}) is that all automorphisms of $\Delta$ extends continuously to the boundary. It is customary to classify the elements of $\Aut(\Delta)$ according to the number of fixed points in~$\overline{\Delta}$: 

\begin{definition}
An automorphism $\gamma\in\Aut(\Delta)\setminus\{\mathrm{id}_\Delta\}$ is called \emph{elliptic} if it has a unique fixed point in~$\Delta$, \emph{parabolic} if it has a unique fixed point in~$\de\Delta$, \emph{hyperbolic} if it has exactly two fixed points in~$\de\Delta$. It is easy to check that these cases are mutually exclusive and exhaustive.
\end{definition}

We end this brief introduction to the Poincar\'e distance by recalling two facts relating its geometry to the Euclidean geometry of the plane (see, e.g., \cite[Lemma~1.1.5 and  (1.1.11)]{Abatebook}):

\begin{proposition}
Let $\zeta_0\in\Delta$ and $r>0$. Then the ball $B_\Delta(\zeta_0,r)\subset\Delta$ for the Poincar\'e distance of center $\zeta_0$ and radius $r$ is the Euclidean ball with center
\[
\frac{1-(\tanh r)^2}{1-(\tanh r)^2|\zeta_0|^2} \zeta_0
\]
and radius 
\[
\frac{(1-|\zeta_0|^2)\tanh r}{1-(\tanh r)^2|\zeta_0|^2}\;.
\]
\end{proposition}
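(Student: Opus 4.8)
The plan is to reduce everything to the case $\zeta_0=0$ by means of an automorphism, which is an isometry of the Poincar\'e distance, and then to exploit the fact that automorphisms of $\Delta$ are M\"obius transformations, hence send Euclidean disks to Euclidean disks. First I would settle the centered case: combining the explicit formula $k_\Delta(0,\zeta)=\mlog\frac{1+|\zeta|}{1-|\zeta|}$ with the Remark identifying this function as the inverse of $\tanh$, one gets $k_\Delta(0,\zeta)<r$ if and only if $|\zeta|<\tanh r$, so that $B_\Delta(0,r)$ is exactly the Euclidean disk centered at the origin of radius $\tanh r$; this already agrees with the claimed formulas when $\zeta_0=0$.

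For the general case, by Corollary~\ref{th:1.autDelta} the map
\[
\gamma(\zeta)=\frac{\zeta-\zeta_0}{1-\overline{\zeta_0}\zeta}
\]
is an automorphism of $\Delta$ with $\gamma(\zeta_0)=0$. Since $\gamma\in\Aut(\Delta)$, the equality case of the Schwarz--Pick lemma (Theorem~\ref{th:1.SPlemma}(ii)) shows that $\gamma$ is a $k_\Delta$-isometry, so $\gamma$ maps $B_\Delta(\zeta_0,r)$ bijectively onto $B_\Delta(0,r)=\{|w|<\tanh r\}$. Therefore $B_\Delta(\zeta_0,r)=\gamma^{-1}\bigl(\{|w|<\tanh r\}\bigr)$, where, again by Corollary~\ref{th:1.autDelta}, $\gamma^{-1}(w)=\frac{w+\zeta_0}{1+\overline{\zeta_0}w}$. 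Being a M\"obius transformation, $\gamma^{-1}$ carries the circle $\{|w|=\tanh r\}$ to a circle or a line; since the image lies inside the bounded set $\Delta$, it is a genuine Euclidean circle, and $B_\Delta(\zeta_0,r)$ is the Euclidean disk it bounds.

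It then remains to pin down the center and radius of this image disk. Write $\rho=\tanh r$ and $\zeta_0=|\zeta_0|\E^{\I\phi}$, and consider the two boundary points $w_\pm=\pm\rho\,\E^{\I\phi}$, the endpoints of the diameter of $\{|w|<\rho\}$ lying on the line $\ell=\R\E^{\I\phi}$ through $0$ and $\zeta_0$. The one step that needs genuine care—rather than appealing to the false assertion that a M\"obius map sends diameters to diameters—is the observation that the whole configuration is symmetric under the reflection $\sigma(w)=\E^{2\I\phi}\overline{w}$ across $\ell$: a direct check gives $\gamma^{-1}\circ\sigma=\sigma\circ\gamma^{-1}$ and $\sigma$ fixes $\{|w|<\rho\}$, so the image disk is $\sigma$-invariant, its center lies on $\ell$, and $\gamma^{-1}(w_+),\gamma^{-1}(w_-)$ are precisely the endpoints of its diameter along $\ell$. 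Hence the center is their midpoint and the radius is half their distance. Evaluating
\[
\gamma^{-1}(w_+)=\E^{\I\phi}\frac{\rho+|\zeta_0|}{1+\rho|\zeta_0|},\qquad
\gamma^{-1}(w_-)=\E^{\I\phi}\frac{|\zeta_0|-\rho}{1-\rho|\zeta_0|},
\]
the midpoint and the half-difference reduce, after routine algebra, to $\frac{1-\rho^2}{1-\rho^2|\zeta_0|^2}\zeta_0$ and $\frac{(1-|\zeta_0|^2)\rho}{1-\rho^2|\zeta_0|^2}$ respectively, which are the stated expressions once $\rho$ is replaced by $\tanh r$. I expect the only real obstacle to be the justification of the reflection symmetry (and thus that the two computed points really are antipodal on the image circle); everything else is either immediate from the earlier results or a short computation.
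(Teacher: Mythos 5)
Your proof is correct. Note that the paper itself gives no proof of this proposition, quoting it from \cite[Lemma~1.1.5]{Abatebook}, where the argument is the direct algebraic one: since $k_\Delta(\zeta_0,\zeta)$ is the inverse hyperbolic tangent of $\bigl|\frac{\zeta-\zeta_0}{1-\overline{\zeta_0}\zeta}\bigr|$, the ball $B_\Delta(\zeta_0,r)$ is the sublevel set $|\zeta-\zeta_0|^2<\rho^2|1-\overline{\zeta_0}\zeta|^2$ with $\rho=\tanh r$, and expanding and completing the square yields the stated center and radius in a few lines. Your route replaces this computation by conformal geometry: isometric reduction to the centered ball (legitimate, since $\gamma\in\Aut(\Delta)$ is a $k_\Delta$-isometry by the Schwarz--Pick lemma applied to $\gamma$ and $\gamma^{-1}$), circle-preservation of M\"obius maps, and---the point you rightly single out as the one needing care---the reflection symmetry. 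That step checks out: writing $\sigma(w)=\E^{2\I\phi}\overline{w}$, the identity $\gamma^{-1}\circ\sigma=\sigma\circ\gamma^{-1}$ follows from $\overline{\zeta_0}\,\E^{2\I\phi}=\zeta_0$, so the image circle is $\sigma$-invariant, its center lies on the fixed line $\ell$, and since $\gamma^{-1}(w_+)\ne\gamma^{-1}(w_-)$ (their difference is $\frac{2\rho(1-|\zeta_0|^2)}{1-\rho^2|\zeta_0|^2}\E^{\I\phi}\ne0$) these are exactly the two points where $\ell$ meets the circle, hence antipodal; your midpoint and half-distance computations are accurate. What each approach buys: the reference's computation is shorter and fully self-contained, while yours explains \emph{why} the hyperbolic ball is a Euclidean disk (M\"obius geometry) rather than verifying it, and isolates the genuine pitfall (M\"obius maps do not send diameters to diameters) with the correct symmetry fix. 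Two cosmetic loose ends you may want to mention explicitly: the case $\zeta_0=0$, where $\phi$ is undefined, is already settled by your centered computation; and the image of the open disk $\{|w|<\rho\}$ is the \emph{bounded} component of the complement of the image circle because the pole $-1/\overline{\zeta_0}$ of $\gamma^{-1}$ lies outside $\overline{\Delta}$, so $\gamma^{-1}$ is a homeomorphism of a neighborhood of the closed disk onto a compact subset of $\C$.
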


\begin{proposition} 
\label{th:1.geod}
Let $\zeta_0=r\E^{\I\theta}\in\Delta$. Then the geodesic for the Poincar\'e metric connecting~$0$ to~$\zeta_0$ is the Euclidean radius $\sigma\colon[0,k_\Delta(0,\zeta_0)]\to\Delta$ given by
\[
\sigma(t)= (\tanh t) \E^{\I\theta}\;.
\]
In particular, $k_\Delta\bigl(0,(\tanh t)\E^{\I\theta}\bigr)=|t|$ for all $t\in\mathbb{R}$ and $\theta\in\mathbb{R}$.
\end{proposition}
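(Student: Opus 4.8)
The plan is to reduce the statement to the explicit formula for $k_\Delta(0,\cdot)$ together with the defining property of the integrated distance, namely that $k_\Delta$ is the infimum of the Poincar\'e lengths of curves joining two points. I would first dispatch the ``in particular'' assertion, which is purely computational. Since $\tanh$ is odd, the point $(\tanh t)\E^{\I\theta}$ has modulus $\tanh|t|$, and $k_\Delta(0,\cdot)$ depends only on the modulus; hence the distance formula gives
\[
k_\Delta\bigl(0,(\tanh t)\E^{\I\theta}\bigr)=\mlog\frac{1+\tanh|t|}{1-\tanh|t|}=|t|,
\]
the last equality being exactly the Remark that $s\mapsto\mlog\frac{1+s}{1-s}$ inverts $\tanh$. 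Applying this with $t=L:=k_\Delta(0,\zeta_0)\ge 0$ and using $\tanh L=r$ shows $\sigma(L)=\zeta_0$, while $\sigma(0)=0$; thus $\sigma$ is a curve in $\Delta$ joining the two points.

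I would then show that $\sigma$ actually realizes the distance, so that it is a length-minimizing geodesic. The one real computation is that $\sigma$ has unit speed for the Poincar\'e metric: from $\sigma'(t)=(1-\tanh^2 t)\E^{\I\theta}$ and $|\sigma(t)|^2=\tanh^2 t$ one gets
\[
\kappa_\Delta\bigl(\sigma(t);\sigma'(t)\bigr)=\frac{|\sigma'(t)|}{1-|\sigma(t)|^2}=\frac{1-\tanh^2 t}{1-\tanh^2 t}=1.
\]
Integrating over $[0,L]$ yields Poincar\'e length exactly $L=k_\Delta(0,\zeta_0)$, so $\sigma$ attains the infimum defining $k_\Delta$ and is a geodesic parametrized by arc length.

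Finally, to justify the definite article in ``the geodesic'' I would establish uniqueness by a direct length estimate. Writing any competitor in polar form $\gamma(t)=\rho(t)\E^{\I\phi(t)}$ with $\gamma(0)=0$ and $\gamma(1)=\zeta_0$, the bound $|\gamma'|\ge|\rho'|$ gives
\[
L(\gamma)=\int_0^1\frac{|\gamma'|}{1-\rho^2}\,dt\ge\int_0^1\frac{|\rho'|}{1-\rho^2}\,dt\ge\left|\int_0^1\frac{\rho'}{1-\rho^2}\,dt\right|=\mlog\frac{1+r}{1-r},
\]
and equality forces $\phi'\equiv 0$ (constant argument) and $\rho'\ge 0$ (no backtracking), which up to reparametrization singles out the Euclidean radius. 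I expect this uniqueness step to be the only genuinely delicate point, since the inequalities and their equality cases must be handled for merely piecewise-$C^1$ curves and for possibly non-monotone $\rho(t)$; by contrast the existence half is just substitution into formulas already in hand.
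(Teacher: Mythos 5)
Your proof is correct. Note that the paper itself gives no proof of this proposition: it is stated as a recalled fact with a citation to \cite{Abatebook}, so there is no in-paper argument to compare against; your argument (the unit-speed computation $\kappa_\Delta(\sigma(t);\sigma'(t))\equiv 1$ showing the radius realizes the explicitly known value of $k_\Delta(0,\zeta_0)$, plus the polar-coordinates estimate $|\gamma'|\ge|\rho'|$ with its equality analysis for uniqueness) is precisely the standard textbook proof cited. One streamlining for the step you flag as delicate: you never need the decomposition $\gamma=\rho\,\E^{\I\phi}$, which is problematic at the origin; simply set $\rho=|\gamma|$ and observe that the reverse triangle inequality makes $\rho$ Lipschitz with $|\rho'|\le|\gamma'|$ almost everywhere, which gives the same lower bound $L(\gamma)\ge\mlog\frac{1+r}{1-r}$, while in the equality case $|\gamma'|=|\rho'|$ a.e.\ forces the angular component of $\gamma'$ to vanish wherever $\gamma\ne 0$, recovering constancy of the argument without ever differentiating $\phi$.
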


\subsection{The Kobayashi distance in complex manifolds}
\label{subsec:1.2}

Our next aim is to build on any complex manifold a (pseudo)distance enjoying the main properties of the Poincar\'e distance; in particular, we would like to preserve the 1-Lipschitz property of holomorphic maps, that is to generalize to several variables Schwarz-Pick lemma. There are several ways for doing this; historically, the first such generalization has been introduced by Carath\'eodory \cite{Car} in 1926, but the most well-known and most useful has been proposed in 1967 by Kobayashi \cite{Kob1, Kob2}. Here we shall concentrate on the Kobayashi (pseudo)distance; but several other similar metrics and distances have been introduced (see, e.g., \cite{Berg, Hahn, Azu, Dema, Klimek, Sib1, Sib2, Wu2}; see also \cite{Harris} for a general context explaining why in a very precise sense the Carath\'eodory distance is the smallest and the Kobayashi distance is the largest possible invariant distance,
and \cite{Ahlfors} for a different differential geometric approach).  Furthermore, we shall discuss only the Kobayashi \emph{distance}; it is possible to define a Kobayashi metric, which is a complex Finsler metric whose integrated form is exactly the Kobayashi distance, but we shall not need it. It is also possible to introduce a Kobayashi pseudodistance in complex analytic spaces; again, see \cite{Abatebook}, \cite{JarPflug} and \cite{Kobayashibook} for details and much more.

To define the Kobayashi pseudodistance we first introduce an auxiliary function.

\begin{definition}
Let $X$ be a connected complex manifold. The \emph{Lempert function} $\delta_X\colon X\times X\to\mathbb{R}^+\cup\{+\infty\}$ is defined by
\[
\delta_X(z,w)=\inf\bigl\{k_\Delta(\zeta_0,\zeta_1)\bigm|\exists \varphi\in\Hol(\Delta,X): \varphi(\zeta_0)=z, \varphi(\zeta_1)=w\bigr\}
\]
for every $z$,~$w\in X$.
\end{definition}

\begin{remark}
\label{rem:2.Lempert}
Corollary~\ref{th:1.autDelta} yields the following equivalent definition of the Lempert function:
\[
\delta_X(z,w)=\inf\bigl\{k_\Delta(0,\zeta)\bigm|\exists \varphi\in\Hol(\Delta,X): \varphi(0)=z, \varphi(\zeta)=w\bigr\}\;.
\]
\end{remark}

The Lempert function in general (but there are exceptions; see Theorem~\ref{th:1.onedisk} below) does not satisfy the triangular inequality (see, e.g., \cite{Lemp} for an example), and so it is not a distance. But this is a problem easily solved:

\begin{definition}
Let $X$ be a connected complex manifold. The \emph{Kobayashi (pseudo) distance} $k_X\colon X\times X\to\mathbb{R}^+$ is the largest (pseudo)distance bounded above by the Lempert function, that is
\[
k_X(z,w)=\inf\biggl\{\sum_{j=1}^k \delta_X(z_{j-1},z_j)\biggm| k\in\mathbb{N}, z_0=z, z_k=w, z_1,\ldots,z_{k-1}\in X\biggr\}
\]
for all $z$, $w\in X$.
\end{definition}

A few remarks are in order. First of all, it is easy to check that since $X$ is connected then $k_X$ is always finite. Furthermore, it is clearly symmetric, it satisfies the triangle inequality by definition, and $k_X(z,z)=0$ for all $z\in X$. On the other hand, it might well happen that $k_X(z_0,z_1)=0$ for two distinct points $z_0\ne z_1$ of~$X$ (it might even happen that $k_X\equiv 0$; see Proposition~\ref{th:2.examples} below); so $k_X$ in general is only a pseudodistance. Anyway, the definition clearly implies the following generalization of the Schwarz-Pick lemma:

\begin{theorem}
\label{th:2.SPlemma}
Let $X$, $Y$ be two complex manifolds, and $f\in\Hol(X,Y)$. Then
\[
k_Y\bigl(f(z),f(w)\bigr)\le k_X(z,w)
\]
for all $z$, $w\in X$. In particular:
\begin{enumerate}
\item[\rm(i)] if $X$ is a submanifold of $Y$ then $k_Y|_{X\times X}\le k_X$;
\item[\rm(ii)] biholomorphisms are isometries with respect to the Kobayashi pseudodistances.
\end{enumerate}
\end{theorem}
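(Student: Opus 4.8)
The plan is to reduce everything to the defining composition property of the Lempert function and then to lift the resulting inequality to the Kobayashi pseudodistance through the chains appearing in its definition. The crucial observation is that composition with a holomorphic map sends analytic disks to analytic disks: if $\varphi\in\Hol(\Delta,X)$ satisfies $\varphi(\zeta_0)=z$ and $\varphi(\zeta_1)=w$, then $f\circ\varphi\in\Hol(\Delta,Y)$ satisfies $(f\circ\varphi)(\zeta_0)=f(z)$ and $(f\circ\varphi)(\zeta_1)=f(w)$, with the \emph{same} endpoints $\zeta_0,\zeta_1\in\Delta$. Hence every competitor in the infimum defining $\delta_X(z,w)$ produces, after composition with $f$, a competitor in the infimum defining $\delta_Y(f(z),f(w))$ realizing the identical value $k_\Delta(\zeta_0,\zeta_1)$. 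Taking infima, the latter is computed over a family at least as rich as the former, so
\[
\delta_Y\bigl(f(z),f(w)\bigr)\le\delta_X(z,w)
\]
for all $z,w\in X$.

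Next I would propagate this pointwise estimate along chains. Given any chain $z=z_0,z_1,\ldots,z_k=w$ in $X$, its image $f(z)=f(z_0),f(z_1),\ldots,f(z_k)=f(w)$ is a chain in $Y$, and applying the Lempert inequality term by term gives
\[
k_Y\bigl(f(z),f(w)\bigr)\le\sum_{j=1}^k\delta_Y\bigl(f(z_{j-1}),f(z_j)\bigr)\le\sum_{j=1}^k\delta_X(z_{j-1},z_j).
\]
Taking the infimum over all chains in $X$ on the right-hand side yields exactly $k_X(z,w)$, while the left-hand side does not depend on the chosen chain; we thus obtain $k_Y\bigl(f(z),f(w)\bigr)\le k_X(z,w)$, which is the main assertion.

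Finally, the two consequences follow by specialization. For (i) I would apply the main inequality to the holomorphic inclusion $\iota\colon X\hookrightarrow Y$, reading $\iota(z)=z$ and $\iota(w)=w$, to get $k_Y(z,w)\le k_X(z,w)$ for $z,w\in X$. For (ii), applying the inequality both to the biholomorphism $f$ and to its holomorphic inverse $f^{-1}$ (the latter evaluated at the points $f(z),f(w)$) produces the two opposite inequalities $k_Y(f(z),f(w))\le k_X(z,w)$ and $k_X(z,w)\le k_Y(f(z),f(w))$, hence equality. Here there is essentially no serious obstacle: the entire content resides in the composition observation of the first paragraph, and the only point demanding a little care is the clean passage from the Lempert function to the Kobayashi pseudodistance, where one must take the infimum on the correct (the $X$) side while keeping $k_Y(f(z),f(w))$ fixed as a lower bound valid for every image chain.
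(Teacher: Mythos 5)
Your proof is correct and is exactly the argument the paper has in mind: the paper gives no written proof, asserting only that ``the definition clearly implies'' the statement, and your two steps (holomorphic composition of analytic disks contracts the Lempert function, then summation over chains transfers the inequality to $k_X$ and $k_Y$) are the standard unpacking of that remark. The specializations (i) and (ii) via the inclusion and via $f$, $f^{-1}$ are likewise the intended ones.
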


A statement like this is the reason why the Kobayashi (pseudo)distance is said to be an \emph{invariant} distance: it is invariant under biholomorphisms.

Using the definition, it is easy to compute the Kobayashi pseudodistance of a few of interesting manifolds (see, e.g., \cite[Proposition~2.3.4, Corollaries~2.3.6, 2.3.7]{Abatebook}):

\begin{proposition}
\label{th:2.examples}
\begin{enumerate}
\item[\rm(i)] The Poincar\'e distance is the Kobayashi distance of the unit disk~$\Delta$.
\item[\rm(ii)] The Kobayashi distances of $\mathbb{C}^n$ and of the complex projective space $\mathbb{P}^n(\mathbb{C})$ vanish identically.
\item[\rm(iii)] For every $z=(z_1,\ldots,z_n)$, $w=(w_1,\ldots,w_n)\in\Delta^n$ we have
\[
k_{\Delta^n}(z,w)=\max_{j=1,\ldots,n}\{k_\Delta(z_j,w_j)\}\;.
\]
\item[\rm(iv)] The Kobayashi distance of the unit ball $B^n\subset\mathbb{C}^n$ coincides with the classical Bergman distance; in particular, if $O\in\mathbb{C}^n$ is the origin and $z\in B^n$ then
\[
k_{B^n}(O,z)=\mlog\frac{1+\|z\|}{1-\|z\|}\;.
\]
\end{enumerate}
\end{proposition}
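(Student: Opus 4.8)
The plan is to prove all four parts from a single template. In each case the \emph{lower} bound for the Kobayashi (pseudo)distance comes from the generalized Schwarz--Pick lemma (Theorem~\ref{th:2.SPlemma}) applied to a suitable holomorphic map into~$\Delta$, while the \emph{upper} bound comes from exhibiting an explicit holomorphic disk $\varphi\in\Hol(\Delta,X)$ that realizes the claimed value of the Lempert function~$\delta_X$, after which the defining inequality $k_X\le\delta_X$ concludes. A recurring point is that whenever the candidate expression is \emph{already} a distance (satisfies the triangle inequality), the chaining in the definition of $k_X$ cannot lower it, so it must equal $k_X$.

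For (i), first I would note that $\mathrm{id}_\Delta$ is an admissible competitor for $\delta_\Delta$, giving $\delta_\Delta\le k_\Delta$, while Theorem~\ref{th:1.SPlemma}(ii) shows every competitor satisfies $k_\Delta(z,w)=k_\Delta(\varphi(\zeta_0),\varphi(\zeta_1))\le k_\Delta(\zeta_0,\zeta_1)$, whence $\delta_\Delta=k_\Delta$; being already a distance, it equals the Kobayashi distance. For the vanishing in (ii), given $z,w\in\C^n$ the affine disks $\varphi_R(\zeta)=z+R\zeta(w-z)$ send $0\mapsto z$ and $1/R\mapsto w$ for $R>1$, so $\delta_{\C^n}(z,w)\le k_\Delta(0,1/R)\to0$; hence $k_{\C^n}\equiv0$. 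For $\mathbb{P}^n(\C)$ I would observe that any two points lie in a common affine chart, an open submanifold biholomorphic to~$\C^n$, so Theorem~\ref{th:2.SPlemma}(i) forces $k_{\mathbb{P}^n(\C)}\le k_{\C^n}\equiv0$.

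Part (iii) is the crux. The lower bound is immediate from the coordinate projections $\pi_j\colon\Delta^n\to\Delta$: Theorem~\ref{th:2.SPlemma} gives $k_\Delta(z_j,w_j)\le k_{\Delta^n}(z,w)$ for each $j$, and one takes the maximum. For the upper bound I would first use the biholomorphic invariance of $k_{\Delta^n}$ (Theorem~\ref{th:2.SPlemma}(ii)) together with Corollary~\ref{th:1.autDelta} to act by a product automorphism $\gamma=(\gamma_1,\dots,\gamma_n)$, reducing to the case where the first point is the origin and the second is $(\tanh r_1,\dots,\tanh r_n)$ with $r_j=k_\Delta(z_j,w_j)$ (in each factor an automorphism sending $z_j$ to $0$, followed by a rotation fixing $0$ that makes the image of $w_j$ real and nonnegative). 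Writing $r=\max_j r_j$, the decisive construction is the single disk $\varphi(\zeta)=(c_1\zeta,\dots,c_n\zeta)$ with $c_j=\tanh r_j/\tanh r$: because $r_j\le r$ forces $|c_j|\le1$, it maps $\Delta$ into $\Delta^n$, and it sends $0$ to the origin and $\tanh r$ to the target. Hence $\delta_{\Delta^n}\le k_\Delta(0,\tanh r)=r$, and $k_{\Delta^n}\le\delta_{\Delta^n}$ finishes (the case $r=0$ being trivial). I expect this to be the main obstacle: one must guess that the optimal competitor is \emph{linear} in each coordinate, with contraction factors calibrated to the \emph{largest} of the Poincar\'e distances, which is exactly what produces the maximum.

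For (iv) the explicit formula follows the same two-sided scheme: for $z\ne O$ put $u=z/\|z\|$; the linear functional $w\mapsto\langle w,u\rangle$ maps $B^n$ into $\Delta$ and sends $O\mapsto0$, $z\mapsto\|z\|$, so $k_\Delta(0,\|z\|)\le k_{B^n}(O,z)$, while the radial disk $\varphi(\zeta)=\zeta u$ maps $\Delta$ into $B^n$ with $\varphi(0)=O$, $\varphi(\|z\|)=z$, giving the reverse bound through $\delta_{B^n}$. Thus $k_{B^n}(O,z)=k_\Delta(0,\|z\|)=\mlog\frac{1+\|z\|}{1-\|z\|}$. To identify $k_{B^n}$ with the Bergman distance in full, I would use that both are invariant under $\Aut(B^n)$, which acts transitively on $B^n$: moving an arbitrary pair of points so that the first is the origin and the second is radial (via the unitary subgroup fixing $O$), the comparison reduces to the radial family just computed, where it is matched against the classical explicit value of the Bergman distance. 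This last identification is the part that genuinely relies on external input (the closed form of the Bergman distance), rather than on the Lempert-function machinery used everywhere else.
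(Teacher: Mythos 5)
Your proof is correct; the paper itself gives no argument for this proposition, deferring entirely to \cite{Abatebook}, and your two-sided scheme (Schwarz--Pick applied to projections or linear functionals for lower bounds, explicit disks bounding the Lempert function for upper bounds, plus the remark that $k_X=\delta_X$ once the candidate is already a distance) is exactly the standard one found there---in particular the calibrated linear disk $\zeta\mapsto(c_1\zeta,\dots,c_n\zeta)$ with $c_j=\tanh r_j/\tanh r$ is the right competitor for the polydisk, and the chart argument for $\mathbb{P}^n(\C)$ works since two points always avoid a common hyperplane. The one caveat, which you correctly flagged as external input, concerns (iv): with the paper's curvature $-4$ normalization of the Poincar\'e metric, the identity of $k_{B^n}$ with the Bergman distance holds for the correspondingly normalized Bergman metric, and your reduction via transitivity of $\Aut(B^n)$ and invariance of both distances is the right way to match the radial formula $k_{B^n}(O,z)=\mlog\frac{1+\|z\|}{1-\|z\|}$ against the classical closed form.
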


\begin{remark}
As often happens with objects introduced via a general definition, the Kobayashi pseudodistance can seldom be explicitly computed. Besides the cases listed in Proposition~\ref{th:2.examples}, as far as we know there are formulas only for some complex ellipsoids \cite{JPZ}, bounded symmetric domains \cite{JarPflug}, the symmetrized bidisk \cite{AY} and a few other scattered examples. On the other hand, it is possible and important to estimate the Kobayashi distance; see Subsection~\ref{subsect:1.5} below.
\end{remark}

We shall be interested in manifolds where the Kobayashi pseudodistance is a true distance, that is in complex manifolds $X$ such that $k_X(z,w)>0$ as soon as $z\ne w$. 

\begin{definition}
A connected complex manifold $X$ is \emph{(Kobayashi) hyperbolic} if $k_X$ is a true distance. In this case, if $z_0\in X$ and $r>0$ we shall denote by $B_X(z_0,r)$ the ball for~$k_X$ of center~$z_0$ and radius~$r$; we shall call $B_X(z_0,r)$ a \emph{Kobayashi ball}. More generally, if $A\subseteq X$ and $r>0$ we shall put
$B_X(A,r)=\bigcup_{z\in A} B_X(z,r)$.
\end{definition}

In hyperbolic manifolds the Kobayashi distance induces the topology of the manifold. More precisely (see, e.g., \cite[Proposition~2.3.10]{Abatebook}):

\begin{proposition}[Barth, \cite{Bar1}]
\label{th:2.hyp}
A connected complex manifold $X$ is hyperbolic if and only if $k_X$ induces the manifold topology on~$X$. 
\end{proposition}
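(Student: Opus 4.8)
The plan is to prove the two implications separately, the reverse one being a point of general topology and the direct one carrying all the content. Throughout I shall use three elementary facts from the definitions: that $k_X\le\delta_X$; that the Lempert function is antimonotone under inclusions, i.e. if $W\subseteq X$ then $\delta_X\le\delta_W$ on $W\times W$ (a disk into $W$ is in particular a disk into $X$, so the infimum defining $\delta_X$ runs over a larger family); and that $t\mapsto k_\Delta(0,t)$ is increasing on $[0,1)$ by Proposition~\ref{th:1.geod}. For the implication ``topologies coincide $\Rightarrow$ hyperbolic'' I argue by contradiction: if $k_X(z,w)=0$ for some $z\ne w$, then $w$ lies in every Kobayashi ball centered at $z$, so $z$ and $w$ are not separated in the $k_X$-topology; but a complex manifold is Hausdorff, so if the $k_X$-topology were the manifold topology it would separate $z$ and $w$, a contradiction. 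Hence $k_X$ is a true distance and $X$ is hyperbolic.

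For the converse, assume $X$ hyperbolic. I first record the one inclusion valid on every complex manifold, namely that the $k_X$-topology is coarser than the manifold topology. This follows from the continuity of $k_X$ in the manifold topology, which is a purely local matter: around any point take a chart $\varphi$ carrying a neighbourhood biholomorphically onto a Euclidean ball $B$; by antimonotonicity $\delta_X\le\delta_B$ there, and an explicit affine disk through two nearby points of $B$ shows $\delta_B(z,w)\to0$ as $w\to z$, whence $k_X(z,w)\le\delta_X(z,w)\to0$. Combined with the triangle inequality this makes $z\mapsto k_X(p,z)$ continuous, so each Kobayashi ball $B_X(p,r)=\{z\mid k_X(p,z)<r\}$ is open in the manifold topology.

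It remains to prove the reverse inclusion: every manifold-neighbourhood $U$ of a point $p$ contains some Kobayashi ball $B_X(p,\varepsilon)$. Shrinking $U$, I may fix a relatively compact open $W$ with $p\in W$ and $\overline W\subseteq U$, so that $\partial W$ is compact and does not contain $p$. Suppose no such $\varepsilon$ existed; then there would be points $z_m\notin U$ with $k_X(p,z_m)<1/m$, and hence chains $p=a_0,a_1,\dots,a_{l}=z_m$ with $\sum_j\delta_X(a_{j-1},a_j)<1/m$. The key step, which also disposes of the only genuine difficulty, namely witnesses $z_m$ escaping every compact set, is the following. Let $j$ be the first index with $a_j\notin W$; by Remark~\ref{rem:2.Lempert} the step from $a_{j-1}\in W$ to $a_j$ is realised, up to an error $\varepsilon_m\to0$, by a disk $\psi\in\Hol(\Delta,X)$ with $\psi(0)=a_{j-1}$, $\psi(\zeta_j)=a_j$, $\zeta_j\in[0,1)$, and $k_\Delta(0,\zeta_j)<\delta_X(a_{j-1},a_j)+\varepsilon_m$. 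The image $\psi([0,\zeta_j])$ is a continuous path leaving $W$, so it meets $\partial W$ at a first point $b_m=\psi(s)$ with $s\in[0,\zeta_j]$; since $k_\Delta(0,\cdot)$ is increasing, $\delta_X(a_{j-1},b_m)\le k_\Delta(0,s)\le k_\Delta(0,\zeta_j)$, and summing the first $j$ steps gives $k_X(p,b_m)\le\sum_{i\le j}\delta_X(a_{i-1},a_i)+\varepsilon_m<1/m+\varepsilon_m$.

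Now $b_m\in\partial W$ and $\partial W$ is compact, so a subsequence converges in the manifold topology to some $q\in\partial W$; then $q\ne p$, while the continuity established above yields $k_X(p,q)=\lim_m k_X(p,b_m)=0$, contradicting hyperbolicity. Hence the desired $\varepsilon$ exists and the two topologies coincide. I expect the main obstacle to be exactly the highlighted step: a priori the witnesses $z_m$ may run off to infinity, so that mere continuity of $k_X$ cannot be applied to them directly; the device of replacing each $z_m$ by a boundary-crossing point $b_m$ of a Lempert disk, which is forced into the compact shell $\partial W$ while keeping $k_X(p,b_m)$ small, is precisely what lets compactness, continuity, and hyperbolicity be brought to bear.
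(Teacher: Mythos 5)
Your proof is correct, and it is essentially the argument behind the paper's citation: Proposition~\ref{th:2.hyp} is stated here without proof, with reference to Barth \cite{Bar1} and \cite[Proposition~2.3.10]{Abatebook}, and the proof given there rests on exactly your two ingredients --- continuity of $k_X$ in the manifold topology via local comparison of the Lempert function with that of a coordinate ball, and the first-exit point through $\partial W$ of the curve traced by a chain of analytic disks almost realizing $k_X(p,z_m)$ (the same device the present paper uses in the proof of Proposition~\ref{th:2.exhypman}.(ii), where the arcs $\phe_j\circ\sigma_j$ are concatenated into a curve that is trapped in a Kobayashi ball). The only cosmetic difference is that the standard write-up first observes that compactness of $\partial W$, continuity, and hyperbolicity force $\inf\{k_X(p,w)\mid w\in\partial W\}>0$ and then bounds every chain from below by this infimum, whereas you extract a convergent sequence $b_m\to q\in\partial W$ and contradict hyperbolicity at the limit point; the two formulations are interchangeable.
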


To give a first idea of how one can work with the Kobayashi distance, we 
describe two large classes of examples of hyperbolic manifolds:

\begin{proposition}[Kobayashi, \cite{Kob1, Kob2}]
\label{th:2.exhypman}
\begin{enumerate}
\item[\rm(i)] A submanifold of a hyperbolic manifold is hyperbolic. In particular, bounded domains in $\mathbb{C}^n$ are hyperbolic.
\item[\rm(ii)] Let $\pi\colon\tilde X\to X$ be a holomorphic covering map. Then $X$ is hyperbolic if and only if $\tilde X$ is. In particular, a Riemann surface is hyperbolic if and only if it is Kobayashi hyperbolic.
\end{enumerate}
\end{proposition}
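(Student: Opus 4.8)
The plan is to deduce both statements from the generalized Schwarz--Pick lemma (Theorem~\ref{th:2.SPlemma}) together with Barth's theorem (Proposition~\ref{th:2.hyp}). Part (i) is immediate. If $X\subseteq Y$ is a submanifold of a hyperbolic manifold $Y$, the inclusion is holomorphic, so Theorem~\ref{th:2.SPlemma}(i) gives $k_X\ge k_Y|_{X\times X}$; hence $z\ne w$ in $X$ forces $k_X(z,w)\ge k_Y(z,w)>0$, and $X$ is hyperbolic. For a bounded domain $D\subset\mathbb{C}^n$, after a dilation we may assume $D\subseteq\Delta^n$, and since $k_{\Delta^n}(z,w)=\max_j k_\Delta(z_j,w_j)$ is a true distance by Proposition~\ref{th:2.examples}(iii), the polydisk is hyperbolic and the first part applies.

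For part (ii) I would first establish the basic covering identity: fixing a lift $\tilde x_0\in\pi^{-1}(x)$ of $x=\pi(\tilde x_0)$,
\[
k_X(x,y)=\inf\{k_{\tilde X}(\tilde x_0,\tilde y'):\ \tilde y'\in\pi^{-1}(y)\}.
\]
The inequality $\le$ is Schwarz--Pick applied to $\pi$. For $\ge$, I would fix $\eps>0$, choose a Lempert chain realizing $k_X(x,y)$ up to $\eps$ (using the normalization of Remark~\ref{rem:2.Lempert}), and lift the defining disks one after another: each lifts because $\Delta$ is simply connected, the first lift starts at $\tilde x_0$, and each subsequent lift starts at the endpoint of the previous one. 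This produces a chain in $\tilde X$ from $\tilde x_0$ to some lift $\tilde y'$ of $y$ whose total Lempert length does not exceed that of the original chain, giving $\inf_{\tilde y'}k_{\tilde X}(\tilde x_0,\tilde y')\le k_X(x,y)+\eps$.

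From this identity the implication ``$\tilde X$ hyperbolic $\Rightarrow X$ hyperbolic'' follows quickly: if $k_X(x,y)=0$ with $x\ne y$, the identity produces lifts $\tilde y'_n\in\pi^{-1}(y)$ with $k_{\tilde X}(\tilde x_0,\tilde y'_n)\to 0$, so by Barth (Proposition~\ref{th:2.hyp}) $\tilde y'_n\to\tilde x_0$ in $\tilde X$; but $\pi^{-1}(y)$ is closed (indeed discrete) and does not contain $\tilde x_0$, a contradiction. For the converse, assume $X$ hyperbolic and $\tilde x\ne\tilde y$. If $\pi(\tilde x)\ne\pi(\tilde y)$, Schwarz--Pick gives $0<k_X(\pi\tilde x,\pi\tilde y)\le k_{\tilde X}(\tilde x,\tilde y)$ and we are done. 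The remaining case, where $\tilde x$ and $\tilde y$ lie in the same fibre, is the genuinely delicate one and I expect it to be the main obstacle, since it is the only place where the even-covering structure must be reconciled with the chain definition of $k_{\tilde X}$. Here I would use Barth to pick $r>0$ with $B_X(\pi\tilde x,2r)$ contained in a connected evenly covered neighbourhood $W$, let $V$ be the sheet of $\pi^{-1}(W)$ through $\tilde x$, and prove $B_{\tilde X}(\tilde x,r)\subseteq V$: given any chain issuing from $\tilde x$ of total length $<r$, each connecting disk, restricted to the Poincar\'e geodesic segment joining its endpoints (a Euclidean radius by Proposition~\ref{th:1.geod}), projects under $\pi$, by Schwarz--Pick, into $B_X(\pi\tilde x,2r)\subseteq W$, so unique path lifting over $W$ forces the successive endpoints to stay in the single sheet $V$; inductively the chain's terminal point lies in $V$. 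Since $\tilde y\ne\tilde x$ lies in a different sheet, $\tilde y\notin B_{\tilde X}(\tilde x,r)$, whence $k_{\tilde X}(\tilde x,\tilde y)\ge r>0$.

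Finally, the assertion about Riemann surfaces follows by combining part (ii) with the uniformization theorem: the universal covering of a Riemann surface is $\mathbb{P}^1(\mathbb{C})$, $\mathbb{C}$, or $\Delta$, which by Proposition~\ref{th:2.examples}(i)--(ii) is hyperbolic exactly when it equals $\Delta$; hence, by part (ii), the surface is Kobayashi hyperbolic if and only if it is covered by the disk, that is, if and only if it is hyperbolic in the classical sense.
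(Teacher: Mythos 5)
Your proposal is correct and follows essentially the same route as the paper: Schwarz--Pick for (i) (the paper embeds the bounded domain in a ball rather than a polydisk, an immaterial difference), the lifted-chain covering identity for (ii), and, for the delicate same-fibre case, the concatenation of radial Poincar\'e geodesics whose projection stays in a small Kobayashi ball, forcing the lift into a single sheet --- the paper phrases this with a neighbourhood mapped biholomorphically onto the component of $B_X(z_0,\eps)$ instead of your evenly covered $W$ with $B_X(\pi\tilde x,2r)\subseteq W$, but the mechanism is identical. Your uniformization argument for the Riemann surface assertion likewise matches the paper's one-line justification.
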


\begin{proof}
(i) The first assertion follows immediately from Theorem~\ref{th:2.SPlemma}.(i). For the second one,
we remark that the unit ball $B^n$ is hyperbolic by Proposition~\ref{th:2.examples}.(iv). Then Theorem~\ref{th:2.SPlemma}.(ii) implies that all balls are hyperbolic; since a bounded domain is contained in a ball, the assertion follows.

(ii) First of all we claims that 
\begin{equation}
k_X(z_0,w_0)=\inf\bigl\{k_{\tilde X}(\tilde z_0,\tilde w)\bigm | w\in\pi^{-1}(w_0)\bigr\}\;,
\label{eq:2.cover}
\end{equation}
for any $z_0$,~$w_0\in X$,
where $\tilde z_0$ is any element of~$\pi^{-1}(z_0)$. Indeed, first of all 
Theorem~\ref{th:2.SPlemma} immediately implies that
\[
k_X(z_0,w_0)\le \inf\bigl\{k_{\tilde X}(\tilde z_0,\tilde w)\bigm | w\in\pi^{-1}(w_0)\bigr\}\;.
\]
Assume now, by contradiction, that there is $\eps>0$ such that
\[
k_X(z_0,w_0)+\eps\le k_{\widetilde X}\bigl(\tilde z_0,\tilde w\bigr)
\]
for all $\tilde w\in\pi^{-1}(w_0)$. Choose $z_1,\ldots,z_k=w_0\in X$ with $z_k=w$ such that
\[
\sum_{j=1}^k \delta_X(z_{j-1},z_j)<k_X(z_0,w_0)+\eps/2\;.
\]
By Remark~\ref{rem:2.Lempert}, we can find $\phe_1,\ldots,\phe_k\in\Hol(\Delta,X)$ 
and $\zeta_1,\ldots,\zeta_k\in\Delta$ such that $\phe_j(0)=z_{j-1}$, $\phe_j(\zeta_j)=z_j$ for all $j=1,
\ldots,k$ and
\[
\sum_{j=1}^k k_\Delta(0,\zeta_j)<k_X(z_0,w_0)+\eps\;.
\]
Let $\tilde\phe_1,\ldots,\tilde\phe_k\in\Hol(\Delta,\tilde X)$ be the liftings of~$\phe_1,\ldots,\phe_k$
chosen so that $\tilde\phe_1(0)=\tilde z_0$ and $\tilde\phe_{j+1}(0)=\tilde\phe_j(\zeta_j)$ for $j=1,\ldots,k-1$, and set $\tilde w_0=\tilde\phe_k(\zeta_k)\in\pi^{-1}(w_0)$. Then
\[
k_{\tilde X}(\tilde z_0,\tilde w_0)\le\sum_{j=1}^k \delta_{\tilde X}\bigl(\tilde\phe_j(0),
\tilde\phe_j(\zeta_j)\bigr)\le \sum_{j=1}^k k_\Delta(0,\zeta_j)<k_X(z_0,w_0)+\eps\le
k_{\tilde X}(\tilde z_0,\tilde w_0)\;,
\]
contradiction.

Having proved (\ref{eq:2.cover}), let us assume 
that $\tilde X$ is hyperbolic. If there are $z_0$, $w_0\in X$ such that $k_X(z_0,w_0)=0$, then for any~$\tilde z_0\in\pi^{-1}(z_0)$ there is a 
sequence $\{\tilde w_\nu\}\subset\pi^{-1}(w_0)$ such that $k_{\tilde X}(
\tilde z_0,\tilde w_\nu)\to0$ as~$\nu\to+\infty$. 
Then $\tilde w_\nu\to\tilde z_0$ (Proposition~\ref{th:2.hyp}) and so
$\tilde z_0\in\pi^{-1}(w_0)$, that is~$z_0=w_0$.

Conversely, assume $X$ hyperbolic. Suppose $\tilde z_0$,~$\tilde 
w_0\in\tilde X$ are so that $k_{\tilde X}(\tilde z_0,\tilde w_0)=0$; 
then $k_X\bigl(\pi(\tilde z_0),\pi(\tilde w_0)\bigr)=0$ and so
$\pi(\tilde z_0)=\pi(\tilde w_0)=z_0$. Let~$\widetilde U$ be a connected
neighborhood of~$\tilde z_0$ such that $\pi|_{\widetilde U}$~is a 
biholomorphism between~$\widetilde U$ and the (connected component containing~$z_0$ of the) for the Kobayashi ball $B_X(z_0,\eps)$ 
of center~$z_0$ and radius $\eps>0$~small 
enough; this can be done because of Proposition~\ref{th:2.hyp}. Since
$k_{\widetilde X}(\tilde z_0,\tilde w_0)=0$, we can find $\phe_1,\ldots,\phe_k\in\Hol(\Delta,\tilde X)$ 
and $\zeta_1,\ldots,\zeta_k\in\Delta$ with $\phe_1(0)=\tilde z_0$, $\phe_j(\zeta_j)=\phe_{j+1}(0)$ for $j=1,\ldots,k-1$ and $\phe_k(\zeta_k)=\tilde w_0$ such that 
\[
\sum_{j=1}^k k_\Delta(0,\zeta_j)<\eps\;.
\]
Let $\sigma_j$ be the radial segment in~$\Delta$
joining~$0$ to~$\zeta_j$; by Proposition~\ref{th:1.geod} the $\sigma_j$ are geodesics for the Poincar\'e metric. The arcs~$\phe_j\circ\sigma_j$ 
in~$\widetilde X$ connect to form a continuous curve~$\sigma$ from~$\tilde z_0$ 
to~$\tilde w_0$. Now the maps $\pi\circ\phe_j\in\Hol(\Delta,X)$ 
are non-expanding; therefore every point of the curve~$\pi\circ\sigma$ 
should belong to~$B_X(z_0,\eps)$. But then~$\sigma$ is contained in~$\widetilde 
U$, and this implies $\tilde z_0=\tilde w_0$.

The final assertion on Riemann surfaces follows immediately because hyperbolic Riemann surfaces can be characterized as the only Riemann surfaces whose universal covering is the unit disk.
\qed 
\end{proof}

It is also possible to prove the following (see, e.g., \cite[Proposition~2.3.13]{Abatebook}):

\begin{proposition}
\label{th:2.prodhyp}
Let $X_1$ and $X_2$ be connected complex manifolds. Then $X_1\times X_2$ is hyperbolic if and only if both $X_1$ and $X_2$ are hyperbolic.
\end{proposition}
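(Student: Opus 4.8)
The plan is to deduce both implications directly from the generalized Schwarz--Pick lemma (Theorem~\ref{th:2.SPlemma}), applied to the two canonical families of holomorphic maps attached to a product: the projections $\pi_i\colon X_1\times X_2\to X_i$ and the slice inclusions. No product formula for the Kobayashi distance is needed, only the contraction inequality together with the remark that $z\neq w$ in $X_1\times X_2$ forces $z_i\neq w_i$ for at least one index.

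First I would treat the implication ``both factors hyperbolic $\Rightarrow$ product hyperbolic''. Fix $z=(z_1,z_2)$ and $w=(w_1,w_2)$ with $z\neq w$; without loss of generality $z_1\neq w_1$. Since $\pi_1$ is holomorphic, Theorem~\ref{th:2.SPlemma} gives $k_{X_1}(z_1,w_1)=k_{X_1}\bigl(\pi_1(z),\pi_1(w)\bigr)\le k_{X_1\times X_2}(z,w)$. As $X_1$ is hyperbolic and $z_1\neq w_1$, the left-hand side is strictly positive, hence so is $k_{X_1\times X_2}(z,w)$; this is precisely hyperbolicity of the product.

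For the converse I would argue by contraposition. If, say, $X_1$ is not hyperbolic, I choose $z_1\neq w_1$ in $X_1$ with $k_{X_1}(z_1,w_1)=0$, fix any $p\in X_2$, and consider the slice inclusion $\iota\colon X_1\to X_1\times X_2$, $\iota(u)=(u,p)$, which is holomorphic. Now Theorem~\ref{th:2.SPlemma} runs the other way, $k_{X_1\times X_2}\bigl((z_1,p),(w_1,p)\bigr)\le k_{X_1}(z_1,w_1)=0$, while $(z_1,p)\neq(w_1,p)$, so $X_1\times X_2$ is not hyperbolic; the case of $X_2$ is symmetric.

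There is no genuine obstacle here: the whole content lies in choosing, for each implication, the holomorphic map whose contraction inequality points the right way---projections to push a factor distance below the product distance, slice inclusions to push the product distance below a factor distance. The only point requiring a moment's care is that these two maps yield inequalities in opposite directions, so one must match each to the implication being proved. I would remark in passing that the same two maps in fact give the sharp identity $k_{X_1\times X_2}\bigl((z_1,z_2),(w_1,w_2)\bigr)=\max_i k_{X_i}(z_i,w_i)$, generalizing Proposition~\ref{th:2.examples}(iii), but establishing this full formula is unnecessary for the hyperbolicity statement.
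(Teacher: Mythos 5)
Your proof is correct and complete. The paper itself states this proposition without proof, deferring to \cite[Proposition~2.3.13]{Abatebook}, and your argument---the projections $\pi_i$ pushing the factor distance below the product distance via Theorem~\ref{th:2.SPlemma} for one implication, the slice inclusions $u\mapsto(u,p)$ pushing the product distance below the factor distance for the other---is precisely the standard route; the case split ``$z\ne w$ forces $z_i\ne w_i$ for some $i$'' is handled correctly, and nothing is missing.

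One caveat concerns your closing aside: it is not true that ``the same two maps'' give the sharp identity $k_{X_1\times X_2}\bigl((z_1,z_2),(w_1,w_2)\bigr)=\max_i k_{X_i}(z_i,w_i)$. The projections do give the inequality $k_{X_1\times X_2}\ge\max_i k_{X_i}$, but the slice inclusions, combined with the triangle inequality through the intermediate point $(w_1,z_2)$, only yield $k_{X_1\times X_2}\le k_{X_1}(z_1,w_1)+k_{X_2}(z_2,w_2)$, i.e.\ the \emph{sum} rather than the maximum. To obtain the maximum one must, given analytic discs $\phe\in\Hol(\Delta,X_1)$ and $\psi\in\Hol(\Delta,X_2)$ through the relevant points, build a \emph{single} disc $\zeta\mapsto\bigl(\phe(\zeta),\psi(\alpha(\zeta))\bigr)$ into the product, where $\alpha\in\Hol(\Delta,\Delta)$ with $\alpha(0)=0$ reparametrizes the shorter disc to synchronize it with the longer one; this extra step (and its bookkeeping along chains of discs) is the actual content of the product formula and does not follow from projections and slices alone. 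Since you explicitly refrain from using the formula, this inaccuracy does not affect your proof of the proposition, but the remark as stated is misleading and should be dropped or corrected.
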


\begin{remark}
The Kobayashi pseudodistance can be useful even when it is degenerate. For instance, the classical Liouville theorem (a bounded entire function is constant) is an immediate consequence, thanks to Theorem~\ref{th:2.SPlemma}, of the vanishing of the Kobayashi pseudodistance of $\mathbb{C}^n$ and the fact that bounded domains are hyperbolic.
\end{remark}

A technical fact we shall need later on is the following:

\begin{lemma} 
\label{th:1.somma}
Let $X$ be a hyperbolic manifold, and choose $z_0\in X$ 
and $r_1$,~$r_2>0$. Then
\[
B_X\bigl(B_X(z_0,r_1),r_2\bigr)=B_X(z_0,r_1+r_2)\;.
\]
\end{lemma}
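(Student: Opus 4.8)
The plan is to prove the two inclusions separately. The inclusion $B_X\bigl(B_X(z_0,r_1),r_2\bigr)\subseteq B_X(z_0,r_1+r_2)$ is the easy one and holds in any metric space: if $u\in B_X(w,r_2)$ for some $w\in B_X(z_0,r_1)$, then the triangle inequality, which is built into the very definition of $k_X$, gives $k_X(z_0,u)\le k_X(z_0,w)+k_X(w,u)<r_1+r_2$. All the content lies in the reverse inclusion, which fails for a general metric space and requires an intermediate point lying at prescribed distances from both endpoints; I would obtain it by exploiting the fact that $(X,k_X)$ is an (approximate) length space, a structure that the chain-of-disks definition inherits from the geodesic structure of the Poincar\'e disk.

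Concretely, fix $u$ with $k_X(z_0,u)<r_1+r_2$. By the definition of $k_X$ as an infimum over chains I can choose $z_0=p_0,p_1,\dots,p_k=u$ with $\sum_{j=1}^k\delta_X(p_{j-1},p_j)<r_1+r_2$, and then, using the form of the Lempert function in Remark~\ref{rem:2.Lempert}, holomorphic disks $\varphi_j\in\Hol(\Delta,X)$ and points $\zeta_j\in\Delta$ with $\varphi_j(0)=p_{j-1}$, $\varphi_j(\zeta_j)=p_j$ and $L:=\sum_{j=1}^k k_\Delta(0,\zeta_j)<r_1+r_2$. Parametrizing the radial Poincar\'e geodesic from $0$ to $\zeta_j$ by arclength (Proposition~\ref{th:1.geod}) and composing with $\varphi_j$, I get arcs that I concatenate into a single path $\gamma\colon[0,L]\to X$ from $z_0$ to $u$. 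The crucial estimate is that $\gamma$ is $1$-Lipschitz from $([0,L],|\cdot|)$ to $(X,k_X)$: within each arc this is exactly the nonexpansion of $\varphi_j$ for the Kobayashi distance (Theorem~\ref{th:2.SPlemma}) combined with the fact that the arclength parametrization of the radial geodesic is a Poincar\'e isometry, and across the junction points it follows by summing these bounds through the triangle inequality. In particular $k_X(z_0,\gamma(s))\le s$ and $k_X(\gamma(s),u)\le L-s$ for every $s\in[0,L]$.

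It then remains to select the parameter. Because $L<r_1+r_2$ I have $L-r_2<r_1$, so the interval $(\max\{0,L-r_2\},\min\{r_1,L\})$ is nonempty (this is where $r_1,r_2>0$ is used, together with the harmless reduction to the case $L>0$, the case $L=0$ meaning $z_0=u$ being trivial); pick any $s$ in it and set $w=\gamma(s)$. Then $k_X(z_0,w)\le s<r_1$ and $k_X(w,u)\le L-s<r_2$, so $w\in B_X(z_0,r_1)$ and $u\in B_X(w,r_2)$, whence $u\in B_X\bigl(B_X(z_0,r_1),r_2\bigr)$. The one step that needs genuine care---and which I regard as the main obstacle---is the $1$-Lipschitz property of the concatenated path, in particular checking it across the points where consecutive disks are glued; everything else is bookkeeping with the triangle inequality and the choice of the intermediate parameter.
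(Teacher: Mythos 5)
Your proof is correct and is essentially the paper's own argument: the paper also takes a chain of analytic disks with $\sum_j k_\Delta(0,\zeta_j)<r_1+r_2$, uses the radial Poincar\'e geodesics and the $1$-Lipschitz property of the $\varphi_j$, and splits the chain at an intermediate point $w=\varphi_\mu(\eta_\mu)$ whose cumulative distance from $z_0$ is a prescribed value just below $r_1$. Your ``concatenated $1$-Lipschitz path $\gamma\colon[0,L]\to X$ with parameter $s$'' is exactly the paper's discrete choice of the index $\mu$ and the point $\eta_\mu$ on the radius through $\zeta_\mu$, repackaged in length-space language, and the junction estimate you single out is handled the same way (additivity of $k_\Delta$ along radii plus the triangle inequality).
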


\begin{proof} 
The inclusion $B_D\bigl(B_D(z_0,r_1),r_2\bigr)\subseteq B_D(z_0,r_1+r_2)$ 
follows immediately from the triangular inequality. For the converse, let 
$z\in B_D(z_0,r_1+r_2)$, and set $3\eps=r_1+r_2-k_X(z_0,z)$. Then there are
$\phe_1,\ldots,\phe_m\in\Hol(\Delta,X)$ and $\zeta_1,\ldots,\zeta_m\in\Delta$
so that $\phe_1(0)=z_0$, $\phe_j(\zeta_j)=\phe_{j+1}(0)$ for $j=1,\ldots,m-1$,
$\phe_m(\zeta_m)=z$ and
\[
\sum_{j=1}^m k_\Delta(0,\zeta_{j})<r_1+r_2-2\eps\;.
\]
Let $\mu\le m$ be the largest integer such that
\[
\sum_{j=1}^{\mu-1}k_\Delta(0,\zeta_{j})<r_1-\eps\;.
\]
Let $\eta_\mu$ be the point on the Euclidean radius in~$\Delta$ passing through $\zeta_{\mu+1}$ (which is a geodesic for the Poincar\'e distance) such that
\[
\sum_{j=1}^{\mu-1}k_\Delta(0,\zeta_j)+k_\Delta(0,\eta_\mu)=r_1
	-\eps\;.
\]
If we set $w=\phe_\mu(\eta_\mu)$, then $k_X(z_0,w)<r_1$ and $k_X(w,z)<r_2$, 
so that 
\[
z\in B_D(w,r_2)\subseteq B_D\bigl(B_D(z_0,r_1),r_2\bigr)\;,
\]
and we are done.
\qed
\end{proof}

A condition slightly stronger than hyperbolicity is the following:

\begin{definition}
A hyperbolic complex manifold $X$ is \emph{complete hyperbolic} if the Kobayashi distance $k_X$ is complete.
\end{definition}

Complete hyperbolic manifolds have a topological characterization (see, e.g., \cite[Proposition~2.3.17]{Abatebook}):

\begin{proposition}
\label{th:2.chyp}
Let $X$ be a hyperbolic manifold. Then $X$ is complete hyperbolic if and only if every closed Kobayashi ball is compact. In particular, compact hyperbolic manifolds are automatically complete hyperbolic.
\end{proposition}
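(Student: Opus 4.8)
The plan is to treat this as a Hopf--Rinow type theorem for the Kobayashi distance, in which the roles usually played by geodesic completeness and local compactness are taken over by Lemma~\ref{th:1.somma} (the additivity $B_X(B_X(z_0,r_1),r_2)=B_X(z_0,r_1+r_2)$) together with Proposition~\ref{th:2.hyp} (the fact that $k_X$ induces the manifold topology). One implication is a soft metric-space fact; the other genuinely exploits the geometry of $k_X$.

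First I would dispose of the easy direction: if every closed Kobayashi ball is compact, then $X$ is complete. Indeed a $k_X$-Cauchy sequence is bounded, hence contained in a single closed ball, which by hypothesis is compact; so the sequence has a convergent subsequence, and a Cauchy sequence with a convergent subsequence converges. This uses nothing special about $k_X$. For the converse, assume $X$ complete hyperbolic, fix $z_0$, and set $R=\sup\{r>0 : \overline{B_X(z_0,r)}\text{ is compact}\}$. Since $r<r'$ and $\overline{B_X(z_0,r')}$ compact force $\overline{B_X(z_0,r)}$ compact (a closed subset of a compact set), the set of admissible radii is an interval, so it suffices to prove $R=+\infty$. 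That $R>0$ follows from local compactness of the manifold and Proposition~\ref{th:2.hyp}: the balls $B_X(z_0,r)$ form a neighborhood basis at $z_0$, hence for small $r$ they sit inside a relatively compact coordinate neighborhood, making $\overline{B_X(z_0,r)}$ compact.

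The heart of the argument is to rule out $R<+\infty$. I would first check that the supremum is attained, i.e. that $\overline{B_X(z_0,R)}$ is compact, by proving it is totally bounded (it is already complete, being closed in the complete space $X$). Given $\eps>0$, Lemma~\ref{th:1.somma} gives $B_X(z_0,R)=B_X\bigl(B_X(z_0,R-\eps/2),\eps/2\bigr)$, so every point of $B_X(z_0,R)$ lies within $\eps/2$ of the compact ball $\overline{B_X(z_0,R-\eps/2)}$; covering the latter by finitely many $\eps/2$-balls yields a finite $\eps$-cover of $B_X(z_0,R)$. Then I would propagate compactness outward: by compactness of $\overline{B_X(z_0,R)}$ and local compactness there is a uniform $\rho>0$ with $\overline{B_X(z,\rho)}$ compact for every $z\in\overline{B_X(z_0,R)}$; covering $\overline{B_X(z_0,R)}$ by finitely many $B_X(z_i,\rho/2)$ and applying Lemma~\ref{th:1.somma} once more gives $B_X(z_0,R+\rho/2)\subseteq\bigcup_i B_X(z_i,\rho)$, whence $\overline{B_X(z_0,R+\rho/2)}$ is a closed subset of a finite union of compact balls, hence compact, contradicting the definition of $R$. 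Therefore $R=+\infty$ and every closed ball is compact.

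The main obstacle is precisely this outward propagation: completeness alone never forces compactness of balls (think of infinite-dimensional normed spaces), so the whole force of the statement rests on the additivity in Lemma~\ref{th:1.somma}, which allows a single thickening $\rho$ chosen uniformly over the compact region of radius $R$ to enlarge it to radius $R+\rho/2$. The final assertion is then immediate: if $X$ is compact, every closed Kobayashi ball is a closed subset of $X$ and hence compact, so by the equivalence just proved $X$ is complete hyperbolic.
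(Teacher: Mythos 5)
Your proof is correct, and it is essentially the standard Hopf--Rinow-type argument for this result: the paper itself states the proposition without proof (citing \cite[Proposition~2.3.17]{Abatebook}), and the proof given there follows the same scheme you use, with the additivity of Kobayashi balls (Lemma~\ref{th:1.somma}) supplying the crucial ``inner metric'' property that makes the outward propagation of compactness work, and Proposition~\ref{th:2.hyp} guaranteeing that small closed balls are compact. Your identification of Lemma~\ref{th:1.somma} as the step where completeness alone would not suffice is exactly the right diagnosis.
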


Examples of complete hyperbolic manifolds are contained in the following (see, e.g., \cite[Propositions~2.3.19 and 2.3.20]{Abatebook}):

\begin{proposition}
\label{th:2.exchyp}
\begin{enumerate}
\item[\rm(i)] A homogeneous hyperbolic manifold is complete hyperbolic. In particular, both $B^n$ and $\Delta^n$ are complete hyperbolic.
\item[\rm(ii)] A closed submanifold of a complete hyperbolic manifold is complete hyperbolic.
\item[\rm(iii)] The product of two hyperbolic manifolds is complete hyperbolic if and only if both factors are complete hyperbolic.
\item[\rm(iv)] If $\pi\colon\tilde X\to X$ is a holomorphic covering map, then $\tilde X$ is complete hyperbolic if and only if $X$ is complete hyperbolic.
\end{enumerate}
\end{proposition}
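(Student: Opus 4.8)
My plan is to reduce all four parts to the compactness criterion of Proposition~\ref{th:2.chyp}: a hyperbolic manifold is complete hyperbolic exactly when every closed Kobayashi ball $\overline B_X(z_0,r)=\{z\in X\mid k_X(z_0,z)\le r\}$ is compact. The two facts used throughout are that holomorphic maps are non-expanding and biholomorphisms are isometries (Theorem~\ref{th:2.SPlemma}). For (i), fix a base point $z_0$. Hyperbolicity and Proposition~\ref{th:2.hyp} let me choose $\rho>0$ so small that $\overline B_X(z_0,\rho)$ lies inside a relatively compact coordinate neighbourhood; being closed, it is then compact. Homogeneity makes $(X,k_X)$ a homogeneous metric space, so each $\gamma\in\Aut(X)$, being a $k_X$-isometry, shows that \emph{every} closed ball of radius $\rho$ is compact. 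I then bootstrap the radius: assuming $\overline B_X(z_0,r)$ compact, I cover it by finitely many balls $B_X(z_i,\rho/2)$, and Lemma~\ref{th:1.somma} gives $B_X(z_0,r+\rho/2)\subseteq\bigcup_i\overline B_X(z_i,\rho)$, so the closed ball $\overline B_X(z_0,r+\rho/4)\subseteq B_X(z_0,r+\rho/2)$ is a closed subset of a finite union of compacta, hence compact. Induction over radii $\rho+n\rho/4\to\infty$ settles all closed balls. Since $B^n$ and $\Delta^n$ are bounded, hence hyperbolic by Proposition~\ref{th:2.exhypman}(i), and homogeneous (transitivity of $\Aut(\Delta)$ from Corollary~\ref{th:1.autDelta} applied factorwise for $\Delta^n$, and the transitive action of $\Aut(B^n)$ for the ball), part (i) applies to them.

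For (ii), the inclusion $X\hookrightarrow Y$ is holomorphic, so $k_Y\le k_X$ on $X$ by Theorem~\ref{th:2.SPlemma}(i); hence $\overline B_X(z_0,r)\subseteq\overline B_Y(z_0,r)$. The latter is compact because $Y$ is complete hyperbolic, and $\overline B_X(z_0,r)$, being closed in $X$ with $X$ closed in $Y$, is a closed subset of that compact set, hence compact. For (iii), the factors being hyperbolic, $X_1\times X_2$ is hyperbolic by Proposition~\ref{th:2.prodhyp}. If both factors are complete hyperbolic, the holomorphic projections give $k_{X_1\times X_2}(z,w)\ge\max_i k_{X_i}(z_i,w_i)$, so $\overline B_{X_1\times X_2}(z_0,r)\subseteq \overline B_{X_1}(z^1_0,r)\times\overline B_{X_2}(z^2_0,r)$, a product of compacta, and the closed ball is compact. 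Conversely, fixing $z^2_0\in X_2$, the slice $X_1\times\{z^2_0\}$ is a closed submanifold biholomorphic to $X_1$; by (ii) it is complete hyperbolic, and by Theorem~\ref{th:2.SPlemma}(ii) its Kobayashi distance is carried onto $k_{X_1}$, so $X_1$ (and symmetrically $X_2$) is complete hyperbolic.

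For (iv) the hyperbolicity equivalence is Proposition~\ref{th:2.exhypman}(ii), so only completeness remains. The implication from completeness of $\tilde X$ to completeness of $X$ follows from the covering formula (\ref{eq:2.cover}): given $\{w_\nu\}\subseteq\overline B_X(z_0,r)$, fix $\tilde z_0\in\pi^{-1}(z_0)$ and choose, via (\ref{eq:2.cover}), lifts $\tilde w_\nu\in\pi^{-1}(w_\nu)$ with $k_{\tilde X}(\tilde z_0,\tilde w_\nu)<r+1$; these lie in the compact ball $\overline B_{\tilde X}(\tilde z_0,r+1)$, so a subsequence converges, and applying the continuous map $\pi$ produces a convergent subsequence of $\{w_\nu\}$. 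Hence $\overline B_X(z_0,r)$ is compact.

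The reverse implication, that completeness of $X$ forces that of $\tilde X$, is the main obstacle. I would argue with Cauchy sequences, the crucial ingredient being a local isometry lemma: around any $w_\infty\in X$ there is $\eps>0$ such that $B_X(w_\infty,2\eps)$ is evenly covered and, for each $\tilde p\in\pi^{-1}(w_\infty)$, the sheet over it coincides with the Kobayashi ball $B_{\tilde X}(\tilde p,2\eps)$, on which $\pi$ restricts to an isometry onto $B_X(w_\infty,2\eps)$. This refines the disk-chain lifting in the proof of Proposition~\ref{th:2.exhypman}(ii): a near-optimal chain realizing $k_X$ between nearby points stays inside the evenly covered ball and lifts to a chain of equal length upstairs, giving $k_{\tilde X}\le k_X$ locally, the reverse inequality being non-expansiveness. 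Granting this, a Cauchy sequence $\{\tilde w_\nu\}$ in $\tilde X$ projects to a Cauchy, hence convergent, sequence $w_\nu\to w_\infty$ in $X$; choosing $N$ with $w_\nu\in B_X(w_\infty,\eps/2)$ and $k_{\tilde X}(\tilde w_\nu,\tilde w_\mu)<\eps/2$ for $\nu,\mu\ge N$, the point $\tilde w_N$ lies in a unique sheet $B_{\tilde X}(\tilde p_0,2\eps)$ with $k_{\tilde X}(\tilde p_0,\tilde w_N)=k_X(w_\infty,w_N)<\eps/2$, and the triangle inequality forces every $\tilde w_\nu$ with $\nu\ge N$ into that same sheet; since $\pi$ is a homeomorphism there and $w_\nu\to w_\infty$, we conclude $\tilde w_\nu\to\tilde p_0$. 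The delicate point is exactly the local isometry claim — that distinct sheets cannot be bridged by arbitrarily short Kobayashi chains — which is precisely where the even-covering hypothesis and the chain-lifting mechanism of Proposition~\ref{th:2.exhypman}(ii) become indispensable.
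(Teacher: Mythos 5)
Your proof is correct, and since the paper states this proposition without proof (quoting \cite{Abatebook}, Propositions~2.3.19 and~2.3.20), the comparison is with the standard arguments of that reference, which your route essentially reproduces: the compactness criterion of Proposition~\ref{th:2.chyp} combined with Lemma~\ref{th:1.somma} and transitivity of $\Aut(X)$ for the homogeneous case, the distance-decreasing property of inclusions and projections for (ii) and (iii), and formula~(\ref{eq:2.cover}) together with the chain-lifting mechanism for (iv). Only two points in (iv) need sharpening, neither of which is a genuine gap. First, your blanket claim that $\pi$ restricts to an \emph{isometry} of the sheet $B_{\tilde X}(\tilde p,2\eps)$ onto $B_X(w_\infty,2\eps)$ is more than the lifting argument yields at that radius: a near-optimal chain between two non-central points of the $2\eps$-ball may exit the ball, so its lift need not stay in the sheet. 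But your argument only uses the \emph{radial} identity $k_{\tilde X}(\tilde p_0,\tilde q)=k_X\bigl(w_\infty,\pi(\tilde q)\bigr)$ for $\tilde q$ in the sheet, together with upstairs distances $k_{\tilde X}(\tilde w_N,\tilde w_\nu)$, and the radial identity does follow from the mechanism you invoke: since the radial segments are Poincar\'e geodesics, every point of a chain from $w_\infty$ of total length $<2\eps$ stays at $k_X$-distance $<2\eps$ from $w_\infty$ (the partial-sums argument in the paper's proof of Proposition~\ref{th:2.exhypman}(ii)), so the lifted curve stays in the sheet and terminates at the unique preimage of its endpoint there; non-expansiveness of $\pi$ gives the reverse inequality. (Alternatively, the full isometry statement becomes correct if you restrict to points over $B_X(w_\infty,\eps/2)$, since chains between two such points stay within $B_X(w_\infty,2\eps)$.) Second, Kobayashi balls need not be connected, so ``evenly covered ball'' should be read as the connected component of $B_X(w_\infty,2\eps)$ containing $w_\infty$, with the sheets being the slices of $\pi^{-1}$ over that component---exactly the precaution taken in the paper's proof of Proposition~\ref{th:2.exhypman}(ii); since $w_\nu\to w_\infty$ in the manifold topology, the $w_\nu$ eventually lie in that component and your triangle-inequality step is unaffected. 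With these readings all four parts are complete.
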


We shall see more examples of complete hyperbolic manifolds later on (Proposition~\ref{th:1.Barth} and Corollary~\ref{th:1.pscomp}). We end this subsection recalling the following important fact (see, e.g., \cite[Theorem~5.4.2]{Kobayashibook}):

\begin{theorem}
\label{th:1.authyp}
The automorphism group $\Aut(X)$ of a hyperbolic manifold $X$ has a natural structure of real Lie group. 
\end{theorem}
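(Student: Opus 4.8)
The plan is to realize $\Aut(X)$ as a closed subgroup of the isometry group of the metric space $(X,k_X)$, deduce from this that it is locally compact, and then invoke the solution of Hilbert's fifth problem for transformation groups to upgrade local compactness to a genuine Lie group structure. Throughout, the ``natural structure'' refers to the compact-open topology that $\Aut(X)\subseteq\Hol(X,X)$ carries by definition, which on a hyperbolic manifold coincides with the topology of uniform convergence on compacta.

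First I would record the metric-geometric setup. By Proposition~\ref{th:2.hyp} the Kobayashi pseudodistance $k_X$ is a genuine distance inducing the manifold topology, so $(X,k_X)$ is a connected, locally compact, locally connected metric space. By Theorem~\ref{th:2.SPlemma}.(ii) every $\gamma\in\Aut(X)$ is an isometry of $(X,k_X)$; hence $\Aut(X)\subseteq\mathrm{Isom}(X,k_X)$, and on both groups the compact-open topology agrees with the topology of uniform convergence on compacta, so the inclusion is a topological embedding.

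Next I would establish local compactness. The theorem of van Dantzig and van der Waerden asserts that the isometry group $\mathrm{Isom}(X,k_X)$ of a connected, locally compact metric space, with the compact-open topology, is a locally compact topological group acting properly on $X$. It then suffices to show that $\Aut(X)$ is \emph{closed} in $\mathrm{Isom}(X,k_X)$, since a closed subgroup of a locally compact group is locally compact. So suppose $\gamma_\nu\in\Aut(X)$ converges in $\mathrm{Isom}(X,k_X)$ to some $g$. Since $\mathrm{Isom}(X,k_X)$ is a topological group, inversion is continuous and $\gamma_\nu^{-1}\to g^{-1}$ as well, both convergences being uniform on compacta. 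A locally uniform limit of holomorphic maps is again holomorphic (by the Cauchy integral formula), so both $g$ and $g^{-1}$ lie in $\Hol(X,X)$; being mutually inverse, $g\in\Aut(X)$. Hence $\Aut(X)$ is closed, and therefore locally compact.

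Finally, $\Aut(X)$ is a locally compact topological group acting effectively and continuously on the manifold $X$ by biholomorphisms, in particular by $C^\infty$ diffeomorphisms. By the Bochner--Montgomery theorem --- which rests on the solution of Hilbert's fifth problem --- such a group carries a unique Lie group structure compatible with its topology, and the action $\Aut(X)\times X\to X$ is then real-analytic; this is the asserted structure. The principal obstacle is, on the one hand, the closedness step, where one must rule out that a compactly-uniform limit of automorphisms degenerates into a non-surjective holomorphic isometry --- this is exactly what the combination of Weierstrass-type convergence with continuity of inversion in $\mathrm{Isom}(X,k_X)$ prevents --- and, on the other hand and more fundamentally, the invocation of Bochner--Montgomery, which is the genuinely deep input and which I would treat as a black box.
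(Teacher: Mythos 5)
Your argument is correct and is essentially the proof the paper points to: the paper itself gives no proof of this theorem, quoting it from \cite{Kobayashibook} (Theorem~5.4.2), and the argument there runs exactly along your lines --- automorphisms are $k_X$-isometries by Theorem~\ref{th:2.SPlemma}, van Dantzig--van der Waerden gives local compactness of the isometry group of the connected, locally compact metric space $(X,k_X)$, $\Aut(X)$ is closed in it because locally uniform limits of holomorphic maps are holomorphic (applied to the sequence and its inverses), and Bochner--Montgomery upgrades the locally compact effective action by diffeomorphisms to a Lie group structure. The only (historical, not mathematical) quibble is that the Bochner--Montgomery theorem for smooth actions predates and does not rest on the full Gleason--Montgomery--Zippin solution of Hilbert's fifth problem.
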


\subsection{Taut manifolds}
\label{subsect:1.3}

For our dynamical applications we shall need a class of manifolds which is intermediate between complete hyperbolic and hyperbolic manifolds. To introduce it, we first show that hyperbolicity can be characterized as a precompactness assumption on the space $\Hol(\Delta,X)$. 

If $X$ is a topological space, we shall denote by $X^*=X\cup\{\infty\}$ its one-point (or Alexandroff) compactification; see, e.g., \cite[p. 150]{Kelley} for details.

\begin{theorem}[\cite{Ahyp}]
Let $X$ be a connected complex manifold. Then $X$ is hyperbolic if and only if $\Hol(\Delta,X)$ is relatively compact in the space $C^0(\Delta,X^*)$ of continuous functions from $\Delta$ into the one-point compactification of~$X$. In particular, if $X$ is compact then it is hyperbolic if and only if $\Hol(\Delta,X)$ is compact. Finally, if $X$ is hyperbolic then $\Hol(Y,X)$ is relatively compact in $C^0(Y,X^*)$ for any complex manifold~$Y$.
\end{theorem}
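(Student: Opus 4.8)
The natural tool is the Arzel\`a--Ascoli theorem. Since $X$ is a finite-dimensional complex manifold it is locally compact, Hausdorff and second countable, so its one-point compactification $X^*$ is compact and metrizable; I fix once and for all a distance $\rho$ inducing the topology of $X^*$. As the target space $X^*$ is compact, pointwise relative compactness of a family in $C^0(\Delta,X^*)$ is automatic, and Arzel\`a--Ascoli reduces the whole statement to the claim that \emph{$X$ is hyperbolic if and only if $\Hol(\Delta,X)$ is equicontinuous} as a family of maps from $\Delta$ into $(X^*,\rho)$. The plan is to prove the two implications separately, read off the compact case, and finally replace $\Delta$ by an arbitrary domain $Y$.

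For the implication from hyperbolicity to equicontinuity I would start from the generalized Schwarz--Pick Lemma (Theorem~\ref{th:2.SPlemma}): every $\varphi\in\Hol(\Delta,X)$ satisfies $k_X\bigl(\varphi(\zeta_1),\varphi(\zeta_2)\bigr)\le k_\Delta(\zeta_1,\zeta_2)$, so the whole family is $1$-Lipschitz from $(\Delta,k_\Delta)$ to $(X,k_X)$. The core of this direction is the uniform estimate: for every $\varepsilon>0$ there is $\delta>0$ with $\mathrm{diam}_\rho B_X(z,\delta)<\varepsilon$ for every $z\in X$. I would prove it by contradiction, producing $z_n$ and $w_n\in B_X(z_n,1/n)$ with $\rho(z_n,w_n)\ge\varepsilon_0$, extracting limits $z^*\ne w^*$ in the compact space $X^*$, and excluding each configuration (both limits in $X$, or one of them equal to $\infty$) by combining the continuity of $k_X$ with Barth's theorem (Proposition~\ref{th:2.hyp}), which guarantees that $k_X$ induces the manifold topology. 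Granting the estimate, the $1$-Lipschitz property gives equicontinuity at once (choose the neighbourhood $B_\Delta(\zeta_0,\delta)$), and Arzel\`a--Ascoli yields relative compactness. When $X$ is compact the point $\infty$ is isolated in $X^*$, so limits of elements of $\Hol(\Delta,X)$ stay in $X$ and are again holomorphic; hence $\Hol(\Delta,X)$ is closed in $C^0(\Delta,X^*)$ and relative compactness upgrades to compactness.

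The converse is the delicate part, and I expect it to be the main obstacle. Equicontinuity at the origin, together with the disk description of the Lempert function (Remark~\ref{rem:2.Lempert}) and the fact that the value $\varphi(0)$ is arbitrary, yields a \emph{uniform} modulus for $\delta_X$: for every $\varepsilon>0$ there is $\delta>0$ such that $\delta_X(z,w)<\delta$ implies $\rho(z,w)<\varepsilon$ for all $z,w\in X$. The difficulty is that this controls the Lempert function, whereas hyperbolicity concerns $k_X$, which is manufactured from $\delta_X$ along chains on which $\delta_X$ --- having no triangle inequality --- cannot be summed. To bridge this I would prove a \emph{local} lower bound: in a chart $\phi$ around a point $z_0$, equicontinuity confines, through Cauchy's estimates, the center derivatives of all analytic disks based near $z_0$, giving $\delta_X(z,w)\ge c\,\|\phi(z)-\phi(w)\|$ for $z,w$ in a small neighbourhood. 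The uniform modulus then forces every step of a competing chain to be $\rho$-small, so the chain cannot jump out of the chart and must accumulate chart-Euclidean length; summing the local bound along it produces $k_X(z_0,w)>0$ for $w\ne z_0$ near $z_0$. Thus $k_X$ separates points and induces the manifold topology, and Barth's theorem (Proposition~\ref{th:2.hyp}) delivers hyperbolicity. Keeping the chains inside the chart, where the Euclidean comparison is legitimate, is precisely the technical crux.

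Finally, for the last assertion I would retain the Arzel\`a--Ascoli scheme over an arbitrary $Y$ (again locally compact and second countable, with the same compact $X^*$). Equicontinuity is local in the domain, so it is enough to verify it on a polydisk neighbourhood $P\cong\Delta^n$ of each point $y_0\in Y$. Every $y\in P$ is joined to $y_0$ by an analytic disk contained in $P$ whose Poincar\'e parameter tends to $0$ as $y\to y_0$; precomposing any $f\in\Hol(Y,X)$ with these disks lands back in $\Hol(\Delta,X)$, whose equicontinuity has just been established with a modulus independent of the map. This transfers the modulus from $\Delta$ to $Y$, and one more application of Arzel\`a--Ascoli shows that $\Hol(Y,X)$ is relatively compact in $C^0(Y,X^*)$.
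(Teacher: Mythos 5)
Your proposal is essentially correct, and since the paper states this theorem without proof (delegating it to~\cite{Ahyp}), the comparison can only be with the expected argument: your route --- Ascoli--Arzel\`a reduction with compact metrizable target $X^*$, Schwarz--Pick for necessity, a Cauchy-estimate lower bound on the Lempert function plus chain bookkeeping for sufficiency --- is exactly in the spirit of the original. The two delicate points both hold up under scrutiny. In the forward direction, your uniform estimate (for every $\varepsilon>0$ there is $\delta>0$ such that $k_X(z,w)<\delta$ implies $\rho(z,w)<\varepsilon$ for \emph{all} $z,w\in X$) does follow from the compactness-of-$X^*$ contradiction: if both limit points lie in $X$, the continuity of $k_X$ (valid on any complex manifold) contradicts hyperbolicity; if one limit is $\infty$, Barth's theorem (Proposition~\ref{th:2.hyp}) forces the divergent sequence to converge in $X$, a contradiction; and both limits cannot be $\infty$, being distinct. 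In the converse, your local bound $\delta_X(z,w)\ge c\,\|\phi(z)-\phi(w)\|$ on a compact chart neighbourhood is legitimate because equicontinuity, with a modulus independent of the disk, confines every $\varphi$ with $\varphi(0)$ near $z_0$ inside the chart for $|\zeta|<r$ with $r$ uniform, so the Cauchy estimate applies; the chain argument then closes once you make explicit the dichotomy you left implicit --- either some link of the chain has $\delta_X\ge\delta_1$, in which case the chain is long outright, or every link is $\rho$-small and the telescoped Euclidean length up to the first exit from the chart gives the lower bound. Note that the same exit argument is needed (and works verbatim) to get $k_X(z_0,w)>0$ for $w$ \emph{far} from $z_0$, which you asserted only for $w$ near $z_0$.

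One correction of attribution: in the converse, Barth's theorem is invoked backwards and is in fact unnecessary --- once $k_X$ separates points, $X$ is hyperbolic \emph{by definition}, and it is then Barth's theorem that yields the topological statement, not the other way around. (Barth is genuinely needed only in the forward direction, as you used it.) Finally, the last assertion can be streamlined: instead of precomposing with analytic disks, apply Theorem~\ref{th:2.SPlemma} directly to get $k_X\bigl(f(y),f(y_0)\bigr)\le k_P(y,y_0)$ on a polydisk chart $P$ and feed this into the uniform estimate already established in the forward direction; your disk-precomposition version is correct but reproves a special case of the distance-decreasing property.
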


If $X$ is hyperbolic and not compact, the closure of $\Hol(\Delta,X)$ in $C^0(\Delta,X^*)$ might contain continuous maps whose image might both contain $\infty$ and intersect~$X$, exiting thus from the realm of holomorphic maps. Taut manifolds, introduced by Wu \cite{Wutaut}, are a class of (not necessarily compact) hyperbolic manifolds where this problem does not appear,
and (as we shall see) this will be very useful when studying the dynamics of holomorphic self-maps. 

\begin{definition}
A complex manifold $X$ is \emph{taut} if it is hyperbolic and every map in the closure of $\Hol(\Delta,X)$ in $C^0(\Delta,X^*)$ either is in $\Hol(\Delta,X)$ or is the constant map~$\infty$.
\end{definition}

This definition can be rephrased in another way not requiring the one-point compactification.

\begin{definition}
Let $X$ and $Y$ be topological spaces. A sequence $\{f_\nu\}\subset C^0(Y,X)$ is \emph{compactly divergent} if for every pair of compacts $H\subseteq Y$ and $K\subseteq X$ there exists $\nu_0\in\mathbb{N}$ such that $f_\nu(H)\cap K=\emptyset$ for every $\nu\ge\nu_0$. A family $\mathcal{F}\subseteq C^0(Y,X)$ is \emph{normal} if every sequence in $\mathcal{F}$ admits a subsequence which is either uniformly converging on compact subsets or compactly divergent. 
\end{definition}

By the definition of one-point compactification, a sequence in $C^0(Y,X)$ converges in $C^0(Y,X^*)$ to the constant map~$\infty$ if and only if it is compactly divergent. When $X$ and $Y$ are manifolds (more precisely, when they are Hausdorff, locally compact, connected and second countable topological spaces), a subset in $C^0(Y,X^*)$ is compact if and only if it is sequentially compact; therefore we have obtained the following alternative characterization of taut manifolds:

\begin{corollary}
A connected complex manifold $X$ is taut if and only if the family $\Hol(\Delta,X)$ is normal.
\end{corollary}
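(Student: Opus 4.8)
The plan is to deduce the statement directly from the criterion of~\cite{Ahyp} (hyperbolicity is equivalent to relative compactness of $\Hol(\Delta,X)$ in $C^0(\Delta,X^*)$) together with the two observations recorded just above the statement: that a sequence in $C^0(\Delta,X)$ converges to the constant map $\infty$ in $C^0(\Delta,X^*)$ exactly when it is compactly divergent, and that, because $X$ is a manifold, compactness and sequential compactness coincide in $C^0(\Delta,X^*)$. First I would note that $X^*$, being the one-point compactification of a locally compact, second countable, Hausdorff space, is compact and metrizable, so that $C^0(\Delta,X^*)$ with the compact-open topology is itself metrizable; this legitimises passing freely between compactness and sequential compactness and realizing every point of the closure of $\Hol(\Delta,X)$ as the limit of a sequence drawn from $\Hol(\Delta,X)$. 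I would also invoke the standard permanence fact that a locally uniform limit of holomorphic maps into $X$ is again holomorphic.

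For the implication that tautness yields normality, I would take an arbitrary sequence $\{f_\nu\}\subset\Hol(\Delta,X)$. Since a taut manifold is in particular hyperbolic, the criterion of~\cite{Ahyp} makes $\Hol(\Delta,X)$ relatively compact in $C^0(\Delta,X^*)$, so by sequential compactness some subsequence $\{f_{\nu_k}\}$ converges there to a map $g$ in the closure of $\Hol(\Delta,X)$. Tautness forces either $g\in\Hol(\Delta,X)$ or $g\equiv\infty$. In the first case I would check that convergence in $C^0(\Delta,X^*)$ toward a map valued in $X$ amounts to uniform convergence on compact subsets of $\Delta$ as maps into $X$: on a compact $H\subseteq\Delta$ the image $g(H)$ is a compact subset of $X$, hence stays away from $\infty$, so $\{f_{\nu_k}\}$ converges uniformly on compact subsets; in the second case the observation on compact divergence shows $\{f_{\nu_k}\}$ is compactly divergent. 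Either alternative is precisely what normality requires.

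For the converse I would assume $\Hol(\Delta,X)$ normal and first recover hyperbolicity. Given any sequence in $\Hol(\Delta,X)$, normality produces a subsequence that either converges uniformly on compact subsets, hence converges in $C^0(\Delta,X^*)$ to a map into $X$, or is compactly divergent, hence converges in $C^0(\Delta,X^*)$ to $\infty$. In both cases the subsequence converges in $C^0(\Delta,X^*)$, so $\Hol(\Delta,X)$ is relatively (sequentially, hence genuinely) compact there, and the criterion of~\cite{Ahyp} yields that $X$ is hyperbolic. To establish the remaining tautness condition I would take $g$ in the closure of $\Hol(\Delta,X)$, write it as the limit of a sequence $\{f_\nu\}\subset\Hol(\Delta,X)$, and apply normality to a subsequence: if it converges uniformly on compact subsets, its limit $h$ is holomorphic, and uniqueness of limits in the Hausdorff space $C^0(\Delta,X^*)$ forces $g=h\in\Hol(\Delta,X)$; if it is compactly divergent, then $g\equiv\infty$. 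Thus every element of the closure is holomorphic or the constant $\infty$, which together with hyperbolicity is exactly tautness.

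The assembling above rests on two points that carry the real content and where I would be most careful: the equivalence between convergence in $C^0(\Delta,X^*)$ to a map valued in $X$ and locally uniform convergence in $X$ (together with the permanence of holomorphy under such limits), and the metrizability of $C^0(\Delta,X^*)$ that sanctions interchanging compactness with sequential compactness and representing closure points as sequential limits. Neither is deep, but both must be in place for the bridge between the topological criterion for hyperbolicity and the sequence-based notion of normality to function; this interface is the only genuine obstacle, the rest being a direct translation through the remark preceding the statement.
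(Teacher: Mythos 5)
Your proposal is correct and follows essentially the same route as the paper, which derives the corollary immediately from the theorem of \cite{Ahyp} together with the two observations stated just before it (compact divergence is exactly convergence to the constant map $\infty$ in $C^0(\Delta,X^*)$, and compactness equals sequential compactness there); you have merely written out the routine verifications---metrizability of $C^0(\Delta,X^*)$, the match between locally uniform convergence in $X$ and convergence in $C^0(\Delta,X^*)$ to an $X$-valued limit, and closedness of $\Hol(\Delta,X)$ under such limits---that the paper leaves implicit.
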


Actually, it is not difficult to prove (see, e.g., \cite[Theorem~2.1.2]{Abatebook}) that the role of $\Delta$ in the definition of taut manifolds is not essential:

\begin{proposition}
Let $X$ be a taut manifold. Then $\Hol(Y,X)$ is a normal family for every complex manifold~$Y$. 
\end{proposition}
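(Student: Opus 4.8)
The plan is to reduce the statement for a general manifold $Y$ to the already-available defining property of tautness, which only concerns $\Hol(\Delta,X)$. Given a sequence $\{f_\nu\}\subset\Hol(Y,X)$, the goal is to extract a subsequence that either converges uniformly on compacts or is compactly divergent. First I would fix a countable dense set $\{p_1,p_2,\ldots\}\subset Y$ (possible since $Y$ is second countable) together with, for each $i$, a holomorphic disk $\sigma_i\in\Hol(\Delta,Y)$ with $\sigma_i(0)=p_1$ and $\sigma_i$ passing through $p_i$; since $Y$ is connected one can in fact chain together finitely many disks to reach each $p_i$, so I would first secure a single base point and use connectedness to link every point of a countable dense set to it by finite chains of analytic disks.

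Next I would examine the behavior at the base point $p_1$. For each $\nu$ consider the composition $f_\nu\circ\sigma$ where $\sigma$ is one of these fixed disks; these lie in $\Hol(\Delta,X)$, so tautness applies: passing to a subsequence, $\{f_\nu\circ\sigma\}$ either converges uniformly on compacts or diverges compactly. The key dichotomy I want to establish is global: I would show that if along some subsequence the images $f_\nu(p_1)$ stay in a compact subset of $X$, then in fact the whole sequence is (sub)convergent, whereas if $f_\nu(p_1)\to\infty$ in $X^*$ then the sequence is compactly divergent. The mechanism transmitting information from the base point to all of $Y$ is the nonexpansiveness of holomorphic maps for the Kobayashi distance (Theorem~\ref{th:2.SPlemma}): one has $k_X\bigl(f_\nu(p),f_\nu(q)\bigr)\le k_Y(p,q)$, so the images of any fixed compact $H\subset Y$ remain within a fixed Kobayashi ball around $f_\nu(p_1)$. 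Since $X$ is hyperbolic, closed Kobayashi balls need not be compact in general, but one does control the $k_X$-diameter of $f_\nu(H)$ uniformly in $\nu$, and this is what lets the two cases genuinely separate.

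In the convergent case I would run a diagonal argument: apply tautness of $\Hol(\Delta,X)$ to the compositions $f_\nu\circ\sigma_i$ for each $i$ in turn, extracting nested subsequences so that $\{f_\nu(p_i)\}$ converges in $X$ for every $i$ simultaneously. Using the uniform Kobayashi-distance bound from the previous paragraph (equicontinuity of the family $\{f_\nu\}$ with respect to $k_Y$ and $k_X$, which by Proposition~\ref{th:2.hyp} is equicontinuity for the manifold topologies), convergence on the dense set $\{p_i\}$ upgrades to uniform convergence on compact subsets of $Y$, and the limit is holomorphic because a locally uniform limit of holomorphic maps into a manifold is holomorphic. In the divergent case, I would use the same equicontinuity to argue that if $f_\nu(p_1)$ leaves every compact set then so does $f_\nu(H)$ for any compact $H$: the $k_X$-distance from $f_\nu(H)$ to a fixed point is at least $k_X\bigl(f_\nu(p_1),\text{(fixed point)}\bigr)$ minus the fixed bound $\sup_{p\in H}k_Y(p_1,p)$, forcing compact divergence.

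The main obstacle I expect is the transition from pointwise behavior at the single base point to the global dichotomy, i.e.\ proving that no ``mixed'' behavior can occur in the limit. The definition of tautness excludes mixed limits only for disks; for a general $Y$ one must verify that a limit map cannot send part of $Y$ into $X$ and part to $\infty$. Here the essential input is precisely the Kobayashi equicontinuity together with the tautness hypothesis applied to the connecting disks: because each $p_i$ is joined to $p_1$ by a finite chain of disks and tautness forces each composed sequence $f_\nu\circ\sigma$ to behave coherently (all convergent or all divergent, the latter being ruled out as soon as one point converges, since the chain has finite Kobayashi length), the behavior at $p_1$ propagates consistently across all of $Y$. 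Making this propagation rigorous — verifying that a convergent value at one endpoint of a disk forbids divergence at the other, using that the disk has finite $k_Y$-diameter — is the delicate step, and it is exactly where the defining property of taut manifolds does the real work.
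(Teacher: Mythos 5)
Your architecture (dichotomy at a base point, propagated through chains of analytic disks via the taut dichotomy on $\Hol(\Delta,X)$) is the right skeleton, but two of your concrete steps silently assume that $k_X$ is proper or complete, and taut manifolds need not be complete hyperbolic (Rosay's example \cite{Rosay}, cited in the paper). First, in the convergent case: equicontinuity plus convergence of $\{f_\nu(p_i)\}$ on the dense set only shows that $\{f_\nu(y)\}$ is $k_X$-Cauchy for arbitrary $y$; without completeness of $k_X$ (equivalently, without compactness of closed Kobayashi balls) a Cauchy sequence need not converge in $X$, so ``convergence on the dense set upgrades to uniform convergence on compact subsets'' is exactly the unjustified step. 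Second, in the divergent case your estimate runs the implication backwards: compact divergence of $f_\nu(p_1)$ in $X^*$ does \emph{not} imply $k_X\bigl(f_\nu(p_1),x^\ast\bigr)\to+\infty$ when closed balls are not compact --- a sequence can leave every compact subset while remaining at bounded Kobayashi distance from a fixed point. So ``distance from a fixed point minus the fixed bound'' does not force $f_\nu(H)$ to leave compacts, and your global dichotomy does not separate as claimed.

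Both steps are repairable, and the repair is where tautness (not just hyperbolicity) must enter a second time, precisely as your last paragraph gestures. For the convergent case: through any $y\in Y$ there is an analytic disk passing through $y$ and through some dense point $p_i$ (a complex line in a chart); tautness applied to $f_\nu\circ\sigma$, which cannot be compactly divergent because its values at $\sigma^{-1}(p_i)$ converge, yields a sub-subsequence converging at $y$; since $\{f_\nu(y)\}$ is already $k_X$-Cauchy, a convergent subsequence forces convergence of the whole diagonal sequence at $y$, with limit in $X$. For the divergent case, argue by contradiction: if $f_\nu(y_\nu)\in K$ for some $y_\nu\to y_0$ in a compact $H$, the $1$-Lipschitz property gives subsequential convergence of $f_\nu(y_0)$ in $X$, and chaining disks from $y_0$ back to $p_1$ (ruling out compact divergence on each disk as soon as one endpoint converges) produces a subsequence along which $f_\nu(p_1)$ converges in $X$, contradicting $f_\nu(p_1)\to\infty$. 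Alternatively, the cleaner route is the one the paper has already set up: by the theorem quoted from \cite{Ahyp}, hyperbolicity of $X$ makes $\Hol(Y,X)$ relatively compact in $C^0(Y,X^*)$; extract a limit $f\in C^0(Y,X^*)$, note that for every disk $\sigma$ in $Y$ the map $f\circ\sigma$ is, by tautness, either holomorphic into $X$ or constantly $\infty$, deduce that $f^{-1}(\infty)$ and $f^{-1}(X)$ are both open, and conclude by connectedness of $Y$. This avoids any metric completeness issue because all the compactness lives in $X^*$.
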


It is easy to find examples of hyperbolic manifolds which are not taut:

\begin{example}
Let $D=\Delta^2\setminus\{(0,0)\}$. Since $D$ is a bounded domain in~$\mathbb{C}^2$, it is hyperbolic. For $\nu\ge 1$ let $\phe_\nu\in\Hol(\Delta,D)$ given by $\phe_\nu(\zeta)=(\zeta,1/\nu)$.
Clearly $\{\phe_\nu\}$ converges as $\nu\to+\infty$ to the map $\phe(\zeta)=(\zeta,0)$, whose 
image is not contained either in~$D$ or in~$\partial D$. In particular, the sequence $\{\phe_\nu\}$ does not admit a subsequence which is compactly divergent or converging to a map with image in~$D$---and thus $D$ is not taut.
\end{example}

On the other hand, complete hyperbolic manifolds are taut. This is a consequence of the famous
Ascoli-Arzel\`a theorem (see, e.g., \cite[p. 233]{Kelley}):

\begin{theorem}[Ascoli-Arzel\`a theorem]
\label{th:2.AA}
Let $X$ be a metric space, and $Y$ a locally compact metric space. Then a family $\mathcal{F}\subseteq C^0(Y,X)$ is relatively compact in $C^0(Y,X)$ if and only if the following two conditions are satisfied:
\begin{enumerate}
\item[\rm(i)] $\mathcal{F}$ is equicontinuous;
\item[\rm(ii)] the set $\mathcal{F}(y)=\{f(y)\mid f\in\mathcal{F}\}$ is relatively compact in~$X$ for every $y\in Y$.
\end{enumerate}
\end{theorem}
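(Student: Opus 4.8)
The plan is to prove the two implications separately, treating the forward direction (relative compactness forces (i) and (ii)) as routine and concentrating the real work on the converse. For the forward direction I would first note that for each fixed $y\in Y$ the evaluation map $\mathcal{E}_y\colon C^0(Y,X)\to X$, $f\mapsto f(y)$, is continuous for the compact-open topology; hence $\mathcal{E}_y\bigl(\overline{\mathcal{F}}\bigr)=\overline{\mathcal{F}(y)}$ is compact, giving (ii). For equicontinuity (i), I would use that a relatively compact set in the metric space $C^0(Y,X)$ is totally bounded: fixing $y_0$, a compact neighbourhood $K$ of $y_0$ and $\eps>0$, I cover $\overline{\mathcal{F}}$ by finitely many balls of radius $\eps/3$ (in the sup-metric over $K$) centred at $g_1,\dots,g_n$, use continuity of each $g_i$ at $y_0$ to find a common $\delta$, and conclude by the triangle inequality.

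The converse is the substance, and here the plan is to bypass any sequential or diagonal extraction and argue purely topologically, so that the statement holds for an arbitrary locally compact metric space $Y$. The key step---and the one I expect to be the main obstacle---is the lemma that \emph{on an equicontinuous family the topology of pointwise convergence coincides with the compact-open topology}. I would prove this by the standard $\eps/3$ argument: on a compact $K\subseteq Y$, equicontinuity produces, for each $\eps>0$, a finite cover of $K$ by balls on which every member of $\mathcal{F}$ varies by at most $\eps/3$; controlling the finitely many centres pointwise then controls the whole family uniformly on $K$. Getting the quantifiers right---a single modulus of equicontinuity working simultaneously for all $f\in\mathcal{F}$, and the passage from the finite net to all of $K$---is the delicate part.

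With this lemma in hand the rest is assembly. I would first check that equicontinuity is inherited by the pointwise closure $\widetilde{\mathcal{F}}$ of $\mathcal{F}$ in the product $X^Y$, and that equicontinuity forces every pointwise limit of members of $\mathcal{F}$ to be continuous, so that $\widetilde{\mathcal{F}}\subseteq C^0(Y,X)$. By hypothesis (ii) each $\overline{\mathcal{F}(y)}$ is compact, so $\widetilde{\mathcal{F}}$ is a closed subset of the compact product $\prod_{y\in Y}\overline{\mathcal{F}(y)}$ (Tychonoff), hence compact in the pointwise topology. The coincidence lemma then upgrades this to compactness of $\widetilde{\mathcal{F}}$ in the compact-open topology; since $\widetilde{\mathcal{F}}$ contains the compact-open closure $\overline{\mathcal{F}}$, the family $\mathcal{F}$ is relatively compact in $C^0(Y,X)$, as required.

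Finally, if one prefers a hands-on argument valid when $Y$ is second countable (as in all the manifold applications of this paper), I would instead fix a countable dense $D\subseteq Y$, use (ii) together with a Cantor diagonal extraction to obtain a subsequence converging pointwise on $D$, and then invoke equicontinuity exactly as above to promote this to uniform convergence on compacta; condition (ii) again supplies the completeness needed for the pointwise limits to exist in $X$. In that route the diagonal extraction replaces Tychonoff, but the equicontinuity estimate remains the crux.
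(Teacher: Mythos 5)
Your proposal is correct, and in fact the paper gives no proof of this statement at all: it quotes the Ascoli--Arzel\`a theorem as a classical result from Kelley's book, and the argument you outline---pointwise closure inside the Tychonoff-compact product $\prod_{y\in Y}\overline{\mathcal{F}(y)}$, equicontinuity passing to that closure and forcing continuity of the limits, and the coincidence of the pointwise and compact-open topologies on equicontinuous families---is precisely the proof given in that cited reference, so you have reproduced the intended argument rather than diverged from it. The only cosmetic slip is the phrase ``the metric space $C^0(Y,X)$'' in the forward direction (for a general locally compact $Y$ the compact-open topology need not be metrizable), but your actual covering step only uses total boundedness of the restriction of the compact set $\overline{\mathcal{F}}$ in the sup-metric of $C^0(K,X)$ for a compact neighbourhood $K$ of $y_0$, which is exactly what continuity of the restriction map provides, so no genuine gap results.
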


Then: 

\begin{proposition}
\label{th:2.chyptaut}
Every complete hyperbolic manifold is taut.
\end{proposition}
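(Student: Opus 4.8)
The plan is to show that a complete hyperbolic manifold $X$ satisfies the hypotheses of the Ascoli--Arzel\`a theorem (Theorem~\ref{th:2.AA}) with $Y=\Delta$ and target $X$, so that $\Hol(\Delta,X)$ is relatively compact in $C^0(\Delta,X)$; tautness then follows almost for free. The key observation is that the Kobayashi distance converts the Schwarz--Pick lemma into a uniform Lipschitz bound. Indeed, by Proposition~\ref{th:2.examples}.(i) the Poincar\'e distance is the Kobayashi distance of $\Delta$, so Theorem~\ref{th:2.SPlemma} applied to any $f\in\Hol(\Delta,X)$ gives
\[
k_X\bigl(f(\zeta_1),f(\zeta_2)\bigr)\le k_\Delta(\zeta_1,\zeta_2)
\]
for all $\zeta_1,\zeta_2\in\Delta$. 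Since $X$ is hyperbolic, $k_X$ induces the manifold topology (Proposition~\ref{th:2.hyp}), so this inequality says that \emph{every} $f\in\Hol(\Delta,X)$ is $1$-Lipschitz from $(\Delta,k_\Delta)$ to $(X,k_X)$ with the \emph{same} constant. Hence $\Hol(\Delta,X)$ is equicontinuous, verifying hypothesis (i).

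For hypothesis (ii), fix $\zeta_0\in\Delta$ and consider $\mathcal{F}(\zeta_0)=\{f(\zeta_0)\mid f\in\Hol(\Delta,X)\}$. I would first reduce to controlling the distance from a fixed basepoint. Applying the Lipschitz bound above with $\zeta_2=\zeta_0$ gives
\[
k_X\bigl(f(\zeta),f(\zeta_0)\bigr)\le k_\Delta(\zeta,\zeta_0)\le R
\]
whenever $\zeta$ ranges over a $k_\Delta$-ball of radius $R$ about $\zeta_0$. Thus the image $f(B_\Delta(\zeta_0,R))$ is contained in the closed Kobayashi ball $\overline{B_X(f(\zeta_0),R)}$, which by Proposition~\ref{th:2.chyp} is compact precisely because $X$ is complete hyperbolic. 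To turn this into relative compactness of the \emph{evaluation set} $\mathcal{F}(\zeta_0)$, I would argue by contradiction using a diagonal/normality extraction: if $\{f_\nu\}\subset\Hol(\Delta,X)$ is any sequence, then either $\{f_\nu(\zeta_0)\}$ stays in a fixed compact subset of $X$ along a subsequence, or it escapes every compact set. In the first case equicontinuity plus completeness of $k_X$ lets me run Ascoli--Arzel\`a on the subsequence to extract a uniformly-on-compacta convergent subsequence, whose limit is continuous and (being a locally uniform limit of holomorphic maps) is again holomorphic, hence lies in $\Hol(\Delta,X)$.

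The remaining case is the one I expect to be the main obstacle: when $\{f_\nu(\zeta_0)\}$ has no convergent subsequence in $X$. Here I want to conclude that $\{f_\nu\}$ is compactly divergent, which is exactly what tautness allows as the alternative. The mechanism is again the uniform Lipschitz estimate: fix any compacts $H\subseteq\Delta$ and $K\subseteq X$, let $R=\sup_{\zeta\in H}k_\Delta(\zeta,\zeta_0)<\infty$ and choose $r>0$ with $K\subseteq B_X(z_0,r)$ for some basepoint $z_0$. If $f_\nu(H)\cap K\ne\emptyset$, then $k_X(f_\nu(\zeta_0),z_0)\le R+r$ by the triangle inequality and the Lipschitz bound, so $f_\nu(\zeta_0)$ lies in the closed ball $\overline{B_X(z_0,R+r)}$, which is compact by Proposition~\ref{th:2.chyp}. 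Since by assumption $\{f_\nu(\zeta_0)\}$ eventually leaves every compact set, this can happen for only finitely many $\nu$, so $f_\nu(H)\cap K=\emptyset$ for all large $\nu$; that is, $\{f_\nu\}$ is compactly divergent. Combining the two cases, every sequence in $\Hol(\Delta,X)$ admits a subsequence that either converges locally uniformly inside $\Hol(\Delta,X)$ or is compactly divergent, which is precisely normality of $\Hol(\Delta,X)$; by the corollary characterizing tautness, $X$ is taut. The one technical point needing care is confirming that the locally uniform limit in the convergent case is holomorphic rather than merely continuous---this is where working intrinsically with $k_X$ (equivalently, in local holomorphic charts where the standard holomorphic convergence theorems apply) closes the argument.
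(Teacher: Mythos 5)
Your proof is correct and follows essentially the same route as the paper's: equicontinuity of $\Hol(\Delta,X)$ from the $1$-Lipschitz property with respect to the Kobayashi distances, pointwise relative compactness from the compactness of closed Kobayashi balls (Proposition~\ref{th:2.chyp}), Ascoli--Arzel\`a, and Weierstrass for holomorphy of the limit. Your explicit dichotomy on whether $\{f_\nu(\zeta_0)\}$ escapes every compact set is only an organizational variant of the paper's opening assumption that the sequence is not compactly divergent, and your earlier observation that $f\bigl(B_\Delta(\zeta_0,R)\bigr)\subseteq\overline{B_X(f(\zeta_0),R)}$ supplies the pointwise relative compactness at every point needed in the convergent branch.
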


\begin{proof}
Let $X$ be a complete hyperbolic manifold, and $\{\phe_\nu\}\subset\Hol(\Delta,X)$ a sequence which is not compactly divergent; we must prove that it admits a subsequence converging in $\Hol(\Delta,X)$. 

Up to passing to a subsequence, we can find a pair of compacts $H\subset\Delta$ and $K\subseteq X$ such that $\phe_\nu(H)\cap K\ne\emptyset$ for all $\nu\in\mathbb{N}$. Fix $\zeta_0\in H$ 
and $z_0\in K$, and set 
$r=\max\{k_X(z,z_0)\mid z\in K\}$. Then for every $\zeta\in\Delta$ and $\nu\in\mathbb{N}$ we have
\[
k_X\bigl(\phe_\nu(\zeta),z_0\bigr)\le k_X\bigl(\phe_\nu(\zeta),\phe_\nu(\zeta_0)\bigr)+
k_X\bigl(\phe_\nu(\zeta_0),z_0\bigr)\le k_\Delta(\zeta,\zeta_0)+r\;.
\]
So $\{\phe_\nu(\zeta)\}$ is contained in the closed Kobayashi ball of center~$z_0$ and radius $k_\Delta(\zeta,\zeta_0)+r$, which is compact since $X$ is complete hyperbolic (Proposition~\ref{th:2.chyp}); as a consequence, $\{\phe_\nu(\zeta)\}$  is relatively compact in~$X$. Furthernore,
since $X$ is hyperbolic, the whole family $\Hol(\Delta,X)$ is equicontinuous (it is 1-Lipschitz with respect to the Kobayashi distances); therefore, by the Ascoli-Arzel\`a theorem, the sequence $\{\phe_\nu\}$ is relatively compact in~$C^0(\Delta,X)$. In particular, it admits a subsequence converging in $C^0(\Delta,X)$; but since, by Weierstrass theorem, $\Hol(\Delta,X)$ is closed in $C^0(\Delta,X)$, the limit belongs to $\Hol(\Delta,X)$, and we are done.
\qed
\end{proof}

Thus complete hyperbolic manifolds provide examples of taut manifolds. However, 
there are taut manifolds which are not complete hyperbolic; an example has been given by Rosay (see \cite{Rosay}). Finally, we have the following equivalent of Proposition~\ref{th:2.exchyp}
(see, e.g., \cite[Lemma~2.1.15]{Abatebook}):

\begin{proposition}
\label{th:2.extaut}
\begin{enumerate}
\item[\rm(i)] A closed submanifold of a taut manifold is taut.
\item[\rm(ii)] The product of two complex manifolds is taut if and only if both factors are taut.
\end{enumerate}
\end{proposition}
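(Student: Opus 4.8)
The plan is to run everything through the normal-family characterization of tautness recalled just above (a connected complex manifold $X$ is taut if and only if $\Hol(\Delta,X)$ is a normal family), so that in each case it suffices to extract, from an arbitrary sequence in $\Hol(\Delta,\cdot)$, a subsequence that either converges uniformly on compacta or is compactly divergent.

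For (i), let $X$ be a closed submanifold of the taut manifold $Y$, and let $\iota\colon X\hookrightarrow Y$ be the (holomorphic) inclusion. Given $\{\phe_\nu\}\subset\Hol(\Delta,X)$, the sequence $\{\iota\circ\phe_\nu\}$ lies in $\Hol(\Delta,Y)$, which is normal; so, after passing to a subsequence, I may assume it either converges to some $\phe\in\Hol(\Delta,Y)$ or is compactly divergent in $Y$. In the convergent case, for each $\zeta\in\Delta$ the points $\phe_\nu(\zeta)\in X$ converge to $\phe(\zeta)$ in $Y$; since $X$ is \emph{closed} in $Y$, we get $\phe(\zeta)\in X$, hence $\phe(\Delta)\subseteq X$ and $\phe\in\Hol(\Delta,X)$. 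As $X$ carries the subspace topology, uniform convergence on compacta in $Y$ of maps valued in $X$ is convergence in $C^0(\Delta,X)$, so $\phe_\nu\to\phe$ in $\Hol(\Delta,X)$. In the compactly divergent case, any compact $K\subseteq X$ is compact in $Y$, so for every compact $H\subseteq\Delta$ the relation $(\iota\circ\phe_\nu)(H)\cap K=\emptyset$ for large $\nu$ reads $\phe_\nu(H)\cap K=\emptyset$; thus $\{\phe_\nu\}$ is compactly divergent in $X$. Either way $\Hol(\Delta,X)$ is normal, and $X$ is taut.

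For the ``only if'' half of (ii), fix $w_0\in X_2$: then $X_1\times\{w_0\}$ is a closed submanifold of $X_1\times X_2$ biholomorphic to $X_1$, so if $X_1\times X_2$ is taut then $X_1$ is taut by (i), and symmetrically $X_2$ is taut. For the ``if'' half, assume $X_1$ and $X_2$ taut, write $\pi_i\colon X_1\times X_2\to X_i$ for the holomorphic projections, and let $\{\phe_\nu\}\subset\Hol(\Delta,X_1\times X_2)$. Applying tautness of $X_1$ to $\{\pi_1\circ\phe_\nu\}$ and then tautness of $X_2$ to the resulting subsequence of $\{\pi_2\circ\phe_\nu\}$, I obtain one subsequence along which each $\{\pi_i\circ\phe_\nu\}$ is convergent or compactly divergent. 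If both converge, to $\phe^1$ and $\phe^2$, then $\phe_\nu\to(\phe^1,\phe^2)\in\Hol(\Delta,X_1\times X_2)$. Otherwise at least one, say $\{\pi_2\circ\phe_\nu\}$, is compactly divergent; then for compacts $H\subseteq\Delta$ and $K\subseteq X_1\times X_2$ one has $K\subseteq\pi_1(K)\times\pi_2(K)$ with $\pi_2(K)$ compact, and $(\pi_2\circ\phe_\nu)(H)\cap\pi_2(K)=\emptyset$ for large $\nu$ forces $\phe_\nu(H)\cap K=\emptyset$, so $\{\phe_\nu\}$ is compactly divergent. Hence $\Hol(\Delta,X_1\times X_2)$ is normal and the product is taut (consistently, it is hyperbolic by Proposition~\ref{th:2.prodhyp}).

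The routine bookkeeping aside, the single point that genuinely uses the hypotheses is the convergent case of (i): the limit $\phe$ is produced only as a map into $Y$, and it is precisely the \emph{closedness} of $X$ in $Y$ that forces $\phe(\Delta)\subseteq X$. Without closedness this step fails---exactly the phenomenon in the example $\Delta^2\setminus\{(0,0)\}$, where a sequence of disks converges to a disk leaving the (non-closed) manifold. Everything else reduces to the stability of convergence and of compact divergence under the holomorphic maps $\iota$ and $\pi_i$, which is straightforward.
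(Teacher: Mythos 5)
Your proof is correct, and it is essentially the standard argument: the paper gives no inline proof, deferring to \cite[Lemma~2.1.15]{Abatebook}, whose proof runs exactly along these lines---normality of $\Hol(\Delta,\cdot)$, closedness forcing the limit disk into the submanifold in (i), and the projection/diagonal-extraction argument (including the mixed convergent/divergent case) in (ii). Nothing is missing: you correctly handle the two delicate points, namely that compact divergence in the ambient manifold restricts to compact divergence in the closed submanifold, and that divergence of a single projection already forces divergence of the product sequence.
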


Just to give an idea of the usefulness of the taut condition in studying holomorphic self-maps we end this subsection by quoting Wu's generalization of the classical Cartan-Carath\'eodory and Cartan uniqueness theorems (see, e.g., \cite[Theorem~2.1.21 and Corollary 2.1.22]{Abatebook}):

\begin{theorem}[Wu, \cite{Wutaut}]
\label{th:1.CC}
Let $X$ be a taut manifold, and let $f\in\Hol(X,X)$ with a fixed point $z_0\in X$. Then:
\begin{enumerate}
\item[\rm(i)] the spectrum of $\D f_{z_0}$ is contained in~$\overline{\Delta}$;
\item[\rm(ii)] $|\det \D f_{z_0}|\le 1$;
\item[\rm(iii)] $|\det \D f_{z_0}|=1$ if and only if $f\in\Aut(X)$;
\item[\rm(iv)] $\D f_{z_0}=\mathrm{id}$ if and only if $f$ is the identity map;
\item[\rm(v)] $T_{z_0}X$ admits a $\D f_{z_0}$-invariant splitting $T_{z_0}X=L_N\oplus L_U$ such that the spectrum of $\D f_{z_0}|_{L_N}$ is contained in~$\Delta$, the spectrum of $\D f_{z_0}|_{L_U}$ is contained in $\partial\Delta$, and $\D f_{z_0}|_{L_U}$ is diagonalizable.
\end{enumerate}
\end{theorem}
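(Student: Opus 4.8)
The plan is to exploit the iterates $\{f^\nu\}\subset\Hol(X,X)$ together with tautness. Since $f(z_0)=z_0$ every iterate fixes $z_0$, so the sequence $\{f^\nu\}$ is not compactly divergent (take $H=K=\{z_0\}$); as $X$ is taut, $\Hol(X,X)$ is normal, and therefore some subsequence $\{f^{p_k}\}$ converges uniformly on compacta to a map $g\in\Hol(X,X)$ with $g(z_0)=z_0$. Set $A=\D f_{z_0}$; by the chain rule $\D(f^\nu)_{z_0}=A^\nu$, and since uniform convergence on compacta of holomorphic maps forces convergence of all derivatives (Cauchy estimates in a chart centred at $z_0$), we obtain $A^{p_k}\to\D g_{z_0}$. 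Thus the subsequence of powers $\{A^{p_k}\}$ is bounded while $p_k\to+\infty$, and essentially everything will be read off from this single fact.

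Parts (i), (ii) and (v) are then pure linear algebra. If $A$ had an eigenvalue $\lambda$ with $|\lambda|>1$ we would get $\|A^{p_k}\|\ge|\lambda|^{p_k}\to+\infty$, contradicting boundedness; this proves (i). Since $|\det A|^{p_k}=|\det A^{p_k}|$ stays bounded as $p_k\to+\infty$, necessarily $|\det A|\le1$, which is (ii). For (v) I would let $L_N$ and $L_U$ be the sums of the generalized eigenspaces of $A$ relative to the eigenvalues of modulus $<1$ and $=1$ respectively; these are $A$-invariant and give the splitting $T_{z_0}X=L_N\oplus L_U$, with the spectrum of $A|_{L_N}$ in $\Delta$ and that of $A|_{L_U}$ in $\de\Delta$ by construction. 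Diagonalizability of $A|_{L_U}$ is again forced by boundedness: a nontrivial Jordan block for an eigenvalue $\lambda$ with $|\lambda|=1$ would make an off-diagonal entry of $A^{p_k}$ grow like $p_k|\lambda|^{p_k-1}=p_k\to+\infty$.

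For (iv) the holomorphy must be used in an essential way, since the corresponding statement is false for smooth maps. Assuming $\D f_{z_0}=\mathrm{id}$ but $f\ne\mathrm{id}$, I would expand $f$ in a chart centred at $z_0$ as $f(z)=z+P_m(z)+O(\|z\|^{m+1})$, where $P_m\ne0$ is the first nonvanishing homogeneous term, of some degree $m\ge2$. A short induction then gives $f^\nu(z)=z+\nu P_m(z)+O(\|z\|^{m+1})$, so the degree-$m$ Taylor coefficients of $f^{p_k}$ equal $p_k$ times those of $P_m$. Since $\{f^{p_k}\}$ converges, Cauchy estimates force these coefficients to converge; as $p_k\to+\infty$ this is impossible unless $P_m=0$, a contradiction. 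Hence $f=\mathrm{id}$, the converse being trivial.

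The main obstacle is (iii). The converse is immediate: if $f\in\Aut(X)$ then (ii) applied to $f$ and to $f^{-1}$ (both fixing $z_0$) gives $|\det A|\le1$ and $|\det A|^{-1}=|\det\D(f^{-1})_{z_0}|\le1$, whence $|\det A|=1$. For the forward direction, suppose $|\det A|=1$. By (i) and (ii) the moduli of the eigenvalues of $A$ are all $\le1$ with product $1$, so they are all equal to $1$; by (v) then $A$ is diagonalizable with spectrum in $\de\Delta$, hence conjugate to a unitary map, and the closure of $\{A^\nu\}$ is a compact subgroup of $\mathrm{GL}(T_{z_0}X)$. Choosing a convergent subsequence $A^{n_j}\to C$ and thinning so that $m_j:=n_{j+1}-n_j\to+\infty$, we get $A^{m_j}=A^{n_{j+1}}(A^{n_j})^{-1}\to CC^{-1}=\mathrm{id}$. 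Applying tautness to $\{f^{m_j}\}$ (not compactly divergent) yields a subsequential limit $h\in\Hol(X,X)$ fixing $z_0$ with $\D h_{z_0}=\lim A^{m_j}=\mathrm{id}$; by part (iv), already proved, $h=\mathrm{id}_X$, so $f^{m_j}\to\mathrm{id}_X$ along this subsequence. Finally the sequence $\{f^{m_j-1}\}$ again fixes $z_0$, so a further subsequence converges to some $\tilde g\in\Hol(X,X)$; letting $j\to+\infty$ in $f^{m_j}=f\circ f^{m_j-1}=f^{m_j-1}\circ f$ and using that composition is continuous for the compact-open topology gives $f\circ\tilde g=\tilde g\circ f=\mathrm{id}_X$, so $\tilde g$ is a holomorphic inverse of $f$ and $f\in\Aut(X)$. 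The two delicate points I expect here are the passage to the compact group of powers that produces $A^{m_j}\to\mathrm{id}$, and the justification that these limits may be taken inside compositions.
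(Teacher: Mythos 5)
Your proof is correct, and it follows essentially the same route as the paper's source: the paper states Theorem~\ref{th:1.CC} without proof, citing \cite[Theorem~2.1.21]{Abatebook}, and the argument there is precisely yours --- normality of $\{f^k\}$ (the orbit of $z_0$ being a point, no subsequence is compactly divergent) gives power-boundedness of $\D f_{z_0}$, from which (i), (ii) and (v) are linear algebra, (iv) is Cartan's jet/Cauchy-estimate argument, and (iii) obtains the inverse of $f$ as a limit of the maps $f^{m_j-1}$ after forcing $\D (f^{m_j})_{z_0}\to\mathrm{id}$ and invoking (iv). The two points you flagged as delicate are both sound as written: inversion is continuous at the invertible limit $C$ (so $A^{n_{j+1}-n_j}\to\mathrm{id}$ after thinning the gaps), and composition is indeed continuous for the compact-open topology on manifolds, so passing to the limit in $f^{m_j}=f\circ f^{m_j-1}=f^{m_j-1}\circ f$ is legitimate.
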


\begin{corollary}[Wu, \cite{Wutaut}]
\label{th:1.Cu}
Let $X$ be a taut manifold, and $z_0\in X$. Then if $f$, $g\in\Aut (X)$ are such that $f(z_0)=g(z_0)$ and $\D f_{z_0}=\D g_{z_0}$ then $f\equiv g$.
\end{corollary}

\begin{proof}
Apply Theorem~\ref{th:1.CC}.(iv) to $g^{-1}\circ f$.
\qed
\end{proof}

\subsection{Convex domains}
\label{subsect:1.4}

In the next two sections we shall be particularly interested in two classes of bounded domains in $\mathbb{C}^n$: convex domains and strongly pseudoconvex domains. Consequently, in this and the next subsection we shall collect some of the main properties of the Kobayashi distance respectively in convex and strongly pseudoconvex domains.

We start with convex domains recalling a few definitions. 

\begin{definition}
\label{def:1.segment}
Given $x$,~$y\in\mathbb{C}^n$ let
\[
[x,y]=\{sx+(1-s)y\in\mathbb{C}^n\mid s\in[0,1]\}\ \hbox{and}\ 
(x,y)=\{sx+(1-s)y\in\mathbb{C}^n\mid s\in(0,1)\}
\]
denote the \emph{closed,} respectively \emph{open, segment} connecting $x$ and $y$. 
A set $D\subseteq\mathbb{C}^n$ is \emph{convex} if $[x,y]\subseteq D$ for all $x$,~$y\in D$;
and \emph{strictly convex} if $(x,y)\subseteq D$ for all $x$,~$y\in\overline{D}$. A convex domain not strictly convex will sometimes be called \emph{weakly convex.}
\end{definition}

An easy but useful observation (whose proof is left to the reader) is:

\begin{lemma} 
\label{th:1.couno}
Let $D\subset\C^n$ be a convex domain. Then:
\begin{description}[(ii)]
\item[\rm (i)] $(z,w)\subset D$ for all $z\in D$ and $w\in\de D$;
\item[\rm (ii)] if $x$, $y\in\de D$ then either $(x,y)\subset\de D$ or $(x,y)\subset D$.
\end{description}
\end{lemma}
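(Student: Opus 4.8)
The plan is to first isolate the single geometric fact that underlies both statements: if $D$ is convex, $z\in D$ and $w\in\overline D$, then the open segment $(z,w)$ lies in $D$. Statement (i) is then the special case $w\in\de D\subset\overline D$, and (ii) will follow from it by a short segment-splitting argument.

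To prove this key fact I would fix $s\in(0,1)$ and set $p=sz+(1-s)w$. Since $z$ is an interior point, choose $r>0$ with $B(z,r)\subset D$. The homothety $h(x)=sx+(1-s)w$ centered at $w$ with ratio $s$ sends $z$ to $p$ and the ball $B(z,r)$ onto $B(p,sr)$, so it suffices to show $B(p,sr)\subset D$, which gives $p\in D$. If $w$ itself were in $D$ this would be immediate, because then $h(x)=sx+(1-s)w$ is a convex combination of the two points $x,w\in D$. Since in general $w$ is only in $\overline D$, I would approximate: pick $w_k\in D$ with $w_k\to w$ and consider the perturbed homotheties $h_k(x)=sx+(1-s)w_k$, whose images $h_k(B(z,r))=B\bigl(sz+(1-s)w_k,\,sr\bigr)$ are contained in $D$ by convexity. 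For a fixed $q\in B(p,sr)$ the centers $sz+(1-s)w_k$ converge to $p$, so $q$ lies in $B\bigl(sz+(1-s)w_k,\,sr\bigr)\subset D$ for all large $k$; hence $B(p,sr)\subset D$ and $p\in D$.

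For (ii) I would first note that $\overline D$ is convex, so the whole open segment $(x,y)$ is contained in $\overline D=D\cup\de D$; thus every point of $(x,y)$ lies either in $D$ or in $\de D$. If all of them lie in $\de D$ we are in the first alternative. Otherwise there is $s_0\in(0,1)$ with $p_0=s_0x+(1-s_0)y\in D$, and I would show that every $p=sx+(1-s)y$ with $s\in(0,1)$ is in $D$: when $s>s_0$ the point $p$ lies on the open segment $(p_0,x)$ joining the interior point $p_0$ to the boundary point $x$, and when $s<s_0$ it lies on $(p_0,y)$, so in either case part (i) gives $p\in D$; the case $s=s_0$ is trivial. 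Hence $(x,y)\subset D$, the second alternative.

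The main obstacle is the approximation step in the key fact: one must verify carefully that the image balls $B\bigl(sz+(1-s)w_k,\,sr\bigr)$ actually capture a fixed $q\in B(p,sr)$ for all large $k$, which is precisely where the openness of $D$ at $z$ (the fixed radius $r$, uniform along the whole approximation) is essential. Everything else is routine bookkeeping with convex combinations.
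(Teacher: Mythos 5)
Your proof is correct. Note that the paper does not actually give a proof of this lemma---it is explicitly ``left to the reader''---so there is no authorial argument to diverge from; what you have written is the standard one. Your key fact is the classical accessibility lemma for convex sets ($z\in D$, $w\in\overline D$ implies $(z,w)\subset D$), and your approximation step is sound: for fixed $q\in B(p,sr)$ the strict inequality $\|q-p\|<sr$ absorbs the error $\|p-c_k\|\to 0$ in the centers $c_k=sz+(1-s)w_k$, so $q\in B(c_k,sr)\subset D$ for $k$ large; the segment-splitting in (ii), writing any point of $(x,y)\setminus\{p_0\}$ as a point of $(p_0,x)$ or $(p_0,y)$ and invoking (i), correctly yields the dichotomy, with the convexity of $\overline D$ guaranteeing $(x,y)\subset D\cup\de D$ in the first place.
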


This suggests the following

\begin{definition}
\label{def:1.2}
Let $D\subset\C^n$ be a convex domain. Given $x\in\de D$, we put
\[
\hbox{\rm ch}(x)=\{y\in\de D\mid [x,y]\subset\de D\}\;;
\]
we shall say that $x$ is a \emph{strictly convex point} if $\hbox{\rm ch}(x)=\{x\}$. More generally,
given $F\subseteq\de D$ we put
\[
\hbox{\rm ch}(F)=\bigcup_{x\in F}\hbox{\rm ch}(x)\;.
\]
\end{definition}

A similar construction having a more holomorphic character is the following:

\begin{definition}
\label{def:1.3}
Let $D\subset\C^n$ be a convex domain. A \emph{complex supporting functional} at~$x\in\de D$ is a
$\C$-linear map $L\colon\C^n\to\C$ such that $\mathrm{Re}\,L(z)<\mathrm{Re}\,L(x)$ for all $z\in D$. A \emph{complex supporting hyperplane} at $x\in\de D$ is 
an affine complex hyperplane $H\subset\C^n$ of the form
$H=x+\ker L$, where $L$ is a complex supporting functional at~$x$ (the existence of complex supporting functionals and hyperplanes is guaranteed by the Hahn-Banach theorem).
Given $x\in\de D$, we shall denote by
$\hbox{\rm Ch}(x)$ the intersection of~$\overline{D}$ with of all complex supporting hyperplanes at~$x$. Clearly, $\hbox{\rm Ch}(x)$ is a closed convex set containing~$x$; in particular, $\hbox{\rm Ch}(x)\subseteq\hbox{\rm ch}(x)$. If $\hbox{\rm Ch}(x)=\{x\}$ we say that $x$ is a \emph{strictly $\C$-linearly convex point;}
and we say that $D$ is \emph{strictly $\C$-linearly convex} if all points of~$\de D$ are
strictly $\C$-linearly convex. Finally, if $F\subset\de D$ we set
\[
\hbox{\rm Ch}(F)=\bigcup_{x\in F}\hbox{\rm Ch}(x)\;;
\]
clearly, $\hbox{\rm Ch}(F)\subseteq\hbox{\rm ch}(F)$.
\end{definition}

\begin{definition}
Let $D\subset\C^n$ be a convex domain, $x\in\de D$ and $L\colon\C^n\to\C$ a complex supporting functional at~$x$. The \emph{weak peak function} associated to~$L$ is the function $\psi\in\Hol(D,\Delta)$ given by
\[
\psi(z)=\frac{1}{1-\bigl(L(z)-L(x)\bigr)}\;.
\]
Then $\psi$ extends continuously to~$\overline{D}$ with $\psi(\overline{D})\subseteq\overline{\Delta}$, $\psi(x)=1$, and $|\psi(z)|<1$ for all $z\in D$; moreover $y\in\de D$ is 
such that $|\psi(y)|=1$ if and only if $\psi(y)=\psi(x)=1$, and hence if and only if $L(y)=L(x)$.
\end{definition}

\begin{remark}
\label{rem:1.suppf}
If $x\in\de D$ is a strictly convex point of a convex domain $D\subset\C^n$ then it is possible to find a complex supporting functional $L$ at~$x$ so that $\mathrm{Re}\,L(z)<\mathrm{Re}\,L(x)$ for all $z\in\overline{D}\setminus\{x\}$. In particular, the associated weak peak function $\psi\colon\C^n\to\C$ is a true peak function (see Definition~\ref{def:1.peak} below) in the sense that $|\psi(z)|<1$ for all $z\in\overline{D}\setminus\{x\}$.
\end{remark}

We shall now present three propositions showing how the Kobayashi distance is particularly well-behaved in convex domains. The first result, due to Lempert, shows that in convex domains the definition of Kobayashi distance can be simplified:

\begin{proposition}[Lempert, \cite{Lemp}]
\label{th:1.onedisk} 
Let $D\subset\C^n$ be a convex domain. Then $\delta_D=k_D$.
\end{proposition}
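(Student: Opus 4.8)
The plan is to exploit that $k_D$ is, by construction, the largest pseudodistance dominated by $\delta_D$, so that $k_D\le\delta_D$ holds automatically (take one-term chains in the definition of $k_D$). Everything therefore reduces to producing, for each pair $z$, $w\in D$, a single analytic disk joining them that cannot be improved upon --- more precisely, a map $\varphi\in\Hol(\Delta,D)$ with $\varphi(0)=z$ and $\varphi(\zeta_0)=w$ that admits a holomorphic \emph{left inverse} $r\in\Hol(D,\Delta)$, i.e.\ $r\circ\varphi=\mathrm{id}_\Delta$. Granting such a pair $(\varphi,r)$, the equality follows by a short squeezing argument: since $r(z)=0$ and $r(w)=\zeta_0$, Theorem~\ref{th:2.SPlemma} applied to $r$ gives $k_\Delta(0,\zeta_0)=k_\Delta(r(z),r(w))\le k_D(z,w)$, while $\varphi$ is a competitor in the definition of $\delta_D$ (Remark~\ref{rem:2.Lempert}), so $\delta_D(z,w)\le k_\Delta(0,\zeta_0)$. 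Combined with $k_D\le\delta_D$ these read $k_\Delta(0,\zeta_0)\le k_D(z,w)\le\delta_D(z,w)\le k_\Delta(0,\zeta_0)$, forcing $\delta_D(z,w)=k_D(z,w)$.

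Thus the whole content sits in the construction of the pair $(\varphi,r)$, and this is where convexity must enter in an essential way: the Lempert function does \emph{not} satisfy the triangle inequality on a general domain (cf.\ the example in \cite{Lemp}), so no purely formal manipulation can succeed. I would first secure the disk $\varphi$ as a solution of the extremal problem defining $\delta_D(z,w)$. Taking a minimizing sequence $\varphi_\nu$ with $\varphi_\nu(0)=z$, $\varphi_\nu(\zeta_\nu)=w$ and $k_\Delta(0,\zeta_\nu)\to\delta_D(z,w)$, and using that a bounded convex domain is complete hyperbolic, hence taut (Proposition~\ref{th:2.chyptaut}), the sequence $\{\varphi_\nu\}$ --- which is not compactly divergent, since every disk passes through $z$ --- has a subsequence converging in $\Hol(\Delta,D)$ to an \emph{extremal disk} $\varphi$ realizing the infimum, with $\zeta_\nu\to\zeta_0\in\Delta$ (the $\zeta_\nu$ stay in a Poincar\'e-bounded, hence compact, subset of $\Delta$).

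The hard part, and the real obstacle, is to manufacture the left inverse $r$; this is the substance of Lempert's theorem, and here the machinery of complex supporting functionals and weak peak functions introduced above is exactly what is called for. The idea is that the extremal disk $\varphi$ is regular up to the boundary, and at each boundary point $\varphi(\tau)\in\de D$ (for $\tau\in\de\Delta$) one disposes of a complex supporting functional and its associated $\Delta$-valued weak peak function; assembling these boundary data amounts to a Riemann--Hilbert (linear conjugation) problem along $\de\Delta$, whose solution yields the holomorphic retraction $r\colon D\to\Delta$ with $r\circ\varphi=\mathrm{id}_\Delta$. Proving boundary regularity of $\varphi$ and solving this boundary-value problem is the genuinely deep analytic step; once it is in place, the squeezing argument of the first paragraph closes the proof (and incidentally shows that $\varphi$ is an isometry for $k_D$). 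For unbounded convex $D$ one either exhausts $D$ by bounded convex subdomains or checks the statement directly when $D$ contains affine complex lines, where both $\delta_D$ and $k_D$ degenerate in tandem.
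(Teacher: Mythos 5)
Your overall skeleton is sound in its easy parts: $k_D\le\delta_D$ is indeed automatic, the squeezing argument via a left inverse $r$ is correct, and the normal-families extraction of an extremal disk on a bounded convex domain (tautness via Proposition~\ref{th:2.chyptaut}, with the $\zeta_\nu$ trapped in a Poincar\'e ball) is fine. But the proof has a genuine gap exactly where you locate it: the existence of the holomorphic left inverse $r\in\Hol(D,\Delta)$ with $r\circ\varphi=\mathrm{id}_\Delta$ is asserted, not proved. That existence \emph{is} Lempert's hard theorem (equivalently, equality of the Carath\'eodory and Kobayashi distances on convex domains), and the machinery you invoke---boundary regularity of extremal disks and the solution of a Riemann--Hilbert problem along $\partial\Delta$---is developed by Lempert only for \emph{strongly} convex domains with sufficiently smooth boundary; for a general (possibly non-smooth, possibly unbounded) convex domain one must approximate by smooth strongly convex domains and pass to the limit in both the extremal disks and the left inverses, none of which your sketch supplies. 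So as written, the argument proves the proposition only by quoting a theorem substantially deeper than the proposition itself, and your closing remarks about exhaustions and domains containing complex lines would each need their own convergence lemmas ($\delta_{D_j}\to\delta_D$ and $k_{D_j}\to k_D$ for increasing exhaustions) to be complete.

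The paper's proof avoids extremal disks and left inverses entirely: it shows directly that $\delta_D$ \emph{satisfies the triangle inequality}, which suffices since $k_D$ is by definition the largest pseudodistance dominated by $\delta_D$ (finiteness of $\delta_D$ is checked by passing a disk through the complex line joining the two points). Given near-optimal disks $\varphi_1,\varphi_2\in\Hol(\Delta,D)$ with $\varphi_1(0)=z_1$, $\varphi_1(\zeta_1)=\varphi_2(\zeta_1)=z_2$, $\varphi_2(\zeta_2)=z_3$ and $0<\zeta_1<\zeta_2<1$ real (Remark~\ref{rem:1.homDelta}), one glues them by
\[
\phi(\zeta)=\lambda(\zeta)\varphi_1(\zeta)+\bigl(1-\lambda(\zeta)\bigr)\varphi_2(\zeta)\;,
\qquad
\lambda(\zeta)=\frac{(\zeta-\zeta_2)(\zeta-\zeta_2^{-1})}{(\zeta-\zeta_1)(\zeta-\zeta_1^{-1})}\;,
\]
where the simple pole of $\lambda$ at $\zeta_1$ is cancelled because $\varphi_1(\zeta_1)=\varphi_2(\zeta_1)$, and $\lambda(0)=1$, $\lambda(\zeta_2)=0$, $\lambda(\partial\Delta)\subset[0,1]$; thus $\phi(\partial\Delta)$ consists of convex combinations of points of $\overline{D}$, and the weak peak functions attached to complex supporting functionals, together with the maximum principle, force $\phi(\Delta)\subset D$. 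Then $\delta_D(z_1,z_3)\le k_\Delta(0,\zeta_2)=k_\Delta(0,\zeta_1)+k_\Delta(\zeta_1,\zeta_2)\le\delta_D(z_1,z_2)+\delta_D(z_2,z_3)+2\varepsilon$. Note that this argument needs no boundedness, smoothness, strict convexity, or hyperbolicity, and convexity enters only twice, both times cheaply: through the convex combination defining $\phi$ and through Hahn--Banach supporting functionals. Your route, if completed, would yield the strictly stronger statement that extremal disks are complex geodesics admitting left inverses, but that strength is precisely what makes it unavailable at this point of the paper.
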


\begin{proof}
First of all, note that $\delta_D(z,w)<+\infty$ for all $z$,~$w\in D$. 
Indeed, let 
\[
\Omega=\{\lambda\in\C\mid(1-\lambda)z+\lambda w\in D\}\;.
\]
Since $D$ is convex, $\Omega$ is a convex domain in $\C$ containing~0 and~1. 
Let $\phi\colon\Delta\to\Omega$ be a biholomorphism such that $\phi(0)=0$; then 
the map $\phe\colon\Delta\to D$ given by 
\[
\phe(\zeta)=\bigl(1-\phi(\zeta)\bigr)z+\phi(\zeta)w
\]
is such that $z$,~$w\in\phe(\Delta)$.

Now, by definition we have $\delta_D(z,w)\ge k_D(z,w)$; to get the reverse inequality it suffices to show that $\delta_D$ satisfies 
the triangular inequality. Take $z_1$,~$z_2$,~$z_3\in D$ 
and fix $\eps>0$. Then there are $\phe_1$,~$\phe_2\in\Hol(\Delta,D)$ and
$\zeta_1$,~$\zeta_2\in\Delta$ such that $\phe_1(0)=z_1$, $\phe_1(\zeta_1)=
\phe_2(\zeta_1)=z_2$, $\phe_2(\zeta_2)=z_3$ and
\begin{eqnarray*}
k_\Delta(0,\zeta_1)&<\delta_D(z_1,z_2)+\eps\;,\\
	k_\Delta(\zeta_1,\zeta_2)&<\delta_D(z_2,z_3)+\eps\;.
\end{eqnarray*}
Moreover, by Remark~\ref{rem:1.homDelta} we can assume that $\zeta_1$ and $\zeta_2$ are real, and that
$\zeta_2>\zeta_1>0$. Furthermore, up to replacing $\phe_j$ by a map $\phe_j^r$ 
defined by $\phe^r_j(\zeta
)=\phe_j(r\zeta)$ for $r$ close enough to 1, we can also assume that $\phe_j$ 
is defined and continuous on $\overline{\Delta}$ (and this for $j=1$,~$2$).

Let $\lambda\colon\C\setminus\{\zeta_1,\zeta_1^{-1}\}\to\C$ be given by
\[
\lambda(\zeta)=\frac{(\zeta-\zeta_2)(\zeta-\zeta_2^{-1})}{(\zeta-\zeta_1)
	(\zeta-\zeta_1^{-1})}\;.
\]
Then $\lambda$ is meromorphic in $\C$, and in a neighborhood of $\overline\Delta$ the 
only pole is the simple pole at~$\zeta_1$. Moreover, $\lambda(0)=1$,
$\lambda(\zeta_2)=0$ and $\lambda(\partial\Delta)\subset[0,1]$. Then define 
$\phi\colon\overline{\Delta}\to\C^n$ by
\[
\phi(\zeta)=\lambda(\zeta)\phe_1(\zeta)+\bigl(1-\lambda(\zeta)\bigr)
	\phe_2(\zeta)\;.
\]
Since $\phe_1(\zeta_1)=\phe_2(\zeta_1)$, it turns out that $\phi$ is holomorphic on $\Delta$; moreover, $\phi(0)=z_1$, $\phi(\zeta_2)=z_3$ and $\phi(\partial\Delta)\subset\overline{D}$.
We claim that this implies that~$\phi(\Delta)\subset D$. Indeed, otherwise there would be $\zeta_0\in
\Delta$ such that $\phi(\zeta_0)=x_0\in\partial D$. Let $L$ be a complex supporting functional at~$x_0$, and $\psi$ the associated weak peak function.
Then we would have $|\psi\circ\phi|\le 1$ on~$\partial\Delta$ 
and $|\psi\circ\phi(\zeta_0)|=1$; thus, by the maximum principle,
$|\psi\circ\phi|\equiv1$, i.e., $\phi(\Delta)\subset\partial D$, whereas~$\phi(0)
\in D$, contradiction.

So $\phi\in\Hol(\Delta,D)$. In particular, then,
\[
\delta_D(z_1,z_3)\le k_\Delta(0,\zeta_2)=k_\Delta(0,\zeta_1)+k_\Delta(\zeta_1,
	\zeta_2)\le\delta_D(z_1,z_2)+\delta_D(z_2,z_3)+2\eps\;,
\]
and the assertion follows, since $\eps$ is arbitrary. 
\qed
\end{proof}

Bounded convex domains, being bounded, are hyperbolic. But actually more is true:

\begin{proposition}[Harris, \cite{Harris}]
\label{th:1.Barth} 
Let $D\subset\subset\C^n$ be a bounded convex 
domain. Then $D$ is complete hyperbolic.
\end{proposition}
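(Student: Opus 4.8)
The plan is to deduce completeness from the topological criterion in Proposition~\ref{th:2.chyp}. Since $D$ is bounded it is contained in a ball and hence hyperbolic (Proposition~\ref{th:2.exhypman}.(i)), so it suffices to show that every closed Kobayashi ball $\{z\in D\mid k_D(z_0,z)\le r\}$ is compact. Being contained in the bounded set $D$, such a ball is bounded, and by Proposition~\ref{th:2.hyp} the distance $k_D$ induces the manifold topology, so the ball is closed in $D$. The only way it could fail to be compact is therefore by having a limit point on $\de D$; that is, if there were a sequence $\{z_\nu\}$ in the ball with $z_\nu\to x\in\de D$. My goal is to rule this out by proving that $k_D(z_0,z_\nu)\to+\infty$ whenever $z_\nu\to\de D$.

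The key tool is the weak peak function. Fix the limit point $x\in\de D$, choose a complex supporting functional $L$ at~$x$ (which exists by Hahn--Banach, using convexity of~$D$), and let $\psi\in\Hol(D,\Delta)$ be the associated weak peak function $\psi(z)=1/\bigl(1-(L(z)-L(x))\bigr)$. Since $\psi$ is a holomorphic map of $D$ into~$\Delta$, the generalized Schwarz--Pick lemma (Theorem~\ref{th:2.SPlemma}) yields
\[
k_\Delta\bigl(\psi(z_0),\psi(z)\bigr)\le k_D(z_0,z)
\]
for all $z\in D$. This reduces the several-variable lower bound to the one-dimensional behaviour of the Poincar\'e distance near $\de\Delta$.

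To finish, I would let $z_\nu\to x$. Because $\psi$ extends continuously to $\overline D$ with $\psi(x)=1$, we get $\psi(z_\nu)\to1\in\de\Delta$, while $\psi(z_0)$ stays fixed in~$\Delta$ (as $|\psi(z_0)|<1$). Since $k_\Delta(a,\cdot)\to+\infty$ as its argument approaches $\de\Delta$ --- immediate from $k_\Delta(0,\zeta)=\mlog\frac{1+|\zeta|}{1-|\zeta|}$ together with the triangle inequality --- the displayed inequality forces $k_D(z_0,z_\nu)\to+\infty$, contradicting $k_D(z_0,z_\nu)\le r$. Hence the closed ball cannot accumulate on $\de D$, so it is compact, and $D$ is complete hyperbolic.

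The argument is essentially soft: the generalized Schwarz--Pick lemma and Proposition~\ref{th:2.chyp} do all the heavy lifting. The one genuinely substantive ingredient is the existence of the weak peak function at every boundary point, which rests on convexity through the Hahn--Banach construction of a complex supporting functional; this is precisely what produces a simple holomorphic $\psi$ that peaks at~$x$ while mapping $D$ into~$\Delta$. I expect the only point requiring care to be a conceptual rather than computational one: no estimate uniform over $\de D$ is needed, since each convergent sequence $z_\nu\to x$ has a single limit point~$x$ and one uses the supporting functional at that particular~$x$, so the per-point peak function suffices.
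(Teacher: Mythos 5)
Your proposal is correct and takes essentially the same route as the paper: both arguments rule out a boundary limit point $x\in\de D$ of a closed Kobayashi ball by composing with the Hahn--Banach complex supporting functional at~$x$ and invoking the distance-decreasing property of the Kobayashi distance. The only difference is cosmetic: the paper uses the linear map $z\mapsto L(z)/L(x)$ into a half-plane and the compactness of its closed Kobayashi balls, while you postcompose with the M\"obius factor built into the weak peak function to land in~$\Delta$ and use the explicit blow-up of the Poincar\'e distance at~$\de\Delta$ --- the same one-variable fact in two guises.
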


\begin{proof} 
We can assume $O\in D$. By Proposition~\ref{th:2.chyp}, it suffices to show that all the closed 
Kobayashi balls $\overline{B_D(O,r)}$ of center $O$ are compact. Let $\{z_\nu\}\subset
\overline{B_D(O,r)}$; we must find a subsequence converging to a point of $D$. 
Clearly, we may suppose that $z_\nu\to w_0\in\overline D$ as~$\nu\to+\infty$, for 
$D$~is bounded. 

Assume, by contradiction, that~$w_0\in\partial D$, and let $L\colon\C^n\to\C$ be a complex supporting functional at~$w_0$; 
in particular, 
$L(w_0)\ne0$ (because~$O\in D$). 
Set $H=\{\zeta\in\C\mid\mathrm{Re}\, L(\zeta w_0)<\mathrm{Re}\,L(w_0)\}$; 
clearly $H$~is a half-plane of~$\C$, and the linear map $\pi\colon\C^n\to\C$ 
given by $\pi(z)=L(z)/L(w_0)$ sends~$D$ into~$H$. In particular
\[
r\ge k_D(0,z_\nu)\ge k_H\bigl(0,\pi(z_\nu)\bigr)\;.
\]
Since $H$~is complete hyperbolic, by Proposition~\ref{th:2.chyp} the closed 
Kobayashi balls in~$H$ are compact; therefore, up to a subsequence
$\{\pi(z_\nu)\}$ tends to a point of~$H$. On the other hand, $\pi(z_\nu)\to
\pi(w_0)=1\in\partial H$, and this is a contradiction. 
\qed
\end{proof}

\begin{remark}
\label{rem:1:hypcvx}
There are unbounded convex domains which are not hyperbolic; for instance, $\C^n$ itself. However, unbounded hyperbolic convex domains are automatically complete hyperbolic, because Harris (see \cite{Harris}) proved that a convex domain is hyperbolic if and only if it is biholomorphic to a bounded convex domain. Furthermore, Barth (see \cite{Bar2}) has shown that an unbounded convex domain is hyperbolic if and only if it contains no complex lines.
\end{remark}

Finally, the convexity is reflected by the shape of Kobayashi balls. To prove this (and also because they will be useful later) we shall need a couple of estimates:

\begin{proposition}[\cite{Lemp}, \cite{KS}, \cite{KKR}]
\label{th:1.convball} 
Let $D\subset\C^n$ be a 
convex domain. Then:
\begin{enumerate}
\item[\rm(i)] if $z_1$, $z_2$, $w_1$, $w_2\in D$ and $s\in[0,1]$ then
\[
k_D\bigl(sz_1+(1-s)z_2, sw_1+(1-s)w_2\bigr)\le\max\{k_D(z_1,w_1),k_D(z_2,w_2)\}\;;
\]
\item[\rm(ii)] if $z$, $w\in D$ and $s$, $t\in[0,1]$ then
\[
k_D\bigl(sz+(1-s)w,tz+(1-t)w\bigr)\le k_D(z,w)\;.
\]
\end{enumerate}
\end{proposition}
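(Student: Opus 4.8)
The plan is to deduce both statements from the basic definition of the Lempert function together with the convexity-preserving construction of competitor disks, exploiting the fact (Proposition~\ref{th:1.onedisk}) that in a convex domain $\delta_D=k_D$, so that it suffices to work with single analytic disks rather than chains.

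\textbf{Proof of (ii) from (i).}
I would first observe that statement (ii) is the special case of (i) obtained by setting $z_1=z_2=z$ and $w_1=z$, $w_2=w$, and choosing the convex parameter appropriately. Indeed, writing a point $sz+(1-s)w$ of the segment $[z,w]$ as a convex combination of the endpoints $z$ and $w$, one sees that $sz+(1-s)w=s\cdot z+(1-s)\cdot w$ and $tz+(1-t)w=t\cdot z+(1-t)\cdot w$. Applying (i) with the two ``source'' points both equal to $z$ and $w$ in a suitable grouping collapses the maximum on the right-hand side to a single term $k_D(z,w)$, since $k_D(z,z)=0$. So once (i) is established, (ii) is immediate; the real work is entirely in (i).

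\textbf{Proof of (i).}
The heart of the argument is to produce, for fixed $\eps>0$, analytic disks realizing the Kobayashi distances $k_D(z_1,w_1)$ and $k_D(z_2,w_2)$ up to $\eps$, and then to interpolate between them by the same convex combination applied pointwise. Concretely, by Proposition~\ref{th:1.onedisk} we have $\delta_D=k_D$, so there exist $\phe_1,\phe_2\in\Hol(\Delta,D)$ and points $\zeta_1,\zeta_2\in\Delta$ with $\phe_j(0)=z_j$, $\phe_j(\zeta_j)=w_j$, and $k_\Delta(0,\zeta_j)<k_D(z_j,w_j)+\eps$. Using Corollary~\ref{th:1.autDelta} and Remark~\ref{rem:1.homDelta} I would normalize so that $\zeta_1,\zeta_2$ are real and nonnegative and, say, equal to a common value $\zeta\in[0,1)$ with $k_\Delta(0,\zeta)=\max_j k_\Delta(0,\zeta_j)$ (shrinking one disk by precomposing with a disk automorphism, using the Schwarz--Pick lemma to keep the endpoints matched). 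Then I define
\[
\phe(\zeta)=s\,\phe_1(\zeta)+(1-s)\,\phe_2(\zeta)\;;
\]
because $D$ is convex, $\phe(\Delta)\subseteq D$, so $\phe\in\Hol(\Delta,D)$, and $\phe(0)=sz_1+(1-s)z_2$, $\phe(\zeta)=sw_1+(1-s)w_2$. Hence
\[
k_D\bigl(sz_1+(1-s)z_2,\,sw_1+(1-s)w_2\bigr)\le k_\Delta(0,\zeta)=\max_j k_\Delta(0,\zeta_j)<\max_j k_D(z_j,w_j)+\eps\;,
\]
and letting $\eps\to 0$ gives (i).

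The step I expect to be the main obstacle is the normalization making the two disks share a common endpoint parameter $\zeta$ while still (approximately) realizing \emph{both} distances. One cannot in general demand $\zeta_1=\zeta_2$ for free; the fix is to reparametrize, replacing the disk with the larger $k_\Delta(0,\zeta_j)$ as the benchmark and precomposing the other disk with a disk automorphism carrying $0\mapsto 0$ and the smaller endpoint parameter to the common $\zeta$, which by the Schwarz--Pick lemma (Theorem~\ref{th:1.SPlemma}) only decreases the Poincar\'e length and hence does not spoil the estimate. Care is needed to verify that this reparametrization keeps $\phe_j(0)=z_j$ and $\phe_j(\zeta)=w_j$ exactly while the bound $k_\Delta(0,\zeta)<\max_j k_D(z_j,w_j)+\eps$ persists; once this bookkeeping is handled, convexity of $D$ does all the remaining work for free.
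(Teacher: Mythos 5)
Your proof of (i) follows exactly the paper's route: realize each distance up to $\eps$ by a single analytic disk (using $\delta_D=k_D$ from Proposition~\ref{th:1.onedisk}), reparametrize so both disks reach their targets at a common parameter, then take the pointwise convex combination. But your normalizing device, as stated, does not exist: an automorphism of $\Delta$ fixing $0$ is a rotation $\zeta\mapsto\E^{\I\theta}\zeta$, so it preserves $|\zeta|$ and cannot carry the smaller parameter to the larger common value; indeed, by the Schwarz lemma \emph{no} holomorphic self-map of $\Delta$ fixing $0$ can send $\zeta_2$ to a point of larger modulus. The map must go the other way, and it is not invertible: assuming (after relabelling, which your formulation $k_\Delta(0,\zeta)=\max_j k_\Delta(0,\zeta_j)$ correctly allows for) that $0\le\zeta_2\le\zeta_1$, one precomposes the disk with the \emph{smaller} parameter with the linear contraction $\sigma(\zeta)=(\zeta_2/\zeta_1)\zeta$, getting $\psi=\phe_2\circ\sigma$ with $\psi(0)=z_2$ and $\psi(\zeta_1)=w_2$; then $\phi_s=s\phe_1+(1-s)\psi$ maps $\Delta$ into $D$ by convexity, and $k_D\bigl(sz_1+(1-s)z_2,sw_1+(1-s)w_2\bigr)\le k_\Delta(0,\zeta_1)<\max_j k_D(z_j,w_j)+\eps$. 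This is precisely the paper's construction, and note that once it is set up correctly no appeal to Schwarz--Pick is needed at all: the estimate uses only the definition of the Lempert function.

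Your deduction of (ii) is also short of a proof. The substitution $z_1=z_2=z$, $w_1=z$, $w_2=w$ in (i) yields only $k_D\bigl(z,tz+(1-t)w\bigr)\le\max\{k_D(z,z),k_D(z,w)\}=k_D(z,w)$, i.e., the case in which one of the two points is the endpoint $z$ itself; no single substitution produces both $sz+(1-s)w$ and $tz+(1-t)w$ simultaneously, because they are convex combinations of $z$ and $w$ with \emph{different} parameters, while (i) forces the same parameter $s$ on both sides. The correct deduction (the paper's) takes \emph{two} applications of the special case $z_1=z_2=z_0$ of (i): first, with $z_0=sz+(1-s)w$, one gets $k_D\bigl(z_0,tz+(1-t)w\bigr)\le\max\{k_D(z_0,z),k_D(z_0,w)\}$; then each term on the right is bounded by $k_D(z,w)$ by a second application, e.g.\ $k_D\bigl(z,sz+(1-s)w\bigr)\le\max\{k_D(z,z),k_D(z,w)\}=k_D(z,w)$. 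Both defects are local and fixable, but as written the normalizing map in (i) does not exist and (ii) is not a special case of (i).
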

 
\begin{proof} 
Let us start by proving (i). Without loss of generality we can assume that $k_D(z_2,w_2)\le 
k_D(z_1,w_1)$. Fix~$\eps>0$; by Proposition~\ref{th:1.onedisk}, there 
are~$\phe_1$,~$\phe_2\in\Hol(\Delta, D)$ and~$\zeta_1$,~$\zeta_2\in\Delta$ such 
that $\phe_j(0)=z_j$, $\phe_j(\zeta_j)=w_j$ and $k_\Delta(0,\zeta_j)<k_D(z_j,w_j)
+\eps$, for~$j=1$,~2; moreover, we may assume $0\le\zeta_2\le\zeta_1<1$ 
and $\zeta_1>0$. Define $\psi\colon\Delta\to D$ by
\[
\psi(\zeta)=\phe_2\biggl(\frac{\zeta_2}{\zeta_1}\,\zeta\!\biggr)\;,
\]
so that $\psi(0)=z_2$ and $\psi(\zeta_1)=w_2$, 
and~$\phi_s\colon\Delta\to\C^n$ by
\[
\phi_s(\zeta)=s \phe_1(\zeta)+(1-s)\psi(\zeta)\;.
\]
Since $D$~is convex, $\phi_s$ maps~$\Delta$
into~$D$; furthermore, $\phi_s(0)=sz_1+(1-s)z_2$ and $\phi_s(\zeta_1)=sw_1+(1-s)w_2$. Hence
\begin{eqnarray*}
k_D\bigl(sz_1+(1-s)z_2,sw_1+(1-s)w_2\bigr)&=&k_D\bigl(\phi_s(0),\phi_s(\zeta
	_1)\bigr)\\
	&\le& k_\Delta(0,\zeta_1)<k_D(z_1,w_1)+\eps\;,
\end{eqnarray*}
and (i) follows because $\eps$ is arbitrary.

Given $z_0\in D$, we obtain a particular case of (i) by setting $z_1=z_2=z_0$:
\begin{equation}
k_D\bigl(z_0,sw_1+(1-s)w_2\bigr)\le\max\{k_D(z_0,w_1),k_D(z_0,w_2)\}
\label{eq:1.convuno}
\end{equation}
for all  $z_0$, $w_1$, $w_2\in D$ and $s\in[0,1]$.

To prove (ii), put $z_0=sz+(1-s)w$; then two applications of (\ref{eq:1.convuno})
yield
\begin{eqnarray*}
k_D\bigl(sz+(1-s)w,tz+(1-t)w\bigr)&\le&\max\bigl\{k_D\bigl(sz+(1-s)w,z\bigr),
k_D\bigl(sz+(1-s)w,w\bigr)\bigr\}\\
&\le& k_D(z,w)\;,
\end{eqnarray*}
and we are done.
\qed
\end{proof}

\begin{corollary}
\label{th:1.convexballs}
Closed Kobayashi balls in a hyperbolic convex domain are compact and convex.
\end{corollary}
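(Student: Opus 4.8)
The plan is to prove the two assertions separately, drawing on the results just established in this subsection.

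First I would dispose of compactness. The guiding idea is that a hyperbolic convex domain is in fact complete hyperbolic. When $D$ is bounded this is precisely Proposition~\ref{th:1.Barth}; for unbounded $D$ it is the content of Remark~\ref{rem:1:hypcvx}, which rests on Harris's theorem that any hyperbolic convex domain is biholomorphic to a bounded convex one—completeness of $k_D$ then transferring across this biholomorphism, since biholomorphisms are isometries for the Kobayashi distance (Theorem~\ref{th:2.SPlemma}.(ii)). Having established that $D$ is complete hyperbolic in every case, Proposition~\ref{th:2.chyp} yields at once that every closed Kobayashi ball of $D$ is compact.

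For convexity I would argue directly from the inequality (\ref{eq:1.convuno}) derived in the course of proving Proposition~\ref{th:1.convball}. Fixing the center $z_0\in D$ and radius $r>0$, I would take two points $w_1$, $w_2$ in the open Kobayashi ball $B_D(z_0,r)$. Convexity of $D$ guarantees $[w_1,w_2]\subset D$, and for each $s\in[0,1]$ the estimate (\ref{eq:1.convuno}) gives
\[
k_D\bigl(z_0,sw_1+(1-s)w_2\bigr)\le\max\{k_D(z_0,w_1),k_D(z_0,w_2)\}<r\;,
\]
so that $sw_1+(1-s)w_2\in B_D(z_0,r)$. Hence the open Kobayashi ball is convex; and since the closure of a convex set is again convex, the closed Kobayashi ball $\overline{B_D(z_0,r)}$ is convex as well. (Repeating the computation with non-strict inequalities shows equally well that the metric closed ball $\{z\in D\mid k_D(z_0,z)\le r\}$ is convex.)

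I expect no serious obstacle in this argument: everything reduces to the two estimates of Proposition~\ref{th:1.convball} together with the complete hyperbolicity of hyperbolic convex domains. The only genuinely delicate point is the reduction to the bounded case needed for compactness in the unbounded setting, which relies on the non-elementary fact—quoted rather than reproved here—that a hyperbolic convex domain is biholomorphic to a bounded one.
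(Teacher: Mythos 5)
Your proof is correct and takes essentially the same route as the paper: compactness from complete hyperbolicity via Propositions~\ref{th:2.chyp} and~\ref{th:1.Barth} (with Remark~\ref{rem:1:hypcvx} covering the unbounded case), and convexity from the estimate (\ref{eq:1.convuno}). The details you add---passing from the open ball to its closure, or equivalently rerunning the estimate with non-strict inequalities for the metric closed ball---merely make explicit what the paper's one-line proof leaves implicit.
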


\begin{proof}
The compactness follows from Propositions~\ref{th:2.chyp} and~\ref{th:1.Barth} (and Remark~\ref{rem:1:hypcvx} for unbounded hyperbolic convex domains); 
the convexity follows from (\ref{eq:1.convuno}). 
\qed
\end{proof}

\subsection{Strongly pseudoconvex domains}
\label{subsect:1.5}

Another important class of domains where the Kobayashi distance has been studied in detail is given by strongly pseudoconvex domains. In particular, in strongly pseudoconvex domains it is possible to estimate the Kobayashi distance by means of the Euclidean distance from the boundary. 

To recall the definition of strongly pseudoconvex domains, and to fix notations useful later, let us first
introduce smoothly bounded domains. For simplicity we shall state the following definitions in~$\mathbb{R}^N$, but they can be easily adapted to~$\mathbb{C}^n$ by using the standard identification $\mathbb{C}^n\simeq\mathbb{R}^{2n}$.

\begin{definition}
\label{def:1.smoothdomain}
A domain $D\subset\mathbb{R}^N$ has \emph{$C^r$~boundary} (or is a \emph{$C^r$~domain}), where $r\in\mathbb{N}\cup\{\infty,\omega\}$ (and $C^\omega$~means 
real analytic), if there is a $C^r$~function $\rho\colon\mathbb{R}^N\to\mathbb{R}$ such 
that:
\begin{enumerate}
\item[(a)] $D=\{x\in\mathbb{R}^N\mid\rho(x)<0\}$;
\item[(b)] $\partial D=\{x\in\mathbb{R}^N\mid\rho(x)=0\}$; and
\item[(c)] $\hbox{grad}\,\rho$ is never vanishing on~$\partial D$.
\end{enumerate}
The function $\rho$~is a \emph{defining function\/} for~$D$. The \emph{outer unit normal vector\/}~$\mathbf{n}_x$ at~$x$ is the unit vector 
parallel to~$-\hbox{grad}\,\rho(x)$. 
\end{definition}

\begin{remark}
it is not difficult to check that if $\rho_1$~is another defining 
function for a domain~$D$ then there is a never vanishing $C^r$~function $\psi
\colon\mathbb{R}^N\to\mathbb{R}^+$ such that 
\begin{equation}
\rho_1=\psi\rho\;.
\label{eq:1.deff}
\end{equation}
\end{remark}
 
If $D\subset\mathbb{R}^N$ is a $C^r$~domain with defining function~$\rho$, then $\partial
D$ is a $C^r$~manifold embedded in~$\mathbb{R}^N$. In particular, for every~$x
\in\partial D$ the tangent space of~$\partial D$ at~$x$ can 
be identified with the kernel of~$\D\rho_x$ (which by (\ref{eq:1.deff}) is independent 
of the chosen defining function). In particular,
$T_x(\partial D)$ is just the hyperplane orthogonal to~$\mathbf{n}_x$.

Using a defining function it is possible to check when a $C^2$-domain is convex.

\begin{definition}
\label{def:1.Hessian}
If $\rho\colon\mathbb{R}^N\to\mathbb{R}$~is a $C^2$~function, the \emph{Hessian}~$H_{\rho,x}$ 
of~$\rho$ at~$x\in\mathbb{R}^N$ is the symmetric bilinear form given by
\[
H_{\rho,x}(v,w)=\sum_{h,k=1}^N\frac{\partial^2\rho}{\partial x_h
	\partial x_k}(x)\,v_h w_k
\]
for every $v$, $w\in\mathbb{R}^N$.
\end{definition}

The following result is well-known (see, e.g, \cite[p.102]{Krantz}):

\begin{proposition}
A $C^2$~domain $D\subset\mathbb{R}^N$ is convex if and only if
for every~$x\in\partial D$ the Hessian~$H_
{\rho,x}$ is positive semidefinite on~$T_x(\partial D)$, where $\rho$~is any 
defining function for~$D$.
\end{proposition}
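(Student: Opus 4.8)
The plan is to prove the two directions separately, connecting the analytic convexity of $D$ with the positive semidefiniteness of the Hessian via the second-order Taylor expansion of the defining function $\rho$ along line segments. The key observation is that convexity of $D$ is equivalent to a statement about the behavior of $\rho$ along lines, and the Hessian is precisely the second-order term controlling that behavior.

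First I would fix a defining function $\rho$ for $D$ and consider, for $x\in\de D$ and a tangent vector $v\in T_x(\de D)=\ker\D\rho_x$, the function $t\mapsto\rho(x+tv)$. Since $\D\rho_x(v)=0$, its first-order term at $t=0$ vanishes, so the Taylor expansion reads $\rho(x+tv)=\frac{1}{2}H_{\rho,x}(v,v)t^2+o(t^2)$. For the direction ``convex $\Rightarrow$ Hessian positive semidefinite,'' I would argue by contradiction: if $H_{\rho,x}(v,v)<0$ for some tangent $v$, then $\rho(x+tv)<0$ for all small $t\ne0$, meaning the segment through $x$ in the direction $v$ enters $D$ on both sides of $x$. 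Picking two nearby points $x\pm\eta v\in D$ whose connecting segment passes through (or near) the boundary point $x$, one violates convexity; making this rigorous requires perturbing $x$ slightly into $D$ along the inner normal and using that $\de D$ is the graph of a $C^2$ function over its tangent hyperplane so the segment between the two interior points actually exits $\overline D$. The role of $T_x(\de D)$ is essential here because only tangential directions have vanishing first-order term; normal directions are controlled by $\D\rho_x\ne0$.

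For the converse, ``Hessian positive semidefinite on $T_x(\de D)$ for all $x\Rightarrow$ convex,'' I would show that for any $x$,~$y\in\overline D$ the whole segment $[x,y]$ lies in $\overline D$, and then that the open segment lies in $D$ when the endpoints are in $D$. The cleanest route is to represent $\de D$ locally as a graph and verify that the function $\rho$ restricted to any line is convex up to a reparametrization; more precisely, I would use the standard fact that a $C^2$ domain is convex iff the full Hessian $H_{\rho,x}$ is positive semidefinite on $T_x(\de D)$ for every boundary point, which can be deduced by showing that if some segment $[x,y]$ with $x$,~$y\in D$ were to leave $\overline D$, then the restriction of $\rho$ to that line would attain an interior maximum at a boundary point $x_0\in\de D$, forcing $\rho$ to increase in a tangential direction there and hence $H_{\rho,x_0}(v,v)\le0$ along the (tangential) segment direction, contrary to hypothesis.

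The main obstacle will be the local graph reduction and the careful bookkeeping that separates tangential from normal contributions: the Hessian condition is stated only on the tangent hyperplane $T_x(\de D)$, yet convexity is a statement about arbitrary segments whose direction need not be tangent at the boundary crossing point. Resolving this requires the observation that at a point $x_0$ where a segment $[x,y]$ (with interior endpoints) touches $\de D$ from inside, the segment's direction is automatically tangent to $\de D$ at $x_0$ (since $\rho\circ\sigma$ has a critical point there), so the Hessian hypothesis applies exactly where it is needed. Because this is a well-known classical result, rather than reconstructing the full graph argument I would cite the standard reference (as the statement itself does, pointing to \cite[p.102]{Krantz}) and present the Taylor-expansion heuristic above as the conceptual content.
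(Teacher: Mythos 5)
The paper itself offers no proof of this proposition: it is quoted as a known result with a pointer to \cite[p.102]{Krantz}, so there is nothing internal to compare against, and your final decision to defer to the standard reference is consistent with what the paper does. Your forward direction is correct, and in fact simpler than you make it: if $v\in T_x(\partial D)$ has $H_{\rho,x}(v,v)<0$, then $\rho(x\pm\eta v)=\frac{\eta^2}{2}H_{\rho,x}(v,v)+o(\eta^2)<0$ for small $\eta$, so $x\pm\eta v\in D$, while the midpoint $x$ of the segment joining them satisfies $\rho(x)=0$, hence $x\notin D$; this already violates the definition of convexity. No perturbation along the inner normal and no graph representation of $\partial D$ are needed.

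The converse, however, contains a genuine gap, and the ``interior maximum'' sketch cannot be repaired within its own terms. First, the statement that if a segment $[x,y]$ with $x,y\in D$ leaves $\overline{D}$ then $g(t)=\rho\bigl((1-t)x+ty\bigr)$ attains an interior maximum \emph{at a boundary point} is false: in that case $\max g>0$ and the maximum is attained at a point with $\rho>0$, i.e.\ outside $\overline{D}$, where the hypothesis says nothing; if instead you take the first exit time $t_0$, then $\sigma(t_0)\in\partial D$ but $g'(t_0)\ge 0$ and generically the crossing is transversal, so the segment direction is \emph{not} tangent and $H_{\rho,\sigma(t_0)}$ restricted to $T_{\sigma(t_0)}(\partial D)$ is again not applicable. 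Second, in the one scenario where your tangency observation does apply --- the segment stays in $\overline{D}$ and touches $\partial D$ at an interior maximum of $g$, forcing $g'(t_0)=0$ --- the second-derivative test yields only $H_{\rho,x_0}(v,v)=g''(t_0)\le 0$, which is perfectly compatible with positive \emph{semi}definiteness: no contradiction arises, since equality is allowed. Your argument thus proves at best the converse under the \emph{positive definite} (strongly convex) hypothesis. Passing from the semidefinite local condition to global convexity is precisely the delicate point, and it requires an additional idea: for instance, one first proves the strongly convex case (via a connectedness argument, showing the set of pairs $(p,q)\in\overline{D}\times\overline{D}$ with $[p,q]\subset\overline{D}$ is open, closed and nonempty), and then handles the semidefinite case by approximation with perturbed defining functions of the type $\rho(x)+\eps\|x\|^2$, which is how the cited reference proceeds. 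Citing \cite{Krantz} is legitimate --- the paper does exactly that --- but the Taylor heuristic you present should not be advertised as the conceptual content of the converse, because as written it is the step that fails.
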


This suggests the following 

\begin{definition}
\label{def:1.stronglyconvex}
A $C^2$~domain $D\subset\mathbb{R}^N$ is \emph{strongly convex} at~$x\in\partial D$ if for some (and hence any) $C^2$~defining 
function~$\rho$ for~$D$ the Hessian~$H_{\rho,x}$ is positive definite on~$T_x(\partial D)$.
We say that $D$~is \emph{strongly convex} if it 
is so at each point of~$\partial D$.
\end{definition}

\begin{remark} 
It is easy to check that strongly convex $C^2$ domains are strictly convex. Furthermore, 
it is also possible to prove that every strongly convex domain~$D$ has a 
$C^2$~defining function~$\rho$ such that $H_{\rho,x}$~is positive definite 
on the whole of~$\mathbb{R}^N$ for every~$x\in\partial D$ (see, e.g., \cite[p. 101]{Krantz}).
\end{remark}

\begin{remark}
If $D\subset\C^n$ is a convex $C^1$ domain and $x\in\de D$ then the unique (up to a positive multiple) complex supporting functional at~$x$ is given by $L(z)=\langle z,\mathbf{n}_x\rangle$. In particular, $\hbox{\rm Ch}(x)$ coincides with the intersection of the associated complex supporting hyperplane with~$\de D$. But non-smooth points can have more than one complex supporting hyperplanes; this
happens for instance in the polydisk.
\end{remark}

Let us now move to a more complex setting.

\begin{definition}
Let $D\subset\C^n$ be a 
domain with $C^2$~boundary and defining function~$\rho\colon\C^n\to\R$. The 
\emph{complex tangent space}~$T^{\C}_x(\partial D)$ of~$\partial D$ at~$x\in\partial D$ 
is the kernel of~$\partial\rho_x$, that is
\[
T_x^{\C}(\partial D)=\biggl\{v\in\C^n\biggm|\sum_{j=1}^n\frac{\de\rho}{\de z_j}(x)
	\,v_j=0\biggr\}\;.
\]
As usual, $T_x^\C(\de D)$ does not depend on the particular defining 
function. 
The \emph{Levi form\/}~$L_{\rho,x}$ of~$\rho$ at~$x\in\C^n$ is the Hermitian 
form given by
\[
L_{\rho,x}(v,w)=\sum_{h,k=1}^n\frac{\de^2\rho}{\de 
	z_h\de\bar z_k}(x)\,v_h\overline{w_k}
\]
for every $v$, $w\in\C^n$.
\end{definition}

\begin{definition}
A  $C^2$~domain $D\subset\C^n$ is called \emph{strongly pseudoconvex} (respectively, \emph{weakly pseudoconvex}) at a point~$x\in\de D$ if for some 
(and hence all) $C^2$~defining function~$\rho$ for~$D$ the Levi form~$L_{\rho,
x}$ is positive definite (respectively, weakly positive definite) on~$T^\C_x(\de D)$. The domain $D$~is \emph{strongly pseudoconvex} (respectively, \emph{weakly pseudoconvex}) if it is so at each point of~$\de D$.  
\end{definition}

\begin{remark}
If $D$~is strongly pseudoconvex then
there is a defining function~$\rho$ for~$D$ such that the Levi form~$L_{
\rho,x}$ is positive definite on~$\C^n$ for every~$x\in\de D$ (see, e.g., \cite[p. 109]{Krantz}). 
\end{remark}

Roughly speaking, strongly pseudoconvex domains are locally strongly convex. More precisely,
one can prove (see, e.g., \cite[Proposition~2.1.13]{Abatebook}) the following:

\begin{proposition}
\label{th:1.Narasimhan}
A bounded $C^2$~domain $D\subset\subset\C^n$ 
is strongly pseudoconvex if and only if for every $x\in\de D$ there is a neighborhood~$U_x
\subset\C^n$ and a biholomorphism~$\Phi_x\colon U_x\to\Phi_x(U_x)$ such that $\Phi_x(U_x
\cap D)$~is strongly convex.
\end{proposition}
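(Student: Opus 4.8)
The plan is to prove the two implications separately; the reverse one is a short computation together with the biholomorphic invariance of the Levi form, while the forward one is the substantial part, where I would use a holomorphic change of coordinates to strip from the second-order Taylor expansion of a defining function exactly the terms obstructing convexity, leaving only the (positive) Levi form. For the easy direction, suppose $\Phi_x(U_x\cap D)$ is strongly convex and put $y=\Phi_x(x)$. Strong convexity at $y$ says that the real Hessian $H_{\rho,y}$ of a defining function is positive definite on the real tangent space $T_y\bigl(\de\Phi_x(D)\bigr)$. The complex tangent space $T^\C_y\bigl(\de\Phi_x(D)\bigr)$ is a $J$-invariant real subspace of $T_y$, and the Levi form is the $J$-invariant part of the real Hessian, in the sense that $L_{\rho,y}(v,v)=\tfrac14\bigl(H_{\rho,y}(v,v)+H_{\rho,y}(\I v,\I v)\bigr)$; since both $v$ and $\I v$ lie in $T_y$ when $v\in T^\C_y$, I would conclude that $L_{\rho,y}$ is positive definite on $T^\C_y$, i.e. that $\Phi_x(U_x\cap D)$ is strongly pseudoconvex at $y$. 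Transporting back by $\Phi_x^{-1}$, whose invertible Jacobian carries $T^\C_y$ onto $T^\C_x(\de D)$ and preserves positivity of the Levi form, shows that $D$ is strongly pseudoconvex at $x$; as $x\in\de D$ is arbitrary, this direction is complete.

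For the hard direction, fix $x\in\de D$ and translate so that $x=0$. By the remark preceding the statement I may choose a $C^2$ defining function $\rho$ whose Levi form $L_{\rho,0}$ is positive definite on the whole of $\C^n$. Grouping the second-order terms of the Taylor expansion of $\rho$ at $0$ according to their holomorphic, antiholomorphic, and mixed type gives
\[
\rho(z)=2\,\mathrm{Re}\,L(z)+\mathrm{Re}\,Q(z)+L_{\rho,0}(z,z)+o(\|z\|^2),
\]
where $L(z)=\sum_j\frac{\de\rho}{\de z_j}(0)\,z_j$ is $\C$-linear and nonzero (the gradient does not vanish on $\de D$) and $Q$ is a holomorphic quadratic polynomial. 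The central step is the holomorphic map $\Phi$ whose first component is $w_1=L(z)+\tfrac12 Q(z)$, completed to a full coordinate system; since $\D L_0=L\neq0$ this is a biholomorphism near $0$, and it is tailored so that $2\,\mathrm{Re}\,w_1$ absorbs both the linear and the holomorphic-quadratic terms.

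In the new coordinates the defining function $\tilde\rho=\rho\circ\Phi^{-1}$ then has the expansion $\tilde\rho(w)=2\,\mathrm{Re}\,w_1+L_{\tilde\rho,0}(w,w)+o(\|w\|^2)$, with no surviving $\mathrm{Re}\,(w_jw_k)$ terms. Because the Levi form transforms by the Jacobian of $\Phi$, the form $L_{\tilde\rho,0}$ is again positive definite on all of $\C^n$, and so the real Hessian of $\tilde\rho$ at $0$ is, up to the positive factor $2$, the real quadratic form attached to $L_{\tilde\rho,0}$, hence positive definite on $\R^{2n}\simeq\C^n$. By continuity it remains positive definite on a neighborhood $V$ of $0$; shrinking $U_x$ so that $\Phi(U_x)\subset V$, the domain $\Phi(U_x\cap D)=\{\tilde\rho<0\}\cap\Phi(U_x)$ has a defining function of everywhere positive-definite Hessian and is therefore strongly convex.

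I expect the main obstacle to be precisely the gap between positivity of the Levi form on the complex tangent space and positivity of the full real Hessian on the whole real tangent space: the two differ by the real part of the holomorphic quadratic form $Q$, which is a priori indefinite and may dominate. The two devices above are there to close this gap---the coordinate change eliminates $\mathrm{Re}\,Q$, while the choice of a defining function with Levi form positive on all of $\C^n$ (and not merely on $T^\C_0(\de D)$) guarantees that, once $\mathrm{Re}\,Q$ is gone, the remaining Hermitian term also controls the normal $\mathrm{Im}\,w_1$ direction.
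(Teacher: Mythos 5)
Your proof is correct, and it is precisely the standard Narasimhan-lemma argument that the paper itself invokes only by citation (it gives no proof, referring to \cite[Proposition~2.1.13]{Abatebook} and \cite{Krantz}): you pick a defining function whose Levi form is positive definite on all of~$\C^n$, absorb the harmonic quadratic terms $\mathrm{Re}\,Q$ into the coordinate $w_1=L(z)+\frac{1}{2}Q(z)$, and for the converse use the biholomorphic invariance of the Levi form together with the identity $L_{\rho,y}(v,v)=\frac{1}{4}\bigl(H_{\rho,y}(v,v)+H_{\rho,y}(\I v,\I v)\bigr)$, all of which you execute correctly. The only point glossed over---as it also is in the cited sources---is that $U_x$ must be chosen so that $\Phi_x(U_x\cap D)$ is an honest $C^2$ domain in the sense of Definition~\ref{def:1.smoothdomain} (the edge where $\de\Phi_x(U_x)$ meets $\{\tilde\rho=0\}$ requires shrinking or smoothing), a routine technicality that does not affect the substance of the argument.
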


From this one can prove that strongly pseudoconvex domains are taut; but we shall directly prove that they are complete hyperbolic, as a consequence of the boundary estimates we are now going to state.

\begin{definition}
If $M\subset\C^n$ is any subset of~$\C^n$, we shall denote by $d(\cdot,M)\colon\C^n\to\R^+$ the Euclidean distance from~$M$, defined by
\[
d(z,M)=\inf\{\|z-x\|\mid x\in M\}\;.
\]
\end{definition}

To give an idea of the kind of estimates we are looking for, we shall prove an easy lemma:

\begin{lemma}
\label{th:1.bball} 
Let $B_r\subset\C^n$~be the euclidean ball of radius~$r>0$ in~$\C^n$ 
centered at the origin. Then 
\[
\mlog r -\mlog d(z,\partial B_r)\le k_{B_r}(O,z)\le\mlog(2r)
        -\mlog d(z,\partial B_r)
\]
for every $z\in B_r$.
\end{lemma}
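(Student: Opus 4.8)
The plan is to exploit the explicit formula for the Kobayashi distance of a ball centered at the origin, namely $k_{B_r}(O,z)=\mlog\frac{1+\|z\|/r}{1-\|z\|/r}$, which follows from Proposition~\ref{th:2.examples}.(iv) applied to the rescaled ball (the map $z\mapsto z/r$ is a biholomorphism of $B_r$ onto $B^n$, hence a Kobayashi isometry by Theorem~\ref{th:2.SPlemma}.(ii)). Having this closed form, the entire lemma reduces to elementary estimates on the function $\mlog\frac{1+t}{1-t}$, where I would set $t=\|z\|/r\in[0,1)$, together with the single geometric observation that for a ball centered at the origin the Euclidean distance to the boundary is simply $d(z,\partial B_r)=r-\|z\|$.

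First I would record the geometric identity $d(z,\partial B_r)=r-\|z\|$: for a point $z$ inside a ball centered at $O$, the nearest boundary point lies along the ray through $z$, so the distance is exactly the radius minus $\|z\|$. Substituting $\|z\|=r-d(z,\partial B_r)$ into the closed formula, the target inequalities become purely one-variable statements. Writing $s=d(z,\partial B_r)$, so that $\|z\|=r-s$ and $1-\|z\|/r = s/r$, I compute
\[
k_{B_r}(O,z)=\mlog\frac{1+\|z\|/r}{1-\|z\|/r}
=\mlog\frac{2r-s}{s}=\mlog(2r-s)-\mlog s\;,
\]
where $\mlog s$ is shorthand for $\tfrac12\log s$ when $s$ is positive (I would phrase this carefully since $\mlog$ is defined on the excerpt only as the prefactor $\tfrac12\log$). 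The claimed bounds are then immediate: since $r\le 2r-s\le 2r$ for $s\in(0,r]$, taking $\tfrac12\log$ of the outer terms and subtracting $\mlog s=\mlog d(z,\partial B_r)$ yields exactly
\[
\mlog r-\mlog d(z,\partial B_r)\le k_{B_r}(O,z)\le\mlog(2r)-\mlog d(z,\partial B_r)\;.
\]

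The only genuine content is therefore the derivation of the closed-form expression for $k_{B_r}(O,z)$ from the stated formula for $k_{B^n}(O,\cdot)$, and I expect no real obstacle there beyond bookkeeping: the isometry $z\mapsto z/r$ carries $O$ to $O$ and sends $z$ to a point of norm $\|z\|/r$, so $k_{B_r}(O,z)=k_{B^n}(O,z/r)=\mlog\frac{1+\|z\|/r}{1-\|z\|/r}$. The remaining steps are monotonicity of $\tfrac12\log$ applied to the elementary sandwich $r\le 2r-d(z,\partial B_r)\le 2r$, which holds precisely because $0<d(z,\partial B_r)\le r$ for every $z\in B_r$. If any subtlety arises it is only the degenerate case $z=O$ (where $d=r$ and both bounds pinch correctly to $0$ on the left and $\mlog 2$ on the right, consistent with $k_{B_r}(O,O)=0$), which I would note in passing but which requires no separate argument.
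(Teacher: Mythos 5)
Your proposal is correct and follows essentially the same route as the paper: both rest on the explicit formula $k_{B_r}(O,z)=\mlog\frac{1+\|z\|/r}{1-\|z\|/r}$ (which the paper also takes from the known distance on $B^n$ via rescaling) together with the identity $d(z,\partial B_r)=r-\|z\|$, reducing the lemma to an elementary sandwich. The only difference is cosmetic: you parametrize by $s=d(z,\partial B_r)$ and bound $r\le 2r-s\le 2r$, while the paper sets $t=\|z\|/r$ and bounds $\mlog\frac{1}{1-t}\le\mlog\frac{1+t}{1-t}\le\mlog\frac{2}{1-t}$ --- the same computation in different variables.
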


\begin{proof} 
We have
\[
k_{B_r}(O,z)=\mlog \frac{1+\|z\|/r}{1-\|z\|/r}\;,
\]
and $d(z,\partial B_r)=r-\|z\|$.
Then, setting $t=\|z\|/r$, we get
\begin{eqnarray*}
\mlog r-\mlog d(z,\partial B_r)&=&\mlog\frac{1}{1-t}\le
\mlog\frac{1+t}{1-t}=k_{B_r}(O,z)\\
&\le&\mlog\frac{2}{1-t}=\mlog(2r)-\mlog d(z,\partial B_r)\;,
\end{eqnarray*}
as claimed.
\qed
\end{proof}

Thus in the ball the Kobayashi distance from a reference point is comparable with one-half of the logarithm of the Euclidean distance from the boundary.
We would like to prove similar estimates in strongly pseudoconvex domains. To do so
we need one more definition.
 
\begin{definition} 
Let $M$ be a compact $C^2$-hypersurface of~$\R^N$, 
and fix an unit normal vector field~$\mathbf{n}$ on~$M$. We shall say that 
$M$~has a \emph{tubular neighborhood} of \emph{radius}~$\eps>0$ 
if the segments $\{x+t\mathbf{n}_x\mid t\in(-\eps,\eps)\}$ are pairwise 
disjoint, and we set
\[
U_\eps=\bigcup_{x\in M}\{x+t\mathbf{n}_x\mid t\in(-\eps,\eps)\}\;.
\]
Note that if $M$ has a tubular neighborhood of radius~$\eps$, then
$d(x+t\mathbf{n}_x,M)=|t|$ for every~$t\in(-\eps,\eps)$ and~$x\in M$; in 
particular, $U_\eps$ is the union of the Euclidean balls $B(x,\eps)$ of center $x\in M$ and radius~$\eps$.
\end{definition}

\begin{remark}
A proof of the existence of 
a tubular neighborhood of radius sufficiently small for any compact 
$C^2$-hypersurface of~$\R^N$ can be found, e.g., in \cite[Theorem 10.19]{Lee}.
\end{remark}

And now, we begin proving the estimates. The upper estimate does not even 
depend on the strong pseudoconvexity:

\begin{theorem}[\cite{Vormoor, Aba86}]
\label{th:1.bestiu} 
Let $D\subset\subset\C^n$ be a bounded $C^2$ domain, and
$z_0\in D$. Then there is a constant $c_1\in\R$
depending only on~$D$ and~$z_0$ such that 
\begin{equation}
k_D(z_0,z)\le c_1-\mlog d(z,\partial D)
\label{eq:1.bestiu}
\end{equation}
for all $z\in D$.
\end{theorem}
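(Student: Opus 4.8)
The plan is to exploit the fact that a bounded $C^2$ domain satisfies an interior ball condition, so that near the boundary one can slip a Euclidean ball of fixed radius inside $D$ and tangent to $\partial D$; the desired estimate will then follow by transporting the sharp estimate for the ball (Lemma~\ref{th:1.bball}) through the monotonicity of the Kobayashi distance (Theorem~\ref{th:2.SPlemma}.(i)). Concretely, since $\partial D$ is a compact $C^2$ hypersurface it admits a tubular neighborhood of some radius $\eps>0$; fix any $r_0\in(0,\eps)$. For $x\in\partial D$ let $p_x=x-r_0\mathbf{n}_x$ be the center of the inner tangent ball $B(p_x,r_0)$. The tubular neighborhood property gives $d(p_x,\partial D)=r_0$, and a short connectedness argument (a segment from $p_x$ to a point outside $\overline D$ must cross $\partial D$ at distance $\ge r_0$ from $p_x$) shows that $B(p_x,r_0)\subset D$.

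Next I would estimate $k_D(z_0,z)$ for $z$ close to the boundary, say $d(z,\partial D)<r_0$. Let $x\in\partial D$ be a nearest boundary point, so that $z=x-d(z,\partial D)\,\mathbf{n}_x$ lies in the inner ball $B(p_x,r_0)$, and a direct computation gives $d\bigl(z,\partial B(p_x,r_0)\bigr)=d(z,\partial D)$. By the triangle inequality, monotonicity (Theorem~\ref{th:2.SPlemma}.(i)) and the translated form of Lemma~\ref{th:1.bball},
\[
k_D(z_0,z)\le k_D(z_0,p_x)+k_{B(p_x,r_0)}(p_x,z)\le k_D(z_0,p_x)+\mlog(2r_0)-\mlog d(z,\partial D)\;.
\]
To turn the first term into a constant I would note that the centers $p_x$ range over the compact set $K=\{x-r_0\mathbf{n}_x\mid x\in\partial D\}\subset D$; since $D$ is hyperbolic the function $k_D(z_0,\cdot)$ is continuous (Proposition~\ref{th:2.hyp}), hence bounded on $K$ by some $c_0$, giving $k_D(z_0,z)\le c_0+\mlog(2r_0)-\mlog d(z,\partial D)$.

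Finally, for the remaining points, those with $d(z,\partial D)\ge r_0$, which form a compact subset of $D$, the distance $k_D(z_0,\cdot)$ is again bounded, while $-\mlog d(z,\partial D)$ is bounded below because $d(z,\partial D)\le\mathrm{diam}\,D$; enlarging the constant absorbs this region as well, and taking $c_1$ to be the larger of the two resulting constants yields (\ref{eq:1.bestiu}). The only genuinely geometric input is the interior ball condition, so the main point to get right is the passage from the $C^2$ hypothesis to the uniform inner tangent balls via the tubular neighborhood; everything after that is the triangle inequality together with compactness. Note in particular that no strong pseudoconvexity is used, consistent with the statement.
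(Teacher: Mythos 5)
Your proof is correct and follows essentially the same route as the paper's: a tubular neighborhood of $\partial D$ yields inner tangent Euclidean balls of fixed radius, the ball estimate of Lemma~\ref{th:1.bball} is transported into $D$ via the distance-decreasing property, and the triangle inequality plus compactness (boundedness of $k_D(z_0,\cdot)$ on the set of ball centers and on the region far from the boundary) absorbs everything into the constant $c_1$. The only differences are cosmetic: the paper takes the tangent ball centered on $\partial U_{\eps/2}$ and bounds $k_D(z_0,w)$ over $D\setminus U_{\eps/4}$, while you center at $p_x=x-r_0\mathbf{n}_x$ and bound over the compact set of such centers.
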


\begin{proof} 
Since $D$~is a bounded $C^2$~domain, $\partial D$~admits tubular 
neighborhoods~$U_\eps$ of radius~$\eps<1$ small enough. Put
\[
c_1=\sup\bigl\{k_D(z_0,w)\bigm|w\in D\setminus U_{\eps/4}\bigr\}
        +\max\bigl\{0,\,\mlog\mathrm{diam}(D)\bigr\}\;,
\]
where diam$(D)$ is the Euclidean diameter of~$D$.

There are two cases:
\begin{enumerate}
\item[(i)] $z\in U_{\eps/4}\cap D$. Let $x\in\partial D$ be such that
$\|x-z\|=d(z,\partial D)$. Since $U_{\eps/2}$~is a tubular neighborhood
of~$\partial D$, there exists~$\lambda\in\R$ such that $w=\lambda(x-z)\in
\partial U_{\eps/2}\cap D$ and the euclidean ball~$B$ of center~$w$ and
radius~$\eps/2$ is contained in~$U_\eps\cap D$ and tangent to~$\partial D$
in~$x$. Therefore Lemma~\ref{th:1.bball} yields
\begin{eqnarray*}
k_D(z_0,z)&\le& k_D(z_0,w)+k_D(w,z)\le
        k_D(z_0,w)+k_B(w,z)\\
        &\le& k_D(z_0,w)+\mlog\eps-\mlog d(z,\partial B)\\
        &\le& c_1-\mlog d(z,\partial D)\;,
\end{eqnarray*}
because $w\notin U_{\eps/4}$ (and $\eps<1$).

\item[(ii)] $z\in D\setminus U_{\eps/4}$. Then
\[
k_D(z_0,z)\le c_1-\mlog\mathrm{diam}(D)\le c_1-\mlog d(z,\partial
        D)\;,
\]
because $d(z,\partial D)\le \mathrm{diam}(D)$, and we are done.
\qed
\end{enumerate}
\end{proof}

To prove the more interesting lower estimate, we need to introduce the last definition of this subsection.

\begin{definition}
\label{def:1.peak}
Let $D\subset\C^n$ be a domain in~$\C^n$, and $x\in\de D$. A \emph{peak function} for~$D$ at~$x$ is a holomorphic function $\psi\in\Hol(D,\Delta)$ continuous up to the boundary of~$D$ such that
$\psi(x)=1$ and $|\psi(z)|<1$ for all $z\in\overline{D}\setminus\{x\}$.
\end{definition}

If $D\subset\C^n$ is strongly convex and $x\in\de D$ then by Remark~\ref{rem:1.suppf} there exists a peak function for~$D$ at~$x$.
Since a strongly pseudoconvex domain $D$ is locally strongly convex, using Proposition~\ref{th:1.Narasimhan} one can easily build peak functions defined in a neighborhood of a point of the boundary of~$D$. To prove the more interesting lower estimate on the Kobayashi distance we shall need the non-trivial fact that in a strongly pseudoconvex domain it is possible to build a family of \emph{global} peak functions continuously dependent on the point in the boundary:

\begin{theorem}[Graham, \cite{Graham}]
\label{th:1.Graham} 
Let $D\subset\subset\C^n$ be a strongly 
pseudoconvex $C^2$ domain. Then there exist a neighborhood~$D'$ of~$\overline{D}$ and a continuous function $\Psi\colon\partial D\times D'\to\C$ such that
$\Psi_{x_0}=\Psi(x_0,\cdot)$ is holomorphic in $D'$ and a peak function for $D$ at $x_0$ for each 
$x_0\in\partial D$.
\end{theorem}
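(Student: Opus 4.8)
The plan is to first build, for each boundary point, a \emph{local} holomorphic peaking function depending continuously on the point, and then to glue these local data into a single global holomorphic function by solving a $\bar\partial$-equation with estimates up to the boundary; the peak function $\Psi_{x_0}$ is obtained by exponentiating a global holomorphic function whose real part is strictly negative on $\overline{D}\setminus\{x_0\}$.

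First I would produce the local peaking functions via the \emph{Levi polynomial}. Fix a $C^2$ defining function $\rho$ for $D$ whose Levi form $L_{\rho,x}$ is positive definite on all of $\C^n$ for $x$ near $\de D$ (such a $\rho$ exists by strong pseudoconvexity). For $x_0\in\de D$ set
\[
P_{x_0}(z) = 2\sum_{j=1}^n \frac{\de\rho}{\de z_j}(x_0)\,\bigl(z_j-(x_0)_j\bigr) + \sum_{j,k=1}^n \frac{\de^2\rho}{\de z_j\de z_k}(x_0)\,\bigl(z_j-(x_0)_j\bigr)\bigl(z_k-(x_0)_k\bigr),
\]
which is holomorphic in $z$ with coefficients depending continuously on $x_0$ (as $\rho\in C^2$). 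The real second-order Taylor expansion of $\rho$ at $x_0$ yields
\[
\mathrm{Re}\,P_{x_0}(z) = \rho(z) - L_{\rho,x_0}(z-x_0,z-x_0) + o\bigl(\|z-x_0\|^2\bigr),
\]
so, using $\rho\le 0$ on $\overline{D}$ and the positive definiteness of $L_{\rho,x_0}$ — both uniform over the compact hypersurface $\de D$, with the error term also uniform in $x_0$ — there are a radius $\delta>0$ and a constant $c>0$, independent of $x_0$, with $\mathrm{Re}\,P_{x_0}(z)\le -c\|z-x_0\|^2$ for all $z\in\overline{D}$, $\|z-x_0\|<\delta$. Thus $u_{x_0}:=P_{x_0}$ is a local holomorphic function, continuous in $(x_0,z)$, with $u_{x_0}(x_0)=0$ and $\mathrm{Re}\,u_{x_0}<0$ on $(\overline{D}\cap B(x_0,\delta))\setminus\{x_0\}$. (Alternatively one may extract such a $u_{x_0}$ from Proposition~\ref{th:1.Narasimhan} together with the supporting-functional peak function of Remark~\ref{rem:1.suppf}.)

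Next I would globalize. Choose $\chi\in C^\infty(\C^n)$ with $\chi\equiv 1$ on $B(x_0,\delta/2)$ and $\mathrm{supp}\,\chi\subset B(x_0,\delta)$, and set $\tilde u_{x_0}=\chi\,u_{x_0}$. The $(0,1)$-form $\alpha_{x_0}:=\bar\partial\tilde u_{x_0}=u_{x_0}\,\bar\partial\chi$ is $\bar\partial$-closed (since $u_{x_0}$ is holomorphic where $\chi$ is non-constant) and is supported in the shell $\delta/2\le\|z-x_0\|\le\delta$, hence vanishes near $x_0$. Because $D$ is strongly pseudoconvex one can solve $\bar\partial v_{x_0}=\alpha_{x_0}$ on a fixed strongly pseudoconvex neighborhood $D'\supset\overline{D}$ with a solution continuous on $\overline{D'}$, subject to a sup-norm estimate $\|v_{x_0}\|_{\overline{D}}\le C\|\alpha_{x_0}\|$ with $C$ independent of $x_0$, and — crucially — with $v_{x_0}$ depending continuously on $x_0$; this is achieved by applying a \emph{fixed} $\bar\partial$-solution operator (a Henkin-type integral operator for the strongly pseudoconvex domain) to the continuously varying data $\alpha_{x_0}$. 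Since $\alpha_{x_0}\equiv 0$ near $x_0$, the function $v_{x_0}$ is holomorphic near $x_0$. Then $f_{x_0}:=\tilde u_{x_0}-v_{x_0}$ is holomorphic on $D'$, continuous on $\overline{D'}$, and depends continuously on $x_0$.

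Finally I would verify global peaking. Near $x_0$ one has $f_{x_0}=u_{x_0}-v_{x_0}$ with $v_{x_0}$ holomorphic there, so after subtracting the value and the holomorphic linear part of $f_{x_0}$ at $x_0$ the quadratic estimate $\mathrm{Re}\,u_{x_0}\le -c\|z-x_0\|^2$ survives and forces strict negativity of the real part on a punctured neighborhood of $x_0$; away from $x_0$ the uniform bound on $v_{x_0}$, the strict negativity of $\chi\,\mathrm{Re}\,u_{x_0}$ on $\mathrm{supp}\,\chi$, and a maximum-principle/compactness argument are used to arrange $\mathrm{Re}\,f_{x_0}(z)<0$ for every $z\in\overline{D}\setminus\{x_0\}$, after replacing $f_{x_0}$ by $f_{x_0}-f_{x_0}(x_0)$ and, if needed, a suitable positive multiple. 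Setting $\Psi(x_0,z)=\exp\bigl(f_{x_0}(z)\bigr)$ then gives $\Psi_{x_0}(x_0)=1$ and $|\Psi_{x_0}(z)|=\exp\bigl(\mathrm{Re}\,f_{x_0}(z)\bigr)<1$ on $\overline{D}\setminus\{x_0\}$, i.e.\ a global peak function holomorphic on $D'$; continuity of $\Psi$ on $\de D\times D'$ follows from the continuous dependence on $x_0$ of every ingredient. The main obstacle is exactly the globalization step: solving $\bar\partial$ up to the boundary with a uniform sup-norm estimate \emph{and} continuous dependence on the parameter $x_0$, and then upgrading the local quadratic peaking estimate to strict global negativity of $\mathrm{Re}\,f_{x_0}$ — the precise point where strong pseudoconvexity, through the solvability theory for $\bar\partial$, is indispensable.
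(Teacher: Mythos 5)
A preliminary remark: the paper does not actually prove this theorem --- it is quoted from Graham's paper \cite{Graham} without proof --- so your proposal can only be measured against the classical Henkin--Ram\'irez/Graham construction it alludes to. Your first step is that construction's skeleton and is fine: the Levi polynomial gives a local holomorphic peaking function with the uniform quadratic estimate $\mathrm{Re}\,P_{x_0}(z)\le\rho(z)-c\|z-x_0\|^2$ on $\overline{D}\cap B(x_0,\delta)$, and the idea of applying one \emph{fixed} Henkin-type $\bar\partial$-solving operator on a fixed strongly pseudoconvex enlargement $D'=\{\rho<\epsilon\}$ to data varying continuously with $x_0$ is indeed how the joint continuity of $\Psi$ is obtained.

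The genuine gap is in your globalization step: the \emph{additive} patch $f_{x_0}=\chi u_{x_0}-v_{x_0}$ cannot deliver $\mathrm{Re}\,f_{x_0}<0$ on $\overline{D}\setminus\{x_0\}$, because $v_{x_0}$ is bounded but in no sense small. Quantitatively, $\|\alpha_{x_0}\|_\infty\approx\sup_{\mathrm{shell}}|u_{x_0}|\cdot\|\bar\partial\chi\|_\infty=O(\delta)\cdot O(\delta^{-1})=O(1)$, and this does not improve by shrinking $\delta$ or rescaling $u_{x_0}$. Consequently: (a) outside $\mathrm{supp}\,\chi$ one has $\mathrm{Re}\,f_{x_0}=-\mathrm{Re}\,v_{x_0}$, which has no sign at all; (b) near $x_0$, after subtracting $f_{x_0}(x_0)$ and the linear part of $v_{x_0}$, the quadratic remainder of $v_{x_0}$ is controlled only by interior Cauchy estimates, i.e.\ by $O(\|v_{x_0}\|_\infty\delta^{-2})\,\|z-x_0\|^2$, which need not be smaller than the Levi constant $c$ --- so even \emph{local} strict negativity is not guaranteed (H\"older-$\frac12$ estimates for $\bar\partial$ do not help: $\|z-x_0\|^{1/2}$ still dominates $\|z-x_0\|^2$); and (c) subtracting the value and the linear part, or multiplying by a positive constant, are global modifications with no sign control, and no maximum-principle argument can manufacture strict negativity on $\overline{D}$ --- that is exactly the unknown. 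This failure is why the classical proof patches \emph{multiplicatively}: on the shell $\{\delta/2\le\|z-x_0\|\le\delta\}$ intersected with $\{\rho<\epsilon\}$, $\epsilon\sim c\delta^2$, one has $\mathrm{Re}\,P_{x_0}<0$, so $\log P_{x_0}$ is single-valued there; splitting $\log P_{x_0}=\tilde g_1-\tilde g_2$ by the cutoff and correcting both pieces with the same $\bar\partial$-solution $v_{x_0}$ produces a Henkin--Ram\'irez support function $\Phi_{x_0}$, holomorphic on the fixed $D'$, equal to $P_{x_0}e^{h}$ near $x_0$ and to a nonvanishing exponential away from $x_0$: the bounded correction now enters only as a factor of modulus between $e^{-C}$ and $e^{C}$, which is harmless for vanishing/nonvanishing. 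A further nontrivial step (argument control on $\Phi_{x_0}$, or an equivalent device) is then needed to convert the nonvanishing support function into an actual peak function with $|\Psi_{x_0}|<1$ on $\overline{D}\setminus\{x_0\}$. Finally, beware the tempting shortcut of patching $\chi\log P_{x_0}$ additively and subtracting a large constant: it does yield a global function with negative real part, hence a peak function $\exp(1/h)$, but that function is merely \emph{continuous} at $x_0$, not holomorphic on a neighborhood $D'$ of $\overline{D}$, so it proves strictly less than the stated theorem.
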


With this result we can prove 

\begin{theorem}[\cite{Vormoor, Aba86}]
\label{th:1.bestil} 
Let $D\subset\subset\C^n$ be a bounded strongly pseudoconvex $C62$
domain, and~$z_0\in D$. Then there is a constant $c_2\in\R$ depending only 
on~$D$ and~$z_0$ such that 
\begin{equation}
c_2-\mlog d(z,\partial D)\le k_D(z_0,z)
\label{eq:1.bestil}
\end{equation}
for all $z\in D$.
\end{theorem}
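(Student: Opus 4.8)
The plan is to obtain the lower estimate
\[
c_2-\mlog d(z,\partial D)\le k_D(z_0,z)
\]
by exploiting the peak functions provided by Theorem~\ref{th:1.Graham} together with the explicit computation of the Kobayashi distance on the disk. The key idea is that a global peak function $\Psi_{x_0}$ at a boundary point $x_0$ gives a holomorphic map $\Psi_{x_0}\in\Hol(D,\Delta)$, so that by the Schwarz--Pick lemma for the Kobayashi distance (Theorem~\ref{th:2.SPlemma}) we have
\[
k_D(z_0,z)\ge k_\Delta\bigl(\Psi_{x_0}(z_0),\Psi_{x_0}(z)\bigr)
\]
for every $z\in D$. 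The right-hand side can be estimated from below once we understand how close $\Psi_{x_0}(z)$ gets to the boundary point $1\in\partial\Delta$ as $z$ approaches the boundary near $x_0$.

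First I would set things up so that the estimate is uniform in the boundary point. Fix a reference point $z_0\in D$ and, using compactness of $\partial D$ and continuity of $\Psi$ on $\partial D\times D'$, extract uniform bounds: on one side, since $z_0\in D$ we have $|\Psi_{x_0}(z_0)|\le\rho_0<1$ for some $\rho_0$ independent of $x_0$; on the other side, the crucial quantitative input is that $\Psi_{x_0}$ peaks at $x_0$ at a definite rate. Concretely, for $z$ near the boundary I would take $x_0=x(z)\in\partial D$ to be a nearest boundary point, so that $\|z-x_0\|=d(z,\partial D)$, and I would estimate $1-|\Psi_{x_0}(z)|$ from above in terms of $d(z,\partial D)$. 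The expected bound is linear, $1-|\Psi_{x_0}(z)|\le C\,d(z,\partial D)$, coming from the fact that $\Psi$ is $C^2$ (or at least Lipschitz) near the boundary and $\Psi_{x_0}(x_0)=1$, so that $1-|\Psi_{x_0}(z)|\le|1-\Psi_{x_0}(z)|=|\Psi_{x_0}(x_0)-\Psi_{x_0}(z)|\le C\|z-x_0\|=C\,d(z,\partial D)$, with $C$ uniform over $\partial D$ by the continuity of $\Psi$ on the compact set $\partial D\times\overline{D}$.

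With these two ingredients in hand the computation is routine. Using the monotonicity of the Poincar\'e distance and the explicit formula, I would bound
\[
k_\Delta\bigl(\Psi_{x_0}(z_0),\Psi_{x_0}(z)\bigr)\ge k_\Delta\bigl(0,|\Psi_{x_0}(z)|\bigr)-k_\Delta\bigl(0,|\Psi_{x_0}(z_0)|\bigr)
\ge\mlog\frac{1}{1-|\Psi_{x_0}(z)|}-\mlog\frac{1+\rho_0}{1-\rho_0}\;,
\]
where the first step uses the triangle inequality applied to $0$ and the second uses $k_\Delta(0,t)=\mlog\frac{1+t}{1-t}\ge\mlog\frac1{1-t}$. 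Substituting the bound $1-|\Psi_{x_0}(z)|\le C\,d(z,\partial D)$ turns the leading term into $-\mlog\bigl(C\,d(z,\partial D)\bigr)=-\mlog C-\mlog d(z,\partial D)$, and collecting all the constants (which depend only on $D$ and $z_0$) into a single $c_2$ yields the desired inequality for $z$ near the boundary. For $z$ in a fixed compact part of $D$ away from the boundary, $d(z,\partial D)$ is bounded below, so $-\mlog d(z,\partial D)$ is bounded above while $k_D(z_0,z)\ge0$, and the inequality holds trivially after further adjusting $c_2$.

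The main obstacle is the quantitative peaking estimate $1-|\Psi_{x_0}(z)|\le C\,d(z,\partial D)$ with a constant uniform in $x_0\in\partial D$. The linear decay in $d(z,\partial D)$ is exactly what produces the $\mlog$ of the boundary distance, and one must be careful that this holds when $z$ is genuinely close to its nearest boundary point $x_0$ (so that $\Psi_{x_0}(z)$ is close to the peak value $1$), using the continuity --- and really the uniform Lipschitz behavior in the second variable --- of $\Psi$ on the compact set $\partial D\times\overline{D}$ guaranteed by Theorem~\ref{th:1.Graham}. Once this uniform estimate is secured, the rest is a direct application of the Schwarz--Pick lemma and elementary manipulation of the Poincar\'e distance formula.
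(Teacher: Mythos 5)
Your overall strategy is exactly the paper's: Graham's Theorem~\ref{th:1.Graham} supplies the family of peak functions, the Schwarz--Pick property (Theorem~\ref{th:2.SPlemma}) transfers the Poincar\'e estimate to $D$, the nearest boundary point $x_0$ with $\|z-x_0\|=d(z,\de D)$ is used for $z$ near $\de D$, and the region away from the boundary is handled trivially by adjusting the constant. The only structural difference is cosmetic: the paper first composes $\Psi_x$ with the automorphism (\ref{eq:1.change}) so that $\Phi_x(z_0)=0$, whereas you keep $\Psi_{x_0}$ and absorb $k_\Delta\bigl(0,\Psi_{x_0}(z_0)\bigr)\le\mlog\frac{1+\rho_0}{1-\rho_0}$ into the constant, using the uniform bound $\rho_0=\sup_{x_0\in\de D}|\Psi_{x_0}(z_0)|<1$ given by compactness and continuity; both variants work.

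The genuine weak point is your justification of the crucial estimate $|1-\Psi_{x_0}(z)|\le C\,\|z-x_0\|$ with $C$ uniform in $x_0\in\de D$. Theorem~\ref{th:1.Graham} asserts only that $\Psi$ is \emph{continuous} on $\de D\times D'$ and holomorphic in the second variable; it gives no $C^2$ or Lipschitz regularity, and continuity on the compact set $\de D\times\overline{D}$ yields uniform continuity, not a Lipschitz constant --- a mere modulus of continuity $\omega$ would only produce $-\mlog\omega\bigl(d(z,\de D)\bigr)$, which is strictly weaker than $-\mlog d(z,\de D)$. The missing idea, which is precisely how the paper closes this step, is to exploit holomorphy on the uniformly larger domain: since each $\Psi_x$ is holomorphic on the fixed neighborhood $D'\supset\supset\overline{D}$, continuity plus compactness bounds $\|\Psi\|$ uniformly on $\de D\times U_\eps$ for a neighborhood $U_\eps$ of $\de D$ with $U_\eps\subset\subset D'$, and then the Cauchy estimates on the polydisks $P(x_0,\eps/2)$ give a uniform bound on the gradient of $\Psi_{x_0}$ there, hence the uniform Lipschitz inequality $|1-\Psi_{x_0}(z)|=|\Psi_{x_0}(x_0)-\Psi_{x_0}(z)|\le M\|z-x_0\|$. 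With that substitution for your appeal to ``continuity implies Lipschitz,'' your argument is complete and coincides in substance with the paper's proof.
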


\begin{proof} 
Let~$D'\supset\supset D$ and~$\Psi\colon\partial D\times D'\to\C$ be given 
by Theorem~\ref{th:1.Graham}, and define $\phi\colon\partial D\times\Delta\to\C$~by
\begin{equation}
\phi(x,\zeta)=\frac{1-\overline{\Psi(x,z_0)}}{1-\Psi(x,z_0)}\cdot
        \frac{\zeta-\Psi(x,z_0)}{1-\overline{\Psi(x,z_0)}\zeta}\;.
\label{eq:1.change}
\end{equation}
Then the map $\Phi(x,z)=\Phi_x(z)=\phi\bigl(x,\Psi(x,z)\bigr)$ is defined on a
neighborhood~$\partial D\times D_0$ of~$\partial D\times\overline{D}$
(with~$D_0\subset\subset D'$) and satisfies
\begin{enumerate}
\item[(a)] $\Phi$ is continuous, and $\Phi_x$~is a
holomorphic peak function for~$D$ at~$x$ for any~$x\in\partial D$;
\item[(b)] for every~$x\in\partial D$ we have~$\Phi_x(z_0)=0$.
\end{enumerate}
Now set $U_\eps=\bigcup_{x\in\partial D}P(x,\eps)$, where $P(x,\eps)$
is the polydisk of center~$x$ and polyradius~$(\eps,\ldots,\eps)$.
The family~$\{U_\eps\}$ is a basis for the neighborhoods of~$\partial D$;
hence there exists~$\eps>0$ such that~$U_\eps\subset\subset D_0$ and $U_\eps$
is contained in a tubular neighborhood of~$\partial D$. Then for
any~$x\in\partial D$ and~$z\in P(x,\eps/2)$ the Cauchy estimates yield
\begin{eqnarray*}
|1-\Phi_x(z)|=|\Phi_x(x)-\Phi_x(z)|&\le&\bigg\|\frac{\de\Phi_x}{\de z}
        \bigg\|_{P(x,\eps/2)}\|z-x\|\\
	&\le&\frac{2\sqrt{n}}{\eps}\|\Phi\|_{\de D\times U_\eps}\|z-x\|=M\|z-x\|\;,
\end{eqnarray*}
where $M$ is independent of~$z$ and~$x$; in these formulas $\|F\|_S$ denotes the supremum
of the Euclidean norm of the map~$F$ on the set~$S$. 

Put $c_2=-\mlog M$;
note that $c_2\le\mlog(\eps/2)$, for $\|\Phi\|_{\de D\times U_\eps}\ge1$.
Then we again have two cases:
\begin{enumerate}
\item[(i)] $z\in D\cap U_{\eps/2}$. Choose $x\in\partial D$ so that
$d(z,\partial D)=\|z-x\|$. Since~$\Phi_x(D)\subset\Delta$ and~$\Phi_x(z_0)=0$,
we have
\[
k_D(z_0,z)\ge k_\Delta\bigl(\Phi_x(z_0),\Phi_x(z)\bigr)\ge
        \mlog\frac{1}{1-|\Phi_x(z)|}\;.
\]
Now,
\[
1-|\Phi_x(z)|\le|1-\Phi_x(z)|\le M\|z-x\|=M\,d(z,\partial D)\;;
\]
therefore
\[
k_D(z_0,z)\ge -\mlog M-\mlog d(z,\partial D)= c_2-\mlog
        d(z,\partial D)
\]
as desired.
\item[(ii)] $z\in D\setminus U_{\eps/2}$. Then $d(z,\partial D)\ge\eps/2$;
hence
\[
k_D(z_0,z)\ge0\ge\mlog(\eps/2)-\mlog
        d(z,\partial D)\ge c_2-\mlog d(z,\partial D)\;,
\]
and we are done. 
\qed
\end{enumerate}
\end{proof}

A first consequence is the promised:

\begin{corollary}[Graham, \cite{Graham}]
\label{th:1.pscomp}
Every bounded strongly pseudoconvex $C^2$ domain~$D$ is complete hyperbolic.
\end{corollary}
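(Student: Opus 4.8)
The plan is to reduce the statement, via the topological characterization of completeness in Proposition~\ref{th:2.chyp}, to the lower boundary estimate of Theorem~\ref{th:1.bestil}. Since $D$ is bounded it is in particular hyperbolic (Proposition~\ref{th:2.exhypman}.(i)), so by Proposition~\ref{th:2.chyp} it is enough to show that every closed Kobayashi ball is compact. Fix $z_0\in D$ and $r>0$. As $k_D$ is continuous and induces the manifold topology (Proposition~\ref{th:2.hyp}), the sublevel set
\[
L_r=\{z\in D\mid k_D(z_0,z)\le r\}
\]
is closed in $D$ and contains every closed Kobayashi ball $\overline{B_D(z_0,r)}$; hence it suffices to prove that $L_r$ is (sequentially) compact.

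The one real input is that the lower estimate forces points of $L_r$ to stay uniformly away from $\de D$. Concretely, let $\{z_\nu\}\subset L_r$; applying Theorem~\ref{th:1.bestil} we obtain a constant $c_2$, depending only on $D$ and $z_0$, with
\[
c_2-\mlog d(z_\nu,\de D)\le k_D(z_0,z_\nu)\le r
\]
for every $\nu$, and therefore $d(z_\nu,\de D)\ge\E^{2(c_2-r)}=:\delta>0$. Thus the whole sequence lies in the set $\{z\in\overline{D}\mid d(z,\de D)\ge\delta\}$, which is a closed subset of the bounded set $\overline{D}$, hence compact, and entirely contained in $D$.

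I would then extract a convergent subsequence $z_\nu\to w_0$; by the previous paragraph $w_0$ satisfies $d(w_0,\de D)\ge\delta$, so $w_0\in D$, and by continuity of $k_D$ we get $k_D(z_0,w_0)=\lim_\nu k_D(z_0,z_\nu)\le r$, i.e. $w_0\in L_r$. Hence $L_r$ is sequentially compact, so compact; every closed Kobayashi ball is a closed subset of some $L_r$ and thus compact; and Proposition~\ref{th:2.chyp} gives that $D$ is complete hyperbolic.

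Finally, a word on where the difficulty really lies: this corollary is essentially a one-line consequence of Theorem~\ref{th:1.bestil}, into which all the genuine work has already been absorbed (in turn that theorem rests on Graham's global peak functions, Theorem~\ref{th:1.Graham}). Granting the lower estimate, the only thing to verify is the elementary compactness bookkeeping above, and there is no serious obstacle.
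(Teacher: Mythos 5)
Your proof is correct and follows essentially the same route as the paper: both apply the lower boundary estimate of Theorem~\ref{th:1.bestil} to deduce that points of a Kobayashi ball of radius~$r$ satisfy $d(z,\de D)\ge\E^{2(c_2-r)}$, hence the ball is relatively compact in the bounded domain~$D$, and then conclude via the topological characterization of completeness in Proposition~\ref{th:2.chyp}. The only difference is bookkeeping (your explicit sequential-compactness argument for the sublevel set~$L_r$, which the paper leaves implicit), so there is nothing to add.
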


\begin{proof} 
Take $z_0\in D$, $r>0$ and let $z\in B_D(z_0,r)$. Then (\ref{eq:1.bestil}) 
yields
\[
d(z,\partial D)\ge\exp\bigl(2(c_2-r)\bigr)\;,
\]
where $c_2$ depends only on $z_0$. Then $B_D(z_0,r)$ is relatively compact 
in $D$, and the assertion follows from Proposition~\ref{th:2.chyp}.
\qed
\end{proof}

For dynamical applications we shall also need estimates on the Kobayashi distance
$k_D(z_1,z_2)$ when both $z_1$ and $z_2$ are close to the boundary. The needed estimates 
were proved by Forstneri\v c and Rosay (see \cite{FR}, and \cite[Corollary~2.3.55, Theorem~2.3.56]{Abatebook}):
 
\begin{theorem}[\cite{FR}]
\label{th:1.bestia} 
Let $D\subset\subset\C^n$ be a bounded strongly 
pseudoconvex $C^2$ domain, and choose two points $x_1$,~$x_2\in\de D$ 
with $x_1\ne x_2$. Then there exist~$\eps_0>0$ and~$K\in\R$ such that for any 
$z_1$, $z_2\in D$ with $\|z_j-x_j\|<\eps_0$ for $j=1$,~$2$ we have
\begin{equation}
k_D(z_1,z_2)\ge-\mlog d(z_1,\partial D)-\mlog d(z_2,\partial D)+K\;.
\label{eq:1.bestia}
\end{equation}
\end{theorem}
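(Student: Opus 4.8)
The plan is to reduce the two-point boundary estimate to the one-point lower estimate already established in Theorem~\ref{th:1.bestil}, by inserting a single intermediate point on a suitable geodesic-like path and exploiting the triangle inequality together with the fact that the two boundary points $x_1\ne x_2$ are at positive Euclidean distance. The key observation is that $-\mlog d(z_j,\de D)$ is exactly the quantity controlled from below in \eqref{eq:1.bestil}, so what remains is to produce, for points $z_1,z_2$ near $x_1,x_2$ respectively, a path whose total Kobayashi length is bounded below by the sum of two such one-point contributions. A direct application of the triangle inequality gives only an upper bound on $k_D(z_1,z_2)$ in terms of distances to a fixed reference point, which is the wrong direction; the honest route must use a peak function to separate the two boundary points.

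First I would fix a reference point $z_0\in D$ and invoke Theorem~\ref{th:1.bestil}, which gives a constant $c_2$ with $c_2-\mlog d(z_j,\de D)\le k_D(z_0,z_j)$ for both $j$. Since the two boundary points are distinct, I would choose the peak functions supplied by Graham's Theorem~\ref{th:1.Graham}: let $\Psi_{x_1}$ be a peak function for $D$ at $x_1$, so that $\Psi_{x_1}(x_1)=1$ while $|\Psi_{x_1}(x_2)|=:1-\eta<1$ for some $\eta>0$. The map $\Psi_{x_1}\in\Hol(D,\Delta)$ is nonexpanding for the Kobayashi distances by Theorem~\ref{th:2.SPlemma}, so $k_D(z_1,z_2)\ge k_\Delta\bigl(\Psi_{x_1}(z_1),\Psi_{x_1}(z_2)\bigr)$. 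The point $\Psi_{x_1}(z_1)$ tends to $1\in\de\Delta$ as $z_1\to x_1$, while $\Psi_{x_1}(z_2)$ stays bounded away from $\de\Delta$ near the value $\Psi_{x_1}(x_2)$ of modulus $1-\eta$. By the boundary behavior of the peak function, as in the Cauchy-estimate argument of Theorem~\ref{th:1.bestil}, I expect $1-|\Psi_{x_1}(z_1)|\le M\,d(z_1,\de D)$ for $z_1$ near $x_1$, which forces $k_\Delta\bigl(\Psi_{x_1}(z_1),\Psi_{x_1}(z_2)\bigr)\ge -\mlog d(z_1,\de D)+\text{const}$, capturing one of the two logarithmic terms.

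To recover the \emph{second} logarithmic term simultaneously, I would run the symmetric argument with the peak function $\Psi_{x_2}$ at $x_2$, obtaining $k_D(z_1,z_2)\ge -\mlog d(z_2,\de D)+\text{const}$ for $z_2$ near $x_2$, and then combine the two. The clean way to combine them is to split along an intermediate point: write $k_D(z_1,z_2)\ge k_D(z_1,w)+\bigl(k_D(z_1,z_2)-k_D(z_1,w)\bigr)$ is not directly useful, so instead I would average or add the two peak-function estimates after first arranging that each isolates a single term. Concretely, the combined estimate of Forstneri\v c--Rosay comes from estimating $k_D(z_1,z_2)$ below by the Poincar\'e distance between $\Psi_{x_1}(z_1)$ and $\Psi_{x_1}(z_2)$ in $\Delta$, and then observing that this Poincar\'e distance itself behaves like $-\mlog d(z_1,\de D)-\mlog d(z_2,\de D)$ up to a constant, because near the boundary both image points approach the unit circle along separated boundary arcs (one near $1$, one near the point of modulus $1-\eta$) so their mutual Poincar\'e distance is the sum of two individual "depth" contributions. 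I would compute the asymptotics of $k_\Delta(\zeta_1,\zeta_2)$ when $1-|\zeta_1|\to0$ and $\zeta_2$ stays in a fixed compact subset of $\Delta$, and then sharpen by letting $z_2\to x_2$ as well; controlling this two-variable boundary asymptotic of the Poincar\'e distance is the crux.

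The hard part will be establishing the two-sided control near the boundary with the \emph{sum} of both terms appearing, rather than just one. A single peak function at $x_1$ naturally produces only the $-\mlog d(z_1,\de D)$ term, because $\Psi_{x_1}$ does not "see" the depth of $z_2$ once $z_2$ is bounded away from $x_1$. The technical heart is therefore to combine information from the geometry at both boundary points: one must either use a biholomorphic change of coordinates that straightens both boundary pieces (via the local strong convexity from Proposition~\ref{th:1.Narasimhan}) and transfer the ball estimate of Lemma~\ref{th:1.bball} to both endpoints, or exploit the separation $\eta>0$ quantitatively so that the Poincar\'e distance in $\Delta$ between the two images splits additively. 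I expect the cleanest execution to pass through the precise asymptotic expansion $k_\Delta(\zeta_1,\zeta_2)=-\mlog(1-|\zeta_1|)-\mlog(1-|\zeta_2|)+O(1)$ valid when the two points approach distinct boundary points of $\Delta$, which then pulls back under the two peak maps to yield \eqref{eq:1.bestia}; verifying that the distinctness $x_1\ne x_2$ guarantees the images stay angularly separated (so the $O(1)$ term is uniformly bounded) is the step demanding genuine care.
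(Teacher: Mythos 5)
Your mid-proposal diagnosis is exactly right---a peak function at $x_1$ does not ``see'' the depth of $z_2$---but your proposed execution then contradicts it, and this is a genuine gap, not a technicality. Under $\Psi_{x_1}$ the image $\Psi_{x_1}(z_2)$ converges to $\Psi_{x_1}(x_2)$, which is an \emph{interior} point of $\Delta$ of modulus $1-\eta<1$: it does not approach the unit circle, and $1-|\Psi_{x_1}(z_2)|$ bears no relation to $d(z_2,\de D)$. Hence the two-variable Poincar\'e asymptotics $k_\Delta(\zeta_1,\zeta_2)=-\mlog(1-|\zeta_1|)-\mlog(1-|\zeta_2|)+O(1)$, which is indeed valid when the two points approach \emph{distinct boundary arcs} of~$\Delta$, simply does not apply to the pair $\bigl(\Psi_{x_1}(z_1),\Psi_{x_1}(z_2)\bigr)$: the second term is $O(1)$ and you recover only $-\mlog d(z_1,\de D)+O(1)$. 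Using both peak maps at once does not repair this: $(\Psi_{x_1},\Psi_{x_2})\colon D\to\Delta^2$ gives, by Proposition~\ref{th:2.examples}.(iii), a lower bound by the \emph{maximum} of the two logarithmic terms, which is strictly weaker than their sum (take $d(z_1,\de D)=d(z_2,\de D)=\delta\to0$: the sum is twice the max). A further flaw: with the peak fixed at $x_1$, the Cauchy-estimate argument gives $1-|\Psi_{x_1}(z_1)|\le M\|z_1-x_1\|$, and since $\|z_1-x_1\|\ge d(z_1,\de D)$ this yields only the weaker bound $-\mlog\|z_1-x_1\|$; as in the proof of Theorem~\ref{th:1.bestil} one must peak at the boundary point \emph{nearest} to $z_1$, which is why Graham's Theorem~\ref{th:1.Graham} provides a continuous family rather than a single function.

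Note that the paper states Theorem~\ref{th:1.bestia} without proof, quoting \cite{FR}; measured against the standard argument (in \cite{FR} and \cite[Corollary~2.3.55]{Abatebook}), the idea your proposal is missing is that the additivity comes from \emph{splitting a near-optimal chain of analytic disks at an intermediate point}, not from any single holomorphic map to~$\Delta$. Concretely: fix $\eps'>0$ with $2\eps_0<\eps'<\|x_1-x_2\|-2\eps_0$, and take a chain $\phe_1,\dots,\phe_k\in\Hol(\Delta,D)$ joining $z_1$ to $z_2$ of total length less than $k_D(z_1,z_2)+\eps$. The images of the radial segments form a connected path from $z_1$ (inside $B(x_1,\eps')$) to $z_2$ (outside), so the path crosses $\de B(x_1,\eps')\cap D$ at some point $w$; since Euclidean radii are geodesics (Proposition~\ref{th:1.geod}), the disk containing $w$ splits additively there, whence $k_D(z_1,z_2)+\eps\ge k_D(z_1,w)+k_D(w,z_2)$. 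This is precisely the point you abandoned when you observed that the naked triangle inequality goes the wrong way: it becomes a lower bound because $w$ lies \emph{on} the chain. Each summand is then handled by one peak function: if $x\in\de D$ is nearest to $z_1$, then $\|w-x\|$ is bounded below, so by continuity and compactness of Graham's family $|\Psi_x(w)|\le 1-\eta$ uniformly, and $k_D(z_1,w)\ge k_\Delta\bigl(\Psi_x(z_1),\Psi_x(w)\bigr)\ge k_\Delta\bigl(0,\Psi_x(z_1)\bigr)-k_\Delta\bigl(0,\Psi_x(w)\bigr)\ge-\mlog d(z_1,\de D)-C$, and symmetrically for $k_D(w,z_2)$; summing gives (\ref{eq:1.bestia}). (Alternatively, \cite{FR} obtain the same additivity by integrating lower estimates for the infinitesimal Kobayashi metric along curves that must exit a neighborhood of $x_1$ and re-enter one of $x_2$.) Without the intermediate point---or some equivalent curve-splitting device---no combination of the two one-sided peak estimates can produce the sum.
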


\begin{theorem}[\cite{FR}]
\label{th:1.bestib}
Let $D\subset\subset\C^n$ be a bounded $C^2$ domain and 
$x_0\in\partial D$. Then there exist~$\eps>0$ and~$C\in\R$ 
such that for all $z_1$,~$z_2\in D$ with $\|z_j-x_0\|<\eps$ for $j=1$,~$2$ we have
\begin{equation}
k_D(z_1,z_2)\le \mlog\left(1+\frac{\|z_1-z_2\|}{d(z_1,\de D)}\right)+
\mlog\left(1+\frac{\|z_1-z_2\|}{d(z_2,\de D)}\right)+C\;.
\label{eq:1.bestib}
\end{equation}
\end{theorem}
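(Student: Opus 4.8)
The plan is to produce an explicit short chain of points joining $z_1$ to $z_2$, each consecutive pair lying in a Euclidean ball contained in $D$, and then to invoke the distance-decreasing property $k_D\le k_B$ (Theorem~\ref{th:2.SPlemma}.(i)) together with the closed-form expression for the Kobayashi distance of a ball (Proposition~\ref{th:2.examples}.(iv)). Write $d_j=d(z_j,\de D)$ and $s=\|z_1-z_2\|$. Since $\de D$ is a compact $C^2$ hypersurface, it admits a tubular neighbourhood of some radius $r_0>0$, the nearest-point projection $\pi$ and the Gauss map $x\mapsto\mathbf{n}_x$ are Lipschitz there, and $D$ satisfies a uniform interior ball condition: for every $z$ with $d(z,\de D)<r_0$, writing $x(z)=\pi(z)$, one has $z=x(z)-d(z,\de D)\,\mathbf{n}_{x(z)}$ and the ball $B\bigl(x(z)-r_0\mathbf{n}_{x(z)},\,r_0\bigr)$ lies in $D$ and is tangent to $\de D$ at $x(z)$. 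Fixing $\eps$ small (so that $z_1,z_2\in B(x_0,\eps)$ forces $d_j<r_0$ and $s<2\eps$ to all be much smaller than $r_0$), every constant below will depend only on $D$.

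Two elementary Lipschitz estimates drive the construction: the distance-to-boundary function is $1$-Lipschitz, so $|d_1-d_2|\le s$, and the projection and Gauss map are Lipschitz, so $\|x_1-x_2\|\le Ls$ with $x_j:=x(z_j)$. The first of these rules out the only dangerous configuration, in which one point would be much deeper than the other while the two stay close together.

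The chain has three links; let $B_j=B(x_j-r_0\mathbf{n}_{x_j},r_0)\subseteq D$ be the interior tangent ball, so that $z_j\in B_j$ with $d(z_j,\de B_j)=d_j$. First, for a fixed constant $K$ to be chosen, I push $z_j$ inward along the normal to $p_j=x_j-(d_j+Ks)\mathbf{n}_{x_j}\in B_j$. As $z_j$, $p_j$ and the centre of $B_j$ are collinear, the segment is a geodesic of $B_j$, and rescaling $B_j$ to $B^n$ in Proposition~\ref{th:2.examples}.(iv) gives
\[
k_D(z_j,p_j)\le k_{B_j}(z_j,p_j)=\mlog\frac{(2r_0-d_j)(d_j+Ks)}{(2r_0-d_j-Ks)\,d_j}\le\mlog\Bigl(1+\frac{s}{d_j}\Bigr)+C_1 ,
\]
the last step using $d_j+Ks\ll r_0$ to bound the first factor and $1+Ks/d_j\le K(1+s/d_j)$. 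Second, I join $p_1$ to $p_2$. The two Lipschitz estimates bound $\|p_1-p_2\|$ by a fixed multiple $C_2 s$ of $s$, while both $p_j$ have depth at least $Ks$; choosing $K>C_2$ forces $p_2$ into the interior tangent ball $B_*$ at $\pi(p_1)$ and keeps both points at depth comparable to their mutual distance. Reading off the explicit distance of $B_*$ (again Proposition~\ref{th:2.examples}.(iv)), the quantity governing $k_{B_*}(p_1,p_2)$ stays bounded away from $1$, whence $k_D(p_1,p_2)\le k_{B_*}(p_1,p_2)\le C_3$.

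Summing the three links by the triangle inequality gives exactly
\[
k_D(z_1,z_2)\le\mlog\Bigl(1+\frac{s}{d_1}\Bigr)+\mlog\Bigl(1+\frac{s}{d_2}\Bigr)+C ,
\]
with $C=2C_1+C_3$, which is~(\ref{eq:1.bestib}). \emph{The heart of the matter, and the only real obstacle, is the Euclidean bookkeeping of the middle link}: one must guarantee that the two lifted points sit inside a single interior ball at a depth comparable to their separation. This is precisely where the inequalities $|d_1-d_2|\le s$ and $\|x_1-x_2\|\le Ls$ are indispensable — without them the horizontal cost could blow up like $\mlog(d_2/d_1)$ and wreck the estimate. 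The holomorphic content is by contrast entirely routine, using only distance-decreasing and the explicit Kobayashi distance of a ball; note in particular that neither pseudoconvexity nor any peak function is needed, consistent with the statement holding for arbitrary bounded $C^2$ domains.
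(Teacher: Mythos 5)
Your proof is correct. A preliminary remark: the paper itself does not prove Theorem~\ref{th:1.bestib} --- it quotes the estimate from Forstneri\v c--Rosay \cite{FR} (pointing to \cite[Corollary~2.3.55, Theorem~2.3.56]{Abatebook} for a writeup) --- so there is no internal proof to compare against; judged on its own merits, your argument is sound and is in fact essentially the classical one: push each $z_j$ inward along the normal through its boundary projection to a depth proportional to $s=\|z_1-z_2\|$, pay $\mlog(1+s/d_j)$ plus a constant for each vertical link using an internally tangent ball of uniform radius $r_0$ (which exists by $C^2$-regularity and compactness of $\de D$), and observe that the middle link joins two points whose separation $\le C_2 s$ is dominated by their depth $\ge Ks$, hence costs a bounded amount. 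The bookkeeping checks out: $|d_1-d_2|\le s$ and the Lipschitz bounds on the projection and the Gauss map do give $\|p_1-p_2\|\le C_2 s$, and your vertical estimate
\[
k_{B_j}(z_j,p_j)=\mlog\frac{(2r_0-d_j)(d_j+Ks)}{(2r_0-d_j-Ks)\,d_j}\le\mlog\Bigl(1+\frac{s}{d_j}\Bigr)+C_1
\]
holds once $\eps$ is small enough that $d_j+Ks\le r_0$ and $K\ge1$. Two minor simplifications are worth recording. First, the middle link does not require the tangent ball $B_*$ at $\pi(p_1)$ at all: since $d(p_1,\de D)=d_1+Ks\ge Ks$, the Euclidean ball $B(p_1,Ks)$ already lies in $D$, so taking for instance $K\ge 2C_2$ gives directly $k_D(p_1,p_2)\le k_{B(p_1,Ks)}(p_1,p_2)\le\mlog\frac{1+C_2/K}{1-C_2/K}\le\mlog 3$, bypassing the verification that $p_2\in B_*$. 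Second, Proposition~\ref{th:2.examples}.(iv) only gives $k_{B^n}(O,z)$ from the origin; for two collinear interior points you either invoke, as you do, that Euclidean radii are geodesics of the ball (true, since a diameter is a holomorphic retract), or --- since only an upper bound is needed --- simply restrict to the complex line through the center and apply Theorem~\ref{th:2.SPlemma} to the disk slice, which yields the displayed formula at once. Neither point affects correctness, and your closing observation is accurate: the argument uses no pseudoconvexity or peak functions, consistent with the statement holding for every bounded $C^2$ domain, in contrast to the lower bounds (Theorems~\ref{th:1.bestil} and~\ref{th:1.bestia}) where strong pseudoconvexity is essential.
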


We end this section by quoting a theorem, that we shall need in Section~\ref{sec:3}, giving a different way of comparing the Kobayashi geometry and the Euclidean geometry of strongly pseudoconvex domains:

\begin{theorem}[\cite{AS}]
\label{th:1.volume}
Let $D\subset\subset\C^n$ be a strongly pseudoconvex $C^\infty$ domain, and $R>0$. Then there exist
$C_R>0$ depending only on $R$ and $D$ such that
\[
\frac{1}{C_R} d(z_0,\de D)^{n+1}\le \nu\bigl(B_D(z_0,R)\bigr)\le C_R d(z_0,\de D)^{n+1}
\]
for all $z_0\in D$, where $\nu\bigl(B_D(z_0,R)\bigr)$ denotes the Lebesgue volume of the Kobayashi ball $B_D(z_0,R)$.
\end{theorem}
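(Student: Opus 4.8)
The plan is to show that the Kobayashi ball $B_D(z_0,R)$ is comparable, up to constants depending only on $R$ and $D$, to a \emph{non-isotropic box} centred at $z_0$ whose sides have length $\asymp\delta$ in the complex normal direction ($2$ real dimensions, each of size $\asymp\delta$) and $\asymp\sqrt\delta$ in the $2(n-1)$ real complex-tangential directions, where $\delta=d(z_0,\partial D)$; such a box has Euclidean volume $\asymp\delta\cdot\delta\cdot(\sqrt\delta)^{2(n-1)}=\delta^{n+1}$, which is exactly the claimed order. Since $\partial D$ is compact, it is enough to treat points $z_0$ close to the boundary: for $z_0$ ranging in a fixed compact subset of $D$ the quantity $d(z_0,\partial D)$ is bounded between two positive constants, while a standard compactness argument (the balls are relatively compact by Corollary~\ref{th:1.pscomp} and their volume varies continuously with $z_0$) shows that $z_0\mapsto\nu\bigl(B_D(z_0,R)\bigr)$ is bounded away from $0$ and $+\infty$ there, so the estimate is automatic after enlarging $C_R$.

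So fix $x_0\in\partial D$, let $z_0$ be close to $x_0$ and put $\delta=d(z_0,\partial D)$. The \emph{normal confinement} is the clean part: combining the upper estimate (\ref{eq:1.bestiu}) and the lower estimate (\ref{eq:1.bestil}) from a single fixed base point $z_*$ with the triangle inequality $k_D(z_0,z)\le R$ for $z\in B_D(z_0,R)$, one gets
\[
\tfrac1C\,\delta\le d(z,\partial D)\le C\,\delta\qquad\text{for all }z\in B_D(z_0,R),
\]
with $C=C(R,D)$; thus the whole ball sits in a slab of thickness $\asymp\delta$ about $\partial D$, which pins down the real normal coordinate. The \emph{tangential confinement} is where strong pseudoconvexity enters: using Graham's globally defined, continuously varying peak function $\Psi_{x_0}$ (Theorem~\ref{th:1.Graham}) and the non-expansiveness $k_D(z_0,z)\ge k_\Delta\bigl(\Psi_{x_0}(z_0),\Psi_{x_0}(z)\bigr)$, together with the quantitative peaking $1-\mathrm{Re}\,\Psi_{x_0}(z)\gtrsim d(z,\partial D)+\|(z-x_0)_{\mathrm{tan}}\|^2$ coming from positivity of the Levi form, one sees that a point whose complex-tangential displacement from $z_0$ exceeds $C(R)\sqrt\delta$ has $\Psi_{x_0}$-image pseudohyperbolically far from $\Psi_{x_0}(z_0)$, and hence lies outside $B_D(z_0,R)$; the imaginary normal (``vertical'') coordinate is controlled to order $\delta$ in the same way. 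This yields the inclusion $B_D(z_0,R)\subseteq\text{(box)}$ and the upper volume bound $\nu\bigl(B_D(z_0,R)\bigr)\le C_R\,\delta^{n+1}$.

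For the reverse inclusion (the lower volume bound) I would produce, by hand, an \emph{anisotropic family of analytic disks} through $z_0$ realising the tangential $\sqrt\delta$ scale. After localising and convexifying near $x_0$ via Theorem~\ref{th:1.Narasimhan}, the boundary is put in the Siegel normal form $2\,\mathrm{Re}\,w_1+\|w'\|^2+o(\cdots)<0$, and for a tangential vector $a$ with $\|a\|\asymp\sqrt\delta$ a parabolic disk of the shape $\zeta\mapsto\bigl(w_1(z_0)-c\|a\|^2\zeta^2,\,w'(z_0)+a\zeta\bigr)$ stays inside $D$ and connects $z_0$ to a point tangentially displaced by $\asymp\sqrt\delta$ with Poincar\'e length bounded in terms of $R$ alone; combining such disks with a one-variable disk in the normal variable shows that the non-isotropic box of the correct dimensions is contained in $B_D(z_0,R)$, giving $\nu\bigl(B_D(z_0,R)\bigr)\ge C_R^{-1}\delta^{n+1}$. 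The main obstacle throughout is the \emph{uniformity of the constants} in $z_0$ (equivalently in $x_0$): the pointwise Forstneri\v c--Rosay bounds (\ref{eq:1.bestia}) and (\ref{eq:1.bestib}) have constants depending on the boundary point(s), and moreover (\ref{eq:1.bestib}) is \emph{isotropic} in $\|z_1-z_2\|$, so by itself it only yields the weaker lower bound $\delta^{2n}$. The two devices that restore uniformity and supply the sharp $\sqrt\delta$ tangential scale are precisely the \emph{continuous} dependence of Graham's peak functions on $x_0\in\partial D$ (Theorem~\ref{th:1.Graham}) together with the compactness of $\partial D$, and the normal-form/parabolic-disk construction, whose estimates are uniform because the Levi form is uniformly positive definite on the compact boundary.
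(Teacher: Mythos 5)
The paper itself offers no proof of Theorem~\ref{th:1.volume}: it is quoted from \cite{AS}, so the only comparison available is with the argument in that reference. Your strategy --- squeeze $B_D(z_0,R)$ between non-isotropic boxes of size $\asymp\delta$ in the two real normal directions and $\asymp\sqrt\delta$ in the $2(n-1)$ complex-tangential directions, so that the volume is $\asymp\delta^{n+1}$ --- is exactly the mechanism used there (it goes back to the comparison of Kobayashi balls with Kor\'anyi-type anisotropic polydisks in Li's work \cite{Li} on strongly pseudoconvex Bergman spaces). The individual steps are sound: your slab confinement from (\ref{eq:1.bestiu}) and (\ref{eq:1.bestil}) is literally Lemma~\ref{th:3.lemma5} of this paper; the lower bound via parabolic disks after Narasimhan convexification (Proposition~\ref{th:1.Narasimhan}) works, with uniformity in $x_0$ coming from compactness of $\partial D$ and uniform Taylor remainders of the defining function; and your diagnosis that (\ref{eq:1.bestib}), being isotropic, can only give $\delta^{2n}$ is correct and is precisely why the anisotropic construction is needed.

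One caveat deserves to be made explicit. Theorem~\ref{th:1.Graham} as stated supplies only a \emph{continuous family of peak functions}, with no size estimates, so the quantitative peaking $1-\mathrm{Re}\,\Psi_{x_0}(z)\gtrsim d(z,\partial D)+\|(z-x_0)_{\mathrm{tan}}\|^2$ cannot be cited off that statement: you must descend to the construction, i.e.\ to the Levi polynomial $P_{x_0}$ of a strictly plurisubharmonic defining function, which gives the two-sided estimate $|1-\Psi_{x_0}(z)|\asymp \delta(z)+|\mathrm{Im}\,P_{x_0}(z)|+\|z-x_0\|^2$ uniformly in $x_0$ (note the full isotropic square, stronger than your tangential-only version; together with the imaginary part it is what pins the vertical coordinate to scale $\delta$ rather than $\sqrt\delta$, without which the box would have volume $\delta^{n+1/2}$). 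You should also record the two elementary facts that convert $k_D(z_0,z)\le R$ into geometry: $|1-\Psi_{x_0}(z_0)|\asymp\delta$ when $x_0$ is the closest boundary point to $z_0$, and the disk estimate that $k_\Delta(w_0,w)\le R$ forces $|1-w|\le \E^{2R}|1-w_0|$. With these spelled out your proof closes; the appeal to continuity of $z_0\mapsto\nu\bigl(B_D(z_0,R)\bigr)$ on compacta is unnecessary, since the inscribed Euclidean ball $B_B(z_0,R)\subseteq B_D(z_0,R)$ of radius $\tanh(R)\,\delta(z_0)$ already bounds the volume below away from $\partial D$.
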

 
\section{Holomorphic dynamics}
\label{sec:2}

In this section we shall describe the dynamics of holomorphic self-maps of taut manifolds, and in particular the dynamics of holomorphic self-maps of convex and strongly pseudoconvex domains. A main tool in this exploration will be provided by the Kobayashi distance.

\begin{definition}
Let $f\colon X\to X$ be a self-map of a set~$X$. Given $k\in\mathbb{N}$, we define the 
$k$-th \emph{iterate}~$f^k$ of~$f$ setting by induction $f^0=\mathrm{id}_X$, $f^1=f$ and
$f^k=f\circ f^{k-1}$. Given $x\in X$, the \emph{orbit} of~$x$ is the set $\{f^k(x)\mid k\in\mathbb{N}\}$. 
\end{definition}

Studying the dynamics of a self-map $f$ means studying the asymptotic behavior of the sequence $\{f^k\}$ of iterates of~$f$; in particular, in principle one would like to know the behavior of all orbits. In general this is too an ambitious task; but as we shall see it can be achieved for holomorphic self-maps of taut manifolds, because the normality condition prevents the occurrence of chaotic behavior.
 
The model theorem for this theory is the famous Wolff-Denjoy theorem (for a proof see, e.g., \cite[Theorem~1.3.9]{Abatebook}):

\begin{theorem}[Wolff-Denjoy, \cite{Wo, De}]
\label{th:2.WD}
Let $f\in\Hol(\Delta,\Delta)\setminus\{\mathrm{id}_\Delta\}$ be a holomorphic self-map of $\Delta$ different from the identity. Assume that $f$ is not an elliptic automorphism. Then the sequence of iterates of $f$ converges, uniformly on compact subsets, to a constant map $\tau\in\overline{\Delta}$.
\end{theorem}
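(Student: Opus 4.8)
The plan is to distinguish cases according to whether $f$ has a fixed point inside $\Delta$ or not, and to use the Schwarz-Pick lemma together with normality (which holds because $\Delta$ is complete hyperbolic, hence taut) to control the limit of the iterates. The crucial dichotomy is the existence of a fixed point in $\Delta$.

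First I would treat the case where $f$ has a fixed point $z_0\in\Delta$. Since $f$ is not an elliptic automorphism and not the identity, Theorem~\ref{th:1.CC} applied to $f$ shows that $\D f_{z_0}$ cannot be an isometry at the infinitesimal level unless $f\in\Aut(\Delta)$; combining the Schwarz-Pick lemma (Theorem~\ref{th:1.SPlemma}) with the fixed point, one gets $k_\Delta\bigl(f^k(\zeta),z_0\bigr)=k_\Delta\bigl(f^k(\zeta),f^k(z_0)\bigr)\le k_\Delta(\zeta,z_0)$, so every orbit stays in a fixed Kobayashi ball. The iterates $\{f^k\}$ form a normal family, and any limit $h$ fixes $z_0$ with $\D h_{z_0}=\lim(\D f_{z_0})^k$. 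Ruling out the elliptic-automorphism case forces $|\det\D f_{z_0}|<1$ (by Theorem~\ref{th:1.CC}(iii), since $f\notin\Aut(\Delta)$ when $f$ is not an elliptic automorphism with a fixed point), hence the eigenvalue $f'(z_0)$ has modulus $<1$, and the only possible limit is the constant $\tau=z_0$. This gives uniform convergence on compacta to the constant $z_0\in\Delta$.

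The harder and more interesting case is when $f$ has no fixed point in $\Delta$. Here I would take any limit $h$ of a subsequence of $\{f^k\}$ (normality guarantees such a limit exists as a holomorphic self-map, since orbits cannot escape to the boundary uniformly without the whole sequence being compactly divergent, which for a single disk means converging to a boundary constant). The standard device is the \emph{Wolff point}: one constructs a boundary point $\tau\in\de\Delta$ and a family of horocycles (sublevel sets of the Busemann-type functional associated to $\tau$) that are $f$-invariant, by applying the Schwarz-Pick lemma to the maps $f$ restricted to shrinking interior disks $\{|\zeta|<r\}$, extracting fixed points $z_r$ of $r\mapsto r f$, and letting $r\to1$ so that $z_r\to\tau\in\de\Delta$. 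Invariance of the horocycles at $\tau$ then shows that every orbit clusters only at $\tau$, and normality upgrades this to uniform convergence on compacta to the constant map $\tau$.

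The main obstacle is the construction of the Wolff point and the verification that the associated horocycles are genuinely $f$-invariant; this is exactly the content that does not follow formally from the earlier abstract material and requires the one-dimensional geometry of $\Delta$ (the explicit form of horocycles and the boundary Schwarz lemma). Everything else—the normality argument, the exclusion of compactly divergent behavior in the interior-fixed-point case, and the identification of the limit as a constant—follows cleanly from the Schwarz-Pick lemma and the tautness of $\Delta$ established earlier in the text. I expect the fixed-point case to be essentially a direct application of Theorem~\ref{th:1.CC}, while the fixed-point-free case carries the genuine difficulty.
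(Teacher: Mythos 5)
Your proposal takes essentially the same route as the text: the dichotomy between an interior fixed point (where the Schwarz--Pick lemma makes $f$ a strict contraction, so the iterates collapse to the fixed point) and the fixed-point-free case handled by Wolff's lemma, whose proof in the text (Theorem~\ref{th:2.Wolff}) is exactly your construction---fixed points $\eta_\nu$ of $r_\nu f$ with $r_\nu\to1$, $\eta_\nu\to\tau\in\de\Delta$, and passage to the limit in the Schwarz--Pick inequality to obtain $f$-invariant horocycles, after which every subsequential limit is a boundary constant lying in $\overline{E_0(\tau,R)}\cap\de\Delta=\{\tau\}$ and normality gives convergence of the whole sequence. The one step you leave loose---deducing in the fixed-point case that every limit map is the constant $z_0$ from $|f'(z_0)|<1$ (note that $h(z_0)=z_0$ and $h'(z_0)=0$ alone do not force $h$ to be constant)---is filled either by the uniform strict contraction of $f$ on compact Kobayashi balls centered at $z_0$, or by Theorems~\ref{th:2.BedAba} and~\ref{th:2.limitnumb}, which show the limit manifold is the single point $z_0$; so there is no genuine gap, only a detail to make explicit.
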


\begin{definition}
Let $f\in\Hol(\Delta,\Delta)\setminus\{\mathrm{id}_\Delta\}$ be a holomorphic self-map of $\Delta$ different from the identity and not an elliptic automorphism. Then the point $\tau\in\overline{\Delta}$ whose existence is asserted by Theorem~\ref{th:2.WD} is the \emph{Wolff point} of~$f$.
\end{definition}

Actually, we can even be slightly more precise, introducing a bit of terminology.

\begin{definition}
Let $f\colon X\to X$ be a self-map of a set~$X$. A \emph{fixed point} of $f$ is a point $x_0\in X$ such that $f(x_0)=x_0$. We shall denote by $\Fix(f)$ the set of fixed points of~$f$. More generally, we shall say that $x_0\in X$ is \emph{periodic} of \emph{period} $p\ge 1$ if $f^p(x_0)=x_0$ and
$f^j(x_0)\ne x_0$ for all $j=1,\ldots,p-1$.
We shall say that $f$ is \emph{periodic} of \emph{period}~$p\ge 1$ if $f^p=\mathrm{id}_X$, that is if all points are periodic of period~$p$. 
\end{definition}

\begin{definition}
Let $f\colon X\to X$ be a continuous self-map of a topological space~$X$. We shall say that a continuous map $g\colon X\to X$ is a \emph{limit map} of~$f$ if there is a subsequence of iterates of~$f$ converging to~$g$ (uniformly on compact subsets). We shall denote by $\Gamma(f)\subset C^0(X,X)$ the set of limit maps of~$f$. If $\mathrm{id}_X\in\Gamma(f)$ we shall say that $f$ is \emph{pseudoperiodic}.
\end{definition}

\begin{example}
\label{ex:2.ellaut}
Let $\gamma_\theta\in\Aut(\Delta)$ be given by $\gamma_\theta(\zeta)=\E^{2\pi\I\theta}\zeta$. It is easy to check that $\gamma_\theta$ is periodic if $\theta\in\mathbb{Q}$, and it is pseudoperiodic (but not periodic) if $\theta\in\R\setminus\mathbb{Q}$.
\end{example}

\begin{definition}
Let $X$ and $Y$ be two sets (topological spaces, complex manifolds, etc.). Two self-maps $f\colon X\to X$ and $g\colon Y\to Y$ are \emph{conjugate} if there exists a bijection (homeomorphism, biholomorphism, etc.) $\psi\colon X\to Y$ such that $f=\psi^{-1}\circ g\circ \psi$. 
\end{definition}

If $f$ and $g$ are conjugate via $\psi$, we clearly have $f^k=\psi^{-1}\circ g^k\circ\psi$ for all $k\in\mathbb{N}$; therefore $f$ and $g$ share the same dynamical properties.

\begin{example}
It is easy to check that any elliptic automorphism of $\Delta$ is (biholomorphically) conjugated to one of the automorphisms $\gamma_\theta$ introduced in Example~\ref{ex:2.ellaut}. Therefore an elliptic automorphism of $\Delta$ is necessarily periodic or pseudoperiodic.
\end{example}

We can now better specify the content of Theorem~\ref{th:2.WD} as follows. Take $f\in\Hol(\Delta,\Delta)$ different from the identity. We have two cases: either $f$ has a fixed point $\tau\in\Delta$ or $\Fix(f)=\emptyset$ (notice that, by the Schwarz-Pick lemma and the structure of the automorphisms of~$\Delta$, the only holomorphic self-map of~$\Delta$ with at least two distinct fixed points is the identity). Then:
\begin{enumerate}
\item[(a)] If $\Fix(f)=\{\tau\}$, then either $f$ is an elliptic automorphism---and hence it is periodic or pseudoperiodc---or the whole sequence of iterates converges to the constant function~$\tau$;
\item[(b)] if $\Fix(f)=\emptyset$ then there exists a unique point $\tau\in\de\Delta$ such that the whole sequence of iterates converges to the constant function~$\tau$.
\end{enumerate}
So there is a natural dichotomy between self-maps with fixed points and self-maps without fixed points. Our aim is to present a (suitable) generalization of the Wolff-Denjoy theorem to taut manifolds in any (finite) dimension. Even in several variables a natural dichotomy will appear; but it will be slightly different.

\subsection{Dynamics in taut manifolds}
\label{subsec:2.1}

Let $X$ be a taut manifold. Then the whole family $\Hol(X,X)$ is normal; in particular, if $f\in\Hol(X,X)$ the sequence of iterates $\{f^k\}$ is normal. This suggests to subdivide the study of the dynamics of self-maps of $X$ in three tasks:
\begin{enumerate}
\item[(a)] to study the dynamics of $f$ when the sequence $\{f^k\}$ is not compactly divergent;
\item[(b)] to find conditions on $f$ ensuring that the sequence $\{f^k\}$ is not compactly divergent;
\item[(c)] to study the dynamics of $f$ when the sequence $\{f^k\}$ is compactly divergent.
\end{enumerate}
So in several variables the natural dichotomy to consider is between maps having a compactly divergent sequence of iterates and maps whose sequence of iterates is not compactly divergent.
If $f$ has a fixed point its sequence of iterates cannot be compactly divergent; so this dichotomy has something to do with the dichotomy discussed in the introduction to this section but, as we shall see, in general they are not the same.

In this subsection we shall discuss tasks (a) and (b). To discuss task (c) we shall need a boundary; we shall limit ourselves to discuss (in the next three subsections) the case of bounded (convex or strongly pseudoconvex) domains in~$\C^n$.

An useful notion for our discussion is the following

\begin{definition}
A \emph{holomorphic retraction} of a complex manifold $X$ is a holomorphic self-map $\rho\in\Hol(X,X)$ such that $\rho^2=\rho$. In particular, $\rho(X)=\Fix(\rho)$. The image of a holomorphic retraction is a \emph{holomorphic retract.}
\end{definition}

The dynamics of holomorphic retraction is trivial: the iteration stops at the second step. On the other had, it is easy to understand why holomorphic retractions might be important in holomorphic dynamics. Indeed, assume that the sequence of iterates $\{f^k\}$ converges to a map~$\rho$.
Then the subsequence $\{f^{2k}\}$ should converge to the same map; but $f^{2k}=f^k\circ f^k$, and thus $\{f^{2k}\}$ converges to $\rho\circ\rho$ too---and thus $\rho^2=\rho$, that is $\rho$ is a holomorphic retraction.

In dimension one, a holomorphic retraction must be either the identity or a constant map, because of the open mapping theorem and the identity principle. In several variables there is instead plenty of non-trivial holomorphic retractions.

\begin{example}
Let $B^2$~be the unit 
euclidean ball in~$\C^2$. The power series
\[
1-\sqrt{1-t}=\sum_{k=1}^\infty c_kt^k
\]
is converging for $|t|<1$ and has $c_k>0$ for all~$k\ge1$. Take $g_k\in\Hol
(B^2,\C)$ such that $|g_k(z,w)|\le c_k$ for all~$(z,w)\in B^2$, and define
$\phi\in\Hol(B^2,\Delta)$ by
\[
\phi(z,w)=z+\sum_{k=1}^\infty g_k(z,w)\,w^{2k}.\;
\]
Then $\rho(z,w)=\bigl(\phi(z,w),0)$ always satisfies $\rho^2=\rho$, and it 
is neither constant nor the identity.
\end{example} 

On the other hans, holomorphic retracts cannot be wild. This has been proven for the first time by Rossi \cite{Rossi}; here we report a clever proof due to H. Cartan \cite{Ca}:  

\begin{lemma}
\label{th:2.ret}
Let $X$ be a complex manifold, and $\rho\colon X\to X$ a 
holomorphic retraction of~$X$. Then the image of $\rho$ is a closed submanifold
of~$X$.
\end{lemma}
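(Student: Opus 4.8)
The plan is to prove the statement locally: I fix a point $p$ in the image $M=\rho(X)=\Fix(\rho)$ and produce a neighborhood in which $M$ is a $k$-dimensional complex submanifold, where $k$ will turn out to be the rank of $d\rho_p$. Closedness of $M$ is immediate, since $M=\Fix(\rho)=\{x\in X:\rho(x)=x\}$ is the locus where the continuous map $\rho$ agrees with the identity, hence closed.

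The first step is to analyze the differential at a fixed point. Differentiating the relation $\rho\circ\rho=\rho$ at $p$ (using $\rho(p)=p$) and the chain rule gives $L\circ L=L$ for $L:=d\rho_p$; thus $L$ is a linear projection, diagonalizable with eigenvalues $0$ and $1$, and its rank $k$ equals its trace. I then pass to local holomorphic coordinates on a ball centered at $p=0$, writing $T_pX=\mathbb{C}^n$ split as $\mathbb{C}^k\times\mathbb{C}^{n-k}$ so that $L$ is the standard projection $(z,w)\mapsto(z,0)$.

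The heart of the argument is to upgrade the rank information from the single point $p$ to a full neighborhood, that is, to show $\rho$ has constant rank $k$ near $p$. Differentiating $\rho\circ\rho=\rho$ now at an arbitrary point $x$ yields $d\rho_{\rho(x)}\circ d\rho_x=d\rho_x$, whence $\mathrm{im}\,d\rho_x\subseteq\mathrm{im}\,d\rho_{\rho(x)}$ and $\mathrm{rank}\,d\rho_x\le\mathrm{rank}\,d\rho_{\rho(x)}$. Since $\rho(x)\in\Fix(\rho)$, the differential $d\rho_{\rho(x)}$ is again idempotent, so its rank equals its trace; and because $x\mapsto\mathrm{trace}\,d\rho_x$ is continuous, takes integer values on $\Fix(\rho)$, and equals $k$ at $p$, it must remain equal to $k$ at the fixed points near $p$. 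Combining the resulting upper bound $\mathrm{rank}\,d\rho_x\le k$ with the lower semicontinuity of rank (the locus where some $k\times k$ minor is nonzero is open) forces $\mathrm{rank}\,d\rho_x\equiv k$ on a neighborhood $V$ of $p$. This interplay between the rank at $x$ and the rank at the fixed point $\rho(x)$ is the step I expect to be the main obstacle, and the place where the retraction hypothesis is genuinely used.

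With constant rank in hand, the conclusion follows from the holomorphic constant rank theorem: $\rho|_V$ is, in suitable coordinates, a linear projection, so $\rho(V)$ is a $k$-dimensional complex submanifold near $p$. It only remains to identify $M$ with $\rho(V)$ near $p$: if $q\in M\cap V$ then $q=\rho(q)\in\rho(V)$, while trivially $\rho(V)\subseteq M$, so $M\cap V=\rho(V)\cap V$. Hence $M$ is a submanifold in a neighborhood of each of its points, and being closed it is a closed submanifold of $X$.
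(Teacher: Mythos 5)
Your proof is correct, but it follows a genuinely different route from the one in the paper. The paper reproduces Cartan's trick: after reducing to a bounded domain via a local chart (using $V=\rho^{-1}(U)\cap U$ to get $\rho(V)\subseteq V$), it sets $P=\D\rho_{z_0}$ and defines the explicit map $\phe=\mathrm{id}+(2P-\mathrm{id})\circ(\rho-P)$, which has $\D\phe_{z_0}=\mathrm{id}$ and satisfies, by a direct computation using only $P^2=P$ and $\rho^2=\rho$, the identity $\phe\circ\rho=P\circ\phe$; thus $\phe$ is a chart in which $\rho$ \emph{is} the linear projection $P$, and the submanifold structure of the image is immediate, with no appeal to the constant rank theorem. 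You instead establish that $\rho$ has constant rank near a fixed point and then invoke the holomorphic rank theorem; the key step---getting the upper bound $\mathrm{rank}\,\D\rho_x\le\mathrm{rank}\,\D\rho_{\rho(x)}$ from $\D\rho_{\rho(x)}\circ\D\rho_x=\D\rho_x$, and then pinning down the rank at nearby fixed points by the continuity and integrality of the trace of an idempotent---is a nice and correct argument (the trace is chart-independent at fixed points, and working in a single chart around $p$ makes $x\mapsto\mathrm{trace}\,\D\rho_x$ continuous, which is all you use). Your observation that $M=\Fix(\rho)$ gives closedness for free is the same one the paper relies on implicitly. What the two approaches buy: Cartan's argument is shorter and completely self-contained, producing an explicit linearizing chart in one computation (and showing constancy of the rank as a byproduct), whereas yours uses standard machinery in a transparent way and isolates exactly where idempotency enters, at the cost of invoking the rank theorem; both arguments work verbatim in the smooth category. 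The only point where your write-up is slightly informal is the final step, where the rank theorem linearizes $\rho$ only after possibly shrinking the neighborhood $V$, so ``$\rho(V)$ is a submanifold near $p$'' should be read as holding for a suitably small $V$; this is a presentational detail, not a gap.
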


\begin{proof} 
Let $M=\rho(X)$ be the image of~$\rho$, and take~$z_0\in M$. Choose an 
open neighborhood~$U$ of~$z_0$ in~$X$ contained in a local chart for~$X$ 
at~$z_0$. Then $V=\rho^{-1}(U)\cap U$ is an open neighborhood of~$z_0$ 
contained in a local chart such that $\rho(V)\subseteq V$. Therefore 
without loss of generality we can assume that $X$~is a bounded domain~$D$ 
in~$\C^n$.

Set $P=\D\rho_{z_0}\colon\C^n\to\C^n$, and define $\phe\colon D\to\C^n$ by
\[
\phe=\mathrm{id}_D+(2P-\mathrm{id}_D)\circ(\rho-P)\;.
\]
Since $\D\phe_{z_0}=\mathrm{id}$, the map $\phe$~defines a local chart in a neighborhood 
of~$z_0$. Now $P^2=P$ and $\rho^2=\rho$; hence
\begin{eqnarray*}
\phe\circ\rho&=&\rho+(2P-\mathrm{id}_D)\circ\rho^2-(2P-\mathrm{id}_D)\circ P\circ
	\rho\\
	&=&P\circ\rho=P+P\circ(2P-\mathrm{id}_D)\circ(\rho-P)=P\circ\phe\;.
\end{eqnarray*}
Therefore in this local chart $\rho$ becomes linear, and $M$~is a 
submanifold near~$z_0$. By the arbitrariness of~$z_0$, the assertion 
follows. 
\qed
\end{proof}

Having the notion of holomorphic retraction, we can immediately explain why holomorphic dynamics is trivial in compact hyperbolic manifolds (for a proof see, e.g., \cite[Theorem~2.4.9]{Abatebook}):

\begin{theorem}[Kaup, \cite{Kaup}]
Let $X$ be a compact hyperbolic manifold, and $f\in\Hol(X,X)$. Then there is $m\in\mathbb{N}$ such that $f^m$ is a holomorphic retraction.
\end{theorem}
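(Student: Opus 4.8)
The plan is to use tautness to extract a convergent subsequence of iterates, recognise its limit as a holomorphic retraction onto the ``limit set'' $M=\bigcap_k f^k(X)$, and then use the irreducibility of the images together with the finiteness of $\Aut(M)$ to upgrade that limit retraction to an honest finite power $f^m$. First I would note that since $X$ is compact hyperbolic it is complete hyperbolic (Proposition~\ref{th:2.chyp}), hence taut (Proposition~\ref{th:2.chyptaut}); so $\Hol(X,X)$ is normal, and because $X$ is compact no sequence can be compactly divergent. Thus the iterates admit a subsequence $f^{k_j}\to g$, and after a further refinement I would arrange $p_j:=k_{j+1}-k_j\to+\infty$ with $f^{p_j}\to h$. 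Passing to the limit in $f^{k_{j+1}}=f^{p_j}\circ f^{k_j}$, and using uniform convergence on the compact $X$ (together with uniform continuity of $h$), yields the key relation $h\circ g=g$.

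Next I would identify $M$. Put $A_k=f^k(X)$; these are compact and decreasing, $A_{k+1}\subseteq A_k$. I claim $h(X)=\bigcap_k A_k=:M$: the inclusion $h(X)\subseteq A_k$ holds because eventually $p_j\ge k$, and the reverse inclusion follows by choosing preimages $y=f^{p_j}(x_j)$ for $y\in\bigcap_k A_k$ and extracting $x_j\to x$, so $y=h(x)$. From $h\circ g=g$ one gets $h|_M=\mathrm{id}_M$, and since $h(X)\subseteq M$ this exhibits $h$ as a holomorphic retraction onto $M$; by Cartan's Lemma~\ref{th:2.ret}, $M$ is therefore a closed, hence compact, complex submanifold of $X$.

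The crux is to show the images actually \emph{stabilise}. Each $A_k=f^k(X)$ is an \emph{irreducible} analytic subset of $X$: it is analytic by Remmert's proper mapping theorem (as $X$ is compact) and irreducible as the holomorphic image of the irreducible $X$. The dimensions $\dim A_k$ are non-increasing, so $\dim A_k=r$ for all $k\ge k_0$; but for $k\ge k_0$ the inclusion $A_{k+1}\subseteq A_k$ of two irreducible analytic sets of the \emph{same} dimension forces $A_{k+1}=A_k$ (a proper analytic subset of an irreducible set has strictly smaller dimension). Hence $f^{k_0}(X)=\bigcap_k A_k=M$ and, in particular, $f(M)=A_{k_0+1}=A_{k_0}=M$. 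Now $f|_M\colon M\to M$ is surjective, and it is injective because $f^{p_j}|_M\to h|_M=\mathrm{id}_M$ (from $f(a)=f(b)$ one gets $f^{p_j}(a)=f^{p_j}(b)$, and $j\to\infty$ gives $a=b$); a holomorphic bijection of the compact manifold $M$ is biholomorphic, so $f|_M\in\Aut(M)$. Since $M$ is compact hyperbolic (a closed submanifold of a hyperbolic manifold, Proposition~\ref{th:2.exhypman}), $\Aut(M)$ is a compact Lie group (Theorem~\ref{th:1.authyp}) with vanishing Lie algebra — a nonzero holomorphic vector field would integrate to a nonconstant holomorphic map $\C\to M$, impossible because $k_M$ is a genuine distance while $k_{\C}\equiv0$ — hence $\Aut(M)$ is finite and $f^q|_M=\mathrm{id}_M$ for some $q$. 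Taking $m$ a multiple of $q$ with $m\ge k_0$ gives $f^m(X)=f^{m-k_0}(M)=M$ and $f^m|_M=\mathrm{id}_M$, so for every $x\in X$ one has $f^m(x)\in M$ and $f^m\bigl(f^m(x)\bigr)=f^m(x)$, i.e.\ $(f^m)^2=f^m$.

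\textbf{Main obstacle.} The delicate step is the stabilisation of images: tautness by itself only produces a \emph{limit} retraction $g$ (or $h$), not a finite iterate, and a priori the $A_k$ could shrink towards $M$ without ever equalling it — a transverse ``contraction'' that would make no $f^m$ idempotent. I expect the irreducibility of the $A_k$ to be exactly the tool that forbids this, converting stabilisation of dimensions into stabilisation of sets; coupled with the finiteness of $\Aut(M)$, this is precisely what makes the exponent $m$ finite rather than merely asymptotic.
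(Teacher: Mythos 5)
Your argument is correct in substance, and it is worth saying up front that the paper itself does not prove this theorem: it only states it, referring to \cite[Theorem~2.4.9]{Abatebook}. Your proof is essentially the classical one of Kaup as presented there: extract a limit retraction from the non-divergent iterates, stabilise the decreasing chain of images $A_k=f^k(X)$ by Remmert's proper mapping theorem plus irreducibility (a proper analytic subset of an irreducible analytic set has strictly smaller dimension, so the $A_k$ can strictly decrease only finitely often), and then use finiteness of the order of $f|_M$ via the absence of holomorphic vector fields on a compact hyperbolic manifold. Your diagnosis of the crux is exactly right: the paper's general taut-manifold machinery (Theorem~\ref{th:2.BedAba}, Theorem~\ref{th:2.Calka}, Corollary~\ref{th:2.qr}) produces only the \emph{limit} retraction $\rho$ and the compact group $\Gamma(f)\cong\mathbb{Z}_q\times\mathbb{T}^r$, and cannot by itself force $f^k(X)=M$ for a finite $k$ (on the non-compact disk, $\zeta\mapsto\zeta/2$ has no idempotent iterate); the stabilisation of the irreducible analytic images is precisely the extra ingredient compactness buys, and you supply it correctly.

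Two small points need one line each to be watertight. First, you deduce $h|_M=\mathrm{id}_M$ from $h\circ g=g$, but that identity only gives $h=\mathrm{id}$ on $g(X)$; you must also know $g(X)\supseteq M$. This follows by exactly the subsequence-of-preimages argument you already wrote for $h(X)=M$: for $y\in M$ choose $x_j$ with $y=f^{k_j}(x_j)$, pass to $x_j\to x$, and get $y=g(x)$; so in fact $g(X)=M$. Since $h|_M=\mathrm{id}_M$ feeds both the retraction property of $h$ and the injectivity of $f|_M$, this line should not be skipped. Second, Theorem~\ref{th:1.authyp} gives only that $\Aut(M)$ is a real Lie group, not that it is compact; compactness requires observing that $\Aut(M)$ is closed in the compact space $\Hol(M,M)$ (a limit of automorphisms, taken together with a limit of their inverses, is an automorphism, composition being continuous on this equicontinuous family), or alternatively you can invoke Corollary~\ref{th:2.qr} to get that the closure of $\{(f|_M)^k\}$ is a compact group $\mathbb{Z}_q\times\mathbb{T}^r$ and then use your vector-field argument (which, note, implicitly uses the Bochner--Montgomery identification of the Lie algebra of $\Aut(M)$ with the holomorphic vector fields on the compact manifold $M$) to kill the torus factor, so that $f|_M$ has finite order $q$. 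With these two additions your proof stands and coincides in strategy with the proof the paper points to.
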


So from now on we shall concentrate on non-compact taut manifolds. The basic result describing the dynamics of self-maps whose sequence of iterates is not compactly divergent is the following:

\begin{theorem}[Bedford, \cite{Bed}; Abate, \cite{Ab1}]
\label{th:2.BedAba}
Let $X$ be a taut manifold, and $f\in\Hol(X,X)$. 
Assume that the sequence~$\{f^k\}$ of iterates of~$f$ is not compactly 
divergent. Then there exist a unique holomorphic retraction $\rho\in\Gamma(f)$ onto a submanifold~$M$ of~$X$ such that every limit map $h\in\Gamma(f)$ 
is of the form
\begin{equation}
h=\gamma\circ\rho\;,
\label{eq:2.limit}
\end{equation}
where~$\gamma$ is an automorphism of~$M$. Furthermore, $\phe=f|_M\in\Aut(M)$ and $\Gamma(f)$ is isomorphic to a subgroup of~$\Aut(M)$, the closure of $\{\phe^k\}$ in~$\Aut(M)$.
\end{theorem}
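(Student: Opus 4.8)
The plan is to exploit the normality of $\Hol(X,X)$ guaranteed by tautness together with the retraction-squaring observation preceding the statement. Since $\{f^k\}$ is not compactly divergent and $X$ is taut, some subsequence $\{f^{k_i}\}$ converges to a limit map $\rho\in\Gamma(f)$ (not the constant $\infty$). First I would show one can arrange this limit to be a retraction: by passing to a further subsequence I may assume the differences $k_{i+1}-k_i\to+\infty$, and then argue that $\{f^{k_{i+1}-k_i}\}$ also has a non-compactly-divergent subsequence whose limit is the identity on $M=\rho(X)$. The cleanest route is to set $p_i=k_{i+1}-k_i$ and observe $f^{k_{i+1}}=f^{p_i}\circ f^{k_i}$; taking limits (using equicontinuity from the $1$-Lipschitz property for $k_X$, which holds by hyperbolicity) forces the limit of $\{f^{p_i}\}$ to fix every point of $M$. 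This yields a retraction $\rho$ with $\rho^2=\rho$, and Lemma~\ref{th:2.ret} then makes $M=\rho(X)=\Fix(\rho)$ a closed submanifold.

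Next I would establish that $\phe=f|_M\in\Aut(M)$. Since $\rho\circ f=f\circ\rho=\rho$ restricted appropriately, $f$ maps $M$ into $M$; the limit-of-$\{f^{p_i}\}$ map constructed above restricts to $\mathrm{id}_M$, and since each $f^{p_i}|_M$ is a self-map of the taut manifold $M$ (tautness of $M$ follows from Proposition~\ref{th:2.extaut}.(i), as $M$ is a closed submanifold), Wu's uniqueness result (Corollary~\ref{th:1.Cu}, via Theorem~\ref{th:1.CC}.(iv)) shows that $\phe$ is invertible on $M$ with holomorphic inverse obtained as the limit of $\{f^{p_i-1}|_M\}$. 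Thus $\phe\in\Aut(M)$ and the restriction map $\Gamma(f)\to C^0(M,M)$ lands in $\Aut(M)$.

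Then I would pin down the structure of an arbitrary limit map $h\in\Gamma(f)$. Writing $h=\lim f^{m_j}$, I factor $f^{m_j}=f^{m_j-k_i}\circ f^{k_i}$ and pass to limits to get $h=\gamma\circ\rho$ for some $\gamma$; restricting to $M$ identifies $\gamma=h|_M$, and since $h|_M$ is a limit of the powers $\phe^{m_j}$ inside $\Aut(M)$, it is itself an automorphism of $M$. This simultaneously proves formula~(\ref{eq:2.limit}) and shows $\Gamma(f)$ is carried bijectively (via restriction to $M$) onto the closure of $\{\phe^k\}$ in $\Aut(M)$; that $\Aut(M)$ is a topological group in which this closure makes sense uses Theorem~\ref{th:1.authyp}, and the group operation corresponds to composition of limit maps. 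Uniqueness of $\rho$ follows because any retraction in $\Gamma(f)$ restricts to $\mathrm{id}_M$ and has image $M$, forcing it to equal $\rho$ on all of $X$ by the factorization $h=\gamma\circ\rho$ with $\gamma=\mathrm{id}$.

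The main obstacle I expect is the delicate diagonal/subsequence argument producing a \emph{retraction} as a limit and showing the associated restriction is the identity on $M$: one must carefully control that $f$ does not escape $M$ and that the limit of $\{f^{p_i}\}$ exists and equals $\mathrm{id}_M$ rather than merely some automorphism, which is exactly where tautness (normality plus the holomorphic limit) and Wu's Cartan-type uniqueness theorem do the essential work. The identification of $\Gamma(f)$ with a closed subgroup of $\Aut(M)$ is then largely formal, but verifying that composition of limit maps is well-defined and matches the group law on $\Aut(M)$ requires a short continuity check using equicontinuity of the iterates.
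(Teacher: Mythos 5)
Your overall strategy --- extracting limits along arithmetically related subsequences of exponents, invoking Lemma~\ref{th:2.ret}, and then restricting to $M$ --- is the same as the paper's, but there is a genuine gap at the central step: you never actually prove that any limit map is idempotent. From $f^{k_{i+1}}=f^{p_i}\circ f^{k_i}$ you correctly deduce that a limit $g$ of $\{f^{p_i}\}$ satisfies $g\circ\rho=\rho$, i.e.\ that $g$ fixes $M=\rho(X)$ pointwise (where $\rho=\lim f^{k_i}$); but this gives neither $\rho^2=\rho$ nor $g^2=g$. A map that fixes $\rho(X)$ pointwise need not map $X$ into $\rho(X)$, so $g$ is not yet a retraction, and nothing you have written constrains $\rho\circ\rho$. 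The paper closes exactly this gap with a \emph{third} sequence of exponents, $q_\nu=k_{\nu+1}-2k_\nu$: with $h=\lim f^{k_\nu}$, $\rho=\lim f^{p_\nu}$ and $g=\lim f^{q_\nu}$ one gets $h\circ\rho=\rho\circ h=h$ and $g\circ h=h\circ g=\rho$, whence $\rho^2=g\circ h\circ\rho=g\circ h=\rho$ (after checking, via these composition identities, that $\{f^{p_\nu}\}$ and $\{f^{q_\nu}\}$ cannot be compactly divergent). Note that you also cannot shortcut this by an idempotent-in-a-compact-semigroup argument, because the relative compactness of $\{f^k\}$ (Theorem~\ref{th:2.Calka}) is proved in the paper \emph{using} the present theorem.

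The later steps inherit this problem and add smaller ones. The identity $g\circ h|_M=h\circ g|_M=\mathrm{id}_M$, which the paper reads off from the same three limits, is what shows $\gamma=h|_M\in\Aut(M)$ directly; your appeal to Corollary~\ref{th:1.Cu} does not by itself produce an inverse (Cartan-type uniqueness compares two given maps, and to run your alternative you would first need the limit of $\{f^{p_i-1}|_M\}$ to exist, i.e.\ non-compact divergence of that sequence, which you do not check). Likewise, your uniqueness argument (``any retraction in $\Gamma(f)$ equals $\gamma'\circ\rho$ with $\gamma'=\mathrm{id}_M$'') presupposes that the factorization $h=\gamma\circ\rho$ with $\gamma\in\Aut(M)$ has already been established for \emph{every} $h\in\Gamma(f)$ with respect to the one fixed $\rho$; but your derivation of that factorization in turn assumes that a non-divergent limit of the automorphisms $\phe^{m_j}$ in $\Hol(M,M)$ is again an automorphism, which is essentially the point at issue. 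The paper avoids this circularity by writing each of the four relevant limit maps as $\gamma_i\circ\rho_i$ (applying the first part of the proof to each subsequence separately) and then proving $\rho_1=\rho_2=\rho_3=\rho_4$ through the algebraic computation (\ref{eq:2.trick}); some argument of this kind is needed and is missing from your proposal.
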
 

\begin{proof} 
Since the sequence $\{f^k\}$ of iterates is not compactly divergent, it must contain a subsequence
$\{f^{k_\nu}\}$ converging to $h\in\Hol
(X,X)$. We can also assume that $p_\nu=k_{\nu+1}-k_\nu$ and $q_\nu=p_\nu-k_\nu=
k_{\nu+1}-2k_\nu$ tend to~$+\infty$ as~$\nu\to+\infty$, and that $\{f^{p_\nu}\}$
and $\{f^{q_\nu}\}$ are either converging or compactly divergent. Now we have
\[
\lim_{\nu\to\infty}f^{p_\nu}\bigl(f^{k_\nu}(z)\bigr)=\lim_{\nu\to\infty}f^
	{k_{\nu+1}}(z)=h(z)
\]
for all~$z\in X$; 
therefore $\{f^{p_\nu}\}$ cannot be compactly divergent, and thus converges to 
a map~$\rho\in\Hol(X,X)$ such that 
\begin{equation}
h\circ\rho=\rho\circ h=h\;.
\label{eq:2.rho}
\end{equation}
Next, for all~$z\in X$ we have
\[
\lim_{\nu\to\infty}f^{q_\nu}\bigl(f^{k_\nu}(z)\bigr)=\lim_{\nu\to\infty}f^
	{p_\nu}(z)=\rho(z)\;.
\]
Hence neither $\{f^{q_\nu}\}$ can be compactly divergent, and thus converges to a 
map~$g\in\Hol(X,X)$ such that
\begin{equation}
g\circ h=h\circ g=\rho\;.
\label{eq:2.gi}
\end{equation}
In particular
\[
\rho^2=\rho\circ\rho=g\circ h\circ\rho=g\circ h=\rho\;,
\]
and $\rho$ is a holomorphic retraction of $X$ onto a submanifold $M$. 
Now (\ref{eq:2.rho}) implies $h(X)\subseteq M$. Since $g\circ\rho=\rho\circ g$, we 
have $g(M)\subseteq M$ and (\ref{eq:2.gi}) yields 
\[
g\circ h|_M=h\circ g|_M=\mathrm{id}_M\;;
\]
hence $\gamma=h|_M\in\Aut(M)$ and (\ref{eq:2.rho}) becomes (\ref{eq:2.limit}).

Now, let $\{f^{k'_\nu}\}$ be another subsequence of $\{f^k\}$ converging to a 
map $h'\in\Hol(X,X)$. Arguing as before, we can assume $s_\nu=k'_
\nu-k_\nu$ and $t_\nu=k_{\nu+1}-k'_\nu$ are converging to~$+\infty$ as~$\nu\to+\infty$,
and that $\{f^{s_\nu}\}$ and $\{f^{t_\nu}\}$ converge to holomorphic maps 
$\alpha\in\Hol(X,X)$, respectively $\beta\in\Hol(X,X)$ such that
\begin{equation}
\alpha\circ h=h\circ\alpha=h'\qquad\quad\hbox{and}\qquad\quad\beta\circ
	h'=h'\circ\beta=h\;.
\label{eq:2.ab}
\end{equation}
Then $h(X)=h'(X)$, and so $M$ does not depend on the particular converging 
subsequence.

We now show that $\rho$ itself does not depend on the chosen 
subsequence. Write $h=\gamma_1\circ\rho_1$, $h'=\gamma_2\circ\rho_2$, $\alpha=
\gamma_3\circ\rho_3$ and $\beta=\gamma_4\circ\rho_4$, where $\rho_1$, $\rho_2$, 
$\rho_3$ and $\rho_4$ are holomorphic retractions of $X$ onto $M$, and 
$\gamma_1$, $\gamma_2$, $\gamma_3$ and $\gamma_4$ are automorphisms of $M$. Then
$h\circ h'=h'\circ h$ and $\alpha\circ\beta=\beta\circ\alpha$ together with 
(\ref{eq:2.ab}) become
\begin{equation}
\vcenter{\openup1\jot
	\halign{\hfil$#$\hfil\cr
\gamma_1\circ\gamma_2\circ\rho_2=\gamma_2\circ\gamma_1\circ\rho_1\;,\cr
\gamma_3\circ\gamma_1\circ\rho_1=\gamma_1\circ\gamma_3\circ\rho_3=\gamma_2
	\circ\rho_2\;,\cr
\gamma_4\circ\gamma_2\circ\rho_2=\gamma_2\circ\gamma_4\circ\rho_4=\gamma_1
	\circ\rho_1\;,\cr
\gamma_3\circ\gamma_4\circ\rho_4=\gamma_4\circ\gamma_3\circ\rho_3\;.\cr}}
\label{eq:2.trick}
\end{equation}
Writing $\rho_2$ in function of $\rho_1$ using the first and the second 
equation in (\ref{eq:2.trick}) we find $\gamma_3=\gamma^{-1}_1\circ\gamma_2$. Writing
$\rho_1$ in function of $\rho_2$ using the first and the third equation, we 
get $\gamma_4=\gamma_2^{-1}\circ\gamma_1$. Hence $\gamma_3=\gamma_4^{-1}$ 
and the fourth equation yields $\rho_3=\rho_4$. But then, using the second 
and third equation we obtain
\[
\rho_2=\gamma_3^{-1}\circ\gamma_1^{-1}\circ\gamma_2\circ\rho_2=\rho_3=
	\rho_4=\gamma_4^{-1}\circ\gamma_2^{-1}\circ\gamma_1\circ\rho_1=
	\rho_1\;,
\]
as claimed. 

Next, from $f\circ\rho=\rho\circ f$ it follows immediately that $f(M)\subseteq M$. Put $\phe=f|_M$;
if $f^{p_\nu}\to\rho$ then $f^{p_\nu+1}\to\phe\circ\rho$, and thus $\phe\in\Aut(M)$.

Finally, for each limit point $h=\gamma\circ\rho\in\Gamma(f)$ we have $\gamma^{-1}
\circ\rho\in\Gamma(f)$. Indeed fix a subsequence $\{f^{p_\nu}\}$ converging
to~$\rho$, and a subsequence $\{f^{k_\nu}\}$ converging to~$h$. As usual, we can
assume that $p_\nu-k_\nu\to+\infty$ and $f^{p_\nu-k_\nu}\to h_1=\gamma_1\circ
\rho$ as~$\nu\to+\infty$. Then $h\circ h_1=\rho=h_1\circ h$, that is~$\gamma_1=\gamma^{-1}$.
Hence the association $h=\gamma\circ\rho\mapsto\gamma$ yields an isomorphism between $\Gamma(f)$ and the subgroup of~$\Aut(M)$ obtained as closure of~$\{\phe^k\}$.
\qed
\end{proof}

\begin{definition}
Let $X$ be a taut manifold and $f\in\Hol(X,X)$ such that the sequence $\{f^k\}$ is not compactly divergent. The manifold $M$ whose existence is asserted in the previous theorem is 
the \emph{limit manifold} of the map~$f$, and its dimension is the \emph{limit 
multiplicity}~$m_f$ of~$f$; finally, the holomorphic retraction is the \emph{limit retraction} 
of~$f$. 
\end{definition}

It is also possible to describe precisely the algebraic structure of the group $\Gamma(f)$, because it is compact. This is a consequence of the following theorem (whose proof generalizes an argument due to Ca\l ka \cite{Calka}), that, among other things, says that if a sequence of iterates is not compactly divergent then it does not contain any compactly divergent subsequence, and thus it is relatively compact in $\Hol(X,X)$:

\begin{theorem}[Abate, \cite{Ab2}]
\label{th:2.Calka}
Let $X$ be a taut manifold, and $f\in\Hol(X,X)$. Then the following assertions are equivalent:
\begin{enumerate}
\item[\rm(i)] the sequence of iterates $\{f^k\}$ is not compactly divergent;
\item[\rm(ii)] the sequence of iterates $\{f^k\}$ does not contain any compactly divergent subsequence;
\item[\rm(iii)] $\{f^k\}$ is relatively compact in $\Hol(X,X)$;
\item[\rm(iv)] the orbit of $z\in X$ is relatively compact in~$X$ for all $z\in X$;
\item[\rm(v)] there exists $z_0\in X$ whose orbit is relatively compact in~$X$.
\end{enumerate}
\end{theorem}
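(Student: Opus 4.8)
The plan is to split the five conditions into an easy cycle together with one substantial implication. First I would dispatch the routine links. The implication (ii)$\Rightarrow$(i) is immediate, since the whole sequence is one of its own subsequences; (iii)$\Rightarrow$(iv) follows because evaluation at a point is continuous on $\Hol(X,X)$, so a relatively compact $\{f^k\}$ has relatively compact orbits; and (iv)$\Rightarrow$(v) is trivial. For (v)$\Rightarrow$(ii), if the orbit of $z_0$ has compact closure $\overline O$, then no subsequence $\{f^{k_\nu}\}$ can be compactly divergent: taking the compacta $H=\{z_0\}$ and $K=\overline O$ in the definition would force $f^{k_\nu}(z_0)\notin\overline O$ eventually, whereas $f^{k_\nu}(z_0)\in\overline O$ always. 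Finally (ii)$\Rightarrow$(iii) uses tautness: by normality every subsequence of $\{f^k\}$ has a sub-subsequence that either converges uniformly on compacta or is compactly divergent; (ii) rules out the latter, and the Weierstrass theorem places the limit in $\Hol(X,X)$, so $\{f^k\}$ is relatively compact. These steps close the cycle (v)$\Rightarrow$(ii)$\Rightarrow$(iii)$\Rightarrow$(iv)$\Rightarrow$(v) and show that each of (ii)--(v) implies (i).

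It then remains to prove the one nontrivial implication (i)$\Rightarrow$(ii). Assuming $\{f^k\}$ is not compactly divergent, normality first yields a convergent subsequence $f^{k_\nu}\to h\in\Hol(X,X)$, so Theorem~\ref{th:2.BedAba} applies. I would use its conclusion, together with the construction in its proof, to realize the limit retraction $\rho$ onto a submanifold $M$ as $\rho=\lim_\nu f^{p_\nu}$ with $p_\nu\to+\infty$, so that the restriction $\phi=f|_M\in\Aut(M)$ satisfies $\phi^{p_\nu}=f^{p_\nu}|_M\to\rho|_M=\mathrm{id}_M$. Since $f(M)=M$ and $M$ is a closed submanifold, any hypothetical compactly divergent subsequence $\{f^{n_i}\}$ of $\{f^k\}$ would restrict to a compactly divergent subsequence $\{\phi^{n_i}\}$ on $M$. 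Thus the whole matter reduces to the recurrence statement: an automorphism $\phi$ of the taut manifold $M$ for which $\phi^{p_\nu}\to\mathrm{id}_M$ along some $p_\nu\to+\infty$ cannot have a compactly divergent subsequence of iterates.

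To settle this last point I would run the argument of \cite{Calka}. The recurrence $\phi^{p_\nu}\to\mathrm{id}_M$ makes every $z\in M$ a recurrent point, since $\phi^{p_\nu}(z)\to z$, so the orbit $\{\phi^k(z)\}$ has $z$ itself as a limit point in $M$. Because $\phi$ is a biholomorphism, each $\phi^k$ is a $k_M$-isometry (Theorem~\ref{th:2.SPlemma}), hence the family $\{\phi^k\}$ is equicontinuous; combining recurrence with equicontinuity and the local compactness of $M$, a Ca\l ka-style argument shows that each orbit is in fact relatively compact. The Ascoli-Arzel\`a theorem (Theorem~\ref{th:2.AA}) then makes $\{\phi^k\}$ relatively compact in $\Aut(M)$, which excludes any compactly divergent subsequence and gives the required contradiction. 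Alternatively, one may observe that $\overline{\{\phi^k\}}$ is a monothetic closed subgroup of the Lie group $\Aut(M)$ of Theorem~\ref{th:1.authyp}, nondiscrete unless $\phi$ has finite order, and hence compact.

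I expect this final step to be the only real obstacle. The delicate point is precisely that $M$ (like $X$) is only assumed taut, not complete hyperbolic, so closed Kobayashi balls need not be compact and a bounded orbit need not be relatively compact; one cannot simply argue that an escaping orbit has $k_M$-distance tending to infinity. The recurrence $\phi^{p_\nu}\to\mathrm{id}_M$ is exactly the substitute for completeness that Ca\l ka's argument exploits, upgrading a single finite limit point of an orbit to relative compactness of the whole orbit.
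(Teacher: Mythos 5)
Your proposal is correct and its main line is essentially the paper's proof. The routine implications are the same ones the paper uses (the paper closes the cycle as (i)$\Rightarrow$(v)$\Rightarrow$(ii)$\Rightarrow$(iii)$\Rightarrow$(iv)$\Rightarrow$(i); your rearrangement through (i)$\Rightarrow$(ii) is immaterial, since once the orbit of some $z_0\in M$ is relatively compact you have (v) and the cycle does the rest), and your reduction of the hard step is exactly the paper's: invoke Theorem~\ref{th:2.BedAba}, restrict to the limit manifold $M$, note $\phe=f|_M\in\Aut(M)$ with $\phe^{p_\nu}\to\mathrm{id}_M$, and observe that $\phe^k=f^k|_M$ so everything reduces to recurrence of an automorphism of a taut manifold. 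The one point you leave as a black box is where the paper spends almost all of its effort: ``a Ca\l ka-style argument shows that each orbit is relatively compact'' is not a citation one can make as is, because Ca\l ka's theorem concerns \emph{bounded} orbits of isometries of metric spaces, and in a merely taut $M$ bounded does not imply relatively compact --- a difficulty you correctly identify yourself. The paper carries out the adaptation in full: fix $\eps_0$ with $B_M(z_0,\eps_0)$ relatively compact, so that every $\phe^k\bigl(B_M(z_0,\eps_0)\bigr)=B_M\bigl(\phe^k(z_0),\eps_0\bigr)$ is too (here the isometry property enters); use Lemma~\ref{th:1.somma} to cover $\overline{B_M(z_0,\eps_0)}$ intersected with the orbit by finitely many balls $B_M(w_j,\eps_0/4)$ meeting the orbit; exploit the infinitude of $I=\bigl\{k\bigm|k_M\bigl(\phe^k(z_0),z_0\bigr)<\eps_0/2\bigr\}$ (this is where $\mathrm{id}_M\in\Gamma(\phe)$ is used); and run a minimal-counterexample induction to trap the whole orbit in a fixed compact set $K$. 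So your plan is faithful to the paper, but as written the decisive combinatorial step is asserted rather than proved.

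Your parenthetical alternative, by contrast, is a genuinely different and legitimate route for the crux, which the paper does not take. The closure $G$ of the cyclic group $\langle\phe\rangle$ inside the Lie group $\Aut(M)$ (Theorem~\ref{th:1.authyp}) is a closed, hence Lie, hence locally compact monothetic subgroup; by Weil's dichotomy for monothetic locally compact groups, $G$ is either compact or topologically isomorphic to the discrete group $\mathbb{Z}$, and the recurrence $\phe^{p_\nu}\to\mathrm{id}_M$ excludes the latter unless $\phe$ has finite order, in which case $G$ is finite anyway. Since the Lie topology on $\Aut(M)$ is the compact-open topology, compactness of $G$ forbids compactly divergent subsequences of $\{\phe^k\}$, and your restriction argument (correct, since $M$ is closed and $f$-invariant) transfers this to $\{f^k\}$. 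This trades the paper's covering-and-induction combinatorics for structure theory of locally compact groups; it even yields the compactness of $\Gamma(f)$ directly, which the paper instead deduces in Corollary~\ref{th:2.qr} \emph{from} Theorem~\ref{th:2.Calka}. Two small points of hygiene if you pursue it: take the closure inside $\Aut(M)$ (the closure in $C^0(M,M)$ need not consist of automorphisms a priori), and note that your Ascoli--Arzel\`a sentence literally gives relative compactness in $C^0(M,M)$, not in $\Aut(M)$ --- which is all you need to rule out compact divergence.
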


\begin{proof}
(v)$\Longrightarrow$(ii). Take $H=\{z_0\}$ and $K=\overline{\{f^k(z_0)\}}$. Then $H$ and $K$ are compact and $f^k(H)\cap K\ne\emptyset$ for all $k\in\mathbb{N}$, and so no subsequence of $\{f^k\}$ can be compactly divergent.

(ii)$\Longrightarrow$(iii). Since $\Hol(X,X)$ is a metrizable topological space, if $\{f^k\}$ is not relatively compact then it admits a subsequence $\{f^{k_\nu}\}$ with no converging subsequences. But then, being $X$ taut, $\{f^{k_\nu}\}$ must contain a compactly divergent subsequence, against~(ii).

(iii)$\Longrightarrow$(iv). The evaluation map $\Hol(X,X)\times X\to X$ is continuous.

(iv)$\Longrightarrow$(i). Obvious.

(i)$\Longrightarrow$(v). Let $M$ be the limit manifold of $f$, and let $\phe=f|_M$. By Theorem~\ref{th:2.BedAba} we know that $\phe\in\Aut(M)$ and that $\mathrm{id}_M\in\Gamma(\phe)$.

Take $z_0\in M$; we would like to prove that $C=\{\phe^k(z_0)\}$ is relatively compact in~$M$ (and hence in~$X$). Choose $\eps_0>0$ so that $B_M(z_0,\eps_0)$ is relatively compact in~$M$; notice that $\phe\in\Aut(M)$ implies that $B_M\bigl(\phe^k(z_0),\eps_0)=\phe^k\bigl(B_M(z_0,\eps_0)\bigr)$ is relatively compact in~$M$ for all $k\in\mathbb{N}$. By Lemma~\ref{th:1.somma} we have
\[
\overline{B_M(z_0,\eps_0)}\subseteq B_M\bigl(B_M(z_0,7\eps_0/8),\eps_0/4\bigr)\;;
\]
hence there are $w_1,\ldots,w_r\in B_M(z_0,7\eps_0/8)$ such that
\[
\overline{B_M(z_0,\eps_0)}\cap C\subset \bigcup_{j=1}^r B_M(w_j,\eps_0/4)\cap C\;,
\]
and we can assume that $B_M(w_j,\eps_0/4)\cap C\ne\emptyset$ for $j=1,\ldots,r$. 

For each $j=1,\ldots,r$ choose $k_j\in\mathbb{N}$ so that $\phe^{k_j}(z_0)\in B_M(w_j,\eps_0/4)$; then
\begin{equation}
B_M(z_0,\eps_0)\cap C\subset\bigcup_{j=1}^r\bigl[B_M\bigl(\phe^{k_j}(z_0),\eps_0/2\bigr)\cap C\bigr]
\label{eq:2.Calka1}
\end{equation}
Since $\mathrm{id}_M\in\Gamma(\phe)$, the set $I=\bigl\{k\in\mathbb{N}\bigm| k_M\bigl(\phe^k(z_0),z_0)<\eps_0/2\bigr)\bigr\}$ is infinite; therefore we can find $k_0\in\mathbb{N}$ such that
\begin{equation}
k_0\ge\max\{1,k_1,\ldots,k_r\}\quad\hbox{and}\quad k_M\bigl(\phe^{k_0}(z_0),z_0\bigr)<\eps_0/2\;.
\label{eq:2.Calkadue}
\end{equation}
Put
\[
K=\bigcup_{k=1}^{k_0}\overline{B_M\bigl(\phe^k(z_0),\eps_0\bigr)}\;;
\]
since, by construction, $K$ is compact, to end the proof it suffices to show that $C\subset K$. 
Take $h_0\in I$; since the set $I$ is infinite, it suffices to show that $\phe^k(z_0)\in K$ for all $0\le k\le h_0$.

Assume, by contradiction, that $h_0$ is the least element of~$I$ such that $\{\phe^k(z_0)\mid 0\le k\le h_0\}$ is not contained in~$K$. Clearly, $h_0>k_0$. Moreover, $k_M\bigl(\phe^{h_0}(z_0),\phe^{k_0}(z_0)\bigr)<\eps_0$ by (\ref{eq:2.Calkadue}); thus
\[
k_M\bigl(\phe^{h_0-j}(z_0),\phe^{k_0-j}(z_0)\bigr)=k_M\bigl(\phe^{h_0}(z_0),\phe^{k_0}(z_0)\bigr)<\eps_0
\]
for every $0\le j\le k_0$. In particular, 
\begin{equation}
\phe^j(z_0)\in K
\label{eq:2.Calkaqua}
\end{equation}
for every $j=h_0-k_0,\ldots,h_0$, and
$\phe^{h_0-k_0}(z_0)\in B_D(z_0,\eps_0)\cap C$. By (\ref{eq:2.Calka1}) we can find $1\le l\le r$ such that $k_M\bigl(\phe^{k_l}(z_0),\phe^{h_0-k_0}(z_0)\bigr)<\eps_0/2$, and so
\begin{equation}
k_M\bigl(\phe^{h_0-k_0-j}(z_0),\phe^{k_l-j}(z_0)\bigr)<\eps_0/2
\label{eq:2.Calkatre}
\end{equation}
for all $0\le j\le\min\{k_l,h_0-k_0\}$. In particular, if $k_l\ge h_0-k_0$ then, by (\ref{eq:2.Calka1}), (\ref{eq:2.Calkaqua}) and (\ref{eq:2.Calkatre}) we have $\phe^j(z_0)\in K$ for all $0\le j\le h_0$, against the choice of~$h_0$. So we must have $k_l<h_0-k_0$; set $h_1=h_0-k_0-k_l$. 
By (\ref{eq:2.Calkatre}) we have $h_1\in I$; therefore, being $h_1<h_0$, we have $\phe^j(z_0)\in K$ for all $0\le j\le h_1$. But (\ref{eq:2.Calkaqua}) and (\ref{eq:2.Calkatre}) imply that $\phe^j(z_0)\in K$ for $h_1\le j\le h_0$, and thus we again have a contradiction.
\qed
\end{proof}

\begin{corollary}[Abate, \cite{Ab2}]
\label{th:2.qr}
Let $X$ be a taut manifold, and $f\in\Hol(X,X)$ such that the sequence of iterates is not compactly divergent. Then $\Gamma(f)$ is isomorphic to a compact abelian group $\mathbb{Z}_q\times\mathbb{T}^r$, where $\mathbb{Z}_q$ is the cyclic group of order $q$ and $\mathbb{T}^r$ is the real torus of dimension~$r$.
\end{corollary}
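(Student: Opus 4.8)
The plan is to combine the structural output of Theorem~\ref{th:2.BedAba} and Theorem~\ref{th:2.Calka} with the classical structure theory of compact abelian Lie groups. First I would record that the limit manifold $M$ is a (closed) submanifold of the taut, hence hyperbolic, manifold $X$; by Proposition~\ref{th:2.exhypman}.(i) it is therefore hyperbolic, so by Theorem~\ref{th:1.authyp} the group $\Aut(M)$ carries a natural real Lie group structure. Theorem~\ref{th:2.BedAba} then identifies $\Gamma(f)$, as a topological group, with the closure $G=\overline{\langle\phe\rangle}$ of the cyclic subgroup generated by $\phe=f|_M$ inside $\Aut(M)$, via the assignment $\gamma\circ\rho\mapsto\gamma=h|_M$ (whose inverse $\gamma\mapsto\gamma\circ\rho$ is continuous, so the identification is a homeomorphism).

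Next I would verify that $G$ is a \emph{compact, abelian, monothetic} Lie group. It is abelian because it is the closure of the abelian subgroup $\langle\phe\rangle$, and in any topological group the closure of an abelian subgroup is abelian. It is compact because, the sequence $\{f^k\}$ not being compactly divergent, Theorem~\ref{th:2.Calka} ((i)$\Leftrightarrow$(iii)) shows that $\{f^k\}$ is relatively compact in $\Hol(X,X)$; hence $\Gamma(f)$, a closed subset of the compact set $\overline{\{f^k\}}$, is compact, and compactness transports to $G$ through the isomorphism. Being a closed subgroup of the Lie group $\Aut(M)$, $G$ is itself a Lie group by Cartan's closed subgroup theorem. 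Finally $G$ is monothetic: since $\mathrm{id}_M\in\Gamma(\phe)$ (established inside the proof of Theorem~\ref{th:2.Calka}), passing to $\phe^{k_\nu+1}$ shows $\phe\in G$, and by construction $G=\overline{\langle\phe\rangle}$, so $G$ is topologically generated by the single element~$\phe$.

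It remains to invoke the classification. Every compact abelian Lie group splits as $G\cong\mathbb{T}^r\times F$, where $\mathbb{T}^r$ is the identity component (a torus of some dimension~$r$) and $F\cong G/\mathbb{T}^r$ is a finite abelian group. I would then use monotheticity to pin down~$F$: the continuous surjection $G\to F$ carries the dense subgroup $\langle\phe\rangle$ onto a subgroup of $F$ that must exhaust~$F$, so $F$ is generated by the image of~$\phe$ and is therefore cyclic, say $F\cong\mathbb{Z}_q$. Hence $\Gamma(f)\cong G\cong\mathbb{Z}_q\times\mathbb{T}^r$, as claimed.

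The genuinely substantive point—everything beyond bookkeeping—is the recognition of $G$ as a compact abelian Lie group topologically generated by one element, together with the structure theorem providing the decomposition $\mathbb{T}^r\times F$; once these are in place the cyclicity of $F$ is immediate. The analytic content (compactness, the group structure, the Lie structure of $\Aut(M)$) is essentially a transcription of the already-established Theorems~\ref{th:2.BedAba},~\ref{th:2.Calka} and~\ref{th:1.authyp}, so I do not expect a serious obstacle there.
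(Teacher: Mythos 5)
Your proposal is correct and follows essentially the same route as the paper: identify $\Gamma(f)$ with the closed subgroup of $\Aut(M)$ generated by $\phe=f|_M$ via Theorem~\ref{th:2.BedAba}, obtain compactness from Theorem~\ref{th:2.Calka} and the Lie group structure from Theorem~\ref{th:1.authyp}, then invoke the structure theorem for compact abelian Lie groups, with the single topological generator forcing the finite factor to be cyclic. The details you add beyond the paper's proof (hyperbolicity of the closed submanifold $M$, the continuity of the identification $\gamma\circ\rho\mapsto\gamma$, and the density argument showing the projection of $\langle\phe\rangle$ exhausts the finite quotient $F$) are precisely the steps the paper leaves implicit, and all are sound.
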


\begin{proof}
Let $M$ be the limit manifold of~$f$, and put $\phe=f|_M$. 
By Theorem~\ref{th:2.BedAba}, $\Gamma(f)$ is isomorphic to the closed subgroup~$\Gamma$ of $\Aut(M)$ generated by~$\phe$. We known that $\Aut(M)$ is a Lie group, by Theorem~\ref{th:1.authyp}, and that $\Gamma$ is compact, by Theorem~\ref{th:2.Calka}. Moreover it is abelian, being generated by a single element. It is well known that the compact abelian Lie groups are all of the form $A\times\mathbb{T}^r$, where $A$ is a finite abelian group; to conclude it suffices to notice that $A$ must be cyclic, again because $\Gamma$ is generated by a single element.
\qed
\end{proof}

\begin{definition}
Let $X$ be a taut manifold, and $f\in\Hol(X,X)$ such that the sequence of iterates is not compactly divergent. Then the numbers $q$ and $r$ introduced in the last corollary
are respectively the \emph{limit period}~$q_f$ and the \emph{limit rank}~$r_f$ of $f$.
\end{definition}

When $f$ has a periodic point $z_0\in X$ of period $p\ge 1$, it is possible to explicitly compute the limit dimension, the limit period and the limit rank of $f$ using the eigenvalues of $\D f^p_{z_0}$. To do so we need to introduce two notions.

Let $m\in\mathbb{N}$ and $\Theta=(\theta_1,\ldots,\theta_m)\in[0,1)^m$. Up to a permutation, we can assume that $\theta_1,\ldots,\theta_{\nu_0}\in\mathbb{Q}$ and
$\theta_{\nu_0+1},\ldots,\theta_m\notin\mathbb{Q}$ for some $0\le\nu_0\le m$ (where $\nu_0=0$ means $\Theta\in(\R\setminus\mathbb{Q})^m$ and $\nu_0=m$ means $\Theta\in\mathbb{Q}^m$). 

Let $q_1\in\mathbb{N}^*$ be the least positive integer such that $q_1\theta_1,\ldots,q_1\theta_{\nu_0}\in\mathbb{N}$; if $\nu_0=0$ we put $q_1=1$. 
For $i$,~$j\in\{\nu_0+1,\ldots,m\}$ we shall write $i\sim j$ if and only if $\theta_i-\theta_j\in\mathbb{Q}$. Clearly, $\sim$ is an equivalence relation; furthermore if $i\sim j$
then there is a smallest $q_{ij}\in\mathbb{N}^*$ such that $q_{ij}(\theta_i-\theta_j)\in\mathbb{Z}$. Let $q_2\in\mathbb{N}^*$ be the least common multiple of $\{q_{ij}\mid i\sim j\}$; we put $q_2=1$ if $\nu_0=m$ or $i\not\sim j$ for all pairs $(i,j)$.

\begin{definition}
Let $\Theta=(\theta_1,\ldots,\theta_m)\in[0,1)^m$. Then the \emph{period}~$q(\Theta)\in\mathbb{N}^*$ of~$\Theta$ is the least common multiple of the numbers $q_1$ and $q_2$ introduced above.
\end{definition}

Next, for $j=\nu_0+1,\ldots, m$ write $\theta'_j=q(\Theta)\theta_j-\lfloor q(\Theta)\theta_j\rfloor$, where $\lfloor s\rfloor$ is the integer part of $s\in\R$. Since
\[
\theta'_i=\theta'_j\ \Longleftrightarrow\  q(\Theta)(\theta_i-\theta_j)\in\mathbb{Z}\ \Longleftrightarrow\  i\sim j\;,
\]
the set $\Theta'=\{\theta'_{\nu_0+1},\ldots,\theta'_m\}$ contains as many elements as the number of $\sim$-equivalence classes. If this number is $s$, put $\Theta'=\{\theta''_1,\ldots,\theta''_s\}$. Write $i\approx j$ if and only if $\theta''_i/\theta''_j\in\mathbb{Q}$ (notice that $0\notin\Theta'$); clearly $\approx$ is an equivalence relation.

\begin{definition}
Let $\Theta=(\theta_1,\ldots,\theta_m)\in[0,1)^m$. Then the \emph{rank}~$r(\Theta)\in\mathbb{N}$ is the number of $\approx$-equivalence classes. If $\nu_0=m$ then $r(\Theta)=0$.
\end{definition}

If $X$ is a taut manifold and $f\in\Hol(X,X)$ has a fixed point $z_0\in X$, Theorem~\ref{th:1.CC} says that all the eigenvalues of $\D f_{z_0}$ belongs to $\overline{\Delta}$.
Then we can prove the following:

\begin{theorem}[Abate, \cite{Ab2}]
\label{th:2.limitnumb}
Let $X$ be a taut manifold of dimension $n$, and $f\in\Hol(X,X)$ with a periodic point $z_0\in X$ of period $p\ge 1$. Let $\lambda_1,\ldots,\lambda_n\in\overline{\Delta}$ 
be the eigenvalues of $\D(f^p)_{z_0}$, listed accordingly to their multiplicity and so that 
\[
|\lambda_1|=\cdots=|\lambda_m|=1>|\lambda_{m+1}|\ge\cdots\ge|\lambda_n|
\]
for a suitable $0\le m\le n$. For $j=1,\ldots, m$ write $\lambda_j=\E^{2\pi\I\theta_j}$ with $\theta_j\in[0,1)$, and set $\Theta=(\theta_1,\ldots,\theta_m)$. Then
\[
m_f=m\;,\qquad q_f=p\cdot q(\Theta)\qquad\hbox{and}\qquad r_f=r(\Theta)\;.
\]
\end{theorem}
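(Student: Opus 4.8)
The plan is to compute the three invariants from the spectral data of $\D(f^p)_{z_0}$ by passing through the auxiliary map $g=f^p$, which fixes $z_0$. First I would note that, being periodic of period~$p$, the point $z_0$ has the finite orbit $\{z_0,f(z_0),\ldots,f^{p-1}(z_0)\}$, which is relatively compact; hence by Theorem~\ref{th:2.Calka} the sequence $\{f^k\}$ is not compactly divergent, and Theorem~\ref{th:2.BedAba} applies both to $f$ and to $g$. Let $M$, $\rho$ and $\phe=f|_M\in\Aut(M)$ be the limit manifold, limit retraction and boundary automorphism of~$f$. The limit maps of $g$ form a subgroup of~$\Gamma(f)$, and the identity element of $\Gamma(f)$ is exactly the limit retraction~$\rho$ (since $\rho\circ h=h\circ\rho=h$ for every $h\in\Gamma(f)$); as a subgroup shares the identity, $g$ has the same limit retraction~$\rho$ and the same limit manifold~$M$, so $m_f=m_g$. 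Moreover a short computation using $\rho^2=\rho$ and $f\circ\rho=\rho\circ f$ gives $\rho(z_0)=z_0$, so $z_0\in M$.

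As $g(z_0)=z_0$, I would then run the fixed-point analysis for~$g$. The differential $\D\rho_{z_0}$ is the idempotent projection of $T_{z_0}X$ onto $T_{z_0}M$, and it commutes with $\D g_{z_0}$ (because $g\circ\rho=\rho\circ g$); hence $\D g_{z_0}$ preserves the splitting $T_{z_0}X=T_{z_0}M\oplus\ker\D\rho_{z_0}$. Along a subsequence $g^{k_\nu}\to\rho$ the powers $(\D g_{z_0})^{k_\nu}$ converge to $\D\rho_{z_0}$, which vanishes on $\ker\D\rho_{z_0}$; this forces the spectrum of $\D g_{z_0}$ there to lie in~$\Delta$. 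On $T_{z_0}M$ the restriction equals $\D(g|_M)_{z_0}$ with $g|_M\in\Aut(M)$, so Theorem~\ref{th:1.CC}(i) applied to $g|_M$ and to its inverse forces its spectrum into $\partial\Delta$. Therefore $T_{z_0}M$ is precisely the sum of the modulus-one eigenspaces of $\D(f^p)_{z_0}$, and $m_f=\dim M=m$.

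For the period and the rank I would first linearise~$\Gamma(g)$. Every element of $\Gamma(g)$ is an automorphism of~$M$ fixing~$z_0$, so by the Cartan uniqueness Corollary~\ref{th:1.Cu} the differential map $\psi\mapsto\D\psi_{z_0}$ is injective on $\Gamma(g)$; being continuous on the compact group $\Gamma(g)$ it is a topological isomorphism onto its image, the closure of the cyclic group generated by $C=\D(g|_M)_{z_0}$. By the previous paragraph $C$ is diagonalisable with eigenvalues $\E^{2\pi\I\theta_1},\ldots,\E^{2\pi\I\theta_m}$, so after diagonalising, $\Gamma(g)$ is isomorphic to the closure $\overline{\langle\Theta\rangle}$ of the subgroup generated by~$\Theta$ in the torus $\mathbb{T}^m=(\R/\mathbb{Z})^m$. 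By Corollary~\ref{th:2.qr} this closure is isomorphic to $\mathbb{Z}_{q_g}\times\mathbb{T}^{r_g}$, so that $q_g$ and $r_g$ are respectively the number of connected components and the dimension of $\overline{\langle\Theta\rangle}$. Computing these two numbers is where I expect the main difficulty to be. By Pontryagin duality $\overline{\langle\Theta\rangle}\cong\widehat{\mathbb{Z}^m/\Lambda}$ with relation lattice $\Lambda=\{n\in\mathbb{Z}^m\mid\sum_{j=1}^m n_j\theta_j\in\mathbb{Z}\}$, so $r_g=m-\mathrm{rank}\,\Lambda$ and $q_g$ is the order of the (cyclic, since the group is monothetic) torsion subgroup of $\mathbb{Z}^m/\Lambda$. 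I would then analyse $\Lambda$ by separating the rational angles from the irrational ones: the relation~$\sim$ records which irrational angles coincide after clearing denominators and so collapses them to the $s$ values $\theta''_1,\ldots,\theta''_s$, while the relation~$\approx$ detects the $\mathbb{Q}$-linear dependencies among these and hence counts the independent circle directions. This bookkeeping is exactly what the definitions of $q(\Theta)$ and $r(\Theta)$ were designed for, and it should yield $r_g=r(\Theta)$ and $q_g=q(\Theta)$.

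Finally I would pass from $g$ back to~$f$. Evaluating limit maps at~$z_0$ and recording which point of the orbit is hit defines a surjective homomorphism $\Gamma(f)\to\mathbb{Z}_p$ whose kernel consists of the elements fixing~$z_0$, namely $\Gamma(g)$; thus $\Gamma(g)$ is a closed subgroup of $\Gamma(f)$ of index~$p$. A closed subgroup of finite index is open, hence contains the identity component, so $\Gamma(f)$ and $\Gamma(g)$ have the same identity component~$\mathbb{T}^{r}$ and therefore $r_f=r_g=r(\Theta)$. Passing to the component groups gives $q_f/q_g=[\Gamma(f):\Gamma(g)]=p$, whence $q_f=p\,q(\Theta)$. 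Together with $m_f=m$ this establishes the three identities.
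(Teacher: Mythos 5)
Your architecture is essentially the paper's: you pass to $g=f^p$, identify its limit retraction with $\rho$ via uniqueness of the idempotent element of $\Gamma(f)$ (Theorem~\ref{th:2.BedAba}), obtain $m_f=m$ by letting $(\D g_{z_0})^{k_\nu}\to\D\rho_{z_0}$ act on the invariant splitting, linearise $\Gamma(g)$ through Cartan uniqueness (Corollary~\ref{th:1.Cu}) onto the closure of the cyclic subgroup of $\mathbb{T}^m$ generated by the unimodular eigenvalues, and finally compare $\Gamma(f)$ with $\Gamma(f^p)$ by an index-$p$ count; your evaluation homomorphism $\Gamma(f)\to\mathbb{Z}_p$ is a clean way of phrasing the paper's final component-counting step. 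These parts are sound, with one small repair: the diagonalisability of $C=\D(g|_M)_{z_0}$ does not follow from your spectral-location paragraph, but rather from Theorem~\ref{th:1.CC}.(v), or from the fact that $C$ generates a relatively compact subgroup of the linear group of $T_{z_0}M$ (Corollary~\ref{th:2.qr}), hence is conjugate into the unitary group.

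The genuine gap sits exactly at the heart of the fixed-point case, which you dismiss with ``it should yield $r_g=r(\Theta)$ and $q_g=q(\Theta)$''. Your Pontryagin-duality reduction is correct as far as it goes: $r_g=m-\mathrm{rank}\,\Lambda$, where $\Lambda=\bigl\{n\in\mathbb{Z}^m\bigm|\sum_j n_j\theta_j\in\mathbb{Z}\bigr\}$, and $q_g$ is the order of the (cyclic) torsion of $\mathbb{Z}^m/\Lambda$. But your gloss that the relation $\approx$ ``detects the $\mathbb{Q}$-linear dependencies'' is inaccurate: $\approx$ is the \emph{pairwise} condition $\theta''_i/\theta''_j\in\mathbb{Q}$, whereas $\mathrm{rank}\,\Lambda$ is governed by \emph{arbitrary} integer relations $\sum_j n_j\theta''_j\in\mathbb{Z}$, which need not be generated by pairwise ones. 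For instance, $\theta''_1=\sqrt2-1$, $\theta''_2=\sqrt3-1$ and $\theta''_3=\sqrt2+\sqrt3-3$ lie in three distinct $\approx$-classes, yet $\theta''_1+\theta''_2-\theta''_3=1$ gives a rank-one relation lattice and a two-dimensional connected closure. So equating the number of $\approx$-classes with $m-\mathrm{rank}\,\Lambda$ is precisely the nontrivial content of the theorem; it is also where the paper's proof concentrates its effort, choosing integers $p_j$ realising the within-class relations and asserting that the powers $\Lambda_1^k$ are dense in the subgroup of $\mathbb{T}^s$ cut out by the corresponding monomial equations, i.e.\ that no further relations occur. A complete write-up along your lines must actually carry out and justify this density/lattice computation, confronting the possibility of longer relations such as the one above rather than assuming that the pairwise bookkeeping encoded in $\approx$ suffices.
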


\begin{proof}
Let us first assume that $z_0$ is a fixed point, that is $p=1$.
Let $M$ be the limit manifold of $f$, and $\rho\in\Hol(X,M)$ its limit retraction. As already remarked, by Theorem~\ref{th:1.CC} the set $\mathrm{sp}(\D f_{z_0})$ of eigenvalues of $\D f_{z_0}$ is contained in~$\overline{\Delta}$; furthermore there is a $\D f_{z_0}$-invariant splitting $T_{z_0}X=L_N\oplus L_U$ satisfying the following properties:
\begin{description}[(c)]
\item[(a)] $\mathrm{sp}(\D f_{z_0}|_{L_N})=\mathrm{sp}(\D f_{z_0})\cap\Delta$ and $\mathrm{sp}(\D f_{z_0}|_{L_U})=\mathrm{sp}(\D f_{z_0})\cap\de\Delta$;
\item[(b)] $(\D f_{z_0}|_{L_N})^k\to O$ as $k\to+\infty$;
\item[(c)] $\D f_{z_0}|_{L_U}$ is diagonalizable.
\end{description}
Fix a subsequence $\{f^{k_\nu}\}$ converging to~$\rho$; in particular, $(\D f_{z_0})^{k_\nu}\to\D\rho_{z_0}$ as $\nu\to+\infty$. Since the only possible eigenvalues of $\D\rho_{z_0}$ are $0$ and $1$, properties (b) and (c) imply that $\D\rho_{z_0}|_{L_N}\equiv O$ and $\D\rho_{z_0}|_{L_U}=\mathrm{id}$. In particular, it follows that
$L_U=T_{z_0}M$ and $m_f=\dim T_{z_0}M=\dim L_U=m$, as claimed.

Set $\phe=f|_M\in\Aut(M)$. By Corollary~\ref{th:1.Cu}, the map $\gamma\mapsto\D\gamma_{z_0}$ is an isomorphism between the group of automorphisms of $M$ fixing~$z_0$ and a subgroup of linear transformations of~$T_{z_0}M$. Therefore, since $\D\phe_{z_0}$ is diagonalizable by (c), $\Gamma(\phe)$, and hence $\Gamma(f)$, is isomorphic to the closed subgroup of~$\mathbb{T}^m$ generated by $\Lambda=(\lambda_1,\ldots,\lambda_m)$. So we have to prove that this latter subgroup is isomorphic to $\mathbb{Z}_{q(\Theta)}\times\mathbb{T}^{r(\Theta)}$. Since we know beforehand the algebraic structure of this group (it is the product of a cyclic group with a torus), it will suffice to write it as a disjoint union of isomorphic tori; the number of tori will be the limit period of~$f$, and the rank of the tori will be the limit rank of~$f$.

Up to a permutation, we can find integers $0\le\nu_0<\nu_1<\cdots<\nu_s=m$ such that $\theta_1,\ldots,\theta_{\nu_0}\in\mathbb{Q}$, and the $\sim$-equivalence classes
are 
\[
\{\theta_{\nu_0+1},\ldots,\theta_{\nu_1}\},\ldots,\{\theta_{\nu_{s-1}+1},\ldots,\theta_m\}\;.
\] 
Then, using the notations introduced for defining $q(\Theta)$ and $r(\Theta)$, we have
\[
\Lambda^{q(\Theta)}=(1,\ldots,1,\E^{2\pi\I\theta''_1},\ldots,\E^{2\pi\I\theta''_1},\E^{2\pi\I\theta''_2},\ldots,\E^{2\pi\I\theta''_2},\ldots, \E^{2\pi\I\theta''_s},
\ldots,\E^{2\pi\I\theta''_s})\;.
\]
This implies that it suffices to show that the subgroup generated by 
\[
\Lambda_1=(\E^{2\pi\I\theta''_1},\ldots,\E^{2\pi\I\theta''_s})
\] 
in~$\mathbb{T}^s$ is isomorphic to $\mathbb{T}^{r(\Theta)}$.

Up to a permutation, we can assume that the $\approx$-equivalence classes are 
\[
\{\theta''_1,\ldots,\theta''_{\mu_1}\},\ldots,\{\theta''_{\mu_{r-1}+1},\ldots,\theta''_s\}\;,
\]
for suitable $1\le\mu_1<\cdots<\mu_r=s$, where $r=r(\Theta)$. Now, by definition of $\approx$ we can find natural numbers $p_j\in\mathbb{N}^*$ for $1\le j\le s$ such that
\[
\displaylines{
\E^{2\pi\I p_1\theta''_1}=\cdots=\E^{2\pi\I p_{\mu_1}\theta''_{\mu_1}}\;,\cr
\vdots\cr
\E^{2\pi\I p_{\mu_{r-1}+1}\theta''_{\mu_{r-1}+1}}=\cdots=\E^{2\pi\I p_s\theta''_s}\;,\cr}
\]
and no other relations of this kind can be found among $\theta''_1,\ldots,\theta''_s$.
It follows that $\{\Lambda_1^k\}_{k\in\mathbb{N}}$ is dense in the subgroup of $\mathbb{T}^s$ defined by the equations
\[
\lambda_1^{p_1}=\cdots=\lambda^{p_{\mu_1}},\ldots,\lambda^{p_{\mu_{t-1}+1}}_{\mu_{r-1}+1}=\cdots=\lambda^{p_s}_s\;,
\] 
which is isomorphic to $\mathbb{T}^r$, as claimed.

Now assume that $z_0$ is periodic of period~$p$, and let $\rho_f$ be the limit retraction of~$f$. Since $\rho_f$ is the unique holomorphic retraction in~$\Gamma(f)$,
and $\Gamma(f^p)\subseteq\Gamma(f)$, it follows that $\rho_f$ is the limit retraction of~$f^p$ too. In particular, the limit manifold of $f$ coincides with the limit manifold of~$f^p$, and hence $m_f=m_{f^p}=m$. Finally, $\Gamma(f)/\Gamma(f^p)\equiv\mathbb{Z}_p$, because $f^j(z_0)\ne z_0$ for $1\le j<p$; hence $\Gamma(f)$ and $\Gamma(f^p)$ have the same connected component at the identity (and hence $r_f=r_{f^p}$), and $q_f=pq_{f^p}$ follows by counting the number of connected components in both groups.
\qed
\end{proof}

If $f\in\Hol(X,X)$ has a periodic point then the sequence of iterates is cleraly not compactly divergent. The converse is in general false, as shown by the following example:

\begin{example}
Let $D\subset\subset\C^2$ be given by 
\[
D=\bigl\{(z,w)\in\C^2\bigm||z|^2+|w|^2+|w|^{-2}<3\bigr\}\;.
\] 
The domain $D$~is strongly pseudoconvex domain, thus taut, but not simply connected. Given $\theta\in\R$ and $\eps=\pm1$, define $f\in\Hol(D,D)$ by
\[
f(z,w)=(z/2,\E^{2\pi\I\theta}w^\eps)\;.
\]
Then the sequence of iterates of $f$ is never compactly divergent, but $f$ has no periodic points as soon as $\theta\notin\mathbb{Q}$. Furthermore,
the limit manifold of~$f$ is the annulus 
\[
M=\bigl\{(0,w)\in\C^2\bigm||w|^2+|w|^{-2}<3\bigr\}\;,
\] 
the limit retraction is $\rho(z,w)=(0,w)$, and suitably choosing~$\eps$ 
and~$\theta$ we can obtain as~$\Gamma(f)$ any compact abelian subgroup 
of~$\Aut(M)$. 
\end{example}

It turns out that self-maps without periodic points but whose sequence of iterates is not compactly divergent can exist only when the topology of the manifold is complicated enough. Indeed, using deep results on the actions of real tori on manifolds, it is possible to prove the following

\begin{theorem}[Abate, \cite{Ab2}]
\label{th:2.topology}
Let $X$ be a taut manifold with finite topological type and such that $H^j(X,\mathbb{Q})=(0)$ for all odd $j$. Take $f\in\Hol(X,X)$. Then the sequence of iterates of $f$ is not compactly divergent if and only if $f$ has a periodic point.
\end{theorem}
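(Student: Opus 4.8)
The plan is to prove the two implications separately. The forward direction is elementary and has in fact already been observed just before the statement: if $z_0\in X$ is periodic of period $p$, then with $H=\{z_0\}$ and $K=\{f^j(z_0)\mid 0\le j<p\}$ we have $f^k(H)\cap K\ne\emptyset$ for every $k\in\mathbb{N}$, so $\{f^k\}$ is not compactly divergent. All the content is in the converse, and the idea is to use the structure theory of $\Gamma(f)$ to translate the existence of a periodic point into the existence of a fixed point of a torus action, which in turn is forced by an Euler-characteristic computation made possible by the cohomological hypothesis.

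So assume $\{f^k\}$ is not compactly divergent. By Theorem~\ref{th:2.BedAba} there are a limit manifold $M\subseteq X$, a limit retraction $\rho\in\Hol(X,M)$, and an automorphism $\phe=f|_M\in\Aut(M)$ such that $\Gamma(f)$ is isomorphic to the group $\Gamma=\overline{\{\phe^k\}}\subseteq\Aut(M)$, which by Corollary~\ref{th:2.qr} is a compact abelian Lie group $\Gamma\cong\mathbb{Z}_q\times\mathbb{T}^r$. Since $M$ is $f$-invariant with $f|_M=\phe$, any periodic point of $\phe$ in $M$ is automatically a periodic point of $f$, so it suffices to produce one. I would now reduce this to a torus fixed point: the identity component of $\Gamma$ is the torus $\mathbb{T}^r$, the image of $\phe$ generates $\Gamma/\mathbb{T}^r\cong\mathbb{Z}_q$ and hence has order $q$, so $\phe^q\in\mathbb{T}^r$; moreover $\phe^q$ topologically generates $\mathbb{T}^r$, because $\phe$ topologically generates $\Gamma$. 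Consequently the stabilizer in $\mathbb{T}^r$ of any point fixed by $\phe^q$ is a closed subgroup containing the dense set $\{\phe^{qk}\}$, hence equals $\mathbb{T}^r$; that is, $\Fix(\phe^q)=M^{\mathbb{T}^r}$. Therefore $\phe$ has a periodic point (of period dividing $q$) if and only if the torus action of $\mathbb{T}^r$ on $M$ has a fixed point.

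Next I would show that $M$ inherits the topological hypotheses on $X$. By Lemma~\ref{th:2.ret} the set $M$ is a closed submanifold of $X$, and writing $\iota\colon M\hookrightarrow X$ for the inclusion we have $\rho\circ\iota=\mathrm{id}_M$; hence on rational cohomology $\rho^*\colon H^*(M,\mathbb{Q})\to H^*(X,\mathbb{Q})$ is injective and exhibits $H^*(M,\mathbb{Q})$ as a direct summand of $H^*(X,\mathbb{Q})$. It follows that $M$ has finite-dimensional rational cohomology and that $H^j(M,\mathbb{Q})=(0)$ for all odd $j$; being connected and nonempty, $M$ then satisfies
\[
\chi(M)=\sum_{j}\dim H^{2j}(M,\mathbb{Q})\ge\dim H^0(M,\mathbb{Q})=1>0\;.
\]

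Finally I would invoke the deep input alluded to in the statement: a smooth action of a torus $\mathbb{T}^r$ on a manifold of finite topological type satisfies $\chi\bigl(M^{\mathbb{T}^r}\bigr)=\chi(M)$; since $\chi(M)\ne0$ the fixed-point set $M^{\mathbb{T}^r}$ is nonempty, and by the reduction of the second paragraph $\phe$, and hence $f$, has a periodic point. The main obstacle is precisely this last step: correctly establishing or quoting the equality of Euler characteristics for torus actions in the non-compact, finite-topological-type setting (the reason that hypothesis on $X$ is imposed at all). The role of the vanishing of the odd rational cohomology is exactly to guarantee $\chi(M)\ne0$, so that the transformation-group machinery can be brought to bear; everything else is bookkeeping with the structure of $\Gamma(f)$ already in hand.
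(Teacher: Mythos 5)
Your proposal is correct and follows precisely the route the paper indicates (the paper itself only quotes the result from \cite{Ab2}, with the hint about torus actions): you reduce via Theorem~\ref{th:2.BedAba} and Corollary~\ref{th:2.qr} to the action of the identity component $\mathbb{T}^r\subseteq\Gamma(f)$ on the limit manifold $M$, transfer the topological hypotheses to $M$ through the limit retraction, and conclude from $\chi\bigl(M^{\mathbb{T}^r}\bigr)=\chi(M)\ne0$ that $\phe^q$, hence $f$, has a periodic point. This is the same argument as in \cite{Ab2}, including the correct identification of the deep input (the Euler-characteristic equality for torus actions on finite-type manifolds) as the only non-elementary step.
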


When $X=\Delta$ a consequence of the Wolff-Denjoy theorem is that the sequence of iterates of a self-map $f\in\Hol(\Delta,\Delta)$ is not compactly divergent if and only if $f$ has a fixed point, which is an assumption easier to verify than the existence of periodic points.
It turns out that we can generalize this result to convex domains (see also \cite{KS} for a different proof):

\begin{theorem}[Abate, \cite{Ab1}]
\label{th:2.convfix}
Let $D\subset\subset\C^n$ be a bounded convex domain. Take $f\in\Hol(D,D)$. Then the sequence of iterates of $f$ is not compactly divergent if and only if $f$ has a fixed point.
\end{theorem}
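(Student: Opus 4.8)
The plan is to prove the two implications separately, with all the substance in the converse. The easy direction is immediate: if $f(z_0)=z_0$ then the orbit of $z_0$ is $\{z_0\}$, which is trivially relatively compact, so by the implication (v)$\Rightarrow$(i) of Theorem~\ref{th:2.Calka} the sequence $\{f^k\}$ is not compactly divergent.

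For the converse, I would first record the good properties of $D$: being bounded and convex it is complete hyperbolic (Proposition~\ref{th:1.Barth}), hence taut (Proposition~\ref{th:2.chyptaut}), so Theorems~\ref{th:2.Calka} and~\ref{th:2.BedAba} are available. Assuming $\{f^k\}$ is not compactly divergent, Theorem~\ref{th:2.Calka} gives that every orbit is relatively compact in $D$, and I must manufacture a fixed point. The crucial preliminary step is to produce a nonempty compact set $\mathcal{O}\subset D$ with the \emph{exact} invariance $f(\mathcal{O})=\mathcal{O}$ (not merely $f(\mathcal{O})\subseteq\mathcal{O}$). This I would get from Theorem~\ref{th:2.BedAba}: choosing $z_0$ in the limit manifold $M$, on which $\phe=f|_M$ is an automorphism, the orbit closure $\mathcal{O}=\overline{\{\phe^k(z_0)\}}$ equals the $G$-orbit $Gz_0$ of the compact group $G=\overline{\{\phe^k\}}\cong\Gamma(f)$, and since $\phe\in G$ one gets $f(\mathcal{O})=\phe(Gz_0)=Gz_0=\mathcal{O}$. (Alternatively one can bypass the structure theorem and take $\mathcal{O}=\bigcap_n f^n(K)$, the eventual image of a relatively compact orbit closure $K$, which is automatically nonempty, compact and satisfies $f(\mathcal{O})=\mathcal{O}$.)

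The heart of the argument is a Chebyshev-center construction whose success rests on the convexity of Kobayashi balls. Set $\mu(z)=\sup_{w\in\mathcal{O}}k_D(z,w)$ and $r^*=\inf_{z\in D}\mu(z)$. Since $z_0\in\mathcal{O}$ we have $\mu\ge k_D(\cdot,z_0)$, which is proper because $D$ is complete hyperbolic; hence $\mu$ is a proper continuous (indeed $1$-Lipschitz for $k_D$) function, the infimum $r^*$ is attained, and the set $C^*=\{z\in D\mid\mu(z)=r^*\}$ is nonempty, closed, contained in the compact ball $\overline{B_D(z_0,r^*)}$ (Corollary~\ref{th:1.convexballs}), hence compact. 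The key point is that $C^*$ is \emph{convex} as a subset of $\C^n$: by Proposition~\ref{th:1.convball}(i), applied with $w_1=w_2=w$, each function $z\mapsto k_D(z,w)$ is quasi-convex along segments, so $\mu$ is quasi-convex and its sublevel set $C^*$ is convex.

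Finally I would show $f(C^*)\subseteq C^*$ and conclude. If $z\in C^*$ and $w'\in\mathcal{O}$, write $w'=f(w)$ with $w\in\mathcal{O}$ (using $f(\mathcal{O})=\mathcal{O}$); then $k_D(f(z),w')=k_D(f(z),f(w))\le k_D(z,w)\le\mu(z)=r^*$ by the Schwarz–Pick lemma (Theorem~\ref{th:2.SPlemma}), so $\mu(f(z))\le r^*$ and $f(z)\in C^*$. Thus $f$ restricts to a continuous self-map of the nonempty compact convex set $C^*\subset\C^n\simeq\R^{2n}$, and Brouwer's fixed point theorem yields a fixed point of $f$. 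The main obstacle, and the reason this is more than a soft compactness argument, is precisely the passage from ``$f$ maps $\mathcal{O}$ onto itself'' to a genuine fixed point: the Chebyshev center need not be unique, since Kobayashi balls of a general convex domain (e.g. the polydisk) are not strictly convex, so one cannot simply fix a unique center. It is the convexity of the center set $C^*$, supplied by Proposition~\ref{th:1.convball}, that lets Brouwer's theorem stand in for a uniqueness argument.
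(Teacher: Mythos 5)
Your proof is correct, and it shares the paper's overall skeleton---reduce to a compact set on which $f$ acts, build an $f$-invariant nonempty compact \emph{convex} subset of~$D$, and invoke Brouwer---but the invariant convex set is constructed by a genuinely different device. Abate's proof also passes to the limit retraction $\rho\colon D\to M$ and the automorphism $\phe=f|_M$ with compact orbit $\Gamma(z_0)$ (Theorem~\ref{th:2.BedAba} and Corollary~\ref{th:2.qr}), but then takes $C$ to be the intersection of \emph{all} closed Kobayashi balls $\overline{B_D(w,r)}$ with center $w\in M$ containing $\Gamma(z_0)$; compactness and convexity come from Corollary~\ref{th:1.convexballs}, and $f$-invariance is proved by pulling each ball back through the automorphism: $\overline{B_D(\phe^{-1}(w),r)}$ again contains $\Gamma(z_0)$, whence $k_D\bigl(w,f(z)\bigr)=k_D\bigl(f(\phe^{-1}(w)),f(z)\bigr)\le k_D\bigl(\phe^{-1}(w),z\bigr)\le r$. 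You instead take the Chebyshev-center set $C^*$ of the orbit closure $\mathcal{O}$, getting convexity from quasi-convexity of $z\mapsto k_D(z,w)$---which is exactly the special case (\ref{eq:1.convuno}) of Proposition~\ref{th:1.convball} that also underlies Corollary~\ref{th:1.convexballs}, so the convexity input is the same---and invariance from the exact equality $f(\mathcal{O})=\mathcal{O}$ combined with nonexpansivity. The trade-off is instructive: the paper's pullback argument needs $\phe$ to be invertible with centers taken in $M$, so it genuinely uses the structure theorem, and it locates the fixed point in a set containing the whole orbit; your argument only needs \emph{some} nonempty compact set with $f(\mathcal{O})=\mathcal{O}$, and your parenthetical eventual-image construction $\mathcal{O}=\bigcap_n f^n(K)$ delivers this from a single relatively compact orbit (Theorem~\ref{th:2.Calka}, (i)$\Rightarrow$(iv)) without invoking Theorem~\ref{th:2.BedAba}, Corollary~\ref{th:2.qr}, or the Lie-group Theorem~\ref{th:1.authyp} at all, making that variant strictly more elementary. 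Your closing observation is also on target: since Kobayashi balls in a weakly convex domain (e.g.\ the polydisk) need not be strictly convex, the center set $C^*$ can fail to be a singleton, and it is precisely its convexity that lets Brouwer replace a uniqueness argument---the same reason the paper cannot simply fix ``the'' circumcenter of $\Gamma(z_0)$ and must work with a convex set of them.
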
 

\begin{proof} 
One direction is obvious; conversely, assume that $\{f^k\}$ is not 
compactly divergent, and let $\rho\colon D\to M$ be the limit retraction. 
First of all, note that $k_M=k_D|_{M\times M}$. In fact
\[ 
k_D(z_1,z_2)\le k_M(z_1,z_2)=k_M\bigl(\rho
	(z_1),\rho(z_2)\bigr)\le k_D(z_1,z_2)
\]
for every $z_1$, $z_2\in M$.
In particular, a Kobayashi ball in~$M$ is nothing but the intersection of a 
Kobayashi ball of~$D$ with~$M$.

Let $\phe=f|_M$, and denote by $\Gamma$ the closed subgroup of~$\Aut(M)$ 
generated by~$\phe$; we know, by Corollary~\ref{th:2.qr}, that $\Gamma$~is  
compact. Take $z_0\in M$; then the orbit 
\[
\Gamma(z_0)=\bigl\{\gamma(z_0)\bigm|\gamma\in\Gamma\bigr\}
\]
is compact and contained in~$M$. Let
\[
\mathcal{C}=\Bigl\{\overline{B_D(w,r)}\Bigm|w\in M,\,r>0\ \hbox{and}\ \overline
	{B_D(w,r)}\supset\Gamma(z_0)\Bigr\}\;.
\]
Every $\overline{B_D(w,r)}$ is
compact and convex (by Corollary~\ref{th:1.convexballs}); therefore, $C=\bigcap\mathcal{C}$~is 
a not empty compact convex subset of~$D$. We claim that~$f(C)\subset C$.

Let $z\in C$; we have to show that $f(z)\in\overline{B_D(w,r)}$ for every~$w\in M$
and~$r>0$ such that~$\overline{B_D(w,r)}\supset\Gamma(z_0)$. Now, $\overline{B_D(\phe^{-1}
(w),r)}\in\mathcal{C}$: in fact
\[
\overline{B_D(\phe^{-1}(w),r)}\cap M=\phe^{-1}\bigl(\overline{B_D(w,r)}\cap M\bigr)
        \supset\phe^{-1}\bigl(\Gamma(z_0)\bigr)=\Gamma(z_0)\;.
\]
Therefore $z\in\overline{B_D(\phe^{-1}(w),r)}$ and
\[
k_D\bigl(w,f(z)\bigr)=k_D\Bigl(f\bigl(\phe^{-1}(w)\bigr),f(z)\Bigr)\le
        k_D\bigl(\phe^{-1}(w),z\bigr)\le r\;,
\]
that is $f(z)\in\overline{B_D(w,r)}$, as we want.

In conclusion, $f(C)\subset C$; by Brouwer's theorem, $f$~must have a fixed 
point in~$C$. 
\qed
\end{proof}

The topology of convex domains is particularly simple: indeed, convex domains are topologically contractible, that is they have a point as (continuous) retract of deformation. Using very deep properties of the Kobayashi distance in strongly pseudoconvex domains, outside of the scope of these notes, Huang has been able to generalize Theorem~\ref{th:2.convfix} to topologically contractible strongly pseudoconvex domains:

\begin{theorem}[Huang, \cite{Huang}]
\label{th:2.Huang}
Let $D\subset\subset\C^n$ be a bounded topologically contractible strongly pseudoconvex $C^3$ domain. Take $f\in\Hol(D,D)$. Then the sequence of iterates of $f$ is not compactly divergent if and only if $f$ has a fixed point.
\end{theorem}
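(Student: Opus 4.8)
The plan is to imitate the proof of Theorem~\ref{th:2.convfix}, isolating exactly where convexity was used and replacing it by the topological hypothesis. The ``if'' direction is immediate: if $f(z_0)=z_0$ then $f^k(z_0)=z_0$ for all $k$, so taking $H=K=\{z_0\}$ shows that $\{f^k\}$ is not compactly divergent. For the converse, recall first that a bounded strongly pseudoconvex $C^3$ domain is complete hyperbolic (Corollary~\ref{th:1.pscomp}), hence taut (Proposition~\ref{th:2.chyptaut}). Assuming $\{f^k\}$ is not compactly divergent, Theorem~\ref{th:2.BedAba} produces the limit retraction $\rho\colon D\to M$ onto the limit manifold $M$, with $\varphi=f|_M\in\Aut(M)$ and $\Gamma(f)$ isomorphic to $\Gamma:=\overline{\{\varphi^k\}}\subset\Aut(M)$, a compact abelian group (Corollary~\ref{th:2.qr}). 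Since $M=\rho(D)$ is a holomorphic retract of the contractible domain $D$, it is itself contractible; by Lemma~\ref{th:2.ret} it is a closed submanifold, hence again complete hyperbolic (Proposition~\ref{th:2.exchyp}). The crucial reduction is that it suffices to find a fixed point of $\varphi$ in $M$: any such point is fixed by $f$, because $f|_M=\varphi$. Equivalently, one must show $M^{\Gamma}\neq\emptyset$.

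To produce such a point I would reproduce inside $M$ the construction in Theorem~\ref{th:2.convfix}. Fixing $z_0\in M$, the orbit $\Gamma(z_0)$ is compact, and setting
\[
\mathcal{C}=\bigl\{\overline{B_M(w,r)}\bigm| w\in M,\ r>0,\ \overline{B_M(w,r)}\supseteq\Gamma(z_0)\bigr\},
\]
the intersection $C=\bigcap\mathcal{C}$ is a nonempty compact set, closed Kobayashi balls being compact by Proposition~\ref{th:2.chyp}; exactly the computation of Theorem~\ref{th:2.convfix}---using only that $\varphi$ is a $k_M$-isometry---then shows $\varphi(C)\subseteq C$. In the convex case one concluded by Brouwer's theorem, precisely because there $C$ was convex (Corollary~\ref{th:1.convexballs}). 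Here convexity of Kobayashi balls is lost, so $C$ need not be convex, and Brouwer does not apply directly.

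The main obstacle, therefore, is to replace Brouwer's theorem by a Lefschetz-type fixed point theorem, and this is where the hypotheses genuinely enter. What one needs is that the compact $\varphi$-invariant set $C$ be $\mathbb{Q}$-acyclic (or at least have nonzero Lefschetz number for $\varphi$), so that $L(\varphi|_C)=1\neq0$ forces a fixed point. Equivalently, one may argue directly on $M$: being contractible and of finite topological type, the torus factor $\mathbb{T}^r\subseteq\Gamma$ has nonempty $\mathbb{Q}$-acyclic fixed-point set $M^{\mathbb{T}^r}$ by Borel's localization theorem (as already used for Theorem~\ref{th:2.topology}, since $\chi(M^{\mathbb{T}^r})=\chi(M)=1$), after which the residual finite cyclic factor $\mathbb{Z}_q$, generated by the single element $\varphi$, must be shown to fix a point of $M^{\mathbb{T}^r}$. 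Controlling the topology of $C$---respectively, running Smith theory for the finite cyclic factor and guaranteeing the tameness and finite orbit structure needed for these localization results---is the delicate point, since it requires the fine geometry of Kobayashi balls and of iterates near the boundary of a strongly pseudoconvex domain. This is the step I expect to be hardest, and the one that forces Huang's deeper tools, which lie outside the present scope.
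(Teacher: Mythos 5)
Your reduction is sound as far as it goes, and in fact it correctly isolates everything elementary: the ``if'' direction; the passage, via tautness (Corollary~\ref{th:1.pscomp} plus Proposition~\ref{th:2.chyptaut}) and Theorem~\ref{th:2.BedAba}, to the limit retraction $\rho\colon D\to M$ with $\varphi=f|_M\in\Aut(M)$ and $\Gamma\cong\mathbb{Z}_q\times\mathbb{T}^r$ compact (Corollary~\ref{th:2.qr}); the observation that $M$ is a contractible closed submanifold and that it suffices to fix a point of $\varphi$; and the construction of the compact set $C$ with $\varphi(C)\subseteq C$, which indeed uses only that $\varphi$ is a $k_M$-isometry. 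Note, however, that the paper does not prove this theorem at all: it quotes it from Huang's paper, stating explicitly that the proof uses ``very deep properties of the Kobayashi distance in strongly pseudoconvex domains, outside of the scope of these notes''. So the only question is whether your completion strategy could work.

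It cannot, and this is a genuine gap rather than a merely delicate step. Your plan replaces Brouwer by equivariant topology: Borel localization for the torus factor (which does give $M^{\mathbb{T}^r}\ne\emptyset$ and $\mathbb{Q}$-acyclic, since $\chi(M)=1$), then Smith theory for the residual cyclic factor. But Smith theory forces fixed points only for groups of \emph{prime power} order acting on mod-$p$ acyclic spaces; a finite cyclic group whose order is not a prime power can act without fixed points on a contractible finite-dimensional space (even on a Euclidean space), and the $\mathbb{Q}$-acyclicity supplied by localization gives no mod-$p$ control. Likewise, Lefschetz on $C$ fails because $C$, a compact intersection of Kobayashi balls, need not be an ANR or acyclic. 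The paper itself records the decisive obstruction: Theorem~\ref{th:2.AH} exhibits a bounded \emph{taut} domain $D\subset\subset\C^8$ homeomorphic to $\C^8$, strongly pseudoconvex at all boundary points but one, on which a finite cyclic group acts without fixed points. A generator $\varphi$ of that group lies in $\Aut(D)$, its iterates form a finite (hence not compactly divergent) family, $D$ is its own limit manifold, and $\Fix(\varphi)=\emptyset$; since your argument uses only tautness, contractibility and compactness of $\Gamma$, if it could be completed it would ``prove'' a statement this example refutes. Hence analytic input from strong pseudoconvexity at \emph{every} boundary point (with the $C^3$ regularity) is unavoidable: Huang's actual proof goes through a non-degeneracy property of extremal mappings (complex geodesics) and the boundary behavior of the Kobayashi distance, not through Borel--Smith theory. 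Your closing admission that Huang's deeper tools are needed is accurate, but the specific topological route you propose is provably a dead end, not a gap that finer control of Kobayashi balls could close.
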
 

This might suggest that such a statement might be extended to taut manifolds (or at least to taut domains) topologically contractible. Surprisingly, this is not true: 

\begin{theorem}[Abate-Heinzner, \cite{AH}]
\label{th:2.AH}
There exists a bounded domain $D\subset\subset\C^8$ which is taut, homeomorphic to~$\C^8$ (and hence topologically contractible), pseudoconvex, and strongly pseudoconvex at all points of~$\de D$ but one, where a finite cyclic group acts without fixed points.
\end{theorem}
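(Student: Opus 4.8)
The statement is an existence assertion, so the plan is to \emph{construct} such a domain $D$ explicitly and then verify each listed property; the detailed construction is carried out in \cite{AH}, and I sketch its architecture. The conceptual key is understanding \emph{why} such a $D$ can exist at all. If a finite cyclic $G=\langle g\rangle$ acts holomorphically on a contractible $D\cong\C^8$, the phrase ``acts without fixed points'' must be read as $\Fix(g)=D^G=\emptyset$, and \emph{not} as freeness of the action: a finite group acting freely on a contractible finite-dimensional manifold would exhibit $D/G$ as a finite-dimensional $K(G,1)$, which is impossible. Moreover, Smith theory forbids even a fixed-point-free action when $G$ has prime-power order, since the fixed set of a $\mathbb{Z}_p$-action on a $\mathbb{Z}_p$-acyclic space is again nonempty and acyclic. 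The loophole is that when $|G|=q$ has two distinct prime factors $p_1,p_2$, the (necessarily nonempty) fixed loci of the prime-power subgroups $\langle g^{q/p_1}\rangle$ and $\langle g^{q/p_2}\rangle$ need not intersect, so a \emph{fixed-point-free but non-free} action becomes possible. Hence the first design decision is to take $G=\mathbb{Z}_q$ with $q$ divisible by two distinct primes. This is exactly what blocks extending Theorem~\ref{th:2.Huang}: the generator $g\in\Aut(D)$ is periodic, so its iterates are \emph{not} compactly divergent, yet $g$ has no fixed point.

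To realise this holomorphically, I would begin from a unitary representation $\sigma\colon G\to U(8)$ on $V=\C^8$ chosen so that $V^G=\{0\}$ (no trivial summand, so that $\Fix(g)=\{0\}$ as a self-map of $\C^8$), while the fixed subspaces $V^{\langle g^{q/p_1}\rangle}$ and $V^{\langle g^{q/p_2}\rangle}$ are both nontrivial and meet only at the origin. One then builds a bounded $G$-invariant pseudoconvex domain $D$ whose closure meets the global fixed locus $\{0\}$ in exactly one point, placed \emph{on} $\de D$. Concretely, one engineers a $G$-invariant defining function whose Levi form is positive definite at every boundary point except one distinguished point $x_0$ — the image of the origin, or equivalently the image under a Cayley-type transformation of the ``point at infinity'' of a Siegel-type equivariant model — where the boundary is allowed to degenerate into the single non-strongly-pseudoconvex (indeed singular) point carrying the entire fixed-point defect of the action.

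With $D$ so produced, the verifications are as follows. Boundedness and global pseudoconvexity are immediate from the defining function; strong pseudoconvexity at every boundary point but $x_0$ follows from positivity of the Levi form at those points; tautness I would obtain directly from a bounded $G$-invariant plurisubharmonic exhaustion together with the explicit local description of $\de D$ near $x_0$, checking that no sequence in $\Hol(\Delta,D)$ can converge to a limit map crossing $\de D$ (the single boundary singularity notwithstanding). Contractibility in the strong form ``homeomorphic to $\C^8$'' is obtained by exhibiting $D$ as an equivariant cell. Finally, fixed-point-freeness is the clean identity $D^G=D\cap V^G=D\cap\{0\}=\emptyset$, which holds precisely because $0$ was pushed onto $\de D$; since $g|_D$ has order $q$, the sequence $\{g^k\}$ is periodic and hence not compactly divergent, closing the dynamical loop.

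The main obstacle is the \emph{simultaneous} fulfilment of these constraints. Smith theory guarantees that $\langle g^{q/p_1}\rangle$ and $\langle g^{q/p_2}\rangle$ have nonempty contractible fixed loci inside $D$, and the delicate point is to route \emph{all} of this unavoidable fixed-point obstruction into the \emph{single} boundary point $x_0$ while (a) keeping the interior strictly fixed-point-free, (b) keeping $D$ homeomorphic to the cell $\C^8$ rather than to some other contractible manifold, and (c) retaining tautness even though complete hyperbolicity must fail at $x_0$ (so that Corollary~\ref{th:1.pscomp} cannot be invoked). Reconciling the complex-analytic requirements — invariant strong pseudoconvexity away from $x_0$, global pseudoconvexity, and a usable exhaustion — with these rigid equivariant-topological constraints, and confirming that the local model at $x_0$ is genuinely isolated with no fixed orbit leaking back into $D$, is the technical heart of \cite{AH}.
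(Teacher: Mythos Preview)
The paper does not prove this theorem; it merely quotes the statement from \cite{AH} and moves on. So there is no ``paper's own proof'' to compare your sketch against. Your proposal is therefore not a proof either but an outline of the architecture of the construction in \cite{AH}, and on that level it is accurate: the Smith-theoretic reasoning explaining why the cyclic group must have order divisible by two distinct primes is exactly the obstruction that forces the design, and your identification of the single non--strongly pseudoconvex boundary point as the receptacle for the fixed-point defect is the right picture. One small caution: your sentence about complete hyperbolicity ``failing at $x_0$'' is a heuristic rather than a theorem; tautness in \cite{AH} is established directly, and you correctly flag this as the delicate verification. Since the present paper only cites the result, nothing more than your citation-level sketch is expected here.
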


This completes the discussion of tasks (a) and (b). In the next two subsections we shall describe how it is possible to use the Kobayashi distance to deal with task~(c).

\subsection{Horospheres and the Wolff-Denjoy theorem}
\label{subsec:2.2}

When $f\in\Hol(\Delta,\Delta)$ has a fixed point $\zeta_0\in\Delta$, the Wolff-Denjoy theorem is an easy consequence of the Schwarz-Pick lemma. Indeed if $f$ is an automorphism the statement is clear; if it is not an automorphism, then $f$ is a strict contraction of any Kobayashi ball centered at~$\zeta_0$, and thus the orbits must converge to the fixed point~$\zeta_0$. When $f$ has no fixed points, this argument fails because there are no $f$-invariant Kobayashi balls. Wolff had the clever idea of replacing Kobayashi balls by a sort of balls ``centered" at points in the boundary, the \emph{horocycles,} and he was able to prove the existence of $f$-invariant horocycles---and thus to complete the proof of the Wolff-Denjoy theorem. 

This is the approach we shall follow to prove a several variable version of the Wolff-Denjoy theorem in strongly pseudoconvex domains, using the Kobayashi distance to define a general notion of multidimensional analogue of the horocycles, the horospheres. This notion, introduced in \cite{Ab1}, is behind practically all known generalizations of the Wolff-Denjoy theorem; and it has found other applications as well (see, e.g., the survey paper \cite{AbLind} and other chapters in this book).

\begin{definition}
Let $D\subset\subset\C^n$ be a bounded domain. Then the \emph{small horosphere} of \emph{center}~$x_0\in\de D$, \emph{radius}~$R>0$ and \emph{pole}~$z_0\in D$ is the set
\[
E_{z_0}(x_0,R)=\bigl\{z\in D\bigm|\limsup_{w\to x_0}[k_D(z,w)-k_D(z_0,w)]<\mlog R
\bigr\}\;;
\] 
the \emph{large horosphere} of \emph{center}~$x_0\in\de D$, \emph{radius}~$R>0$ and \emph{pole}~$z_0\in D$ is the set
\[
E_{z_0}(x_0,R)=\bigl\{z\in D\bigm|\liminf_{w\to x_0}[k_D(z,w)-k_D(z_0,w)]<\mlog R
\bigr\}\;.
\] 
\end{definition}
The rationale behind this definition is the following. A Kobayashi ball of center $w\in D$ and radius~$r$ is the set of $z\in D$ such that $k_D(z,w)<r$. If we let $w$ go to a point in the boundary $k_D(z,w)$ goes to infinity (at least when $D$ is complete hyperbolic), and so we cannot use it to define subsets of~$D$. We then renormalize $k_D(z,w)$ by subtracting the distance $k_D(z_0,w)$ from a reference point~$z_0$. By the triangular inequality the difference $k_D(z,w)-k_D(z_0,w)$ is bounded by $k_D(z_0,z)$; thus we can consider the liminf and the limsup as $w$ goes to~$x_0\in\de D$ (in general, the limit does not exist; an exception is given by strongly convex $C^3$ domains, see \cite[Corollary~2.6.48]{Abatebook}), and the sublevels provide some sort of balls centered at points in the boundary.

The following lemma contains a few elementary properties of the horospheres, which are an immediate consequence of the definition (see, e.g., \cite[Lemmas~2.4.10 and~2.4.11]{Abatebook}):

\begin{lemma}
\label{th:2.horoeasy}
Let $D\subset\subset\C^n$~be a bounded domain of~$\C^n$, and choose $z_0\in D$ and
$x\in\partial D$. Then:
\begin{enumerate}
\item[\rm(i)] for every $R>0$ we have $E_{z_0}(x, R)\subset F_{z_0}(x,R)$;
\item[\rm(ii)] for every $0<R_1<R_2$ we have $E_{z_0}(x, R_1)\subset E_{z_0}(x, R_2)$
and $F_{z_0}(x, R_1)\subset F_{z_0}(x,R_2)$;
\item[\rm(iii)] for every $R>1$ we have $B_D(z_0,\mlog R)\subset E_{z_0}(x,R)$;
\item[\rm(iv)] for every $R<1$ we have $F_{z_0}(x,R) \cap B_D(z_0,-\mlog R)=\emptyset$;
\item[\rm(v)] $\bigcup\limits_{R>0}E_{z_0}(x,R) =\bigcup\limits_{R>0}F_{z_0}(x,R)=D$
and $\bigcap\limits_{R>0}E_{z_0}(x,R)=\bigcap\limits_{R>0} F_{z_0}(x,R)=\emptyset$;
\item[\rm(vi)] if $\gamma\in\Aut(D)\cap C^0(\overline D,\overline D)$, then for every $R>0$
\[
\phe\bigl(E_{z_0}(x,R)\bigr)=E_{\phe(z_0)}\bigl(\phe(x),R\bigr)\qquad\hbox{
and}\qquad \phe\bigl(F_{z_0}(x,R)\bigr)= F_{\phe(z_0)}\bigl(\phe(x),R\bigr)\;;
\]
\item[\rm(vii)] if $z_1 \in D$, set
\[
\mlog L = \limsup_{w\to x}\bigl[k_D(z_1,w)-k_D(z_0,w)\bigr]\;.
\]
Then for every $R>0$ we have $E_{z_1}(x,R)\subseteq E_{z_0}(x,LR)$ and
$F_{z_1}(x,R)\subseteq F_{z_0}(x,LR)$.
\end{enumerate}
\end{lemma}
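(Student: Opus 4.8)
The plan is to prove the seven items in order, in each case reducing everything to two elementary facts. For a pole $p\in D$ write $g^p_z(w)=k_D(z,w)-k_D(p,w)$, so that the small horosphere $E_p(x,R)$ is defined by $\limsup_{w\to x}g^p_z(w)<\mlog R$ and the large horosphere $F_p(x,R)$ by $\liminf_{w\to x}g^p_z(w)<\mlog R$. The triangle inequality gives the two-sided bound $-k_D(p,z)\le g^p_z(w)\le k_D(p,z)$ for every $w\in D$, and $\mlog$ is strictly increasing; together with $\liminf\le\limsup$ these are essentially all that is needed, except that (vi) also uses the isometry property of automorphisms (Theorem~\ref{th:2.SPlemma}(ii)).

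First the four order-theoretic items, all with $p=z_0$. Part (i) is immediate from $\liminf\le\limsup$. Part (ii) follows because $R_1<R_2$ forces $\mlog R_1<\mlog R_2$, so each defining inequality is preserved. For (iii), if $k_D(z_0,z)<\mlog R$ (which needs $R>1$ for the ball to be nonempty) then $\limsup_{w\to x}g^{z_0}_z(w)\le k_D(z_0,z)<\mlog R$, so $z\in E_{z_0}(x,R)$. For (iv), if $k_D(z_0,z)<-\mlog R$ (possible only for $R<1$) the lower bound gives $\liminf_{w\to x}g^{z_0}_z(w)\ge -k_D(z_0,z)>\mlog R$, so $z\notin F_{z_0}(x,R)$, i.e.\ $F_{z_0}(x,R)\cap B_D(z_0,-\mlog R)=\emptyset$. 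Item (v) is then a corollary: given $z\in D$, taking $R$ large enough that $\mlog R>k_D(z_0,z)$ puts $z$ in $B_D(z_0,\mlog R)\subseteq E_{z_0}(x,R)$ by (iii), and since $E_{z_0}(x,R)\subseteq F_{z_0}(x,R)\subseteq D$ both unions equal $D$; taking $R$ small enough that $-\mlog R>k_D(z_0,z)$ removes $z$ from $F_{z_0}(x,R)$ by (iv), and as $E\subseteq F$ both intersections are empty.

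For (vi) the structural input is that $\gamma$, being an automorphism continuous up to the boundary, extends to a homeomorphism of $\overline D$ carrying $\partial D$ onto $\partial D$; in particular $w\to x$ in $\overline D$ if and only if $\gamma(w)\to\gamma(x)$. Since automorphisms are $k_D$-isometries, $k_D(\gamma(a),\gamma(w))=k_D(a,w)$, whence $g^{\gamma(z_0)}_{\gamma(z)}(\gamma(w))=g^{z_0}_z(w)$. Substituting $w'=\gamma(w)$ therefore identifies $\limsup_{w'\to\gamma(x)}g^{\gamma(z_0)}_{\gamma(z)}(w')$ with $\limsup_{w\to x}g^{z_0}_z(w)$, and likewise for $\liminf$; comparing the defining inequalities yields $\gamma\bigl(E_{z_0}(x,R)\bigr)=E_{\gamma(z_0)}(\gamma(x),R)$ and the analogous identity for $F$.

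Finally, (vii) — which I expect to be the point demanding the most care — rests on the decomposition \[ g^{z_0}_z(w)=\bigl[k_D(z,w)-k_D(z_1,w)\bigr]+\bigl[k_D(z_1,w)-k_D(z_0,w)\bigr]=g^{z_1}_z(w)+g^{z_0}_{z_1}(w), \] where $\limsup_{w\to x}g^{z_0}_{z_1}(w)=\mlog L$ is finite because $|g^{z_0}_{z_1}(w)|\le k_D(z_0,z_1)$, so no $\infty-\infty$ arises. For the small horospheres I apply $\limsup(A+B)\le\limsup A+\limsup B$ to get $\limsup_{w\to x}g^{z_0}_z(w)\le\limsup_{w\to x}g^{z_1}_z(w)+\mlog L$, so $z\in E_{z_1}(x,R)$ forces this to be $<\mlog R+\mlog L=\mlog(LR)$, i.e.\ $z\in E_{z_0}(x,LR)$. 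For the large horospheres the \textbf{main subtlety} is to pair the two limits correctly: using the asymmetric inequality $\liminf(A+B)\le\liminf A+\limsup B$ (legitimate since $\limsup B=\mlog L<+\infty$) gives $\liminf_{w\to x}g^{z_0}_z(w)\le\liminf_{w\to x}g^{z_1}_z(w)+\mlog L$, whence $F_{z_1}(x,R)\subseteq F_{z_0}(x,LR)$, completing the proof.
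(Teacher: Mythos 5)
Your proof is correct, and it is precisely the definition-chasing argument the paper has in mind: the text gives no proof of this lemma, observing that all items are immediate consequences of the definition and citing \cite[Lemmas~2.4.10 and~2.4.11]{Abatebook}, and your treatment---the two-sided triangle-inequality bound $|k_D(z,w)-k_D(z_0,w)|\le k_D(z_0,z)$ for (i)--(v), isometry plus boundary continuity for (vi), and the splitting $k_D(z,w)-k_D(z_0,w)=\bigl[k_D(z,w)-k_D(z_1,w)\bigr]+\bigl[k_D(z_1,w)-k_D(z_0,w)\bigr]$ with subadditivity of $\limsup$ and the asymmetric inequality $\liminf(A+B)\le\liminf A+\limsup B$ for (vii)---is the standard one. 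The only step deserving an explicit word is the one you in fact assert in (vi): continuity of $\gamma$ on $\overline D$ alone only pushes sequences $w\to x$ forward to sequences $\gamma(w)\to\gamma(x)$, which by itself yields just the one-sided inclusions $\gamma\bigl(F_{z_0}(x,R)\bigr)\subseteq F_{\gamma(z_0)}\bigl(\gamma(x),R\bigr)$ and $E_{\gamma(z_0)}\bigl(\gamma(x),R\bigr)\subseteq\gamma\bigl(E_{z_0}(x,R)\bigr)$, while the stated equalities need sequences $w'\to\gamma(x)$ to pull back to sequences approaching $x$, i.e., exactly your claim that the extension is a homeomorphism of $\overline D$ (equivalently, that $\gamma^{-1}$ is also continuous up to the boundary), which is how the hypothesis $\gamma\in\Aut(D)\cap C^0(\overline D,\overline D)$ is meant to be read.
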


It is also easy to check that the horospheres with pole at the origin in~$B^n$ (and thus in~$\Delta$) coincide with the classical horospheres:

\begin{lemma}
\label{th:2.horoball}
If $x\in\de B^n$ and $R>0$ then
\[
E_O(x,R)=F_O(x,r)=\left\{z\in B^n\biggm| \frac{|1-\langle z,x\rangle|^2}{1-\|z\|^2}<R\right\}\;.
\]
\end{lemma}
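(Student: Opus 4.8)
The plan is to show that for the ball $B^n$ with pole at the origin the limit defining the horospheres actually exists (not merely the $\limsup$ and the $\liminf$ separately), so that $E_O(x,R)$ and $F_O(x,R)$ automatically coincide, and then to compute its value. Concretely, I would prove that for every $z\in B^n$ and every $x\in\de B^n$
\[
\lim_{w\to x}\bigl[k_{B^n}(z,w)-k_{B^n}(O,w)\bigr]=\mlog\frac{|1-\langle z,x\rangle|^2}{1-\|z\|^2}\;.
\]
Granting this, the condition ``$\cdots<\mlog R$'' cutting out both $E_O(x,R)$ and $F_O(x,R)$ reduces to $\frac{|1-\langle z,x\rangle|^2}{1-\|z\|^2}<R$, which is exactly the asserted set; and since a genuine two-sided limit exists, $\limsup$ and $\liminf$ agree and the small and large horospheres coincide.

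The key input is a closed expression for $k_{B^n}(z,w)$ between two arbitrary points, which I would extract from the homogeneity of the ball. For $w\in B^n$ pick $\varphi_w\in\Aut(B^n)$ with $\varphi_w(w)=O$; since by Theorem~\ref{th:2.SPlemma}.(ii) automorphisms are Kobayashi isometries, Proposition~\ref{th:2.examples}.(iv) gives
\[
k_{B^n}(z,w)=k_{B^n}\bigl(O,\varphi_w(z)\bigr)=\mlog\frac{1+\|\varphi_w(z)\|}{1-\|\varphi_w(z)\|}\;.
\]
The one genuinely ball-specific ingredient is the standard M\"obius identity
\[
1-\|\varphi_w(z)\|^2=\frac{(1-\|w\|^2)(1-\|z\|^2)}{|1-\langle z,w\rangle|^2}\;,
\]
coming from the explicit form of the transvection exchanging $O$ and $w$. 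Note that $|1-\langle z,x\rangle|\ge 1-|\langle z,x\rangle|\ge 1-\|z\|>0$ by Cauchy--Schwarz, so the denominators never degenerate along the limit.

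Writing $t=\|\varphi_w(z)\|$, I would then compute
\[
k_{B^n}(z,w)-k_{B^n}(O,w)=\mlog\frac{(1+t)(1-\|w\|)}{(1-t)(1+\|w\|)}
=\mlog\frac{(1+t)^2\,|1-\langle z,w\rangle|^2}{(1-\|z\|^2)(1+\|w\|)^2}\;,
\]
where the second equality follows by rationalizing and substituting the M\"obius identity for $1-t^2$. As $w\to x$ we have $\|w\|\to1$; the numerator of $1-t^2$ then tends to $0$ while its denominator stays bounded away from $0$, forcing $t\to1$, so that $(1+t)^2\to4$ and $(1+\|w\|)^2\to4$, while $\langle z,w\rangle\to\langle z,x\rangle$ by continuity of the Hermitian product. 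Taking logarithms yields the displayed limit and completes the argument.

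The main obstacle is really only the M\"obius identity: it is the sole point where the special geometry of $B^n$ enters, and establishing it requires the explicit automorphism interchanging $O$ and $w$ (alternatively one may simply quote the known closed two-point formula for the Bergman--Kobayashi distance on $B^n$). Everything else is the elementary limit computation above, whose crucial feature---that the true two-sided limit exists, independently of the direction of approach $w\to x$---is precisely what forces $E_O(x,R)=F_O(x,R)$.
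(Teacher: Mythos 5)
Your proof is correct and takes essentially the same route as the paper: the paper likewise reduces to the explicit formula for $k_{B^n}(O,\cdot)$ via a M\"obius automorphism together with the identity $1-\|\gamma_z(w)\|^2=\frac{(1-\|z\|^2)(1-\|w\|^2)}{|1-\langle w,z\rangle|^2}$, and shows the genuine limit of $k_{B^n}(z,w)-k_{B^n}(O,w)$ as $w\to x$ exists and equals $\mlog\bigl(|1-\langle z,x\rangle|^2/(1-\|z\|^2)\bigr)$, so that small and large horospheres coincide. The only cosmetic difference is that the paper uses the involution $\gamma_z$ sending $z$ to $O$ (applied to $w$, noting $\|\gamma_z(x)\|=1$), whereas you use $\varphi_w$ sending $w$ to $O$ (applied to $z$, deriving $t\to 1$ from the identity); by symmetry these yield the same computation.
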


\begin{proof}
If $z\in B^n\setminus\{O\}$, let $\gamma_z\colon B^n\to\C^n$ be given by
\begin{equation}
\gamma_z(w)=\frac{z-P_z(w)-(1-\|z\|^2)^{1/2}\bigl(w-P_z(w)\bigr)}{1-\langle w,z\rangle}\;,
\label{eq:2.autBn}
\end{equation}
where $P_z(w)=\frac{\langle w,z\rangle}{\langle z,z\rangle}z$ is the orthogonal projection on~$\C z$;
we shall also put $\gamma_O=\mathrm{id}_{B^n}$. It is easy to check that $\gamma_z(z)=O$, that $\gamma_z(B^n)\subseteq B^n$ and that $\gamma_z\circ\gamma_z=\mathrm{id}_{B^n}$; in particular, $\gamma_z\in\Aut(B^n)$. Furthermore,
\[
1-\|\gamma_z(w)\|^2=\frac{(1-\|z\|^2)(1-\|w\|^2)}{|1-\langle w,z\rangle|^2}\;.
\]
Therefore for all $w\in B^n$ we get
\begin{eqnarray*}
k_{B^n}(z,w)-k_{B^n}(O,w)&=&k_{B^n}\bigl(O,\gamma_z(w)\bigr)-k_{B^n}
	(O,w)\\
	&=&\mlog{\biggl(\frac{1+\|\gamma_z(w)\|}{1+\|w\|}\cdot\frac{1-\|w\|}
	{1-\|\gamma_z(w)\|}\biggr)}\\
	&=&\log\frac{1+\|\gamma_z(w)\|}{1+\|w\|}+\mlog\frac{|1-\langle w,z\rangle|^2}{1-\|z\|^2}\;.
\end{eqnarray*}
Letting $w\to x$ we get the assertion, because $\|\gamma_z(x)\|=1$. 
\qed
\end{proof}

Thus in $B^n$ small and large horospheres coincide. Furthermore, the horospheres with pole at the origin are ellipsoids tangent to~$\de B^n$ in~$x$, because an easy computation yields
\[
E_O(x,R)=\biggl\{z\in B^n \biggm| \frac{\|P_x(z)-(1-r)x\|^2}{r^2}+\frac{\|z-P_x(z)\|^2}{r}<1
\biggr\}\;,
\]
where $r=R/(1+R)$. In particular if $\tau\in\de\Delta$ we have
\[
E_0(\tau,R)=\bigl\{\zeta\in\Delta\bigm| |\zeta-(1-r)\tau|^2<r^2\bigr\}\;,
\]
and so a horocycle is an Euclidean disk internally tangent to~$\de\Delta$ in~$\tau$.

Another domain where we can explicitly compute the horospheres is the polydisk; in this case large and small horospheres are actually different (see, e.g., \cite[Proposition~2.4.12]{Abatebook}):

\begin{proposition}
\label{th:2.horopolydisk}
Let $x\in\partial\Delta^n$ and $R>0$. Then
\[
\vcenter{\halign{$\hfil#$&$#=\Biggl\{z\in\Delta^n\hfil$&${}\Biggm|\hfil#$&
        $\biggl\{{\displaystyle\frac{|x_j-z_j|^2}{1-|z_j|^2}}\biggm|
        |x_j|=1\biggr\}<R\Biggr\}$#\hfil\cr
        E_O(x,R)&{}&\max\limits_j&\thinspace;\cr
\noalign{\vskip2\jot}
        F_O(x,R)&{}&\min\limits_j&\thinspace.\cr}}
\]
\end{proposition}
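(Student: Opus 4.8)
The plan is to reduce everything to the one-dimensional case via the product formula for the Kobayashi distance on the polydisk. By Proposition~\ref{th:2.examples}.(iii) we have $k_{\Delta^n}(z,w)=\max_j k_\Delta(z_j,w_j)$ and $k_{\Delta^n}(O,w)=\max_j k_\Delta(0,w_j)$, so, writing $a_j(w)=k_\Delta(z_j,w_j)$ and $b_j(w)=k_\Delta(0,w_j)$, the quantity whose $\limsup$ (resp.\ $\liminf$) as $w\to x$ defines $E_O(x,R)$ (resp.\ $F_O(x,R)$) is
\[
g(w)=\max_j a_j(w)-\max_j b_j(w)\;.
\]
I would set $J=\{j\mid |x_j|=1\}$, which is nonempty since $x\in\partial\Delta^n$.

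First I would record the two basic one-dimensional facts. For $j\notin J$ both $a_j(w)$ and $b_j(w)$ stay bounded as $w\to x$ (the $j$-th coordinate converges to an interior point of $\Delta$), whereas for $j\in J$ both tend to $+\infty$; hence for $w$ close enough to $x$ both maxima in $g(w)$ are attained on $J$, and the indices outside $J$ can be discarded. For $j\in J$ the difference $a_j(w)-b_j(w)$ converges, as $w_j\to x_j$, to
\[
c_j:=\mlog\frac{|x_j-z_j|^2}{1-|z_j|^2}\;,
\]
which is exactly the one-variable horosphere computation already carried out in the proof of Lemma~\ref{th:2.horoball} (the case $n=1$, using $|1-x_j\bar z_j|=|x_j-z_j|$ since $|x_j|=1$). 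Crucially, $a_j(w)-b_j(w)$ depends only on the single coordinate $w_j$.

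The heart of the argument, and the step I expect to be the main obstacle, is to compute the $\limsup$ and $\liminf$ of $g(w)=\max_{j\in J}(b_j+\eps_j)-\max_{j\in J}b_j$, where $\eps_j=a_j-b_j\to c_j$ and the $b_j\to+\infty$. The two one-sided bounds that always hold are immediate: choosing an index $k^*$ realizing $\max_{j\in J}b_j$ gives $g(w)\ge\eps_{k^*}\ge\min_{j\in J}\eps_j$, while choosing $j^*$ realizing $\max_{j\in J}(b_j+\eps_j)$ gives $g(w)\le\eps_{j^*}\le\max_{j\in J}\eps_j$; letting $w\to x$ these yield $\liminf g\ge\min_{j\in J}c_j$ and $\limsup g\le\max_{j\in J}c_j$. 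The reverse inequalities require exhibiting approach paths realizing each extreme value. For the $\limsup$ I would let all coordinates $w_j$, $j\in J$, tend to $x_j$ at the same Poincar\'e rate (with $|w_j|$ a common value tending to $1$), so that all $b_j$ coincide and cancel against the common maximum, leaving $g(w)=\max_{j\in J}\eps_j\to\max_{j\in J}c_j$. For the $\liminf$ I would pick $j_0\in J$ with $c_{j_0}=\min_{j\in J}c_j$ and let $w_{j_0}\to x_{j_0}$ much faster than the other coordinates, so that $b_{j_0}$ dominates both maxima and $g(w)=\eps_{j_0}\to c_{j_0}$. The point that needs justification here is precisely that the blow-up rates of the various coordinates can be prescribed independently; this is clear because $b_j$ depends only on $|w_j|$ and the $w_j$ for $j\in J$ vary in independent copies of $\Delta$.

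Putting these together, $\limsup_{w\to x}g(w)=\mlog\max_{j\in J}\frac{|x_j-z_j|^2}{1-|z_j|^2}$ and $\liminf_{w\to x}g(w)=\mlog\min_{j\in J}\frac{|x_j-z_j|^2}{1-|z_j|^2}$, using that $\mlog$ is increasing so that it commutes with $\max$ and $\min$. Since the defining conditions of $E_O(x,R)$ and $F_O(x,R)$ are $\limsup g<\mlog R$ and $\liminf g<\mlog R$ respectively, applying the inverse of the strictly increasing function $\mlog$ turns these into $\max_{j\in J}\frac{|x_j-z_j|^2}{1-|z_j|^2}<R$ and $\min_{j\in J}\frac{|x_j-z_j|^2}{1-|z_j|^2}<R$, which are exactly the asserted descriptions.
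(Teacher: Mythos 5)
Your proof is correct. Note that the paper itself does not prove this proposition (it is quoted from \cite[Proposition~2.4.12]{Abatebook}); the closest argument actually carried out in the paper is the one in Lemmas~\ref{th:2.horoball} and~\ref{th:2.horoseqpoly}, which proceeds multiplicatively: one composes with an automorphism $\gamma_z$ of the polydisk sending $z$ to $O$, uses $k_{\Delta^n}(O,w)=\mlog\frac{1+\max_j|w_j|}{1-\max_j|w_j|}$, and is led to study the ratio $(1-\max_j|w_j|^2)/(1-\max_j|\gamma_z(w)_j|^2)$, out of which the $\max$/$\min$ dichotomy emerges as a $\max_j\min_h$ of ratios of coordinate quantities (this is exactly how the coefficients $\alpha_j$ appear in Lemma~\ref{th:2.horoseqpoly}). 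You instead work additively: after reducing to the coordinates $j$ with $|x_j|=1$ via Proposition~\ref{th:2.examples}.(iii), you analyze $g(w)=\max_{j\in J}(b_j+\eps_j)-\max_{j\in J}b_j$ directly, getting the universal bounds $\min_{j\in J}\eps_j\le g\le\max_{j\in J}\eps_j$ from a choice of maximizing index on each side, and then realize the two extremes by explicit approach paths (all $|w_j|$, $j\in J$, equal and tending to $1$ for the $\limsup$; one coordinate blowing up strictly faster for the $\liminf$). Both routes rest on the same two ingredients --- the product formula for $k_{\Delta^n}$ and the one-variable limit $k_\Delta(z_j,w_j)-k_\Delta(0,w_j)\to\mlog\frac{|x_j-z_j|^2}{1-|z_j|^2}$ --- but your version is more elementary (no polydisk automorphisms beyond the disk case already proved) and makes transparent \emph{why} the small horosphere gets the $\max$ and the large one the $\min$: the $\limsup$ is achieved when all boundary coordinates escape at the same rate, the $\liminf$ when a single worst coordinate dominates, and the independence of the coordinates is precisely what lets you prescribe the rates separately. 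The automorphism computation, on the other hand, is the one that generalizes to horosphere sequences, where the rates are fixed by the given sequence and the limits $\alpha_j$ record their relative speeds.
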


The key in the proof of the classical Wolff-Denjoy theorem is the 

\begin{theorem}[Wolff's lemma, \cite{Wo}]
\label{th:2.Wolff}
Let $f\in\Hol(\Delta,\Delta)$ without fixed points. Then there exists a unique $\tau\in\de\Delta$ such that
\begin{equation}
f\bigl(E_0(\tau,R)\bigr)\subseteq E_0(\tau,R)
\label{eq:2.unocinque}
\end{equation}
for all $R>0$.
\end{theorem}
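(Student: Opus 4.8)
The plan is to reduce the statement to a single inequality and then to locate the boundary point $\tau$ by a limiting argument. Writing $\Phi_\tau(\zeta)=\frac{|\tau-\zeta|^2}{1-|\zeta|^2}$, Lemma~\ref{th:2.horoball} (with $n=1$, so $\langle z,x\rangle=z\bar x$ and $|1-\zeta\bar\tau|=|\tau-\zeta|$) identifies $E_0(\tau,R)=\{\zeta\in\Delta\mid\Phi_\tau(\zeta)<R\}$. Hence, to obtain the inclusion (\ref{eq:2.unocinque}) for every $R>0$ it suffices to prove the single \emph{Wolff inequality} $\Phi_\tau\bigl(f(\zeta)\bigr)\le\Phi_\tau(\zeta)$ for all $\zeta\in\Delta$. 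To manufacture a candidate $\tau$, I would first produce approximate fixed points: for $r\in(0,1)$ the map $rf$ sends the compact convex disk $\{|\zeta|\le r\}$ into itself (because $|f|<1$ on $\Delta$), so by Brouwer's theorem it has a fixed point $w_r\in\Delta$, i.e.\ $f(w_r)=w_r/r$. Letting $r\to1^-$ and extracting a subsequence with $w_r\to\tau\in\overline\Delta$, the absence of fixed points forces $\tau\in\de\Delta$: otherwise $f(w_r)=w_r/r\to\tau$ would give $f(\tau)=\tau$ by continuity.

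The core step is to prove the Wolff inequality for this $\tau$. I would rewrite the Schwarz--Pick lemma (Theorem~\ref{th:1.SPlemma}) in the equivalent form, valid for any $a\in\Delta$,
\[
\frac{|1-\overline{f(a)}f(\zeta)|^2}{1-|f(\zeta)|^2}\le\frac{1-|f(a)|^2}{1-|a|^2}\cdot\frac{|1-\bar a\zeta|^2}{1-|\zeta|^2}\;,
\]
which follows from $\bigl|\tfrac{f(\zeta)-f(a)}{1-\overline{f(a)}f(\zeta)}\bigr|\le\bigl|\tfrac{\zeta-a}{1-\bar a\zeta}\bigr|$ together with the identity $1-\bigl|\tfrac{z-w}{1-\bar w z}\bigr|^2=\tfrac{(1-|z|^2)(1-|w|^2)}{|1-\bar w z|^2}$. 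Applying it with $a=w_r$ and $f(w_r)=w_r/r$, the correction factor becomes $c_r=\tfrac{1-|w_r|^2/r^2}{1-|w_r|^2}$, and a direct computation gives $c_r-1=\tfrac{|w_r|^2(r^2-1)}{r^2(1-|w_r|^2)}<0$, so $c_r<1$. Thus for every $r$ the left-hand side is bounded by $\tfrac{|1-\overline{w_r}\zeta|^2}{1-|\zeta|^2}$; passing to the limit along $w_r\to\tau$ and using $|\tau|=1$ yields exactly $\Phi_\tau\bigl(f(\zeta)\bigr)\le\Phi_\tau(\zeta)$, hence (\ref{eq:2.unocinque}).

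For uniqueness I would argue by contradiction. If two distinct points $\tau_1\ne\tau_2\in\de\Delta$ both satisfied (\ref{eq:2.unocinque}), fix $R>1$ and set $K=\overline{E_0(\tau_1,R)}\cap\overline{E_0(\tau_2,R)}$. Each closed horocycle is a Euclidean disk internally tangent to $\de\Delta$ only at $\tau_i$, so $K$ is closed and meets $\de\Delta$ in $\{\tau_1\}\cap\{\tau_2\}=\emptyset$; therefore $K$ is a compact convex subset of $\Delta$, and it is nonempty since $\Phi_{\tau_i}(0)=1<R$ gives $0\in E_0(\tau_i,R)$. The Wolff inequality shows $\Phi_{\tau_i}(f(\zeta))\le\Phi_{\tau_i}(\zeta)$ for $\zeta\in\Delta$, so $f$ carries each closed horocycle (intersected with $\Delta$) into itself, whence $f(K)\subseteq K$; Brouwer's theorem then produces a fixed point of $f$ in $K\subset\Delta$, contradicting the hypothesis. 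This forces $\tau$ to be unique.

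The main obstacle is the core step: the Julia-type inequality genuinely holds only at the interior approximate fixed points $w_r$, and one must ensure it survives the degenerate limit in which \emph{both} $|w_r|\to1$ and $r\to1$. The decisive observation is the elementary sign $c_r<1$, which renders the correction factor harmless and allows the strict inequality to pass to the closed limit; without it, the competing singularities of $1-|w_r|^2$ and $r^2-1$ as $w_r\to\tau\in\de\Delta$ could not be controlled, and the argument would break down precisely at the point where the boundary behaviour is being extracted.
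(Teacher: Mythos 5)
Your proof is correct, and its existence half follows the same skeleton as the paper's: approximate $f$ by $r f$, get interior fixed points via Brouwer, extract $w_r\to\tau\in\de\Delta$ (ruling out $\tau\in\Delta$ by continuity), and pass a Schwarz--Pick--derived inequality to the limit. The one technical difference there is instructive: the paper sidesteps your correction factor entirely by applying Schwarz--Pick to the map $f_\nu=r_\nu f$ itself, which genuinely fixes $\eta_\nu$, so $k_\Delta\bigl(f_\nu(\zeta),\eta_\nu\bigr)=k_\Delta\bigl(f_\nu(\zeta),f_\nu(\eta_\nu)\bigr)\le k_\Delta(\zeta,\eta_\nu)$ gives the clean inequality
\[
\frac{|1-\overline{\eta_\nu}f_\nu(\zeta)|^2}{1-|f_\nu(\zeta)|^2}\le\frac{|1-\overline{\eta_\nu}\zeta|^2}{1-|\zeta|^2}
\]
with no extra factor, and one then lets $f_\nu\to f$ pointwise; your route applies the Julia-type inequality to $f$ at the approximate fixed points and must therefore verify the sign $c_r\le 1$ (note $c_r=1$ when $w_r=0$, a harmless edge case since only $c_r\le1$ is needed). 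Your uniqueness argument is genuinely different from the paper's: the paper chooses two horocycles centered at $\tau_1\ne\tau_2$ tangent to each other at a single point of $\Delta$, which invariance of the closed sublevel sets forces to be fixed; you instead take the compact convex set $K=\overline{E_0(\tau_1,R)}\cap\overline{E_0(\tau_2,R)}\subset\Delta$ (nonempty for $R>1$, disjoint from $\de\Delta$ since each closed horodisk meets the circle only at its own center), check $f(K)\subseteq K$, and invoke Brouwer a second time. Both are valid; the paper's tangency trick pinpoints the fixed point without a second fixed-point theorem, while yours needs no careful choice of radii and reuses machinery already in play. One small point worth making explicit in your uniqueness step: there you need the converse implication --- that the inclusion $f\bigl(E_0(\tau_i,R)\bigr)\subseteq E_0(\tau_i,R)$ for \emph{all} $R>0$ yields $\Phi_{\tau_i}\bigl(f(\zeta)\bigr)\le\Phi_{\tau_i}(\zeta)$ --- which is immediate by letting $R\downarrow\Phi_{\tau_i}(\zeta)$, but is the reverse of the reduction you stated at the outset.
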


\begin{proof}
For the uniqueness, assume that (\ref{eq:2.unocinque}) holds for two distinct points 
$\tau$,~$\tau_1\in\partial\Delta$. Then we can construct two horocycles, 
one centered at~$\tau$ and the other centered at~$\tau_1$, tangent to 
each other at a point of~$\Delta$. By (\ref{eq:2.unocinque}) this point would be a 
fixed point of~$f$, contradiction.

For the existence, pick a sequence $\{r_\nu\}\subset(0,1)$ with $r_\nu
\to1$, and set $f_\nu=r_\nu f$. Then $f_\nu(\Delta)$ is relatively compact 
in~$\Delta$; by Brouwer's theorem each~$f_\nu$ has a fixed 
point~$\eta_\nu\in\Delta$. Up to a subsequence, we can assume $\eta_\nu\to \tau\in
\overline{\Delta}$. If $\tau$ were in~$\Delta$, we would have
\[
f(\tau)=\lim_{\nu\to\infty}f_\nu(\eta_\nu)=\lim_{\nu\to\infty}\eta_\nu=\tau\;,
\]
which is impossible; therefore $\tau\in\partial\Delta$.

Now, by the Schwarz-Pick lemma we have $k_\Delta\bigl(f_\nu(\zeta),\eta_\nu\bigr)\le k_\Delta(\zeta,\eta_\nu)$ for all $\zeta\in\Delta$; recalling the formula for the Poincar\'e distance we get
\[
1-\biggl|\frac{f_\nu(\zeta)-\eta_\nu}{1-\overline{\eta_\nu}f_\nu(\zeta)}\biggr|^2\ge1-\biggl|
	\frac{\zeta-\eta_\nu}{1-\overline{\eta_\nu}\zeta}\biggr|^2\;,
\]
or, equivalently,
\[
\frac{|1-\overline{\eta_\nu}f_\nu(\zeta)|^2}{1-|f_\nu(\zeta)|^2}\le\frac{|1-\overline{\eta_\nu}\zeta|^2}{
	1-|\zeta|^2}\;.
\]
Taking the limit as $\nu\to\infty$ we get
\[
\frac{|1-\overline{\tau}f(\zeta)|^2}{1-|f(\zeta)|^2}\le\frac{|1-\overline{\tau}\zeta|^2}{
	1-|\zeta|^2}\;,
\]
and the assertion follows.
\qed
\end{proof}

With this result it is easy to conclude the proof of the Wolff-Denjoy theorem. Indeed, if $f\in\Hol(\Delta,\Delta)$ has no fixed points we already know that the sequence of iterates is compactly divergent, which means that the image of any limit $h$ of a converging subsequence is contained in~$\de\Delta$. By the maximum principle, the map~$h$ must be constant; and by Wolff's lemma this constant must be contained in~$\overline{E_0(\tau,R)}\cap\de\Delta=\{\tau\}$. So every converging subsequence of $\{f^k\}$ must converge to the constant~$\tau$; and this is equivalent to saying that the whole sequence of iterates converges to the constant map~$\tau$.

\begin{remark}
Let me make more explicit the final argument used here, because we are going to use it often.
Let $D\subset\subset\C^n$ be a bounded domain; in particular, it is (hyperbolic and) relatively compact inside an Euclidean ball $B$, which is complete hyperbolic and hence taut. Take now $f\in\Hol(D,D)$. Since $\Hol(D,D)\subset\Hol(D,B)$, the sequence of iterates $\{f^k\}$ is normal 
in $\Hol(D,B)$; but since $D$ is relatively compact in~$B$, it cannot contain subsequences compactly divergent in~$B$. Therefore $\{f^k\}$ is relatively compact in $\Hol(D,B)$; and since the latter is a metrizable topological space, to prove that $\{f^k\}$ converges in $\Hol(D,B)$ it suffices to prove that all converging subsequences of $\{f^k\}$ converge to the same limit (whose image
will be contained in $\overline{D}$, clearly).
\end{remark}

The proof of the Wolff-Denjoy theorem we described is based on two ingredients: the existence of a $f$-invariant horocycle, and the fact that a horocycle touches the boundary in exactly one point. 
To generalize this argument to several variables we need an analogous of Theorem~\ref{th:2.Wolff} for our multidimensional horopsheres, and then we need to know how the horospheres touch the boundary.

There exist several multidimensional versions of Wolff's lemma; we shall present three of them (Theorems~\ref{th:2.WolffAba}, \ref{th:2.Wolffconv} and \ref{th:2.Wolffdue}). To state the first one we need a definition.

\begin{definition}
Let $D\subset\C^n$ be a domain in~$\C^n$. We say that $D$ has \emph{simple boundary} if every $\phe\in\Hol(\Delta,\C^n)$ such that $\phe(\Delta)\subseteq\overline{D}$ and $\phe(\Delta)\cap\de D\ne\emptyset$ is constant.
\end{definition}

\begin{remark}
\label{rem:2.sb}
It is easy to prove (see, e.g., \cite[Proposition~2.1.4]{Abatebook}) that if $D$ has simple boundary and $Y$ is any complex manifold then every $f\in\Hol(Y,\C^n)$ such that $f(Y)\subseteq\overline{D}$ and $f(Y)\cap\de D\ne\emptyset$ is constant.
\end{remark}

\begin{remark}
\label{rem:2.simpbound}
By the maximum principle, every domain $D\subset\C^n$ admitting a peak function at each point of its boundary is simple. For instance, strongly pseudoconvex domain (Theorem~\ref{th:1.Graham}) and (not necessarily smooth) strictly convex domains (Remark~\ref{rem:1.suppf}) have simple boundary.
\end{remark}

Then we are able to prove the following 

\begin{theorem}[Abate, \cite{Ab2}]
\label{th:2.WolffAba}
Let $D\subset\subset\C^n$ be a complete hyperbolic bounded domain with simple boundary, and take $f\in\Hol(D,D)$ with compactly divergent sequence of iterates. Fix $z_0\in D$. Then there exists $x_0\in\de D$ such that
\[
f^p\bigl(E_{z_0}(x_0,R)\bigr)\subseteq F_{z_0}(x_0,R)
\]
for all $p\in\mathbb{N}$ and $R>0$.
\end{theorem}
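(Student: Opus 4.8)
The plan is to mimic the one–variable proof of Wolff's lemma (Theorem~\ref{th:2.Wolff}), replacing the approximating fixed points $\eta_\nu$ by suitable points of the orbit of~$z_0$, and invoking the Kobayashi version of the Schwarz–Pick lemma (Theorem~\ref{th:2.SPlemma}) in place of its one–variable ancestor. First I would fix notation: set $a_m=k_D\bigl(z_0,f^m(z_0)\bigr)$ and, for $w\in D$, write $\Phi_w(z)=k_D(z,w)-k_D(z_0,w)$, so that $E_{z_0}(x_0,R)=\{z\mid\limsup_{w\to x_0}\Phi_w(z)<\mlog R\}$ and $F_{z_0}(x_0,R)=\{z\mid\liminf_{w\to x_0}\Phi_w(z)<\mlog R\}$. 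Embedding $D$ in a Euclidean ball $B\supset\supset D$ (which is taut), the sequence $\{f^k\}$ is relatively compact in $\Hol(D,B)$ and, being compactly divergent, each of its subsequential limits has image in $\de D$; since $D$ has simple boundary, Remark~\ref{rem:2.sb} forces every such limit to be a constant in~$\de D$. Because $D$ is complete hyperbolic and $\{f^k\}$ is compactly divergent, $a_m\to+\infty$.

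To produce the Wolff point I would use a \emph{record} trick. Let $\{k_j\}$ be the infinite set of indices at which $a_m$ attains a new running maximum, so that $a_{k_j}\ge a_m$ for every $m\le k_j$; after passing to a subsequence I may assume $f^{k_j}(z_0)\to x_0\in\de D$, and this $x_0$ will be the desired boundary point. The heart of the argument is a one–step estimate coming from Schwarz–Pick: writing $f^{k_j}(z_0)=f^p\bigl(f^{k_j-p}(z_0)\bigr)$ gives $k_D\bigl(f^p(z),f^{k_j}(z_0)\bigr)\le k_D\bigl(z,f^{k_j-p}(z_0)\bigr)$, whence for $j$ large
\[
\Phi_{f^{k_j}(z_0)}\bigl(f^p(z)\bigr)\le k_D\bigl(z,f^{k_j-p}(z_0)\bigr)-a_{k_j}\le \Phi_{f^{k_j-p}(z_0)}(z),
\]
where the last inequality is exactly the point where the record property $a_{k_j}\ge a_{k_j-p}$ is used to absorb the otherwise harmful constant $a_{k_j}-a_{k_j-p}$. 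Taking $\liminf_j$ and using that the particular sequence $f^{k_j}(z_0)$ tends to $x_0$ bounds $\liminf_{w\to x_0}\Phi_w\bigl(f^p(z)\bigr)$ from above by $\liminf_j\Phi_{f^{k_j-p}(z_0)}(z)$.

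The step I expect to be the main obstacle is the evaluation of this last $\liminf$: to estimate it by $\limsup_{w\to x_0}\Phi_w(z)$ — and hence by $\mlog R$ when $z\in E_{z_0}(x_0,R)$ — I must know that the \emph{shifted} orbit $f^{k_j-p}(z_0)$ again converges to the \emph{same} point $x_0$, which is far from obvious (for domains with analytic disks in the boundary it would simply be false, and indeed a bounded Kobayashi distance does not by itself pin down the boundary limit). Here simple boundary enters a second time: for the fixed $p$ I would pass to a further subsequence of $\{k_j\}$ along which $f^{k_j-p}\to x^{(p)}$, a constant in $\de D$ by the argument of the first paragraph; then, evaluating the identity $f^{k_j}=f^{k_j-p}\circ f^p$ at a fixed $z\in D$ and letting $j\to\infty$ yields $x_0=\lim_j f^{k_j}(z)=\lim_j f^{k_j-p}\bigl(f^p(z)\bigr)=x^{(p)}$, since precomposition with the \emph{fixed} holomorphic self-map $f^p$ of $D$ keeps the argument inside $D$, where $f^{k_j-p}\to x^{(p)}$ uniformly on compacta. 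The record indices survive this further extraction and $x_0$ is unchanged, so the point $x_0$ is genuinely independent of $p$. With $f^{k_j-p}(z_0)\to x_0$ now available, $\liminf_j\Phi_{f^{k_j-p}(z_0)}(z)\le\limsup_{w\to x_0}\Phi_w(z)<\mlog R$, giving $f^p(z)\in F_{z_0}(x_0,R)$ for every $p\in\mathbb{N}$ and every $R>0$, as desired.
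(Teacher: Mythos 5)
Your proof is correct, and it differs from the paper's in an instructive way. Both arguments hinge on selecting orbit times along which the quantities $a_m=k_D\bigl(z_0,f^m(z_0)\bigr)$ compare favourably with $a_{m+p}$, and both then combine the contraction $k_D\bigl(f^p(z),f^p(w)\bigr)\le k_D(z,w)$ with the fact that compact divergence plus simple boundary forces every subsequential limit of $\{f^k\}$ to be a constant in~$\de D$. The paper, however, takes $k_\nu$ to be the \emph{last} index with $a_{k_\nu}\le\nu$, so that $a_{k_\nu}<a_{k_\nu+p}$ for all $p>0$, and shifts \emph{forward}: the approach sequence for the liminf defining $F_{z_0}(x_0,R)$ is $f^p(w_\nu)=f^{k_\nu+p}(z_0)$, whose convergence to~$x_0$ comes for free because $f^p(w_\nu)=f^{k_\nu}\bigl(f^p(z_0)\bigr)$ and the maps $f^{k_\nu}$ converge to the constant~$x_0$ uniformly on compacta. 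Your record times give the reverse inequality $a_{k_j}\ge a_{k_j-p}$ and force a \emph{backward} shift, and you correctly isolate the resulting obstacle---showing $f^{k_j-p}(z_0)\to x_0$---which the paper's last-exit selection sidesteps entirely via the commutation of iterates. Your resolution is sound: records survive further extraction, each subsequential limit of $\{f^{k_j-p}\}$ is a boundary constant, and evaluating $f^{k_j}=f^{k_j-p}\circ f^p$ at a fixed point of~$D$ identifies that constant with~$x_0$. One detail worth making explicit: what you actually use is convergence of the \emph{maps} $f^{k_j}\to x_0$ uniformly on compacta, not merely $f^{k_j}(z_0)\to x_0$; this follows because every compact-open limit of a subsequence is a constant which, evaluated at~$z_0$, must equal~$x_0$. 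The net trade-off: the paper's selection needs a single extraction, uniform in~$p$, while your (perhaps more natural) record selection costs one extra compactness argument for each~$p$---a price you pay correctly.
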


\begin{proof}
Since $D$ is complete hyperbolic and $\{f^k\}$ is compactly divergent, we have $k_D\bigl(z_0,f^k(z_0)\bigr)\to+\infty$ as $k\to+\infty$. Given $\nu\in\mathbb{N}$, let $k_\nu$ be the largest~$k$ such that 
$k_D\bigl(z_0,f^k(z_0)\bigr)\le\nu$. In particular for every $p>0$ we have
\begin{equation}
k_D\bigl(z_0,f^{k_\nu}(z_0)\bigr)\le \nu<k_D\bigl(z_0,f^{k_\nu+p}(z_0)\bigr)\;.
\label{eq:2.W1}
\end{equation}
Since $D$ is bounded, up to a subsequence we can assume that $\{f^{k_\nu}\}$ converges
to a holomorphic $h\in\Hol(D,\C^n)$. But $\{f^k\}$ is compactly divergent; therefore $h(D)\subset\de D$ and so $h\equiv x_0\in\de D$, because $D$ has simple boundary (see Remark~\ref{rem:2.sb}).

Put $w_\nu=f^{k_\nu}(z_0)$. We have $w_\nu\to x_0$; as a consequence for every $p>0$ we have
$f^p(w_\nu)=f^{k_\nu}\bigl(f^p(z_0)\bigr)\to x_0$ and
\[
\limsup_{\nu\to+\infty}\bigl[k_D(z_0,w_\nu)-k_D\bigl(z_0,f^p(w_\nu)\bigr)\bigr]\le 0
\]
by (\ref{eq:2.W1}). Take $z\in E_{z_0}(x_0,R)$; then we have
\begin{eqnarray*}
\liminf_{w\to x_0}\bigl[k_D\bigl(f^p(z),w\bigr)-k_D(z_0,w)\bigr]&\le& 
\liminf_{\nu\to +\infty}\bigl[k_D\bigl(f^p(z),f^p(w_\nu)\bigr)-k_D\bigl(z_0,f^p(w_\nu)\bigr)\bigr]\\
&\le&\liminf_{\nu\to+\infty}\bigl[k_D(z,w_\nu)-k_D\bigl(z_0,f^p(w_\nu)\bigr)\bigr]\\
&\le&\limsup_{\nu\to+\infty}\bigl[k_D(z,w_\nu)-k_D(z_0,w_\nu)\bigr]\\
&&\quad+\limsup_{\nu\to+\infty}\bigl[k_D(z_0,w_\nu)-k_D\bigl(z_0,f^p(w_\nu)\bigr)\bigr]\\
&\le&\limsup_{\nu\to+\infty}\bigl[k_D(z,w_\nu)-k_D(z_0,w_\nu)\bigr]<\mlog R\;,
\end{eqnarray*}
that is $f^p(z)\in F_{z_0}(x_0,R)$, and we are done.
\qed
\end{proof}

The next step consists in determining how the large horospheres touch the boundary. 
The main tools here are the boundary estimates proved in Subsection~\ref{subsect:1.5}:

\begin{theorem}[Abate, \cite{Ab1}]
\label{th:2.horo}
Let $D\subset\subset\C^n$ be a bounded strongly pseudoconvex domain. Then
\[
\overline{E_{z_0}(x_0,R)}\cap\de D=\overline{F_{z_0}(x_0,R)}\cap\de D=\{x_0\}
\]
for every $z_0\in D$, $x_0\in\de D$ and $R>0$.
\end{theorem}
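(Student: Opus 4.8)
The plan is to prove two inclusions separately. By Lemma~\ref{th:2.horoeasy}(i) we have $E_{z_0}(x_0,R)\subseteq F_{z_0}(x_0,R)$, and hence $\overline{E_{z_0}(x_0,R)}\cap\de D\subseteq\overline{F_{z_0}(x_0,R)}\cap\de D$. So it suffices to establish (a) $x_0\in\overline{E_{z_0}(x_0,R)}$, and (b) $\overline{F_{z_0}(x_0,R)}\cap\de D\subseteq\{x_0\}$: once these are in hand, the chain $x_0\in\overline{E_{z_0}(x_0,R)}\cap\de D\subseteq\overline{F_{z_0}(x_0,R)}\cap\de D\subseteq\{x_0\}$ forces both intersections to equal $\{x_0\}$.

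For (b) I would argue by contradiction. Suppose some $y_0\in\de D$ with $y_0\ne x_0$ lies in $\overline{F_{z_0}(x_0,R)}$, so there is a sequence $\{z_\nu\}\subset F_{z_0}(x_0,R)$ with $z_\nu\to y_0$. The idea is to combine the two boundary estimates of Subsection~\ref{subsect:1.5}. Applying Theorem~\ref{th:1.bestia} to the distinct boundary points $y_0$ and $x_0$ gives $\eps_0>0$ and $K\in\R$ with $k_D(z_\nu,w)\ge-\mlog d(z_\nu,\de D)-\mlog d(w,\de D)+K$ whenever $z_\nu$ is near $y_0$ and $w$ near $x_0$, while Theorem~\ref{th:1.bestiu} gives $k_D(z_0,w)\le c_1-\mlog d(w,\de D)$. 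Subtracting, the $\mlog d(w,\de D)$ terms cancel and I obtain
\[
k_D(z_\nu,w)-k_D(z_0,w)\ge -\mlog d(z_\nu,\de D)+K-c_1,
\]
a bound independent of $w$. Taking the liminf as $w\to x_0$ (within distance $\eps_0$ of $x_0$) and then letting $z_\nu\to y_0$, the right-hand side tends to $+\infty$ since $d(z_\nu,\de D)\to0$; this contradicts $z_\nu\in F_{z_0}(x_0,R)$, which forces the liminf to be $<\mlog R$.

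For (a) I would show that points $z$ on the inner normal at $x_0$ and close enough to $x_0$ already belong to $E_{z_0}(x_0,R)$; since such $z$ converge to $x_0$, this yields $x_0\in\overline{E_{z_0}(x_0,R)}$. Here I would use the upper estimate Theorem~\ref{th:1.bestib} near the single boundary point $x_0$ together with the strongly pseudoconvex lower estimate Theorem~\ref{th:1.bestil}, namely $k_D(z_0,w)\ge c_2-\mlog d(w,\de D)$. For $w$ near $x_0$ these combine to give
\[
k_D(z,w)-k_D(z_0,w)\le \mlog\!\left(1+\frac{\|z-w\|}{d(z,\de D)}\right)+\mlog\bigl(d(w,\de D)+\|z-w\|\bigr)+C-c_2,
\]
where I have used the identity $\mlog\!\left(1+\frac{\|z-w\|}{d(w,\de D)}\right)+\mlog d(w,\de D)=\mlog\bigl(d(w,\de D)+\|z-w\|\bigr)$ to absorb the blow-up of $k_D(z_0,w)$. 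Letting $w\to x_0$ (so that $d(w,\de D)\to0$ and $\|z-w\|\to\|z-x_0\|$) gives $\limsup_{w\to x_0}[k_D(z,w)-k_D(z_0,w)]\le \mlog\!\left(1+\frac{\|z-x_0\|}{d(z,\de D)}\right)+\mlog\|z-x_0\|+C-c_2$. Finally, taking $z=x_0-t\mathbf{n}_{x_0}$ with $t>0$ small, inside a tubular neighborhood one has $d(z,\de D)=\|z-x_0\|=t$, so the bound becomes $\mlog(2t)+C-c_2\to-\infty$ as $t\to0$; in particular it is $<\mlog R$ for $t$ small, so $z\in E_{z_0}(x_0,R)$.

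The two algebraic cancellations are routine; the main obstacle is recognizing that the boundary estimates must be used in two genuinely different regimes — Theorem~\ref{th:1.bestia} (two distinct boundary points) to force divergence in (b), and Theorem~\ref{th:1.bestib} together with Theorem~\ref{th:1.bestil} (single boundary point, plus the sharp lower bound that \emph{requires} strong pseudoconvexity) to control the limsup in (a) — and in checking that the limit as $w\to x_0$ is path-independent, so that it genuinely bounds the limsup (respectively liminf) rather than merely one directional limit.
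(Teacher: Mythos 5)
Your proposal is correct and matches the paper's proof in all essentials: part (a) combines Theorems~\ref{th:1.bestib} and~\ref{th:1.bestil} to get exactly the paper's estimate (\ref{eq:2.ntE}) and then approaches $x_0$ non-tangentially (the paper allows any sequence with $\|z_\nu-x_0\|/d(z_\nu,\de D)$ bounded, you take the inner normal, a special case), while part (b) combines Theorems~\ref{th:1.bestia} and~\ref{th:1.bestiu} to force the renormalized distance to $+\infty$ near a second boundary point. Your only deviation is cosmetic: you bound the liminf directly by a $w$-independent quantity instead of extracting the paper's doubly-indexed sequences $\{w_{\mu\nu}\}$, which is a mild streamlining of the same argument.
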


\begin{proof} 
We begin by proving that $x_0$~belongs to the closure of~$E_{z_0}(x_0,R)$. Let 
$\eps>0$ be given by Theorem~\ref{th:1.bestib}; then, recalling Theorem~\ref{th:1.bestil}, for 
every $z$,~$w\in D$ with $\|z-x_0\|$, $\|w-x_0\|<\eps$ we have
\[
k_D(z,w)-k_D(z_0,w)\le\mlog\biggl(1+\frac{\|z-w\|}{d(z,\de D)}\biggr)
	+\mlog\bigl[d(w,\de D)+\|z-w\|\bigr]+K\;,
\]
for a suitable constant $K\in\R$ depending only on~$x_0$ and~$z_0$. In 
particular, as soon as $\|z-x\|<\eps$ we get
\begin{equation}
\limsup_{w\to x}[k_D(z,w)-k_D(z_0,w)]\le\mlog\biggl(1+\frac{\|z-x\|}{ d(z,
	\de D)}\biggr)+\mlog\|z-x\|+K\;.
\label{eq:2.ntE}
\end{equation}
So if we take a sequence $\{z_\nu\}\subset D$ converging to~$x_0$ so that
$\{\|z_\nu-x_0\|/d(z_\nu,\de D)\}$ is bounded (for instance, a sequence 
converging non-tangentially to~$x_0$), then for every~$R>0$ we have $z_\nu
\in E_{z_0}(x_0,R)$ eventually, and thus $x_0\in\overline{E_{z_0}(x_0,R)}$. 

To conclude the proof, we have to show that $x_0$~is the only boundary point
belonging to the closure of~$F_{z_0}(x_0,R)$. Suppose, by contradiction, that there
exists $y\in\partial D\cap\overline{F_{z_0}(x_0,R)}$ with~$y\ne x_0$; then we can find a
sequence~$\{z_\mu\}\subset F_{z_0}(x_0,R)$ with~$z_\mu\to y$.

Theorem~\ref{th:1.bestia} provides us with~$\eps>0$ and~$K\in\R$ associated to the
pair~$(x_0,y)$; we may assume $\|z_\mu-y\|<\eps$ for all $\mu\in\mathbb{N}$. 
Since~$z_\mu\in F_{z_0}(x_0,R)$, we have
\[
\liminf_{w\to x}\bigl[k_D(z_\mu,w)-
        k_D(z_0,w)\bigr]<\mlog R
\]
for every $\mu\in\mathbb{N}$;
therefore for each~$\mu\in\mathbb{N}$ we can find a sequence~$\{w_{\mu\nu}\}\subset
D$ such that~$\lim\limits_{\nu\to\infty}w_{\mu\nu}=x_0$ and
\[
\lim_{\nu\to\infty}\bigl[k_D(z_\mu,w_{\mu\nu})-k_D(z_0,w_{\mu\nu})\bigr]<
        \mlog R\;.
\]
Moreover, we can assume $\|w_{\mu\nu}-x\|<\eps$ and~$k_D(z_\mu,w_{\mu\nu})-
k_D(z_0,w_{\mu\nu})<\mlog R$ for all~$\mu$,~$\nu\in\mathbb{N}$.

By Theorem~\ref{th:1.bestia} for all~$\mu$,~$\nu\in\mathbb{N}$ we have
\begin{eqnarray*}
\mlog R&>&k_D(z_\mu,w_{\mu\nu})- k_D(z_0,w_{\mu\nu})\\
        &\ge&-\mlog d(z_\mu,\partial D)-\mlog d(w_{\mu\nu},\partial D)-
        k_D(z_0,w_{\mu\nu})-K\;.
\end{eqnarray*}
On the other hand, Theorem~\ref{th:1.bestiu} yields~$c_1>0$ 
(independent of~$w_{\mu\nu}$) such that
\[
k_D(z_0,w_{\mu\nu})\le c_1-\mlog
        d(w_{\mu\nu},\partial D)
\]
for every $\mu$, $\nu\in\mathbb{N}$.
Therefore
\[
\mlog R>-\mlog d(z_\mu,\partial D)-K-c_1
\]
for every $\mu\in\mathbb{N}$,
and, letting $\mu$ go to infinity, we get a contradiction. 
\qed
\end{proof}

We are then able to prove a Wolff-Denjoy theorem for strongly pseudoconvex domains:

\begin{theorem}[Abate, \cite{Ab2}]
\label{th:2.WDA}
Let $D\subset\subset\C^n$ be a strongly pseudoconvex $C^2$ domain. Take $f\in\Hol(D,D)$ with compactly divergent sequence of iterates. Then $\{f^k\}$ converges to a constant map $x_0\in\de D$.
\end{theorem}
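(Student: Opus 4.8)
The plan is to combine the multidimensional Wolff lemma (Theorem~\ref{th:2.WolffAba}) with the description of how horospheres touch the boundary (Theorem~\ref{th:2.horo}), using the normality of the family of iterates to reduce the convergence statement to the identification of all subsequential limits.

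First I would check that the hypotheses of Theorem~\ref{th:2.WolffAba} are met: a bounded strongly pseudoconvex $C^2$ domain is complete hyperbolic by Corollary~\ref{th:1.pscomp} and has simple boundary by Remark~\ref{rem:2.simpbound}. Fixing any pole $z_0\in D$, Theorem~\ref{th:2.WolffAba} then produces a point $x_0\in\de D$ with
\[
f^p\bigl(E_{z_0}(x_0,R)\bigr)\subseteq F_{z_0}(x_0,R)
\]
for all $p\in\mathbb{N}$ and $R>0$. This $x_0$ will be the claimed limit.

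Next I would set up the normality reduction. Since $D$ is bounded, it is relatively compact in a Euclidean ball $B$, which is taut; as the images of the iterates $\{f^k\}$ stay in $D\subset\subset B$, the sequence cannot be compactly divergent in $B$, hence it is relatively compact in $\Hol(D,B)$. It therefore suffices to show that every convergent subsequence has the same limit. So let $\{f^{k_j}\}$ converge to some $h$. Because $\{f^k\}$ is compactly divergent in $D$, the image $h(D)$ lies in $\de D$; since $D$ has simple boundary, $h$ is constant, say $h\equiv y\in\de D$.

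It remains to identify $y$ with $x_0$, and this is where Theorems~\ref{th:2.WolffAba} and~\ref{th:2.horo} combine. Pick any $z\in D$. By Lemma~\ref{th:2.horoeasy}(v) we have $z\in E_{z_0}(x_0,R)$ for $R$ large enough, so by the invariance above $f^{k_j}(z)\in F_{z_0}(x_0,R)$ for every $j$. Letting $j\to\infty$ gives $y=\lim_j f^{k_j}(z)\in\overline{F_{z_0}(x_0,R)}$; but $y\in\de D$, so Theorem~\ref{th:2.horo} forces $y\in\overline{F_{z_0}(x_0,R)}\cap\de D=\{x_0\}$, i.e.\ $y=x_0$. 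Thus every convergent subsequence of $\{f^k\}$ converges to the constant map $x_0$, and by the normality reduction the whole sequence does. The only genuinely nontrivial inputs are Theorems~\ref{th:2.WolffAba} and~\ref{th:2.horo}, both already established; the remaining difficulty is purely organizational, namely using the small horosphere $E$ to capture an arbitrary interior point $z$ while using the large horosphere $F$ (together with its boundary behaviour) to pin down the limit to the single point $x_0$.
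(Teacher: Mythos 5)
Your proposal is correct and follows essentially the same route as the paper: Theorem~\ref{th:2.WolffAba} (whose hypotheses you rightly verify via Corollary~\ref{th:1.pscomp} and Remark~\ref{rem:2.simpbound}) combined with Theorem~\ref{th:2.horo}, after the standard normality reduction to subsequential limits. The only cosmetic difference is that you first get constancy of the limit $h$ from the simple boundary property and then identify the constant, whereas the paper deduces $h\equiv x_0$ in one step from $h\bigl(E_{z_0}(x_0,R)\bigr)\subseteq\overline{F_{z_0}(x_0,R)}\cap\de D=\{x_0\}$; both are fine.
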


\begin{proof}
Fix $z_0\in D$, and
let $x_0\in\de D$ be given by Theorem~\ref{th:2.WolffAba}.
Since $D$ is bounded, it suffices to prove that every subsequence of $\{f^k\}$ converging in $\Hol(D,\C^n)$ actually converges to the constant map~$x_0$.

Let $h\in\Hol(D,\C^n)$ be the limit of a subsequence of iterates. Since $\{f^k\}$ is compactly divergent, we must have $h(D)\subset\de D$. Hence
Theorem~\ref{th:2.WolffAba} implies that
\[
h\bigl(E_{z_0}(x_0,R)\bigr)\subseteq \overline{F_{z_0}(x_0,R)}\cap\de D
\]
for any $R>0$; since (Theorem~\ref{th:2.horo}) $\overline{F_{z_0}(x_0,R)}\cap\de D=\{x_0\}$ 
we get $h\equiv x_0$, and we are done.
\qed
\end{proof}

\begin{remark}
The proof of Theorem~\ref{th:2.WDA} shows that we can get such a statement in any complete hyperbolic domain with simple boundary satisfying Theorem~\ref{th:2.horo}; and the proof of the latter theorem shows that what is actually needed are suitable estimates on the boundary behavior of the Kobayashi distance. Using this remark, it is possible to extend Theorem~\ref{th:2.WDA} to some classes of weakly pseudoconvex domains; see, e.g., Ren-Zhang \cite{RZ} and Khanh-Thu \cite{KT}.
\end{remark}
 
 \subsection{Strictly convex domains}
 \label{subsec:2.3}
 
The proof of Theorem~\ref{th:2.WDA} described in the previous subsection depends in an essential way on the fact that the boundary of the domain ~$D$ is of class at least~$C^2$. Recently, Budzy\'nska \cite{Bud} (see also \cite{BKR}) found a way to prove Theorem~\ref{th:2.WDA} in \emph{strictly convex} domains without any assumption on the smoothness of the boundary; in this subsection we shall describe a simplified approach due to Abate and Raissy \cite{AR}. 

The result which is going to replace Theorem~\ref{th:2.horo} is the following:

\begin{proposition}
\label{th:2.uuno} 
Let $D\subset\C^n$ be a hyperbolic convex domain, $z_0\in D$, $R>0$
and $x\in\de D$. Then we have $[x,z]\subset \overline{F_{z_0}(x,R)}$ for 
all $z\in\overline{F_{z_0}(x,R)}$. Furthermore, 
\begin{equation}
x\in\bigcap_{R>0} \overline{F_{z_0}(x,R)}\subseteq\hbox{\rm ch}(x)\;.
\label{eq:uinter}
\end{equation}
In particular, if $x$ is a strictly convex point then $\bigcap\limits_{R>0} \overline{F_{z_0}(x,R)}=\{x\}$.
\end{proposition}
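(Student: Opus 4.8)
The plan is to write $g(z)=\liminf_{w\to x}\bigl[k_D(z,w)-k_D(z_0,w)\bigr]$, so that $F_{z_0}(x,R)=\{z\in D\mid g(z)<\mlog R\}$. The triangle inequality gives $|g(z)|\le k_D(z_0,z)$ (so $g$ is finite, $g(z_0)=0$, and $g(z)\ge -k_D(z_0,z)$) and shows $g$ is $1$-Lipschitz for $k_D$, hence continuous; thus $F_{z_0}(x,R)$ is open and $\overline{F_{z_0}(x,R)}\cap D\subseteq\{g\le\mlog R\}$. I also recall that $D$, being convex and hyperbolic, is complete hyperbolic (Proposition~\ref{th:1.Barth} and Remark~\ref{rem:1:hypcvx}).

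First I would prove the segment property. Fix $z\in F_{z_0}(x,R)$, $s\in[0,1)$ and set $p=sx+(1-s)z$; by Lemma~\ref{th:1.couno}.(i), $p\in D$. For $w\in D$ put $p_w=sw+(1-s)z\in D$; then $p_w\to p$ as $w\to x$, so $k_D(p,p_w)\to0$ by hyperbolicity, while Proposition~\ref{th:1.convball}.(i) (with the splitting $w=sw+(1-s)w$) gives $k_D(p_w,w)\le k_D(z,w)$. Hence $k_D(p,w)-k_D(z_0,w)\le k_D(p,p_w)+\bigl[k_D(z,w)-k_D(z_0,w)\bigr]$, and evaluating along a sequence $w\to x$ realizing $g(z)$ yields $g(p)\le g(z)<\mlog R$. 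Thus $[x,z)\subseteq F_{z_0}(x,R)$; approximating an arbitrary $z\in\overline{F_{z_0}(x,R)}$ by points of $F_{z_0}(x,R)$ then gives $[x,z]\subseteq\overline{F_{z_0}(x,R)}$, the first assertion. In particular $x\in\overline{F_{z_0}(x,R)}$ as soon as $F_{z_0}(x,R)\neq\emptyset$.

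By this last remark, proving $x\in\bigcap_{R>0}\overline{F_{z_0}(x,R)}$ reduces to showing $F_{z_0}(x,R)\neq\emptyset$ for every $R>0$, i.e. $\inf_D g=-\infty$. The natural candidates are the points $\gamma(t)=(1-t)z_0+tx$ of $[z_0,x)$, for which I must show $g(\gamma(t))\to-\infty$ as $t\to1^-$. One half is free: completeness gives $k_D(z_0,\gamma(t))\to+\infty$, whence $g(\gamma(t))\ge -k_D(z_0,\gamma(t))\to-\infty$; but I need the value itself, not merely a lower bound, to diverge. \emph{I expect this to be the main obstacle.} My plan is to evaluate the liminf along $w=\gamma(t')$, $t'\to1$, bounding $k_D(\gamma(t),\gamma(t'))$ from above and $k_D(z_0,\gamma(t'))$ from below — the latter via the weak peak function $\psi$ at $x$, using $k_D(z_0,\gamma(t'))\ge k_\Delta\bigl(\psi(z_0),\psi(\gamma(t'))\bigr)$ and $\psi(\gamma(t'))\to1$ — and then checking that the boundary blow-up in $t'$ cancels in the difference, leaving a quantity that tends to $-\infty$ in $t$. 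The delicate point is the \emph{upper} bound on $k_D(\gamma(t),\gamma(t'))$: the naive one-dimensional slice $D\cap\ell$ (with $\ell$ the complex line through $z_0$ and $x$) overestimates the blow-up rate at a non-smooth boundary point, so one must instead exploit the room of the full convex domain near $x$ — e.g. by connecting the two points through the convex cone $\mathrm{conv}\bigl(B(z_0,\varepsilon_0)\cup\{x\}\bigr)\subseteq D$, which provides transverse room proportional to the distance to $x$ — so as to recover the supporting-hyperplane rate that matches the peak-function lower bound.

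Finally I would deduce $\bigcap_{R>0}\overline{F_{z_0}(x,R)}\subseteq\hbox{\rm ch}(x)$. Let $y$ lie in the intersection. If $y\in D$, then $g(y)\le\mlog R$ for all $R>0$, forcing $g(y)=-\infty$ and contradicting $g(y)\ge -k_D(z_0,y)$; hence $y\in\de D$, and the same argument shows every point of the intersection lies in $\de D$. By the segment property (with $z=y$), $[x,y]\subseteq\overline{F_{z_0}(x,R)}$ for every $R$, so $[x,y]\subseteq\bigcap_{R>0}\overline{F_{z_0}(x,R)}\subseteq\de D$; thus $[x,y]\subset\de D$ and $y\in\hbox{\rm ch}(x)$. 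The concluding ``in particular'' is then immediate: if $x$ is a strictly convex point then $\hbox{\rm ch}(x)=\{x\}$, so together with $x\in\bigcap_{R>0}\overline{F_{z_0}(x,R)}$ this forces $\bigcap_{R>0}\overline{F_{z_0}(x,R)}=\{x\}$.
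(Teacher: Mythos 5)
Your first and third steps are correct and are essentially the paper's own argument. For the segment property the paper contracts with the self-maps $h^s_\nu(w)=sw+(1-s)w_\nu$ of $D$, which fix $w_\nu$, and then uses continuity of $k_D$ (Proposition~\ref{th:2.hyp}); your use of Proposition~\ref{th:1.convball}.(i) with the splitting $w=sw+(1-s)w$ yields exactly the same inequality, so the two are the same proof up to parametrization. Likewise your deduction of $\bigcap_{R>0}\overline{F_{z_0}(x,R)}\subseteq\mathrm{ch}(x)$ (intersection lies in $\de D$ because $g\ge -k_D(z_0,\cdot)$ on $D$, then the segment property forces $[x,y]\subset\de D$) matches the paper, which cites Lemma~\ref{th:2.horoeasy} for the first point.

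The genuine gap is in your second step — and, to your credit, you located it exactly: $x\in\bigcap_{R>0}\overline{F_{z_0}(x,R)}$ requires $F_{z_0}(x,R)\ne\emptyset$ for \emph{every} $R>0$, a point the paper's proof silently assumes (it starts from ``given $z\in F_{z_0}(x,R)$'' and lets $s\to0$; Lemma~\ref{th:2.horoeasy}.(iii) settles only $R>1$). But your proposed route to nonemptiness fails quantitatively. The weak peak function gives only $k_D(z_0,\gamma(t'))\ge\mlog\frac{1}{1-t'}+O(1)$, i.e.\ rate $\frac12$, while \emph{any} cone $K\subseteq D$ with vertex $x$ and half-aperture $\alpha$ forces rate $\frac{\pi}{4\alpha}$ on your upper bound: the $\C$-linear map $z\mapsto\langle z-x,v\rangle$ ($v$ the axis direction) sends $K$ into the plane sector of half-aperture $\alpha$ and $\gamma$ onto its axis, so $k_K(\gamma(t),\gamma(t'))\ge\frac{\pi}{4\alpha}\log\frac{1-t}{1-t'}-O(1)$; and necessarily $\alpha<\pi/2$, since an $\alpha=\pi/2$ cone is a half-space, which contains complex lines and is incompatible with hyperbolicity (Remark~\ref{rem:1:hypcvx}). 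Hence your difference is bounded below by $\bigl(\frac{\pi}{4\alpha}-\frac12\bigr)\log\frac{1}{1-t'}+O_t(1)\to+\infty$: the blow-up does not cancel, precisely at the non-smooth points this proposition is designed for (for a planar convex corner of opening $2\alpha<\pi$ the true growth of $k_D(z_0,\gamma(t'))$ is $\frac{\pi}{4\alpha}\log\frac1{1-t'}$, so the peak-function lower bound is simply not sharp and no improvement of the upper bound can rescue the pairing — cancellation must occur between two quantities with the same, a priori unknown, rate). The standard repair, consistent with the machinery this paper quotes in Subsection~\ref{subsec:2.4}, is via horosphere sequences: take a horosphere sequence $\mathbf{x}$ at $x$ (Proposition~\ref{th:2.horseq}); then $G_{z_0}(x,R,\mathbf{x})\ne\emptyset$ for all $R>0$ (Proposition~\ref{th:2.newhor}.(ii), due to Budzy\'nska et al., using completeness of $k_D$, which holds by Proposition~\ref{th:1.Barth} and Remark~\ref{rem:1:hypcvx} also in the unbounded hyperbolic case), and $G_{z_0}(x,R,\mathbf{x})\subseteq F_{z_0}(x,R)$ because the liminf over $w\to x$ is bounded above by the limit along $\{x_\nu\}$. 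With nonemptiness secured this way, your segment argument and the $s\to1$ limit complete the proof exactly as in the paper.
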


\begin{proof}
Given $z\in F_{z_0}(x,R)$, choose a sequence $\{w_\nu\}\subset D$ converging to~$x$
and such that the limit of $k_D(z,w_\nu)-k_D(z_0,w_\nu)$ exists and is less than~$\mlog R$. 
Given $0<s<1$, let $h_\nu^s\colon D\to D$ be defined by
\[
h_\nu^s(w)=sw+(1-s)w_\nu
\]
for every $w\in D$;
then $h_\nu^s(w_\nu)=w_\nu$.
In particular,
\[
\limsup_{\nu\to+\infty}\bigl[k_D\bigl(h_\nu^s(z),w_\nu)-k_D(z_0,w_\nu)\bigr]
\le \lim_{\nu\to+\infty}\bigl[k_D(z,w_\nu)-k_D(z_0,w_\nu)\bigr]<\mlog R\;.
\]
Furthermore we have
\[
\bigl|k_D\bigl(sz+(1-s)x,w_\nu\bigr)-k_D\bigl(h^s_\nu(z),w_\nu\bigr)\bigr|
\le k_D\bigl(sz+(1-s)w_\nu,sz+(1-s)x\bigr)\to 0
\]
as $\nu\to+\infty$. Therefore
\begin{eqnarray*}
\liminf_{w\to x}\bigl[k_D&&\!\!\!\!\bigl(sz+(1-s)x,w\bigr)-k_D(z_0,w)\bigr]\\
&&\le\limsup_{\nu\to+\infty}\bigl[k_D\bigl(sz+(1-s)x,w_\nu\bigr)-k_D(z_0,w_\nu)\bigr]\\
&&\le
\limsup_{\nu\to+\infty}\bigl[k_D\bigl(h_\nu^s(z),w_\nu\bigr)-k_D(z_0,w_\nu)\bigr]\\
&&\qquad+\lim_{\nu\to+\infty}\bigl[k_D\bigl(sz+(1-s)x,w_\nu\bigr)-k_D\bigl(h_\nu^s(z),w_\nu\bigr)\bigr]\\
&&<\mlog R\;,
\end{eqnarray*}
and thus $sz+(1-s)x\in F_{z_0}(x,R)$. Letting $s\to 0$ we also get $x\in\overline{F_{z_0}(x,R)}$,
and we have proved the first assertion for $z\in F_{z_0}(x,R)$. If $z\in\de F_{z_0}(x,R)$,
it suffices to apply what we have just proved to a sequence in~$F_{z_0}(x,R)$ approaching~$z$.

In particular we have thus shown that $x\in\bigcap_{R>0} \overline{F_{z_0}(x,R)}$.
Moreover this intersection is contained in~$\de D$, by Lemma~\ref{th:2.horoeasy}.
Take $y\in\bigcap_{R>0} \overline{F_{z_0}(x,R)}$ different from~$x$. Then
the whole
segment $[x,y]$ must be contained in the intersection, and thus in~$\de D$; hence $y\in\hbox{\rm ch}(x)$,
and we are done.
\qed
\end{proof}

We can now prove a Wolff-Denjoy theorem in strictly convex domains without any assumption on the regularity of the boundary:

\begin{theorem}[Budzy\'nska, \cite{Bud}; Abate-Raissy, \cite{AR}]
\label{th:2.WDAR}
Let $D\subset\subset\C^n$ be a bounded strictly convex domain, and take $f\in\Hol(D,D)$ without fixed points. Then the sequence of iterates $\{f^k\}$ converges to a constant map $x\in\de D$.
\end{theorem}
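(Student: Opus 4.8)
The plan is to run the argument of Theorem~\ref{th:2.WDA} essentially verbatim, replacing the role of the strong-pseudoconvexity boundary estimate (Theorem~\ref{th:2.horo}) by the convexity statement Proposition~\ref{th:2.uuno}. First I would record the standing facts. Since $f$ has no fixed points and $D$ is bounded and convex, Theorem~\ref{th:2.convfix} guarantees that the sequence of iterates $\{f^k\}$ is compactly divergent. Moreover $D$, being a bounded convex domain, is complete hyperbolic by Proposition~\ref{th:1.Barth}, and being strictly convex it has simple boundary by Remark~\ref{rem:2.simpbound}. Hence all the hypotheses of Theorem~\ref{th:2.WolffAba} are met: fixing a pole $z_0\in D$, I obtain a point $x\in\de D$ such that
\[
f^p\bigl(E_{z_0}(x,R)\bigr)\subseteq F_{z_0}(x,R)
\]
for all $p\in\mathbb{N}$ and $R>0$. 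Finally, since $D$ is bounded, the argument in the Remark following the proof of the classical Wolff--Denjoy theorem shows that $\{f^k\}$ is relatively compact in $\Hol(D,\C^n)$, so it suffices to prove that every convergent subsequence of $\{f^k\}$ tends to the constant map~$x$.

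Next I would identify the subsequential limits. Let $h\in\Hol(D,\C^n)$ be the limit of a subsequence $\{f^{k_j}\}$. Because $\{f^k\}$ is compactly divergent we have $h(D)\subseteq\de D$, and since $D$ has simple boundary Remark~\ref{rem:2.sb} forces $h$ to be a constant map $h\equiv y$ for some $y\in\de D$. It then remains only to show $y=x$.

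For this I would feed points of the horospheres into the containment coming from Theorem~\ref{th:2.WolffAba}. If $z\in E_{z_0}(x,R)$, then $f^{k_j}(z)\in F_{z_0}(x,R)$ for every $j$, so passing to the limit gives $y=h(z)\in\overline{F_{z_0}(x,R)}$. Crucially $h$ is \emph{constant}, so I may test this on a point of $E_{z_0}(x,R)$ for each individual $R>0$; granting that the small horospheres are nonempty for every $R>0$, this yields $y\in\bigcap_{R>0}\overline{F_{z_0}(x,R)}$. Since $D$ is strictly convex, every boundary point is a strictly convex point, so Proposition~\ref{th:2.uuno} gives $\bigcap_{R>0}\overline{F_{z_0}(x,R)}=\{x\}$, whence $y=x$ and the proof is complete.

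The main obstacle is the one genuinely new input not already packaged in the smooth case: the nonemptiness of the small horospheres $E_{z_0}(x,R)$ for every $R>0$. In the strongly pseudoconvex setting this was bypassed, because Theorem~\ref{th:2.horo} pins down $\overline{F_{z_0}(x,R)}\cap\de D=\{x\}$ for each single $R$, whereas Proposition~\ref{th:2.uuno} only controls the intersection over all $R$; hence here I really must reach arbitrarily small $R$. To establish nonemptiness I would exploit convexity directly: by Lemma~\ref{th:1.couno}(i) the half-open segment $[z_0,x)$ lies in $D$, and for $z_t=(1-t)z_0+tx$ I expect $\limsup_{w\to x}[k_D(z_t,w)-k_D(z_0,w)]\to-\infty$ as $t\to1^-$, so that $z_t\in E_{z_0}(x,R)$ once $t$ is close enough to~$1$. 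Proving this limit is where the bounded convex geometry of $k_D$ (for instance the contraction estimates of Proposition~\ref{th:1.convball}) really enters; everything else is a transcription of the strongly pseudoconvex proof.
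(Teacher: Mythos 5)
Your proposal is, step for step, the proof the paper gives: Theorem~\ref{th:2.convfix} for compact divergence, Proposition~\ref{th:1.Barth} and Remark~\ref{rem:2.simpbound} to legitimize Theorem~\ref{th:2.WolffAba}, constancy of subsequential limits via simple boundary, and Proposition~\ref{th:2.uuno} to pin the constant down to~$x$. The one point where you go beyond the paper is the nonemptiness of $E_{z_0}(x,R)$ for \emph{every} $R>0$, and you are right that it is a real issue: the paper's proof just says ``choose $z\in E_{z_0}(x,R)$'', while Lemma~\ref{th:2.horoeasy}.(iii) guarantees nonemptiness only for $R>1$; in the strongly pseudoconvex case the gap is closed by Theorem~\ref{th:2.horo}, whose proof shows $x_0\in\overline{E_{z_0}(x_0,R)}$ for all $R$, but that argument rests on the $C^2$ boundary estimates which are unavailable here.

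However, your proposed patch is not yet a proof: you only ``expect'' that $\limsup_{w\to x}[k_D(z_t,w)-k_D(z_0,w)]\to-\infty$ along the segment, and the contraction estimates of Proposition~\ref{th:1.convball} give $k_D\bigl((1-t)z_0+tw,w\bigr)\le k_D(z_0,w)$ with no quantitative gain; moreover the limsup runs over \emph{arbitrary} $w\to x$, so a gain along special sequences would not suffice. A cleaner fix is already in the paper's toolkit: run your argument with sequence horospheres instead of small ones. Theorem~\ref{th:2.Wolffdue} produces $x\in\de D$, a horosphere sequence~$\mathbf{x}$, and the invariance $f\bigl(G_{z_0}(x,R,\mathbf{x})\bigr)\subseteq G_{z_0}(x,R,\mathbf{x})$; Proposition~\ref{th:2.newhor} guarantees both $G_{z_0}(x,R,\mathbf{x})\ne\emptyset$ for every $R>0$ and $G_{z_0}(x,R,\mathbf{x})\subseteq F_{z_0}(x,R)$, so every subsequential limit $y$ of $\{f^k\}$ satisfies $y\in\bigcap_{R>0}\overline{F_{z_0}(x,R)}$, and Proposition~\ref{th:2.uuno} concludes exactly as in your last step. (Equivalently, specialize Theorem~\ref{th:2.dpqua}: strict convexity gives $\hbox{\rm ch}(y)=\{y\}$ for every boundary point, so the $\hbox{\rm Ch}$-saturation there is trivial.) In short: your skeleton is exactly the paper's, and your instinct about the missing lemma is sound; to make the proof airtight, replace the small horospheres by the sequence horospheres, whose nonemptiness for all radii is on record, rather than proving the segment limit from scratch.
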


\begin{proof}
Fix $z_0\in D$, and let $x\in\de D$ be given by Theorem~\ref{th:2.WolffAba}, that can be applied because strictly convex domains are complete hyperbolic (by Proposition~\ref{th:1.Barth}) and have simple boundary (by Remark~\ref{rem:2.simpbound}). So, since $D$ is bounded, it suffices to prove that every converging subsequence of $\{f^k\}$ converges to the constant map~$x$.

Assume that $\{f^{k_\nu}\}$ converges to a holomorphic map $h\in\Hol(D,\C^n)$. Clearly, $h(D)\subset\overline{D}$; since the sequence of iterates is compactly divergent (Theorem~\ref{th:2.convfix}), we have $h(D)\subset\de D$; since $D$ has simple boundary, it follows that $h\equiv y\in\de D$. So we have to prove that $y=x$.

Take $R>0$, and choose $z\in E_{z_0}(x,R)$. Then Theorem~\ref{th:2.WolffAba} yields
$y=h(z)\in\overline{F_{z_0}(x,R)}\cap\de D$. Since this holds for all $R>0$ we get
$y\in\bigcap_{R>0}\overline{F_{z_0}(x,R)}$, and Proposition~\ref{th:2.uuno} yields the assertion.
\qed
\end{proof}

\subsection{Weakly convex domains}
\label{subsec:2.4}

The approach leading to Theorem~\ref{th:2.WDAR} actually yields results for weakly convex domains too, even though we cannot expect in general the convergence to a constant map.

\begin{example}
Let $f\in\Hol(\Delta^2,\Delta^2)$ be given by
\[
f(z,w)=\left(\frac{z+1/2}{1+z/2},w\right)\;.
\]
Then it is easy to check that the sequence of iterates of $f$ converges to the non-constant map $h(z,w)=(1,w)$.
\end{example}

The first observation is that we have a version of Theorem~\ref{th:2.WolffAba} valid in all convex domains, without the requirement of simple boundary:

\begin{theorem}[\cite{Ab1}]
\label{th:2.Wolffconv}
Let $D\subset\subset\C^n$ be a bounded convex
domain, and take a map $f\in\Hol(D,D)$ without fixed points. Then there 
exists~$x\in\partial D$ such that 
\[
f^k\bigl(E_{z_0}(x,R)\bigr)\subset F_{z_0}(x,R)
\]
for every~$z_0\in D$,~$R>0$ and~$k\in\mathbb{N}$.
\end{theorem}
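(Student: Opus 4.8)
The plan is to follow the proof of Theorem~\ref{th:2.WolffAba} as closely as possible, the only new feature being that a convex domain need not have simple boundary. First I would collect the structural input: since $f$ has no fixed points, Theorem~\ref{th:2.convfix} shows that $\{f^k\}$ is compactly divergent, and by Proposition~\ref{th:1.Barth} the domain $D$ is complete hyperbolic, so $k_D\bigl(z_0,f^k(z_0)\bigr)\to+\infty$. Fixing a pole $z_0$, I would choose $k_\nu$ to be the largest $k$ with $k_D\bigl(z_0,f^k(z_0)\bigr)\le\nu$, so that
\[
k_D\bigl(z_0,f^{k_\nu}(z_0)\bigr)\le\nu<k_D\bigl(z_0,f^{k_\nu+p}(z_0)\bigr)
\]
for all $p\ge1$, and pass to a subsequence along which $f^{k_\nu}\to h\in\Hol(D,\C^n)$; compact divergence forces $h(D)\subset\de D$, and I set $w_\nu=f^{k_\nu}(z_0)\to x:=h(z_0)\in\de D$. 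This $x$ is the candidate boundary point.

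Running the chain of inequalities of Theorem~\ref{th:2.WolffAba} with $f^p(w_\nu)$ as the boundary-approaching sequence, I would obtain, for every $z\in E_{z_0}(x,R)$,
\[
\liminf_{w\to y_p}\bigl[k_D\bigl(f^p(z),w\bigr)-k_D(z_0,w)\bigr]\le\limsup_{\nu\to\infty}\bigl[k_D(z,w_\nu)-k_D(z_0,w_\nu)\bigr]<\mlog R,
\]
where $y_p:=h\bigl(f^p(z_0)\bigr)=\lim_\nu f^p(w_\nu)$; that is, $f^p(z)\in F_{z_0}(y_p,R)$. In the simple-boundary setting of Theorem~\ref{th:2.WolffAba} the map $h$ is constant, so $y_p=x$ and the argument closes immediately. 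Here $h$ may be non-constant and $y_p\neq x$ in general: the mismatch between the centre $x$ of the hypothesis horosphere and the centre $y_p$ of the conclusion horosphere is the main obstacle.

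I would remove it using convexity in two steps. First, the image of $h$ lies in a flat boundary piece through $x$: if $L$ is a complex supporting functional at $x$ and $\psi$ its weak peak function, then $\psi\circ h$ is holomorphic on $D$, bounded by $1$, and equal to $1$ at $z_0$, so $\psi\circ h\equiv1$ by the maximum principle; letting $L$ range over all supporting functionals at $x$ yields $h(D)\subseteq\hbox{\rm Ch}(x)\subseteq\hbox{\rm ch}(x)$, whence $[x,y_p]\subset\de D$. Second, horospheres are constant along such flat pieces, i.e.\ $[x,y]\subset\de D$ implies $F_{z_0}(y,R)\subseteq F_{z_0}(x,R)$; I would prove this in the style of Proposition~\ref{th:2.uuno}, sliding a sequence in $D$ that realises membership in $F_{z_0}(y,R)$ from $y$ toward $x$ along the segment and controlling the Kobayashi distances by the convexity estimates of Proposition~\ref{th:1.convball}. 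Combining the two gives $f^p(z)\in F_{z_0}(y_p,R)\subseteq F_{z_0}(x,R)$, as required for the pole $z_0$.

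For the uniformity in the pole I would observe that the statement reduces to the pole-free inequality
\[
\liminf_{w\to x}\bigl[k_D\bigl(f^p(z),w\bigr)-k_D(z,w)\bigr]\le0\qquad(z\in D,\ p\in\mathbb{N}),
\]
because writing $k_D\bigl(f^p(z),w\bigr)-k_D(z_0,w)$ as $\bigl[k_D\bigl(f^p(z),w\bigr)-k_D(z,w)\bigr]+\bigl[k_D(z,w)-k_D(z_0,w)\bigr]$ and using $\liminf(u+v)\le\liminf u+\limsup v$ turns $z\in E_{z_0}(x,R)$ into $f^p(z)\in F_{z_0}(x,R)$ for every pole $z_0$ at once. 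This inequality can be produced by the very same forward estimate, run with $z$ itself as reference point, followed by the transfer of the third paragraph; the point is that all the boundary points so obtained lie in the relative interior of one face of $\de D$, along which (by the second step) the horospheres do not depend on the chosen pole. Throughout, the dynamical part and the estimates on $k_\nu$ are identical to the simple-boundary case, so the entire difficulty is concentrated in the convex-geometric transfer $F_{z_0}(y,R)\subseteq F_{z_0}(x,R)$ along flat boundary pieces, which I expect to be the hardest step.
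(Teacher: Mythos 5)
Your dynamical setup is fine and you have correctly located the obstruction, but the convex-geometric transfer lemma on which your whole proof hinges --- that $[x,y]\subset\de D$ forces $F_{z_0}(y,R)\subseteq F_{z_0}(x,R)$ --- is false, and this is a genuine gap rather than a technicality. Observe first that your hypothesis is symmetric in $x$ and $y$, so your lemma would actually force $F_{z_0}(x,R)=F_{z_0}(y,R)$ for any two boundary points joined by a boundary segment; the bidisk already refutes this. Take $x=(1,b)$ with $|b|<1$ and $y=(1,1)$, so that $[x,y]\subset\de\Delta^2$ and indeed $y\in\hbox{\rm Ch}(x)=\{1\}\times\overline{\Delta}$. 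By Proposition~\ref{th:2.horopolydisk} we have $F_O(x,R)=E_0(1,R)\times\Delta$, while $F_O(y,R)=E_0(1,R)\times\Delta\cup\Delta\times E_0(1,R)$; the point $(0,t)$ with $t\in(0,1)$ real close to $1$ lies in $F_O(y,R)$ for every $R>0$, but lies in $F_O(x,R)$ only when $R>1$. So the inclusion you need, $F_{z_0}(y_p,R)\subseteq F_{z_0}(x,R)$ with $y_p\in\hbox{\rm Ch}(x)$, fails exactly in a configuration your argument cannot exclude: nothing prevents the limit point $y_p=h\bigl(f^p(z_0)\bigr)$ from sitting at a more extreme point of the face (e.g.\ a corner) than $x=h(z_0)$ does, and the example shows that in that case the inclusion, if anything, goes the wrong way ($F_O(x,R)\subseteq F_O(y,R)$). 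The appeal to Proposition~\ref{th:2.uuno} does not repair this: that proposition slides points of a horosphere toward its \emph{own} center along segments, and never compares horospheres with different boundary centers. The same defect propagates to your final pole-free reduction, which invokes the transfer a second time.

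The paper's proof of Theorem~\ref{th:2.Wolffconv} avoids limit maps altogether and so never meets the centering problem. One sets $f_\nu=(1-\frac{1}{\nu})f$, which maps $D$ into a relatively compact subset of $D$, extracts by Brouwer's theorem a fixed point $w_\nu$ of $f_\nu$, and puts $x=\lim_\nu w_\nu\in\de D$ (an interior limit would be a fixed point of $f$). Since $w_\nu$ is fixed by \emph{every} iterate $f_\nu^k$, the Schwarz--Pick inequality gives $k_D\bigl(f_\nu^k(z),w_\nu\bigr)\le k_D(z,w_\nu)$ directly, and $k_D\bigl(f_\nu^k(z),f^k(z)\bigr)\to0$ lets one replace $f_\nu^k$ by $f^k$; evaluating the liminf defining $F_{z_0}(x,R)$ along the single sequence $w_\nu\to x$ --- the same sequence for every $k$, every $R$ and every pole $z_0$ --- closes the proof in a few lines, with no simple-boundary hypothesis, no limit map $h$, and no comparison of horospheres with different centers. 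To salvage your route you would have to choose $x$ extremal within the face containing $h(D)$, so that the horosphere data at $x$ dominates that at every $y\in h(D)$; that is essentially a new argument, and as written your proposal does not prove the theorem.
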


\begin{proof} 
Without loss of generality we can assume that $O\in D$. For $\nu>0$ let $f_\nu\in\Hol(D,D)$ be given by
\[
f_\nu(z)=\left(1-\frac{1}{\nu}\right)f(z)\;;
\]
then $f_\nu(D)$ is relatively compact in~$D$ and $f_\nu\to f$ as $\nu\to+\infty$.
By Brouwer's theorem, every~$f_\nu$ has a fixed
point~$w_\nu\in D$. Up to a subsequence, we may assume that
$\{w_\nu\}$~converges to a point~$x\in\overline{D}$. If~$x\in D$, then
\[
f(x)=\lim_{\nu\to\infty}f_\nu(w_\nu)=\lim_{\nu\to\infty}w_\nu=x\;,
\]
impossible; therefore~$x\in\partial D$.

Now fix $z\in E_{z_0}(x,R)$ and $k\in\mathbb{N}$. We have
\[
\bigl|k_D\bigl(f_\nu^k(z),w_\nu\bigr)-k_D\bigl(f^k(z),w_\nu\bigr)\bigr|\le
        k_D\bigl(f_\nu^k(z),f^k(z)\bigr)\longrightarrow 0
\]
as~$\nu\to+\infty$.
Since $w_\nu$ is a fixed point of~$f_\nu^k$ for every~$k\in\mathbb{N}$, we then get
\begin{eqnarray*}
\liminf_{w\to x}\bigl[k_D(f^k(z),w)-k_D(z_0,w)\bigr]&\le&\liminf_{\nu\to+\infty}
        \bigl[k_D(f^k(z),w_\nu)-k_D(z_0,w_\nu)\bigr]\\
        &\le&\limsup_{\nu\to+\infty}\bigl[k_D\bigl(f^k_\nu(z),w_\nu\bigr)-k_D(z_0,w_\nu)\bigr]\\
        &&\quad+\lim_{\nu\to+\infty}\bigl[k_D\bigl(f^k(z),w_\nu\bigr)-k_D\bigl(f^k_\nu(z),w_\nu\bigr)\bigr]\\
        &\le&\limsup_{\nu\to+\infty}\bigl[k_D\bigl(z,w_\nu\bigr)-k_D(z_0,w_\nu)\bigr]\\
        &\le&\limsup_{w\to x}\bigl[k_D\bigl(z,w\bigr)-k_D(z_0,w)\bigr]<\mlog R\;,
\end{eqnarray*}
and $f^k(z)\in F_{z_0}(x,R)$.
\qed
\end{proof}

When $D$ has $C^2$ boundary this is enough to get a sensible Wolff-Denjoy theorem, because of the following result:

\begin{proposition}[\cite{AR}]
\label{th:2.aggiunta}
Let $D\subset\subset\C^n$ be a bounded convex domain with $C^2$ boundary,
and $x\in\de D$. Then for every $z_0\in D$ and $R>0$ we have
\[
\overline{F_{z_0}(x,R)}\cap\de D\subseteq \hbox{\rm Ch}(x)\;.
\]
In particular, if $x$ is a strictly $\C$-linearly convex point then $\overline{F_{z_0}(x,R)}\cap\de D=\{x\}$.
\end{proposition}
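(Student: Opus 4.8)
The plan is to reduce the statement to a one-variable assertion about the \emph{weak peak function} attached to the unique complex supporting functional at~$x$. Since $D$ has $C^2$ (in particular $C^1$) boundary, at $x$ there is, up to a positive multiple, a single complex supporting functional, namely $L(z)=\langle z,\mathbf{n}_x\rangle$, and $\mathrm{Ch}(x)$ is exactly the slice $\{y\in\de D\mid L(y)=L(x)\}$. Writing $g=L-L(x)$, so that $g$ maps $D$ into the left half-plane $\Pi=\{\zeta\in\C\mid\mathrm{Re}\,\zeta<0\}$ with $g(x)=0$, the associated weak peak function is $\psi=1/(1-g)\in\Hol(D,\Delta)$, and one checks that for $y\in\de D$ one has $\psi(y)=1$ if and only if $g(y)=0$, i.e. if and only if $y\in\mathrm{Ch}(x)$; for every other boundary point $|\psi(y)|<1$. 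Thus everything reduces to proving that $y\in\overline{F_{z_0}(x,R)}\cap\de D$ forces $\psi(y)=1$. The ``in particular'' clause is then immediate: if $x$ is strictly $\C$-linearly convex then $\mathrm{Ch}(x)=\{x\}$, while $x\in\overline{F_{z_0}(x,R)}$ already by Proposition~\ref{th:2.uuno}.

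Next I would take a sequence $z_\mu\to y$ with $z_\mu\in F_{z_0}(x,R)$ and, for each $\mu$, a sequence $w\to x$ realizing the $\liminf$ defining the large horosphere, so that $k_D(z_\mu,w)-k_D(z_0,w)<\mlog R$. The two quantitative inputs are: the contraction property of $\psi$ (equivalently of $g$), which by Theorem~\ref{th:2.SPlemma} gives the lower bound $k_D(z_\mu,w)\ge k_\Pi\bigl(g(z_\mu),g(w)\bigr)$, computable explicitly since $\Pi$ carries the Poincar\'e distance; and the upper estimate of Theorem~\ref{th:1.bestiu}, valid on any bounded $C^2$ domain, $k_D(z_0,w)\le c_1-\mlog d(w,\de D)$. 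The convexity of $D$ enters through the elementary inequality $d(w,\de D)\le-\mathrm{Re}\,g(w)$, coming from the fact that the real supporting hyperplane $\{\mathrm{Re}\,L=\mathrm{Re}\,L(x)\}$ lies outside $D$. Feeding these into the horosphere inequality and letting first $w\to x$ and then $z_\mu\to y$, the boundary blow-up terms $-\mlog d(w,\de D)$ and $-\mlog(-\mathrm{Re}\,g(w))$ are meant to combine so as to bound the horocyclic quantity $|1-\psi(y)|^2/(1-|\psi(y)|^2)$, forcing $\psi(y)\in\de\Delta$ and hence $\psi(y)=1$.

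The hard part will be exactly this last combination. Large horospheres are \emph{not} preserved by nonexpanding maps---the renormalizing term $-k_D(z_0,w)$ transforms in the wrong direction when one tries to push $F_{z_0}(x,R)$ forward through $\psi$---so there is no formal ``image horocycle'' to invoke, and one is forced to estimate $k_D(z_0,w)$ and $k_D(z_\mu,w)$ directly as $w\to x$. The dangerous regime is a \emph{tangential} approach of the realizing sequence $w$ to $x$, where $-\mathrm{Re}\,g(w)$ and $d(w,\de D)$ are of different orders and the two logarithmic terms fail to cancel; this is also where mere convexity (as opposed to strong pseudoconvexity, which would supply the sharp two-sided estimate of Theorem~\ref{th:1.bestia}) is weakest. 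I expect to control it by using the convexity of the Kobayashi balls (Proposition~\ref{th:1.convball}) together with the segment property of Proposition~\ref{th:2.uuno}: replacing $z_\mu$ by points of the segment $[x,z_\mu]\subset\overline{F_{z_0}(x,R)}$ lets one first secure the real-tangential condition $\mathrm{Re}\,g(y)=0$ (that is, $y\in\mathrm{ch}(x)$), after which the remaining freedom pins down the complex-tangential direction $\mathrm{Im}\,g(y)=0$ and yields $y\in\mathrm{Ch}(x)$.
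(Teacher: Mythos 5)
You have the right frame, and the easy parts of your reduction are correct: for a $C^1$ boundary the complex supporting functional at~$x$ is unique up to positive multiples, $\hbox{\rm Ch}(x)$ is the slice $\{g=0\}\cap\de D$ with $g=\langle\cdot\,,\mathbf{n}_x\rangle-\langle x,\mathbf{n}_x\rangle$, the equivalence $|\psi(y)|=1\Leftrightarrow\psi(y)=1\Leftrightarrow g(y)=0$ for $y\in\de D$ is exactly the paper's remark on weak peak functions, and the ``in particular'' clause does follow from Proposition~\ref{th:2.uuno}. Note that this survey states the proposition without proof (it is quoted from \cite{AR}), so the middle step you flag as ``the hard part'' carries the entire burden --- and, as you set it up, it provably fails rather than being merely delicate. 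Writing the Poincar\'e distance of the left half-plane as $k_\Pi(\zeta_1,\zeta_2)=\mlog\bigl[\bigl(|\zeta_1+\overline{\zeta_2}|+|\zeta_1-\zeta_2|\bigr)^2/\bigl(4\,\mathrm{Re}\,\zeta_1\,\mathrm{Re}\,\zeta_2\bigr)\bigr]$, your three inputs (contraction through $g$, Theorem~\ref{th:1.bestiu}, and $d(w,\de D)\le-\mathrm{Re}\,g(w)$) combine, as $w\to x$, into
\[
k_D(z,w)-k_D(z_0,w)\ \ge\ \mlog\frac{|g(z)|^2}{-\mathrm{Re}\,g(z)}\ +\ \mlog\frac{d(w,\de D)}{-\mathrm{Re}\,g(w)}\ -\ C+o(1)\;,
\]
and the second summand is nonpositive and unbounded below along tangential approaches --- already in the unit ball: for $w=(\sqrt{1-t^2}-t^3,\,t,0,\dots,0)\to e_1$ one has $d(w,\de B^n)\asymp t^3$ while $-\mathrm{Re}\,g(w)\asymp t^2$. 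Since the proposition is true for $B^n$, this shows the scheme itself discards the needed information; no bookkeeping recovers it. There is also a logical slip at the end: a bound on $|1-\psi(y)|^2/(1-|\psi(y)|^2)$ only places $\psi(y)$ in a horocycle, which is full of interior points; it ``forces $\psi(y)\in\de\Delta$'' only if you already know $|\psi(y)|=1$, and that is precisely what fails for boundary points off $\hbox{\rm Ch}(x)$.

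The proposed repair has no mechanism behind it. Replacing $z_\mu$ by points of $[x,z_\mu]$ (Proposition~\ref{th:2.uuno}) says nothing about the realizing sequences $w$, which are chosen \emph{after} $z$ is fixed, so it cannot tame the tangential regime; and ``the remaining freedom pins down $\mathrm{Im}\,g(y)=0$'' is the content of the proposition, not an argument for it. Keep in mind what must be proved beyond Proposition~\ref{th:2.uuno}: (a) containment at each \emph{fixed} $R$, not just for $\bigcap_{R>0}\overline{F_{z_0}(x,R)}$ --- and fixed-$R$ containment even in $\hbox{\rm ch}(x)$ is false without smoothness, as the bidisk shows (by Proposition~\ref{th:2.horopolydisk}, $\overline{F_O(\xi,R)}\cap\de\Delta^2$ contains points $(\zeta,\sigma)$ with $\zeta\in E_0(\xi_1,R)$ and $\sigma\in\de\Delta$ arbitrary, most of which lie outside $\hbox{\rm ch}(\xi)$); and (b) the upgrade from the real hyperplane $\{\mathrm{Re}\,g=0\}$ to the complex one $\{g=0\}$. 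Your outline uses the $C^2$ hypothesis only through Theorem~\ref{th:1.bestiu}, which holds for \emph{every} bounded $C^2$ domain and, by the computation above, cannot suffice. What is missing is either a two-point substitute for Theorem~\ref{th:1.bestia} valid when $y\notin\hbox{\rm Ch}(x)$ (this is what drives the strongly pseudoconvex case, Theorem~\ref{th:2.horo}), or a Julia-type lemma transferring the defining $\liminf$ to the normal approach $w_t=x-t\mathbf{n}_x$: there the interior tangent ball provided by $C^2$ smoothness gives $k_D(z_0,w_t)\le C-\mlog t$ with $t=-\mathrm{Re}\,g(w_t)$, your computation does close, and one gets $g\bigl(F_{z_0}(x,R)\bigr)$ inside a half-plane horocycle $\{|\zeta|^2\le\rho\,(-\mathrm{Re}\,\zeta)\}$ touching $i\R$ only at $0$, which disposes of $\mathrm{Im}\,g(y)$ once (a) is known. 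Until such a transfer or substitute estimate is supplied, the proof has a genuine gap at its central step.
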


To simplify subsequent statements, let us introduce a definition.

\begin{definition}
Let $D\subset\C^n$ be a hyperbolic convex domain, and $f\in\Hol(D,D)$ without fixed points.
The \emph{target set} of $f$ is defined as
\[
T(f)=\bigcup_h h(D)\subseteq\de D\;,
\]
where the union is taken with respect to all the holomorphic maps $h\in\Hol(D,\C^n)$ obtained as limit of a subsequence of iterates of~$f$. We have $T(f)\subseteq\de D$ because the sequence of iterates $\{f^k\}$ is compactly divergent.
\end{definition}

As a consequence of Proposition~\ref{th:2.aggiunta} we get:

\begin{corollary}[\cite{AR}]
\label{th:2.aggiuntobis}
Let $D\subset\subset\C^n$ be a $C^2$ bounded convex domain,
and $f\in\Hol(D,D)$ without fixed points. Then there exists 
$x_0\in\de D$ such that 
\[
T(f)\subseteq\hbox{\rm Ch}(x_0)\;.
\]
In particular, if $D$ is strictly
$\C$-linearly convex then the sequence of iterates $\{f^k\}$ converges to the constant map~$x_0$.
\end{corollary}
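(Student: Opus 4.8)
The plan is to combine the invariant-horosphere statement of Theorem~\ref{th:2.Wolffconv} with the boundary description of large horospheres in Proposition~\ref{th:2.aggiunta}, exactly mirroring the closing argument in the proof of Theorem~\ref{th:2.WDAR}. First I would record the setup: being bounded and convex, $D$ is complete hyperbolic by Proposition~\ref{th:1.Barth}, and since $f$ has no fixed points Theorem~\ref{th:2.convfix} forces the sequence $\{f^k\}$ to be compactly divergent. Consequently every limit map $h\in\Hol(D,\C^n)$ of a subsequence of iterates satisfies $h(D)\subseteq\de D$, which is precisely what makes $T(f)\subseteq\de D$ meaningful.

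Next I would apply Theorem~\ref{th:2.Wolffconv} to obtain a point $x_0\in\de D$ enjoying the invariance property $f^k\bigl(E_{z_0}(x_0,R)\bigr)\subseteq F_{z_0}(x_0,R)$ for all $z_0\in D$, $R>0$ and $k\in\mathbb{N}$; fix one such pole $z_0$. The core step is then to track a single orbit through the horospheres. Let $h$ be the limit of a subsequence $\{f^{k_\nu}\}$ and fix $z\in D$. By Lemma~\ref{th:2.horoeasy}(v) the small horospheres exhaust $D$, so $z\in E_{z_0}(x_0,R)$ for some $R>0$; the invariance property gives $f^{k_\nu}(z)\in F_{z_0}(x_0,R)$ for every $\nu$, and letting $\nu\to\infty$ yields $h(z)\in\overline{F_{z_0}(x_0,R)}$. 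Since $h(z)\in\de D$, we conclude $h(z)\in\overline{F_{z_0}(x_0,R)}\cap\de D$, and Proposition~\ref{th:2.aggiunta} bounds this set by $\hbox{\rm Ch}(x_0)$. As $z$ and $h$ were arbitrary, this establishes $T(f)\subseteq\hbox{\rm Ch}(x_0)$.

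For the last assertion I would specialize: if $D$ is strictly $\C$-linearly convex then $x_0$ is a strictly $\C$-linearly convex point, so $\hbox{\rm Ch}(x_0)=\{x_0\}$ by Definition~\ref{def:1.3}, whence $T(f)\subseteq\{x_0\}$ and every limit map equals the constant $x_0$. To upgrade ``every convergent subsequence has limit $x_0$'' to convergence of the full sequence, I would invoke the relative-compactness remark following the Wolff-Denjoy proof: embedding $D$ in a Euclidean ball $B\supset\supset\overline{D}$ makes $\{f^k\}$ relatively compact in the metrizable space $\Hol(D,B)$, so a sequence with a single subsequential limit converges to it.

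I expect no serious obstacle inside the corollary itself, since the two substantial ingredients—the existence of an invariant horosphere and the containment $\overline{F_{z_0}(x,R)}\cap\de D\subseteq\hbox{\rm Ch}(x)$—are already available. The only points demanding care are the exhaustion of $D$ by small horospheres, which guarantees that every orbit point enters the horosphere machinery, and the harmlessness of passing to the limit inside the \emph{closed} set $\overline{F_{z_0}(x_0,R)}$. The genuinely hard analysis, namely the $C^2$ boundary estimates underlying Proposition~\ref{th:2.aggiunta}, lives upstream and is assumed here.
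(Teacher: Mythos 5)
Your proof is correct and follows essentially the same route as the paper: take $x_0$ from Theorem~\ref{th:2.Wolffconv}, exhaust $D$ by small horospheres to trap each orbit in $F_{z_0}(x_0,R)$, pass to the limit and apply Proposition~\ref{th:2.aggiunta} to land in $\hbox{\rm Ch}(x_0)$, then use the compactness/normality remark to upgrade subsequential convergence to convergence of the full sequence in the strictly $\C$-linearly convex case. The only difference is that you make explicit the use of Theorem~\ref{th:2.convfix} for compact divergence, which the paper leaves implicit in the definition of $T(f)$.
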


\begin{proof}
Let $x_0\in\de D$ be given by Theorem~\ref{th:2.Wolffconv}, and
fix $z_0\in D$. Given $z\in D$, choose $R>0$ such that $z\in E_{z_0}(x_0,R)$. 
If $h\in\Hol(D,\C^n)$ is the limit of a subsequence of iterates
then Theorem~\ref{th:2.Wolffconv} and Proposition~\ref{th:2.aggiunta} yield
\[
h(z)\in\overline{F{z_0}(x,R)}\cap\de D\subset\hbox{\rm Ch}(x_0)\;,
\]
and we are done.
\qed
\end{proof}

\begin{remark}
Zimmer \cite{Zi} has proved Corollary~\ref{th:2.aggiuntobis} for bounded convex domains 
with $C^{1,\alpha}$ boundary. We conjecture that it should hold for strictly $\C$-linearly convex domains without smoothness assumptions on the boundary. 
\end{remark}

Let us now drop any smothness or strict convexity condition on the boundary. In this general
context, an useful result is the following:

%

\begin{lemma}
\label{th:2.AV} 
Let $D\subset\C^n$ be a convex domain. Then for every connected complex manifold $X$ and every holomorphic map
$h\colon X\to\C^n$ such that $h(X)\subset\overline{D}$ and $h(X)\cap\de D\ne\emptyset$ we have 
\[
h(X)\subseteq\bigcap_{z\in X}\mathrm{Ch}\bigl(h(z)\bigr)\subseteq\de D\;.
\]
\end{lemma}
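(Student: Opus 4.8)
The plan is to reduce the whole statement to one observation: if $L$ is a complex supporting functional at a point of $h(X)\cap\de D$, then $L\circ h$ is a holomorphic function on $X$ whose real part attains an interior maximum, and hence, since $X$ is connected, $L\circ h$ is constant. Both inclusions will then follow by unwinding the definition of $\mathrm{Ch}$ in Definition~\ref{def:1.3}.

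First I would fix $z_0\in X$ with $x_0:=h(z_0)\in\de D$ (this exists because $h(X)\cap\de D\ne\emptyset$) and choose a complex supporting functional $L_0$ at $x_0$, furnished by Hahn--Banach as in Definition~\ref{def:1.3}. Since $h(X)\subseteq\overline D$ and $\mathrm{Re}\,L_0\le\mathrm{Re}\,L_0(x_0)$ on $\overline D$, the holomorphic function $L_0\circ h\colon X\to\C$ satisfies $\mathrm{Re}(L_0\circ h)\le\mathrm{Re}\,L_0(x_0)$ with equality at $z_0$; the maximum principle then forces $L_0\circ h\equiv L_0(x_0)$. Because $D$ is open and $\mathrm{Re}\,L_0<\mathrm{Re}\,L_0(x_0)$ on $D$, no value $h(z)$ can lie in $D$, so I obtain $h(X)\subseteq\de D$.

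With $h(X)\subseteq\de D$ in hand, I would run the same argument at an arbitrary $z'\in X$: the point $x':=h(z')$ is now a genuine boundary point, so for each complex supporting functional $L'$ at $x'$ the function $L'\circ h$ is holomorphic, has real part $\le\mathrm{Re}\,L'(x')$ with equality at $z'$, and is therefore constant, equal to $L'(x')$. Thus $L'(h(z))=L'(x')$ for every $z\in X$, i.e.\ $h(z)\in x'+\ker L'$. Intersecting over all supporting functionals $L'$ at $x'$ and using $h(z)\in\overline D$ gives $h(z)\in\mathrm{Ch}(h(z'))$. Since $z$ and $z'$ are arbitrary this yields $h(X)\subseteq\bigcap_{z'\in X}\mathrm{Ch}(h(z'))$, and the final inclusion $\mathrm{Ch}(h(z'))\subseteq\de D$ is immediate because a complex supporting hyperplane meets $D$ nowhere.

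The one delicate point, and the place where holomorphicity is essential, is to extract \emph{full} constancy of $L'\circ h$ rather than merely constancy of its real part: it is this that places $h(z)$ on the complex supporting hyperplane $x'+\ker L'$ (a genuine complex hyperplane) rather than only on the larger real hyperplane $\{\mathrm{Re}\,L'=\mathrm{Re}\,L'(x')\}$, and hence inside $\mathrm{Ch}$. Concretely, a holomorphic function with constant real part on a connected manifold is constant (equivalently, a non-constant holomorphic function is open, so its real part cannot attain an interior maximum). I would also be careful about the logical order: one must establish $h(X)\subseteq\de D$ before invoking supporting functionals at $h(z')$ for general $z'$, since $\mathrm{Ch}(h(z'))$ is only defined once $h(z')$ is known to be a boundary point.
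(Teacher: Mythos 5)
Your proof is correct and follows essentially the same route as the paper's: both reduce everything to the constancy of $L\circ h$ for each complex supporting functional $L$, applied first at one point of $h(X)\cap\de D$ to force $h(X)\subseteq\de D$, and then repeated at arbitrary points of $X$ to place $h(X)$ inside every $\mathrm{Ch}\bigl(h(z')\bigr)$. The only difference is cosmetic: the paper applies the maximum modulus principle to $\psi\circ h$, where $\psi$ is the weak peak function associated to $L$, whereas you apply the maximum principle directly to the pluriharmonic function $\mathrm{Re}(L\circ h)$ and then upgrade to full constancy of $L\circ h$ --- two packagings of the same argument.
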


\begin{proof}
Take $x_0=h(z_0)\in h(X)\cap\de D$, and let $\psi$ be the weak peak function associated to a complex supporting functional
at~$x_0$. Then $\psi\circ h$ is a holomorphic function
with modulus bounded by~1 and such that $\psi\circ h(z_0)=1$; by the maximum principle
we have $\psi\circ h\equiv 1$, and hence $L\circ h\equiv L(x_0)$. In particular, $h(X)\subseteq\de D$.

Since this holds for all complex supporting hyperplanes at~$x_0$ we have shown that $h(X)\subseteq \mathrm{Ch}\bigl(h(z_0)\bigr)$; but since we know that $h(X)\subseteq\de D$ we can 
repeat the argument for any $z_0\in X$, and we are done.
\qed
\end{proof}

We can then prove a weak Wolff-Denjoy theorem:

\begin{proposition}
\label{th:2.WDwc}
Let $D\subset\subset\C^n$ be a bounded convex domain, and $f\in\Hol(D,D)$ without fixed points. 
Then there exists $x\in\de D$ such that for any $z_0\in D$ we have
\begin{equation}
T(f)\subseteq\bigcap_{R>0}\mathrm{Ch}\bigl(\overline{F_{z_0}(x,R)}\cap\de D\bigr)\;.
\label{eq:2.intF}
\end{equation}
\end{proposition}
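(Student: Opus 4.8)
The plan is to follow the template of the proof of Theorem~\ref{th:2.WDAR} (the strictly convex case), upgrading it by means of Lemma~\ref{th:2.AV} so that it survives the absence of simple boundary. First I would fix, once and for all, the point $x\in\de D$ produced by Theorem~\ref{th:2.Wolffconv}; the virtue of that statement is that the \emph{same} $x$ serves simultaneously for every pole $z_0\in D$, which is exactly the quantifier structure demanded here. Since $f$ has no fixed points, Theorem~\ref{th:2.convfix} guarantees that $\{f^k\}$ is compactly divergent, so that $T(f)\subseteq\de D$ and every limit map $h\in\Hol(D,\C^n)$ of a subsequence $\{f^{k_\nu}\}$ is holomorphic (Weierstrass) with $h(D)\subseteq\de D$.

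Next I would fix a pole $z_0$, a limit map $h$, and a radius $R>0$, and manufacture a single boundary witness. Choosing any $z\in E_{z_0}(x,R)$, Theorem~\ref{th:2.Wolffconv} gives $f^k(z)\in F_{z_0}(x,R)$ for all $k$; letting $\nu\to+\infty$ along the defining subsequence and passing to closures yields
\[
h(z)\in\overline{F_{z_0}(x,R)}\cap\de D\;.
\]
So far this is verbatim the strictly convex argument. The new point is that I cannot conclude that $h$ is constant, so one witness value is not the whole story; instead I would invoke Lemma~\ref{th:2.AV} with $X=D$ and the map $h$, whose hypotheses hold because $h(D)\subseteq\overline D$ and $h(D)\cap\de D\neq\emptyset$. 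It returns $h(D)\subseteq\mathrm{Ch}\bigl(h(z)\bigr)$, and since $h(z)$ lies in the displayed set and $\mathrm{Ch}$ is monotone, $h(D)\subseteq\mathrm{Ch}\bigl(\overline{F_{z_0}(x,R)}\cap\de D\bigr)$.

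To finish I would let $R$ and then $h$ vary. As $R>0$ was arbitrary, $h(D)\subseteq\bigcap_{R>0}\mathrm{Ch}\bigl(\overline{F_{z_0}(x,R)}\cap\de D\bigr)$; taking the union over all limit maps $h$ of subsequences of iterates places $T(f)$ inside the same intersection, which is the assertion. The entire run used the single fixed $x$ of Theorem~\ref{th:2.Wolffconv}, so the displayed inclusion indeed holds for every $z_0$.

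The conceptual heart, and the step I expect to carry the real weight, is the application of Lemma~\ref{th:2.AV}. In the strictly convex (simple boundary) setting the limit map collapses to a constant and the target set is a single boundary point, so one only needs $h(z)\in\overline{F_{z_0}(x,R)}\cap\de D$; here $h(D)$ can be a genuine positive-dimensional analytic piece of $\de D$, and Lemma~\ref{th:2.AV} is precisely what forces this whole piece into $\mathrm{Ch}\bigl(h(z)\bigr)$, i.e.\ into one $\C$-linearly flat portion of the boundary anchored at the witness. A minor point to keep honest is the choice of $z\in E_{z_0}(x,R)$ for \emph{every} $R>0$, including small ones: this requires the small horospheres to be nonempty for all radii (equivalently, the horofunction $z\mapsto\limsup_{w\to x}[k_D(z,w)-k_D(z_0,w)]$ to be unbounded below), a feature of the Kobayashi geometry near $\de D$ that is used freely here, exactly as in the proof of Theorem~\ref{th:2.WDAR}, and which one checks by letting $z$ tend to $x$.
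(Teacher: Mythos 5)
Your proposal is correct and is essentially the paper's own proof: the same fixed $x\in\de D$ from Theorem~\ref{th:2.Wolffconv}, the same observation that compact divergence (via Theorem~\ref{th:2.convfix}) forces $h(D)\subseteq\de D$, the same anchoring step $h(z)\in\overline{F_{z_0}(x,R)}\cap\de D$ for $z\in E_{z_0}(x,R)$, and the same application of Lemma~\ref{th:2.AV} giving $h(D)\subseteq\mathrm{Ch}\bigl(h(z)\bigr)\subseteq\mathrm{Ch}\bigl(\overline{F_{z_0}(x,R)}\cap\de D\bigr)$ before letting $R$ and $h$ vary. Your closing caveat about the nonemptiness of $E_{z_0}(x,R)$ for \emph{every} $R>0$ is a genuine point, but the paper's proof makes the same tacit assumption when it writes ``take $z\in E_{z_0}(x,R)$'', so your argument is at exact parity with the original.
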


\begin{proof}
Let $x\in\de D$ be given by Theorem~\ref{th:2.Wolffconv}. Choose $z_0\in D$ and $R>0$,
and take $z\in E_{z_0}(x,R)$. Let $h\in\Hol(D,\C^n)$ be obtained as limit of a subsequence of iterates of $f$.
Arguing as usual we know that $h(D)\subseteq\de D$; therefore
Theorem~\ref{th:2.Wolffconv} yields $h(z)\in\overline{F_{z_0}(x,R)}\cap\de D$. 
Then Lemma~\ref{th:2.AV} yields 
\[
h(D)\subseteq\mathrm{Ch}\bigl(h(z)\bigr)\subseteq\mathrm{Ch}\bigl(\overline{F_{z_0}(x,R)}\cap\de D\bigr)\;.
\]
Since $z_0$ and $R$ are arbitrary, we get the assertion.
\qed
\end{proof}

\begin{remark}
Using Lemma~\ref{th:2.horoeasy} it is easy to check that the intersection in (\ref{eq:2.intF}) is independent of the choice of~$z_0\in D$.
\end{remark}

Unfortunately, large horospheres can be too large. For instance, take $(\tau_1,\tau_2)\in\de\Delta\times\de\Delta$. Then Proposition~\ref{th:2.horopolydisk} says that the horosphere of center $(\tau_1,\tau_2)$ in the bidisk are given by
\[
F_O\bigl((\tau_1,\tau_2),R\bigr)=E_0(\tau_1,R)\times\Delta\cup \Delta\times E_0(\tau_2,R)\;,
\]
where $E_0(\tau,R)$ is the horocycle of center~$\tau\in\de\Delta$ and radius $R>0$ in the unit disk~$\Delta$, and a not difficult computation shows that 
\[
\mathrm{Ch}\bigl(\overline{F_O\bigl((\tau_1,\tau_2),R\bigr)}\cap\de\Delta^2\bigr)=\de\Delta^2\;,
\]
making the statement of Proposition~\ref{th:2.WDwc} irrelevant. So to get an effective statement we need to replace large horospheres with smaller sets. 

Small horospheres might be too small; as shown by Frosini \cite{Fro}, there are holomorphic self-maps of the polydisk with no invariant
small horospheres. We thus need another kind of horospheres, defined by Kapeluszny, Kuczumow and Reich \cite{KKR1}, and studied in detail by Budzy\'nska \cite{Bud}.
To introduce them we begin with a definition:

\begin{definition}
\label{def:3.1}
Let $D\subset\subset\C^n$ be a bounded domain, and $z_0\in D$. A sequence $\mathbf{x}=\{x_\nu\}\subset D$ converging to~$x\in\de D$ is
a \emph{horosphere sequence} at~$x$ if
the limit of $k_D(z,x_\nu)-k_D(z_0,x_\nu)$ as $\nu\to+\infty$ exists for all~$z\in D$.
\end{definition}

\begin{remark}
\label{rem:3.2}
It is easy to see that the notion of horosphere sequence does not depend on the point~$z_0$.
\end{remark}

Horosphere sequences always exist. This follows from a topological lemma:

\begin{lemma}[\cite{Reich}]
\label{th:2.Reich}
Let $(X, d)$ be a separable metric space, and for each $\nu\in\mathbb{N}$ let $a_\nu\colon X\to\R$ be a 1-Lipschitz map, i.e., $|a_{\nu}(x)-a_\nu(y)| \le d(x, y)$ for all $x$,~$y\in X$. If for each $x\in X$ the sequence $\{a_\nu(x)\}$ is bounded, then there exists a subsequence $\{a_{\nu_j}\}$ of $\{a_\nu\}$ such that $\lim_{j\to\infty} a_{\nu_j} (x)$ exists for each $x\in X$.
\end{lemma}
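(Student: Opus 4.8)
The plan is to run a diagonal extraction against a countable dense subset and then upgrade the resulting pointwise convergence to all of $X$ by means of the uniform Lipschitz bound, which here plays exactly the role that equicontinuity plays in the Ascoli--Arzel\`a argument (Theorem~\ref{th:2.AA}). The hypothesis that each $\{a_\nu(x)\}$ is bounded will supply the relative compactness of the values in $\R$ needed to start the extraction.

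First I would use separability to fix a countable dense set $\{x_k\}_{k\in\mathbb{N}}\subseteq X$. Since $\{a_\nu(x_1)\}$ is a bounded sequence in $\R$, by Bolzano--Weierstrass there is a subsequence $\{a_\nu^{(1)}\}$ of $\{a_\nu\}$ for which $\{a_\nu^{(1)}(x_1)\}$ converges. Boundedness of $\{a_\nu^{(1)}(x_2)\}$ then yields a further subsequence $\{a_\nu^{(2)}\}$ converging also at $x_2$, and inductively a nested chain of subsequences $\{a_\nu^{(k)}\}$ such that $\{a_\nu^{(k)}(x_j)\}$ converges for every $j\le k$. The diagonal sequence $a_{\nu_j}:=a_j^{(j)}$ is, from index $k$ on, a subsequence of $\{a_\nu^{(k)}\}$, so $\lim_{j\to\infty} a_{\nu_j}(x_k)$ exists for every $k\in\mathbb{N}$.

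It remains to show that $\{a_{\nu_j}(x)\}$ converges for an arbitrary $x\in X$, and this is where the $1$-Lipschitz hypothesis is decisive. Given $\eps>0$, density furnishes an $x_k$ with $d(x,x_k)<\eps/3$. For all indices $i,j$ the triangle inequality together with the Lipschitz bound gives
\[
|a_{\nu_j}(x)-a_{\nu_i}(x)|\le |a_{\nu_j}(x)-a_{\nu_j}(x_k)|+|a_{\nu_j}(x_k)-a_{\nu_i}(x_k)|+|a_{\nu_i}(x_k)-a_{\nu_i}(x)|\le \frac{2\eps}{3}+|a_{\nu_j}(x_k)-a_{\nu_i}(x_k)|\;.
\]
Since $\{a_{\nu_j}(x_k)\}$ converges it is Cauchy, so the middle term is $<\eps/3$ once $i,j$ are large; hence $\{a_{\nu_j}(x)\}$ is Cauchy in $\R$ and therefore convergent. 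As $x\in X$ was arbitrary, the diagonal subsequence converges pointwise on all of $X$, which is the assertion.

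I do not anticipate a genuine obstacle here: the extraction is a standard diagonal procedure, and the only real point to get right is that uniform (in this case Lipschitz) equicontinuity is precisely what transfers the Cauchy property from the dense set to every point of $X$, and that the diagonal sequence is eventually contained in each level of the nested chain so that convergence at the $x_k$ is not lost.
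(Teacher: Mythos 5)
Your proof is correct and follows essentially the same route as the paper's: a diagonal extraction over a countable dense set, with the $1$-Lipschitz bound then transferring convergence from the dense points to all of $X$. The only cosmetic difference is that you verify the Cauchy property at a general point via an $\eps/3$ argument, whereas the paper bounds $\limsup_{j} a_{\nu_j}(x)-\liminf_{j} a_{\nu_j}(x)$ by $2d(x,x_h)$ directly; these are the same estimate in two guises.
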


\begin{proof} 
Take a countable sequence $\{x_j\}_{j\in\mathbb{N}}\subset X$ dense in $X$. Clearly, the sequence $\{a_{\nu}(x_0)\}\subset\R$ admits a convergent subsequence $\{a_{\nu,0}(x_0)\}$. Analogously, the sequence $\{a_{\nu,0}(x_1)\}$ admits a convergent subsequence $\{a_{\nu,1}(x_1)\}$. Proceeding in this way, we get a countable family of subsequences $\{a_{\nu,k}\}$ of the sequence $\{a_\nu\}$ such that for each $k\in\mathbb{N}$ the limit $\lim_{\nu\to\infty} a_{\nu,k} (x_j)$ exists for $j = 0,\ldots, k$. We claim that setting $a_{\nu_j}=a_{j,j}$ the subsequence $\{a_{\nu_j}\}$ is as desired. Indeed, given $x\in X$ and $\eps>0$ we can find $x_h$ such that $d(x,x_h)< \eps/2$, and then we have
\begin{eqnarray*}
0 &\le& \limsup_{j\to\infty} a_{\nu_j}(x) - \liminf_{j\to\infty} a_{\nu_j} (x)\\
&=& \bigl[\limsup_{j\to\infty}\bigl(a_{\nu_j}(x) - a_{\nu_j} (x_h)\bigr) + \lim_{j\to\infty} a_{\nu_j} (x_h)\bigr]\\
&&\quad - 
\bigl[\liminf_{j\to\infty}\bigl(a_{\nu_j}(x) - a_{\nu_j} (x_h)\bigr) + \lim_{j\to\infty} a_{\nu_j} (x_h)\bigr]\\
&\le& 2d(x, x_h) < \eps\;.
\end{eqnarray*}
Since $\eps$ was arbitrary, it follows that the limit $\lim_{j\to\infty} a_{\nu_j}(x)$ exists, as required. 
\qed
\end{proof}

Then: 

\begin{proposition}[\cite{BKR}]
\label{th:2.horseq}
Let $D\subset\subset\C^n$ be a bounded convex domain, and $x\in\de D$. Then every sequence
$\{x_\nu\}\subset D$ converging to~$x$ contains a subsequence which is a horosphere sequence at~$x$.
\end{proposition}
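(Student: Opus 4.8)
The plan is to apply the topological Lemma~\ref{th:2.Reich} directly, taking as metric space $D$ equipped with the Kobayashi distance and as 1-Lipschitz functions the normalized distances that appear in the definition of a horosphere sequence. First I would fix a pole $z_0\in D$ (Remark~\ref{rem:3.2} guarantees that the resulting notion does not depend on this choice) and, for each $\nu$, define $a_\nu\colon D\to\R$ by
\[
a_\nu(z)=k_D(z,x_\nu)-k_D(z_0,x_\nu)\;.
\]
Unwinding Definition~\ref{def:3.1}, a subsequence $\{x_{\nu_j}\}$ is a horosphere sequence at~$x$ precisely when $\lim_{j\to\infty}a_{\nu_j}(z)$ exists for every $z\in D$; so the whole statement reduces to producing a subsequence along which the $a_\nu$ converge pointwise on~$D$.

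The core of the argument is then just the verification of the three hypotheses of Lemma~\ref{th:2.Reich}. For the Lipschitz condition, the triangle inequality for $k_D$ gives
\[
|a_\nu(z)-a_\nu(w)|=\bigl|k_D(z,x_\nu)-k_D(w,x_\nu)\bigr|\le k_D(z,w)\;,
\]
so each $a_\nu$ is 1-Lipschitz for~$k_D$. For the ambient space, I would observe that $(D,k_D)$ is a separable metric space: being a bounded domain, $D$ is hyperbolic (Proposition~\ref{th:2.exhypman}), so $k_D$ is a genuine distance, and by Barth's theorem (Proposition~\ref{th:2.hyp}) it induces the Euclidean subspace topology, which is second countable and hence separable. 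Finally, again by the triangle inequality, $|a_\nu(z)|\le k_D(z,z_0)$ for all $\nu$, so the sequence $\{a_\nu(z)\}$ is bounded for each fixed $z$.

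With all three hypotheses in place, Lemma~\ref{th:2.Reich} furnishes a subsequence $\{a_{\nu_j}\}$ that converges pointwise on~$D$; the corresponding subsequence $\{x_{\nu_j}\}$ still converges to~$x$ and, by the reformulation above, is a horosphere sequence at~$x$.

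I do not expect a genuine obstacle here, since the combinatorial heart of the matter has been isolated in the preceding lemma; the proof is essentially a bookkeeping exercise matching its hypotheses. It is worth noting that convexity of $D$ plays no role in this argument—only boundedness (hence hyperbolicity, hence separability) is used—so the statement in fact holds for arbitrary bounded domains. The one point that genuinely requires attention is the separability of $(D,k_D)$, which is exactly where Barth's identification of the Kobayashi topology with the manifold topology enters, guaranteeing that the countable dense set needed to run the diagonal argument inside Lemma~\ref{th:2.Reich} exists.
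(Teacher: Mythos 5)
Your proof is correct and takes essentially the same route as the paper: both are direct applications of Lemma~\ref{th:2.Reich} to the normalized distance functions, with the same triangle-inequality verifications of the 1-Lipschitz and pointwise-boundedness hypotheses. The only cosmetic difference is that the paper applies the lemma on $X=D\times D$ (with the sum distance and $a_\nu(z,w)=k_D(w,x_\nu)-k_D(z,x_\nu)$, so all poles are handled simultaneously), whereas you fix the pole $z_0$ and work on $D$ itself---equivalent in view of Remark~\ref{rem:3.2}---and your side observations (separability of $(D,k_D)$ via Proposition~\ref{th:2.hyp}, and the fact that only boundedness, not convexity, is used) are accurate.
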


\begin{proof}
Let $X=D\times D$ be endowed with the distance 
\[
d\bigl((z_1,w_1),(z_2,w_2)\bigr)=k_D(z_1,z_2)+k_D(w_1,w_2)
\] 
for all $z_1$, $z_2$, $w_1$, $w_2\in D$. 

Define $a_\nu\colon X\to\R$ by setting $a_\nu(z,w)=k_D(w,x_\nu)-k_D(z,x_\nu)$. The triangular inequality shows that each $a_\nu$ is 1-Lipschitz,
and for each $(z,w)\in X$ the sequence $\{a_\nu(z,w)\}$ is bounded by $k_D(z,w)$. Lemma~\ref{th:2.Reich} then yields a subsequence $\{x_{\nu_j}\}$ such that
$\lim_{j\to\infty} a_{\nu_j}(z,w)$ exists for all $z$, $w\in D$, and this exactly means that $\{x_{\nu_j}\}$ is a horosphere sequence.
\qed
\end{proof}

We can now introduce a new kind of horospheres.

\begin{definition}
\label{def:3.2}
Let $D\subset\subset\C^n$ be a bounded convex domain. Given $z_0\in D$,
let $\mathbf{x}=\{x_\nu\}$ be a horosphere sequence at~$x\in\de D$, and take $R>0$.
Then the \emph{sequence horosphere} $G_{z_0}(x,R,\mathbf{x})$ is defined as
\[
G_{z_0}(x,R,\mathbf{x})=\bigl\{z\in D\bigm|\lim_{\nu\to+\infty}\bigl[k_D(z,x_\nu)-k_D(z_0,x_\nu)\bigr]
<\mlog R\bigr\}\;.
\]
\end{definition}

The basic properties of sequence horospheres are contained in the following:

\begin{proposition}[\cite{KKR1, Bud, BKR}]
\label{th:2.newhor}
Let $D\subset\subset\C^n$ be a bounded convex domain. Fix $z_0\in D$, and let $\mathbf{x}=\{x_\nu\}\subset D$ be a horosphere sequence
at~$x\in\de D$. Then:
\begin{enumerate}
\item[\rm (i)] $E_{z_0}(x,R)\subseteq G_{z_0}(x,R,\mathbf{x})\subseteq F_{z_0}(x,R)$ for all $R > 0$; 
\item[\rm (ii)] $G _{z_0}(x,R,\mathbf{x})$ is nonempty and convex for all $R>0$;
\item[\rm (iii)] $\overline{G_{z_0}(x,R_1,\mathbf{x})}\cap D\subset G_{z_0} (x,R_2, \mathbf{x})$ for all $0<R_1<R_2$;
\item[\rm (iv)] $B_D(z_0,\mlog R)\subset G_{z_0}(x, R, \mathbf{x})$ for all $R>1$;
\item[\rm (v)] $B_D(z_0,-\mlog R)\cap G_{z_0}(x, R, \mathbf{x})=\emptyset$ for all $0<R<1$;
\item[\rm (vi)] $\bigcup\limits_{R>0}G_{z_0}(x,R,\mathbf{x})=D$ and
$\bigcap\limits_{R>0}G_{z_0}(x,R,\mathbf{x})=\emptyset$.
\end{enumerate}
\end{proposition}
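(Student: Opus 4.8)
The plan is to reduce the whole statement to the behaviour of the single function $\ell\colon D\to\R$ defined by $\ell(z)=\lim_{\nu\to+\infty}\bigl[k_D(z,x_\nu)-k_D(z_0,x_\nu)\bigr]$, whose existence for every $z\in D$ is exactly the hypothesis that $\mathbf{x}=\{x_\nu\}$ is a horosphere sequence at $x$; by definition $G_{z_0}(x,R,\mathbf{x})=\{z\in D\mid\ell(z)<\mlog R\}$. First I would record two elementary features of $\ell$. Setting $b_\nu(z)=k_D(z,x_\nu)-k_D(z_0,x_\nu)$, the triangular inequality gives $|b_\nu(z)-b_\nu(w)|\le k_D(z,w)$, so each $b_\nu$ is $1$-Lipschitz for $k_D$ and hence so is the pointwise limit $\ell$; in particular $\ell$ is continuous, since $k_D$ induces the manifold topology (Proposition~\ref{th:2.hyp}). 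Moreover, again by the triangular inequality, $-k_D(z_0,z)\le\ell(z)\le k_D(z_0,z)$ and $\ell(z_0)=0$.

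With this in hand most items become immediate. For (i) I would observe that the limit defining $\ell(z)$ is a particular subsequential limit of $k_D(z,w)-k_D(z_0,w)$ as $w\to x$, so $\liminf_{w\to x}[\,\cdots\,]\le\ell(z)\le\limsup_{w\to x}[\,\cdots\,]$, which yields $E_{z_0}(x,R)\subseteq G_{z_0}(x,R,\mathbf{x})\subseteq F_{z_0}(x,R)$ at once. For (iv) and (v) I would use the two-sided bound on $\ell$: if $k_D(z_0,z)<\mlog R$ then $\ell(z)\le k_D(z_0,z)<\mlog R$, giving (iv); if $k_D(z_0,z)<-\mlog R$ then $\ell(z)\ge-k_D(z_0,z)>\mlog R$, giving (v). Item (vi) is just finiteness of $\ell(z)$ for each fixed $z$: taking $R$ large (resp. small) places $z$ inside (resp. outside) $G_{z_0}(x,R,\mathbf{x})$. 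For (iii), continuity of $\ell$ suffices: if $z\in\overline{G_{z_0}(x,R_1,\mathbf{x})}\cap D$ then, along any sequence $z_j\to z$ in $G_{z_0}(x,R_1,\mathbf{x})$, one has $\ell(z)=\lim_j\ell(z_j)\le\mlog R_1<\mlog R_2$. Finally, convexity of the sublevel set in (ii) follows from the convexity estimate (\ref{eq:1.convuno}): applied with base point $x_\nu$ it reads $k_D(sz_1+(1-s)z_2,x_\nu)\le\max\{k_D(z_1,x_\nu),k_D(z_2,x_\nu)\}$, and subtracting $k_D(z_0,x_\nu)$ and letting $\nu\to+\infty$ gives $\ell(sz_1+(1-s)z_2)\le\max\{\ell(z_1),\ell(z_2)\}$, so $\ell$ is quasi-convex and its sublevel sets are convex.

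The one genuinely delicate point, and the main obstacle, is the nonemptiness of $G_{z_0}(x,R,\mathbf{x})$ for small $R$, equivalently the claim $\inf_{z\in D}\ell(z)=-\infty$; for $R>1$ nonemptiness is already contained in~(iv). The approach I would take is to let $z$ run toward $x$ along the segment $z_t=(1-t)z_0+tx$ (which lies in $D$ for $t\in[0,1)$ by Lemma~\ref{th:1.couno}) and to show $\ell(z_t)\to-\infty$ as $t\to1$; writing $z_t^\nu=(1-t)z_0+tx_\nu$ and using continuity of $\ell$, this reduces to proving $k_D(z_0,x_\nu)-k_D(z_t^\nu,x_\nu)\to+\infty$. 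The subtlety is that the estimates of Proposition~\ref{th:1.convball} give only $k_D(z_t^\nu,x_\nu)\le k_D(z_0,x_\nu)$, that is $\ell(z_t)\le0$, and cannot by themselves produce an unbounded gap: a genuine \emph{lower} bound on $k_D(z_0,x_\nu)$ is required. To get one I would introduce a complex supporting functional $L$ at $x$ and its associated weak peak function $\psi\in\Hol(D,\Delta)$, for which $\psi(x_\nu)\to1$ and hence $k_D(z_0,x_\nu)\ge k_\Delta\bigl(\psi(z_0),\psi(x_\nu)\bigr)\to+\infty$, and then control $k_D(z_t^\nu,x_\nu)$ from above through Lempert's one-dimensional slice $\Omega_\nu=\{\lambda\in\C\mid(1-\lambda)z_0+\lambda x_\nu\in D\}$ and Proposition~\ref{th:1.onedisk}, along which the contraction gained by advancing a fixed fraction $t$ toward the boundary can be quantified. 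Making this estimate uniform as $x_\nu\to x\in\de D$ is the technical heart of the proposition, and for its complete execution I would ultimately rely on the arguments of Budzy\'nska and of Budzy\'nska--Kuczumow--Reich \cite{Bud, BKR}.
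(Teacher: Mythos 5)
The paper itself states Proposition~\ref{th:2.newhor} without proof (it is quoted from \cite{KKR1, Bud, BKR}), so there is no internal argument to compare against; judged on its own merits, your reduction of everything to the single function $\ell(z)=\lim_{\nu\to+\infty}\bigl[k_D(z,x_\nu)-k_D(z_0,x_\nu)\bigr]$ is correct and efficient. The $1$-Lipschitz estimate $|\ell(z)-\ell(w)|\le k_D(z,w)$, the two-sided bound $|\ell(z)|\le k_D(z_0,z)$ with $\ell(z_0)=0$, and the quasi-convexity of~$\ell$ obtained from (\ref{eq:1.convuno}) with base point~$x_\nu$ are all sound, and they dispose of (i), (iii), (iv), (v), (vi) and the convexity half of (ii) exactly as in the sources.

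The genuine gap is the one you flagged yourself: nonemptiness of $G_{z_0}(x,R,\mathbf{x})$ for $R\le 1$, equivalently $\inf_D\ell=-\infty$, is only sketched and then deferred to \cite{Bud, BKR}, so as written the proof of (ii) is incomplete. Moreover, the vehicle you propose --- the Euclidean segment $z_t=(1-t)z_0+tx$ --- is the wrong one: a Euclidean segment is not a complex geodesic, and, as you yourself observe, Proposition~\ref{th:1.convball} only yields $\ell\le 0$ along it, while the peak-function lower bound on $k_D(z_0,x_\nu)$ does not by itself create the unbounded gap. The fix, however, stays entirely inside the paper's toolbox, so the appeal to the external references is avoidable. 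By Proposition~\ref{th:1.onedisk} choose $\phe_\nu\in\Hol(\Delta,D)$ and $t_\nu\in[0,1)$ with $\phe_\nu(0)=z_0$, $\phe_\nu(t_\nu)=x_\nu$ and $k_\Delta(0,t_\nu)\le k_D(z_0,x_\nu)+1/\nu$. Since $D$ is complete hyperbolic (Proposition~\ref{th:1.Barth}) and $x_\nu\to x\in\de D$, we have $k_D(z_0,x_\nu)\to+\infty$, hence $t_\nu\to 1$; so for fixed $s\in(0,1)$ eventually $s<t_\nu$, and since $0$, $s$, $t_\nu$ lie on a geodesic of~$\Delta$ (Proposition~\ref{th:1.geod}) the points $w^s_\nu=\phe_\nu(s)$ satisfy $k_D(w^s_\nu,x_\nu)\le k_\Delta(s,t_\nu)=k_\Delta(0,t_\nu)-k_\Delta(0,s)$, whence, in your notation, $b_\nu(w^s_\nu)\le 1/\nu-k_\Delta(0,s)$. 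Because $k_D(z_0,w^s_\nu)\le k_\Delta(0,s)$, the $w^s_\nu$ stay in a compact closed Kobayashi ball (Proposition~\ref{th:2.chyp}), so a subsequence converges to some $w^s\in D$; the $1$-Lipschitz bound $b_{\nu_j}(w^s)\le b_{\nu_j}(w^s_{\nu_j})+k_D(w^s,w^s_{\nu_j})$ then gives $\ell(w^s)\le-k_\Delta(0,s)$ --- and here the horosphere-sequence hypothesis is exactly what lets you conclude, since the full limit $\ell(w^s)$ exists and may be computed along the subsequence. Letting $s\to1^-$ yields $\inf_D\ell=-\infty$ and completes~(ii).
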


\begin{remark}
\label{rem:3.4}
If $\mathbf{x}$ is a horosphere sequence at~$x\in\de D$ then it is not difficult to check that the family $\{G_z(x,1,\mathbf{x})\}_{z\in D}$ and the family $\{G_{z_0}(x,R,\mathbf{x})\}_{R>0}$
with $z_0\in D$ given, coincide. 
\end{remark}

Then we have the following version of Theorem~\ref{th:2.Wolff}:

\begin{theorem}[\cite{Bud, AR}] 
\label{th:2.Wolffdue}
Let $D\subset\subset\C^n$ be a convex domain, and let 
$f\in\Hol(D,D)$ without fixed points. Then there exists
$x\in\de D$ and a horosphere sequence $\mathbf{x}$ at~$x$ such that
\[
f\bigl(G_{z_0}(x,R,\mathbf{x})\bigr)\subseteq G_{z_0}(x,R,\mathbf{x})
\]
for every $z_0\in D$ and $R>0$.
\end{theorem}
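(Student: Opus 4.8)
The plan is to run the approximation-by-fixed-points argument already used in the proof of Theorem~\ref{th:2.Wolffconv}, but to harvest from it simultaneously both the boundary point $x$ and a horosphere sequence $\mathbf{x}$ whose terms are honest fixed points of the approximating maps. Assuming $O\in D$, I would set $f_\nu=(1-1/\nu)f$; each $f_\nu$ maps $D$ into a relatively compact subset of~$D$, so by Brouwer's theorem it has a fixed point $w_\nu\in D$, and since $f$ has no fixed point the $w_\nu$ must accumulate at a point $x\in\de D$. So far this is exactly as in Theorem~\ref{th:2.Wolffconv}. The new ingredient is Proposition~\ref{th:2.horseq}: the sequence $\{w_\nu\}$ converging to~$x$ admits a subsequence $\mathbf{x}=\{x_\nu\}$ that is a horosphere sequence at~$x$, so that the sets $G_{z_0}(x,R,\mathbf{x})$ of Definition~\ref{def:3.2} are well defined. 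The key point to keep track of is that each $x_\nu$ is still a fixed point of the corresponding map $f_{\mu_\nu}=(1-1/\mu_\nu)f$.

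With $x$ and $\mathbf{x}$ in hand, I would prove invariance directly from the definition. Fix $z_0\in D$ and $R>0$, and take $z\in G_{z_0}(x,R,\mathbf{x})$, i.e.\ $\lim_\nu[k_D(z,x_\nu)-k_D(z_0,x_\nu)]<\mlog R$. Since $x_\nu$ is a fixed point of $f_{\mu_\nu}$, the Schwarz-Pick lemma (Theorem~\ref{th:2.SPlemma}) gives $k_D(f_{\mu_\nu}(z),x_\nu)=k_D(f_{\mu_\nu}(z),f_{\mu_\nu}(x_\nu))\le k_D(z,x_\nu)$. On the other hand $f_{\mu_\nu}(z)=(1-1/\mu_\nu)f(z)\to f(z)$, and because a bounded convex domain is complete hyperbolic (Proposition~\ref{th:1.Barth}) the Kobayashi distance induces the Euclidean topology, so $k_D(f_{\mu_\nu}(z),f(z))\to0$; hence $|k_D(f(z),x_\nu)-k_D(f_{\mu_\nu}(z),x_\nu)|\le k_D(f(z),f_{\mu_\nu}(z))\to0$. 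Combining these,
\[
\limsup_{\nu\to+\infty}\bigl[k_D(f(z),x_\nu)-k_D(z_0,x_\nu)\bigr]\le\lim_{\nu\to+\infty}\bigl[k_D(z,x_\nu)-k_D(z_0,x_\nu)\bigr]<\mlog R\;.
\]
Because $\mathbf{x}$ is a horosphere sequence the limit on the left actually exists, so $f(z)\in G_{z_0}(x,R,\mathbf{x})$, which is the claim. The same computation works verbatim for every $z_0\in D$.

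The argument is short once the right objects are set up, so I expect no serious analytic obstacle; the real content is organizational. The delicate step is the first paragraph: one must produce a single sequence that is simultaneously convergent to a boundary point, a horosphere sequence, and made of fixed points of the approximating family, and this requires applying Proposition~\ref{th:2.horseq} to extract a subsequence while retaining the fixed-point property along that subsequence. Everything else reduces to the nonexpansiveness of $f_{\mu_\nu}$ together with the convergence $f_{\mu_\nu}\to f$ in the Kobayashi distance.
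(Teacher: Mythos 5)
Your proposal is correct and takes essentially the same route as the paper's proof: approximate $f$ by $f_\nu=(1-1/\nu)f$, get Brouwer fixed points accumulating at $x\in\de D$, use Proposition~\ref{th:2.horseq} to extract a horosphere subsequence whose terms remain fixed points of the corresponding $f_{\mu_\nu}$, and deduce invariance from $k_D\bigl(f_{\mu_\nu}(z),x_\nu\bigr)\le k_D(z,x_\nu)$ together with $k_D\bigl(f_{\mu_\nu}(z),f(z)\bigr)\to0$. The only difference is cosmetic: you justify the convergence $k_D\bigl(f_{\mu_\nu}(z),f(z)\bigr)\to0$ explicitly (for which plain hyperbolicity and Proposition~\ref{th:2.hyp} already suffice, completeness being unnecessary), a step the paper uses without comment.
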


\begin{proof}
As in the proof of  Theorem~\ref{th:2.Wolffconv}, for $\nu>0$ put
$f_\nu=(1-1/\nu)f\in\Hol(D,D)$; then $f_\nu\to f$ as $\nu\to+\infty$, each $f_\nu$ has a
fixed point $x_\nu\in D$, and up to a subsequence we can assume that $x_\nu\to x\in\de D$.
Furthermore, by Proposition~\ref{th:2.horseq} up to a subsequence we can also assume that $\mathbf{x}=\{x_\nu\}$ is a horosphere
sequence at~$x$.

Now, for every $z\in D$ we have
\[
\bigl|k_D\bigl(f(z),x_\nu\bigr)-k_D\bigl(f_\nu(z),x_\nu\bigr)\bigr|\le
k_D\bigl(f_\nu(z),f(z)\bigr)\to 0
\] 
as $\nu\to+\infty$. Therefore if $z\in G_{z_0}(x,R,\mathbf{x})$ we get
\begin{eqnarray*}
\lim_{\nu\to+\infty}\bigl[k_D\bigl(f(z),x_\nu\bigr)&-&k_D(z_0,x_\nu)\bigr]\\
&\le&\lim_{\nu\to+\infty}\bigl[k_D\bigl(f_\nu(z),x_\nu\bigr)-k_D(z_0,x_\nu)\bigr]\\
&&\qquad+\limsup_{\nu\to+\infty}\bigl[k_D\bigl(f(z),x_\nu\bigr)-k_D\bigl(f_\nu(z),x_\nu\bigr)\bigr]\\
&\le&\lim_{\nu\to+\infty}\bigl[k_D(z,x_\nu)-k_D(z_0,x_\nu)\bigr]<\mlog R
\end{eqnarray*}
because $f_\nu(x_\nu)=x_\nu$ for all $\nu\in\mathbb{N}$, and we are done.
\qed
\end{proof}

Putting everything together we can prove the following Wolff-Denjoy theorem
for (not necessarily strictly or smooth) convex domains:

\begin{theorem}[\cite{AR}]
\label{th:2.dpqua}
Let $D\subset\subset\C^n$ be a bounded convex domain,
and $f\in\Hol(D,D)$ without fixed points. Then there exist $x\in \de D$ and a horosphere
sequence $\mathbf{x}$ at~$x$ such that for any $z_0\in D$ we have
\[
T(f)\subseteq \bigcap_{z\in D}\hbox{\rm Ch}\bigl(\overline{G_z(x,1,\mathbf{x})}\cap\de D\bigr)=\bigcap_{R>0}\hbox{\rm Ch}\bigl(\overline{G_{z_0}(x,R,\mathbf{x})}\cap\de D\bigr)\;.
\]
\end{theorem}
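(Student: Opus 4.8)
The plan is to combine the $f$-invariant sequence horosphere from Theorem~\ref{th:2.Wolffdue} with the convexity of $\mathrm{Ch}(\cdot)$ delivered by Lemma~\ref{th:2.AV}, exactly paralleling the proof of Corollary~\ref{th:2.aggiuntobis} but with sequence horospheres in place of large horospheres. First I would invoke Theorem~\ref{th:2.Wolffdue} to obtain the point $x\in\de D$ and the horosphere sequence $\mathbf{x}$ at~$x$ such that $f\bigl(G_{z_0}(x,R,\mathbf{x})\bigr)\subseteq G_{z_0}(x,R,\mathbf{x})$ for every $z_0\in D$ and $R>0$; these $x$ and $\mathbf{x}$ will be the ones in the statement.

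\smallskip

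Next, fix $z_0\in D$ and $R>0$, and pick an arbitrary $h\in\Hol(D,\C^n)$ arising as the limit of a subsequence $\{f^{k_\nu}\}$ of iterates of~$f$. Since $f$ has no fixed points, Theorem~\ref{th:2.convfix} guarantees the sequence of iterates is compactly divergent, so $h(D)\subseteq\de D$. Now take any $z\in G_{z_0}(x,R,\mathbf{x})$. By the $f$-invariance each iterate satisfies $f^{k_\nu}(z)\in G_{z_0}(x,R,\mathbf{x})$, hence in the closure, and passing to the limit gives $h(z)\in\overline{G_{z_0}(x,R,\mathbf{x})}$; combined with $h(D)\subseteq\de D$ this yields $h(z)\in\overline{G_{z_0}(x,R,\mathbf{x})}\cap\de D$. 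Here I would apply Lemma~\ref{th:2.AV} to the map $h$ (whose image meets $\de D$): it gives $h(D)\subseteq\mathrm{Ch}\bigl(h(z)\bigr)$, and since $h(z)\in\overline{G_{z_0}(x,R,\mathbf{x})}\cap\de D$ we conclude $h(D)\subseteq\mathrm{Ch}\bigl(\overline{G_{z_0}(x,R,\mathbf{x})}\cap\de D\bigr)$. As $h$ ranges over all subsequential limits, taking the union over such $h$ shows $T(f)\subseteq\mathrm{Ch}\bigl(\overline{G_{z_0}(x,R,\mathbf{x})}\cap\de D\bigr)$, and intersecting over all $R>0$ gives the bound by $\bigcap_{R>0}\mathrm{Ch}\bigl(\overline{G_{z_0}(x,R,\mathbf{x})}\cap\de D\bigr)$.

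\smallskip

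It remains to prove the stated equality of the two intersections, namely that $\bigcap_{z\in D}\mathrm{Ch}\bigl(\overline{G_z(x,1,\mathbf{x})}\cap\de D\bigr)$ coincides with $\bigcap_{R>0}\mathrm{Ch}\bigl(\overline{G_{z_0}(x,R,\mathbf{x})}\cap\de D\bigr)$. This is where Remark~\ref{rem:3.4} does the essential work: it asserts that the family $\{G_z(x,1,\mathbf{x})\}_{z\in D}$ and the family $\{G_{z_0}(x,R,\mathbf{x})\}_{R>0}$ coincide as collections of sets. Since the two families are literally the same family of subsets of~$D$, their closures intersected with $\de D$ give the same family of boundary subsets, and applying $\mathrm{Ch}$ and intersecting yields identical sets. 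I would therefore simply cite Remark~\ref{rem:3.4} to identify the two intersections.

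\smallskip

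The \emph{main obstacle} I anticipate is the clean application of Lemma~\ref{th:2.AV}: one must be sure the limit map $h$ genuinely satisfies its hypotheses, i.e.\ that $h(D)\subseteq\overline{D}$ (clear, as $D$ is bounded and $h$ is a locally uniform limit of maps into $D$) and that $h(D)\cap\de D\neq\emptyset$ (clear from compact divergence, which forces $h(D)\subseteq\de D$). The only subtlety worth care is that the invariance $f\bigl(G_{z_0}(x,R,\mathbf{x})\bigr)\subseteq G_{z_0}(x,R,\mathbf{x})$ passes correctly to the limit through the \emph{closure}: the iterates stay in the open sequence horosphere, so the limit lands in its closure, which is exactly what Lemma~\ref{th:2.AV} then converts into a bound by $\mathrm{Ch}$ of a boundary set. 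Everything else is a routine union-and-intersection bookkeeping, with the coincidence of the two families handled by Remark~\ref{rem:3.4}.
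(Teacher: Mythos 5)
Your proposal is correct and follows essentially the same route as the paper: the published proof simply cites Remark~\ref{rem:3.4} for the equality of the two intersections and then applies Theorem~\ref{th:2.Wolffdue} together with Lemma~\ref{th:2.AV} exactly as in the proof of Proposition~\ref{th:2.WDwc}, which is precisely the argument you spell out (invariance of the sequence horospheres passing to the closure in the limit, compact divergence via Theorem~\ref{th:2.convfix} forcing $h(D)\subseteq\de D$, and the $\mathrm{Ch}$ bound from Lemma~\ref{th:2.AV}). Your attention to the hypotheses of Lemma~\ref{th:2.AV} and to the open-set-versus-closure point is exactly the right bookkeeping; nothing is missing.
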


\begin{proof}
The equality of the intersections is a
consequence of Remark~\ref{rem:3.4}. Then the assertion follows from Theorem~\ref{th:2.Wolffdue} and Lemma~\ref{th:2.AV} as in the proof of Proposition~\ref{th:2.WDwc}.
\qed
\end{proof}

To show that this statement is actually better than Proposition~\ref{th:2.WDwc} let us consider the case of the polydisk.

\begin{lemma}
\label{th:2.horoseqpoly}
Let $\mathbf{x}=\{x_\nu\}\subset\Delta^n$ be a horosphere sequence converging to
$\xi=(\xi_1,\ldots,\xi_n)\in\de\Delta^n$. Then for every $1\le j\le n$ such that $|\xi_j|=1$ the limit
\begin{equation}
\alpha_j:=\lim_{\nu\to+\infty}\min_h\left\{\frac{1-|(x_\nu)_h|^2}{1-|(x_{\nu})_j|^2}\right\}\le 1
\label{eq:2.alpha}
\end{equation}
exists, and we have
\[
G_O(\xi,R,\mathbf{x})=\left\{z\in\Delta^n\biggm| \max_j\left\{\alpha_j\frac{|\xi_j-z_j|^2}{1- |z_j|^2}\biggm| |\xi_j|=1\right\}  < R\right\}=\prod_{j=1}^n E_j\;,
\]
where
\[
E_j=\left\{\begin{array}{ll}
\Delta\quad&\hbox{if $|\xi_j|<1$,}\\
E_0(\xi_j,R/\alpha_j)\quad&\hbox{if $|\xi_j|=1$.}
\end{array}\right.
\]
\end{lemma}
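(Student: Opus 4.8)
The plan is to reduce everything to the product formula $k_{\Delta^n}(z,w)=\max_{1\le j\le n}k_\Delta(z_j,w_j)$ of Proposition~\ref{th:2.examples}(iii) together with the one--variable boundary asymptotics already contained in the proof of Lemma~\ref{th:2.horoball}. Throughout write $S=\{j\mid |\xi_j|=1\}$, and for $j\in S$ set $t_\nu^{(j)}=1-|(x_\nu)_j|^2\to 0$ and $m_\nu=\min_h\bigl(1-|(x_\nu)_h|^2\bigr)$; since the coordinates with $|\xi_j|<1$ stay at positive distance from the boundary, eventually $m_\nu=\min_{h\in S}t_\nu^{(h)}$, so that $\min_h\bigl(1-|(x_\nu)_h|^2\bigr)/(1-|(x_\nu)_j|^2)=m_\nu/t_\nu^{(j)}\le 1$ is exactly the sequence defining $\alpha_j$. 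Putting $B_\nu^{(j)}=k_\Delta(0,(x_\nu)_j)$ and $M_\nu=\max_j B_\nu^{(j)}$, the product formula gives $k_{\Delta^n}(z,x_\nu)-k_{\Delta^n}(O,x_\nu)=\max_j k_\Delta(z_j,(x_\nu)_j)-M_\nu$, and because the terms with $j\notin S$ remain bounded while those with $j\in S$ diverge, both maxima are eventually attained on $S$.

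First I would record the one--variable facts. For $j\in S$, using the automorphism $\gamma_\zeta$ and the identity $1-|\gamma_\zeta(w)|^2=(1-|\zeta|^2)(1-|w|^2)/|1-\overline\zeta w|^2$ exactly as in the proof of Lemma~\ref{th:2.horoball}, one gets
\[
k_\Delta(\zeta,(x_\nu)_j)-k_\Delta(0,(x_\nu)_j)\longrightarrow c_j(\zeta):=\mlog\frac{|\xi_j-\zeta|^2}{1-|\zeta|^2}\,,
\]
and a direct comparison of the explicit expressions for $B_\nu^{(j)}$ and $M_\nu$ yields $B_\nu^{(j)}-M_\nu=\mlog\bigl(m_\nu/t_\nu^{(j)}\bigr)+o(1)$. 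Hence, once the limits $\alpha_j=\lim_\nu m_\nu/t_\nu^{(j)}$ are known to exist, the quantity $\ell(z):=\lim_\nu[k_{\Delta^n}(z,x_\nu)-k_{\Delta^n}(O,x_\nu)]$ --- which exists because $\mathbf{x}$ is a horosphere sequence (Definition~\ref{def:3.1}) --- equals $\max_{j\in S}\bigl[\mlog\alpha_j+c_j(z_j)\bigr]$.

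The heart of the argument, and the step I expect to be the main obstacle, is the existence of the $\alpha_j$: the $\max$ defining $k_{\Delta^n}$ couples coordinates tending to $\de\Delta$ at different rates, so one cannot simply take the limit coordinate by coordinate. I would argue by compactness and identifiability. Each $B_\nu^{(j)}-M_\nu$ lies in $(-\infty,0]$, so along a subsequence every one of them converges to some $\gamma_j\in[-\infty,0]$ (with $\gamma_j=0$ for a coordinate realizing $m_\nu$); along this subsequence $\ell(z)=\max_{j\in S}[\gamma_j+c_j(z_j)]$. Since $\ell$ is independent of the subsequence, the function $z\mapsto\max_{j\in S}[\gamma_j+c_j(z_j)]$ cannot depend on the subsequential choice of the $\gamma_j$. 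To see that this pins the $\gamma_j$ down individually, I would evaluate at the test points having $-r\xi_{j_0}$ in the $j_0$-th slot and $0$ elsewhere: there $c_h(0)=0$ for $h\ne j_0$, while $c_{j_0}(-r\xi_{j_0})=k_\Delta(0,r)\to+\infty$ as $r\to1$, so $\gamma_{j_0}$ can be recovered from the values of $\ell$ at these points as $r\to1$ (the value $-\infty$ being read off correctly precisely when the $j_0$-th term never dominates). Thus all subsequential limits coincide, $\gamma_j=\lim_\nu(B_\nu^{(j)}-M_\nu)$ exists, and $\alpha_j=\exp(2\gamma_j)=\lim_\nu m_\nu/t_\nu^{(j)}\le 1$ exists, with $\alpha_j=0$ exactly when $\gamma_j=-\infty$.

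With the $\alpha_j$ in hand the conclusion is immediate: $\ell(z)=\max_{j\in S}\mlog\bigl(\alpha_j|\xi_j-z_j|^2/(1-|z_j|^2)\bigr)$, so applying the increasing map $t\mapsto\exp(2t)$ shows that $G_O(\xi,R,\mathbf{x})=\{z\mid\ell(z)<\mlog R\}$ coincides with the middle set of the statement. Finally, the defining inequality $\max_{j\in S}\alpha_j|\xi_j-z_j|^2/(1-|z_j|^2)<R$ is the conjunction of the independent one--coordinate conditions $\alpha_j|\xi_j-z_j|^2/(1-|z_j|^2)<R$, i.e. $z_j\in E_0(\xi_j,R/\alpha_j)$, for $j\in S$ (with $R/\alpha_j=+\infty$, hence $E_0(\xi_j,R/\alpha_j)=\Delta$, when $\alpha_j=0$) and of no condition at all for $j\notin S$; this is precisely the product $\prod_j E_j$.
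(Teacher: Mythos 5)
Your proof is correct, and while it shares the paper's computational skeleton it diverges exactly at the delicate point, where it is in fact more complete than the paper's own argument. The paper does not use the product formula additively as you do: it conjugates by the polydisk automorphism $\gamma_z$ sending $z$ to $O$, writes $k_{\Delta^n}(z,x_\nu)-k_{\Delta^n}(O,x_\nu)$ as $\mlog\bigl[(1-\n x_\nu\n^2)/(1-\n\gamma_z(x_\nu)\n^2)\bigr]$ plus a term tending to zero, and expands the bracket as $\max_j\min_h\bigl\{\frac{1-|(x_\nu)_h|^2}{1-|(x_\nu)_j|^2}\cdot\frac{|1-\overline{z_j}(x_\nu)_j|^2}{1-|z_j|^2}\bigr\}$; your normalized quantities $B_\nu^{(j)}-M_\nu=\mlog\bigl(m_\nu/t_\nu^{(j)}\bigr)+o(1)$ are precisely the logarithms of the paper's min-terms, so the two computations agree up to applying $t\mapsto\E^{2t}$. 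The automorphism route buys the max--min identity in one line, with no need for your separate asymptotic comparison of $B_\nu^{(j)}$ and $M_\nu$; your route makes the one-variable structure of Lemma~\ref{th:2.horoball} explicit coordinate by coordinate. The genuine difference is the existence of the limits in (\ref{eq:2.alpha}): the paper simply interchanges the limit with the max and splits the limit of each product, asserting that ``in particular'' the limit defining $\alpha_j$ exists --- a step which, as written, presupposes what it claims, since the max couples coordinates escaping to $\de\Delta$ at different rates. Your subsequential-compactness argument, with the test points having $-r\xi_{j_0}$ in the $j_0$-th slot and $0$ elsewhere used to recover each $\gamma_{j_0}$ (including the value $-\infty$) from the subsequence-independent limit function $\ell$, is exactly the justification the paper leaves implicit, and you correctly identified it as the heart of the matter; note also that your observation that some $\gamma_j$ must vanish (a coordinate realizing $m_\nu$ infinitely often) is what keeps $\ell$ finite and the recovery procedure meaningful. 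The remaining bookkeeping --- the coordinates with $|\xi_j|<1$ dropping out of the max because $M_\nu\to+\infty$, and the convention $E_0(\xi_j,R/\alpha_j)=\Delta$ when $\alpha_j=0$ in the product decomposition --- is handled correctly and matches the paper's (tacit) conventions.
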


\begin{proof}
Given $z=(z_1,\ldots,z_n)\in\Delta^n$, let $\gamma_z\in\Aut(\Delta^n)$ be defined by
\[
\gamma_z(w)=\left(\frac{w_1-z_1}{1-\overline{z_1}w_1},\ldots,\frac{w_n-z_n}{1-\overline{z_n}w_n}
\right)\;,
\]
so that $\gamma_z(z)=O$. Then
\[
k_{\Delta^n}(z,x_\nu)-k_{\Delta^n}(O,x_\nu)=k_{\Delta^n}\bigl(O,\gamma_z(x_\nu)\bigr)-
k_{\Delta^n}(O,x_\nu)\;.
\]
Now, writing $\n z\n=\max_j\{|z_j|\}$ we have
\[
k_{\Delta^n}(O,z)=\max_j\{k_\Delta(0,z_j)\}=\max_j\left\{\mlog\frac{1+|z_j|}{1-|z_j|}\right\}
=\mlog\frac{1+\n z\n}{1-\n z\n}\;,
\]
and hence
\[
k_{\Delta^n}(z,x_\nu)-k_{\Delta^n}(O,x_\nu)=\log\biggl(\frac{1+\n \gamma_z(x_\nu)\n}{
        1+\n x_\nu\n}\biggr)+\mlog\biggl(\frac{1-\n x_\nu\n^2}{1-\n\gamma_z(x_\nu)\n^2}
	\biggr)\;.
\]
Since $\n \gamma_z(\xi)\n=\n \xi\n=1$, we just have to study the behavior of the
second term, that we know has a limit as $\nu\to+\infty$ because $\mathbf{x}$ is a horosphere sequence. Now
\[
\displaylines{1-\n x_\nu\n^2=\min_h\bigl\{1-|(x_\nu)_h|^2\bigr\};\cr
1-\n \gamma_z(x_\nu)\n^2=\min_j\biggl\{\frac{1-|z_j|^2}{|1-\overline{z_j}(x_\nu)_j|^2}
        (1-|(x_\nu)_j|^2)\biggr\}\;.\cr}
\]
Therefore
\[
\frac{1-\n x_\nu\n^2}{1-\n \gamma_z(x_\nu)\n^2}=\max_j\min_h\biggl\{\frac{1-|(x_\nu)_h|^2}{1-
	|(x_\nu)_j|^2}\,\cdot\,\frac{|1-\overline{z_j}(x_\nu)_j|^2}{1-|z_j|^2}\biggr\}\;.
\]
Taking the limit as $\nu\to+\infty$ we get
\begin{equation}
\lim_{\nu\to+\infty}\frac{1-\n x_\nu\n^2}{1-\n \gamma_z(x_\nu)\n^2}
=\max_j\left\{\frac{|1-z_j\overline{\xi_j}|^2}{1-|z_j|^2}\lim_{\nu\to+\infty}\min_h\left\{
\frac{1-|(x_\nu)_h|^2}{1-|(x_\nu)_j|^2}\right\}\right\}\;.
\label{eq:2.hDuno}
\end{equation}
In particular, we have shown that the limit in (\ref{eq:2.alpha}) exists, and it is bounded by~1
(it suffices to take $h=j$). Furthermore, if $|\xi_j|<1$ then $\alpha_j=0$; so (\ref{eq:2.hDuno})
becomes
\[
\lim_{\nu\to+\infty}\frac{1-\n x_\nu\n^2}{1-\n \gamma_z(x_\nu)\n^2}
=\max\left\{\alpha_j\frac{|1-z_j\overline{\xi_j}|^2}{1-|z_j|^2}\biggm| |\xi_j|=1\right\}\;,
\]
and the lemma follows.
\qed
\end{proof}

Now, a not too difficult computation shows that
\[
\mathrm{Ch}(\xi)=\bigcap_{|\xi_j|=1}\{\eta\in\de\Delta^n\mid \eta_j=\xi_j\}
\]
for all $\xi\in\de\Delta^n$. As a consequence, 
\[
\mathrm{Ch}\bigl(\overline{G_O(\xi,R,\mathbf{x})}\cap\de\Delta^n\bigr)=
\bigcup_{j=1}^n \overline\Delta\times\cdots\times C_j(\xi)\times\cdots\times\overline\Delta\;,
\]
where
\[
C_j(\xi)=\left\{\begin{array}{ll}
\{\xi_j\}\quad&\hbox{if $|\xi_j|=1$,}\cr
\de\Delta\quad&\hbox{if $|\xi_j|<1$.}\cr
\end{array}\right.
\]
Notice that the right-hand sides do not depend either on $R$ or on the horosphere sequence $\mathbf{x}$, but only on~$\xi$.

So Theorem~\ref{th:2.dpqua} in the polydisk assumes the following form:

\begin{corollary} 
\label{th:2.dpquapd}
Let $f\in\Hol(\Delta^n,\Delta^n)$ be without fixed points. Then there exists $\xi\in \de\Delta^n$ such that
\begin{equation}
T(f)\subseteq \bigcup_{j=1}^n \overline\Delta\times\cdots\times C_j(\xi)\times\cdots\times\overline\Delta\;.
\label{eq:2.union}
\end{equation}
\end{corollary}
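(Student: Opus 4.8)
The plan is to apply Theorem~\ref{th:2.dpqua} directly to the polydisk $\Delta^n$ and then read off the conclusion from the explicit computation of the complex-linear hulls of sequence horospheres recorded immediately above the statement.

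First I would observe that $\Delta^n$ is a bounded convex domain, so Theorem~\ref{th:2.dpqua} applies verbatim. Choosing $z_0=O$, it furnishes a point $\xi\in\de\Delta^n$ and a horosphere sequence $\mathbf{x}=\{x_\nu\}$ at~$\xi$ such that
\[
T(f)\subseteq\bigcap_{R>0}\hbox{\rm Ch}\bigl(\overline{G_O(\xi,R,\mathbf{x})}\cap\de\Delta^n\bigr)\;.
\]
Next I would insert the identity displayed just before the statement, which asserts that for every $R>0$
\[
\hbox{\rm Ch}\bigl(\overline{G_O(\xi,R,\mathbf{x})}\cap\de\Delta^n\bigr)=\bigcup_{j=1}^n\overline\Delta\times\cdots\times C_j(\xi)\times\cdots\times\overline\Delta\;.
\]
The point to emphasize is that the right-hand side depends only on~$\xi$, and neither on $R$ nor on the horosphere sequence~$\mathbf{x}$. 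Consequently the intersection over $R>0$ in the inclusion for $T(f)$ collapses to this single union, and (\ref{eq:2.union}) follows at once.

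The substantive work has in fact already been carried out before the statement: identifying $\hbox{\rm Ch}(\xi)=\bigcap_{|\xi_j|=1}\{\eta\in\de\Delta^n\mid\eta_j=\xi_j\}$ and computing the complex-linear hull of the boundary trace of $G_O(\xi,R,\mathbf{x})$. That computation rests on Lemma~\ref{th:2.horoseqpoly}, which exhibits $G_O(\xi,R,\mathbf{x})$ as a product of disks (in the coordinates with $|\xi_j|<1$) and rescaled horocycles $E_0(\xi_j,R/\alpha_j)$ (in the coordinates with $|\xi_j|=1$), so that its closure meets $\de\Delta^n$ exactly where each coordinate $j$ with $|\xi_j|=1$ is frozen to~$\xi_j$. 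Granting this product structure, the main and essentially only conceptual obstacle is to notice the $R$-independence of the hull, which trivializes the intersection; everything else is a direct substitution into Theorem~\ref{th:2.dpqua}.
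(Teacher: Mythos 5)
Your proposal is correct and follows exactly the paper's own derivation: the corollary is obtained by specializing Theorem~\ref{th:2.dpqua} to $\Delta^n$, substituting the computation of $\hbox{\rm Ch}\bigl(\overline{G_O(\xi,R,\mathbf{x})}\cap\de\Delta^n\bigr)$ based on Lemma~\ref{th:2.horoseqpoly}, and observing that the resulting union depends only on~$\xi$, so the intersection over $R>0$ collapses. One small imprecision in your gloss: the closure of $G_O(\xi,R,\mathbf{x})$ meets $\de\Delta^n$ at any point having \emph{some} coordinate of modulus one (a coordinate $j$ with $|\xi_j|=1$ is forced to equal $\xi_j$ only when that coordinate itself lies on $\de\Delta$), but this does not affect your argument, since the displayed identity for the hull that you actually invoke is the correct one.
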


Roughly speaking, this is the best one can do, in the sense that while it might be true (for instance in the bidisk; see Theorem~\ref{th:2.Herve} below) that the image of a limit point of the sequence of iterates of~$f$ is always contained in just one of the sets appearing in the right-hand side of~(\ref{eq:2.union}), it is impossible to determine a priori in which one it is contained on the basis of the point~$\xi$ only;
it is necessary to know something more about the map~$f$. Indeed, Herv\'e has proved the following:

\begin{theorem}[Herv\'e, \cite{Herve}]
\label{th:2.Herve}
Let $F=(f,g)\colon\Delta^2\to\Delta^2$ be a holomorphic self-map of the bidisk, and write $f_w=f(\cdot,w)$ and $g_z=g(z,\cdot)$. Assume that $F$ has no fixed points in~$\Delta^2$. Then one and only one of the following cases occurs:
\begin{enumerate}
\item[\rm (i)] if $g(z,w)\equiv w$ (respectively, $f(z,w)\equiv z$) then the sequence of iterates of~$F$ converges uniformly on compact sets to $h(z,w)=(\sigma,w)$, where $\sigma$ is the common Wolff point of the $f_w$'s (respectively, to $h(z,w)=(z,\tau)$, where $\tau$ is the common Wolff point
of the $g_z$'s);
\item[\rm (ii)] if $\Fix(f_w)=\emptyset$ for all $w\in\Delta$ and $\Fix(g_z)=\{y(z)\}\subset\Delta$ for all $z\in\Delta$ (respectively, if $\Fix(f_w)=\{x(w)\}$ and $\Fix(g_z)=\emptyset$) then 
$T(f)\subseteq\{\sigma\}\times\overline\Delta$, where $\sigma\in\de\Delta$ is the common Wolff point of the~$f_w$'s (respectively, $T(f)\subseteq\overline\Delta\times\{\tau\}$, where $\tau$ is the common Wolff point
of the $g_z$'s);
\item[\rm (iii)] if $\Fix(f_w)=\emptyset$ for all $w\in\Delta$ and $\Fix(g_z)=\emptyset$ for all $z\in\Delta$ then either $T(f)\subseteq\{\sigma\}\times\overline\Delta$ or
$T(f)\subseteq\overline\Delta\times\{\tau\}$, where $\sigma\in\de\Delta$ is the common Wolff point of the~$f_w$'s, and $\tau\in\de\Delta$ is the common Wolff point of the~$g_z$;
\item[\rm (iv)] if $\Fix(f_w)=\{x(w)\}\subset\Delta$ for all $w\in\Delta$ and $\Fix(g_z)=\{y(z)\}\subset\Delta$
for all $z\in\Delta$ then there are $\sigma$,~$\tau\in\de D$ such that the sequence of iterates converges to the constant map $(\sigma,\tau)$.
\end{enumerate}
\end{theorem}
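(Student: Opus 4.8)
The plan is to reduce everything to one–variable Wolff--Denjoy theory applied to the \emph{slices} $f_w=f(\cdot,w)$ and $g_z=g(z,\cdot)$, exploiting the product shape of the iteration: writing $F^k(z,w)=(z_k,w_k)$ one has $z_{k+1}=f_{w_k}(z_k)$ and $w_{k+1}=g_{z_k}(w_k)$. Since $\Delta^2$ is a bounded convex domain and $F$ has no fixed point, Theorem~\ref{th:2.convfix} shows $\{F^k\}$ is compactly divergent, so every limit $h=(h_1,h_2)$ of a subsequence of iterates (these exist by normality of $\Hol(\Delta^2,\Delta^2)$) has $h(\Delta^2)\subseteq\de\Delta^2$; equivalently $T(f)\subseteq\de\Delta^2$ and $\max\{|h_1|,|h_2|\}\equiv1$. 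By the remark following Theorem~\ref{th:2.Wolff}, to obtain the convergence asserted in (i) and (iv) it suffices to identify all subsequential limits.

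The engine is two lemmas, both proved by looking at the frozen iteration $\phi_k(w):=f_w^k(0)$, which for each fixed $k$ is a holomorphic self-map of $\Delta$, so $\{\phi_k\}$ is normal. \emph{Uniformity:} assume $f(z,w)\not\equiv z$. If some slice $f_{w_0}$ is fixed-point free, then by one–variable Wolff--Denjoy the full sequence $\phi_k(w_0)$ converges to its Wolff point $\sigma(w_0)\in\de\Delta$; hence any locally uniform limit $\phi_\infty$ of a subsequence $\{\phi_{k_j}\}$ satisfies $|\phi_\infty(w_0)|=1$, and since a nonconstant holomorphic map $\Delta\to\overline{\Delta}$ has image in $\Delta$ by the open mapping theorem, $\phi_\infty$ must be a unimodular constant. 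Then $\phi_{k_j}(w)\to\de\Delta$ for \emph{every} $w$, which (via Theorem~\ref{th:2.Calka} applied to the taut manifold $\Delta$) forces every $f_w$ to be fixed-point free. Thus the set of $w$ for which $f_w$ has an interior fixed point is either empty or all of $\Delta$, and likewise for the $g_z$. \emph{Common Wolff point:} if every $f_w$ is fixed-point free, then $\phi_k(w)\to\sigma(w)\in\de\Delta$ pointwise, so by Vitali $\sigma$ is holomorphic on $\Delta$ with unimodular values, hence constant; all the $f_w$ share a single Wolff point $\sigma$. Together with the degenerate alternatives $f(z,w)\equiv z$ or $g(z,w)\equiv w$ (which are exactly case (i)), the uniformity lemma makes cases (ii), (iii), (iv) exhaustive and produces the points $\sigma$ and $\tau$.

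In case (i), say $g(z,w)\equiv w$, the second coordinate is frozen, $F^k(z,w)=(f_w^k(z),w)$, and fixed-point freeness of $F$ forces every $f_w$ to be fixed-point free; the common Wolff point $\sigma$ then gives $f_w^k(z)\to\sigma$ pointwise, so $F^k\to h$ with $h(z,w)=(\sigma,w)$, uniformly on compacta by normality. When all $f_w$ are fixed-point free (cases (ii), (iii)), the shared Wolff point makes the horocycle tube $E_0(\sigma,R)\times\Delta$ invariant, because $f_w\bigl(E_0(\sigma,R)\bigr)\subseteq E_0(\sigma,R)$ by Wolff's lemma; hence $\beta(z):=|1-\overline{\sigma}z|^2/(1-|z|^2)$, which satisfies $E_0(\sigma,R)=\{\beta<R\}$, is nonincreasing along any orbit, so the first coordinate stays in a fixed horocycle. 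Consequently, for every subsequential limit $h$ and every $p$, if $|h_1(p)|=1$ then $h_1(p)\in\overline{E_0(\sigma,R)}\cap\de\Delta=\{\sigma\}$. In case (iii) the symmetric statement holds with $\tau$, and since at each $p$ one of $|h_1(p)|,|h_2(p)|$ equals $1$, the proper-or-total analytic sets $\{h_1=\sigma\}$ and $\{h_2=\tau\}$ cover the connected $\Delta^2$; as two proper analytic subsets cannot cover $\Delta^2$, one of them is everything, giving $h_1\equiv\sigma$ or $h_2\equiv\tau$, which is (iii).

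The hard part is upgrading this to the precise statements of (ii) and (iv). In case (ii) I must exclude a limit $h$ with $h_1\not\equiv\sigma$: such an $h$ would have $|h_1|<1$ on a nonempty open set, on which $h_2$ is forced to be a unimodular constant $\tau'$ (so $h_2\equiv\tau'$ globally by the identity principle), meaning the second coordinate of the orbit escapes to $\de\Delta$ along a subsequence while the first stays interior. This must be contradicted using that each $g_z$ has an \emph{interior} fixed point $y(z)$, so the second-coordinate dynamics cannot run to the boundary unaided; I expect this to require the boundary behaviour of $k_{\Delta^2}$ together with the contraction estimate toward the moving center $y(z_k)$, controlled through the convexity estimates of Proposition~\ref{th:1.convball}. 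Case (iv) is the most delicate: neither slice family now has a common Wolff point, but the interior fixed points $x(w)=\Fix(f_w)$ and $y(z)=\Fix(g_z)$ depend holomorphically on the parameter (implicit function theorem, the slice not being the identity so that its derivative at the fixed point is $\ne1$), and fixed-point freeness of $F$ is equivalent to that of the composed disk map $x\circ y$, whose Wolff point yields the corner $(\sigma,\tau)$. The obstacle is that the natural contracting quantities $k_\Delta(z_k,x(w_k))$ and $k_\Delta(w_k,y(z_k))$ have \emph{moving} centers, so monotonicity is lost; establishing that the orbit is nonetheless driven into the corner and that the whole sequence converges to the single constant $(\sigma,\tau)$ is where the main work lies, which I would carry out by combining the two boundary estimates of Subsection~\ref{subsect:1.5} with the reduction to $x\circ y$.
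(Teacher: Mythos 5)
You should first be aware that the paper does not prove this theorem at all: it is quoted from Herv\'e \cite{Herve}, and the text only points to that paper for the examples showing all four cases occur. So your attempt can only be measured against what a complete proof must contain. The parts you carry out are sound and are essentially Herv\'e's own reduction: compact divergence of $\{F^k\}$ from Theorem~\ref{th:2.convfix}; the ``uniformity lemma'' via the auxiliary maps $\phi_k(w)=f_w^k(0)$ (normality, the open mapping theorem, then Theorem~\ref{th:2.Calka} applied to the slice) showing each slice family is entirely fixed-point free or entirely not; the common Wolff point via Vitali; and the complete treatment of case (i). One simplification: in (iii) you do not need the analytic-set covering argument --- since $|h_j|\le 1$ on the connected $\Delta^2$, the maximum principle already forces each coordinate of a limit map to be either a unimodular constant or of modulus $<1$ everywhere, and the horocycle invariance pins the unimodular constant to $\sigma$ (resp.~$\tau$).

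There remain, however, genuine gaps, two of which you name yourself and one you do not. (a) Cases (ii) and (iv) are programs, not proofs: in (ii) the exclusion of a limit with $h_2\equiv\tau'$ unimodular while $|h_1|<1$ somewhere is the heart of the matter, and your proposed tool is problematic as stated, because $k_\Delta\bigl(w_k,y(z_k)\bigr)$ with moving centers is \emph{not} monotone along the orbit, so Proposition~\ref{th:1.convball} alone does not produce the needed contradiction; in (iv) the reduction to the one-variable map $x\circ y$ is a good idea (and holomorphy of $x(w),y(z)$ is indeed fine, since under the case hypothesis no slice can be the identity, so the derivative at the interior fixed point is $\ne 1$), but passing from the Wolff point of $x\circ y$ to convergence of the full two-variable orbit to the single corner $(\sigma,\tau)$ is exactly the step left open. (b) In (iii) the theorem asserts a \emph{global} dichotomy --- one alternative holds for the whole target set $T(f)$ --- whereas your argument only yields a per-limit dichotomy; you must still exclude that different subsequences realize the two alternatives nontrivially, e.g.\ one limit $(\sigma,h_2)$ with $|h_2|<1$ somewhere and another $(h_1',\tau)$ with $|h_1'|<1$ somewhere. (c) Exhaustiveness (``one and only one of the following cases occurs'') also requires handling identity slices: the hypotheses of (ii) and (iv) demand $\Fix(g_z)$ be a \emph{singleton}, so one must show that $g_{z_0}=\mathrm{id}_\Delta$ for a single $z_0\in\Delta$ already forces $g(z,w)\equiv w$, i.e.\ lands one in case (i); your uniformity lemma, which only distinguishes empty from nonempty fixed-point sets, does not address this.
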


All four cases can occur: see \cite{Herve} for the relevant examples.

\section{Carleson measures and Toeplitz operators}
\label{sec:3}

In this last section we shall describe a completely different application of the Kobayashi distance to complex analysis. To describe the problem we would like to deal with we need a few definitions.

\begin{definition}
We shall denote by $\nu$ the Lebesgue measure in~$\C^n$. If $D\subset\subset\C^n$ is a bounded domain and $1\le p\le\infty$, we shall denote by $L^p(D)$ the usual space of measurable $p$-integrable
complex-valued functions on~$D$, with the norm
\[
\|f\|_p=\left[\int_D |f(z)|^p\,\D\nu(z)\right]^{1/p}
\]
if $1\le p<\infty$, while $\|f\|_\infty$ will be the essential supremum of~$|f|$ in~$D$. Given $\beta\in\R$, we shall also consider the \emph{weighted $L^p$-spaces} $L^p(D,\beta)$, which are the $L^p$ spaces with respect to the measure $\delta^\beta\nu$, where $\delta\colon D\to\R^+$ is the Euclidean distance from the boundary: $\delta(z)=d(z,\de D)$. The norm in $L^p(D,\beta)$ is given by
\[
\|f\|_{p,\beta}=\left[\int_D |f(z)|^p\delta(z)^\beta\,\D\nu(z)\right]^{1/p}
\] 
for $1\le p<\infty$, and by $\|f\|_{\beta,\infty}=\|f\delta^\beta\|_\infty$ for $p=\infty$.
\end{definition}

\begin{definition}
Let $D\subset\subset\C^n$ be a bounded domain in~$\C^n$, and $1\le p\le\infty$. The \emph{Bergman space} $A^p(D)$ is the Banach space $A^p(D)=L^p(D)\cap\Hol(D,\C)$ endowed with the norm $\|\cdot\|_p$. More generally, given $\beta\in\R$ the \emph{weighted Bergman space} $A^p(D,\beta)$ is the Banach space $A^p(D,\beta)=L^p(D,\beta)\cap\Hol(D,\C)$ endowed with the norm
$\|\cdot\|_{p,\beta}$. 
\end{definition}

The Bergman space $A^2(D)$ is a Hilbert space; this allows us to introduce one of the most studied objects in complex analysis.

\begin{definition}
Let $D\subset\subset\C^n$ be a bounded domain in~$\C^n$. The \emph{Bergman projection}
is the orthogonal projection $P\colon L^2(D)\to A^2(D)$.
\end{definition}

It is a classical fact (see, e.g., \cite[Section~1.4]{Krantz} for proofs) that the Bergman projection is an integral operator: it exists a function $K\colon D\times D\to\C$ such that
\begin{equation}
Pf(z)=\int_D K(z,w)f(w)\,\D\nu(w)
\label{eq:3.Bergproj}
\end{equation}
for all $f\in L^2(D)$. It turns out that $K$ is holomorphic in the first argument, $K(w,z)=\overline{K(z,w)}$ for all $z$, $w\in D$, and it is a \emph{reproducing kernel} for $A^2(D)$ in the sense that
\[
f(z)=\int_D K(z,w)f(w)\,\D\nu(w)
\] 
for all $f\in A^2(D)$.

\begin{definition}
Let $D\subset\subset\C^n$ be a bounded domain in~$\C^n$. The function $K\colon D\times D\to\C$ satisfying (\ref{eq:3.Bergproj}) is the \emph{Bergman kernel} of~$D$.
\end{definition}

\begin{remark}
\label{rem:3.BK}
It is not difficult to show (see again, e.g., \cite[Section~1.4]{Krantz}) that $K(\cdot,w)\in A^2(D)$ for all $w\in D$, and
that 
\[
\|K(\cdot,w)\|^2_2=K(w,w)>0\;.
\]
\end{remark}

A classical result in complex analysis says that in strongly pseudoconvex domains the Bergman projection can be extended to all~$L^p$ spaces:

\begin{theorem}[Phong-Stein, \cite{PS}]
\label{th:3.PS}
Let $D\subset\subset\C^n$ be a strongly pseudoconvex domain with $C^\infty$ boundary, and $1\le p\le\infty$. Then the formula $(\ref{eq:3.Bergproj})$ defines a continuous operator~$P$ from $L^p(D)$ to~$A^p(D)$. Furthermore, for any $r>p$ there is $f\in L^p(D)$ such that $Pf\notin A^r(D)$.
\end{theorem}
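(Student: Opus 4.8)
The plan is to establish the first (positive) assertion in the range $1<p<\infty$ by \emph{Schur's test}, thereby reducing the whole matter to pointwise size estimates for the Bergman kernel, and to obtain the second (sharpness) assertion by an explicit construction concentrated near a boundary point. The soft invariant-distance machinery of Sections~\ref{sec:1}--\ref{sec:2} will not by itself produce the needed kernel bounds; those are the genuinely deep input. What the Kobayashi theory \emph{does} supply, and what makes the whole computation converge, is the volume estimate of Theorem~\ref{th:1.volume} together with the boundary distance estimates of Subsection~\ref{subsect:1.5}.

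First I would record the size estimates for the Bergman kernel $K$ that encode, in Kobayashi language, the classical Fefferman--Kerzman asymptotics. On the diagonal, combining Remark~\ref{rem:3.BK} with Theorem~\ref{th:1.volume} one expects
\[
K(z,z)=\|K(\cdot,z)\|_2^2\asymp \nu\bigl(B_D(z,R)\bigr)^{-1}\asymp \delta(z)^{-(n+1)}\;,
\]
while off the diagonal the controlling fact is an exponential decay in the Kobayashi distance: there are constants $C,\beta>0$ with
\[
|K(z,w)|\le C\,\delta(z)^{-(n+1)/2}\delta(w)^{-(n+1)/2}\,e^{-\beta k_D(z,w)}
\]
for all $z,w\in D$. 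These two estimates are the substance of the Phong--Stein analysis; once granted, they interlock perfectly with Theorem~\ref{th:1.volume} and with the one-point boundary estimates of Theorems~\ref{th:1.bestiu} and \ref{th:1.bestil}.

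Next I would run the Schur test. Since $|Pf(z)|\le\int_D|K(z,w)|\,|f(w)|\,\D\nu(w)$, it suffices to exhibit a positive weight $h$ and a constant $C$ with $\int_D|K(z,w)|h(w)^{p'}\,\D\nu(w)\le C\,h(z)^{p'}$ and the symmetric inequality in $z$, where $p'$ is the conjugate exponent. Choosing $h=\delta^{-s}$ and inserting the off-diagonal bound reduces the problem to estimating $\int_D \delta(w)^{-(n+1)/2-sp'}\,e^{-\beta k_D(z,w)}\,\D\nu(w)$. Here I would decompose $D$ into the Kobayashi annuli $A_j(z)=\{w\mid j\le k_D(z,w)<j+1\}$: on $A_j(z)$ the exponential is comparable to $e^{-\beta j}$, the estimates of Theorems~\ref{th:1.bestiu}--\ref{th:1.bestil} force $-\mlog\delta(w)$ to lie within $O(j)$ of $-\mlog\delta(z)$, and Theorem~\ref{th:1.volume} controls the volume of each annulus. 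The resulting series in $j$ converges, leaving exactly the right power of $\delta(z)$, precisely when $s$ lies in an open interval depending on $p$ and $\beta$; this interval is nonempty exactly for $1<p<\infty$, which is where the endpoint obstruction becomes visible.

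Finally, for the second assertion I would pass to a model in which $K$ is essentially explicit (a strongly convex chart around a fixed $x_0\in\de D$ via Proposition~\ref{th:1.Narasimhan}, or a direct comparison with the ball) and build $f\in L^p(D)$ supported in a non-isotropic neighbourhood of $x_0$ for which the kernel \emph{lower} bound forces $Pf(z)$ to grow like $\delta(z)^{-\gamma}$, with $\gamma$ tuned to sit exactly at the integrability threshold for exponent $p$; such an $f$ is then $p$-integrable while $Pf\notin A^r(D)$ for every $r>p$. \textbf{The main obstacle} is the off-diagonal kernel estimate above: it is the one ingredient not furnished by the invariant-distance formalism and must be imported from the microlocal/asymptotic analysis of the Bergman kernel. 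After it is in hand, the Schur summation and the sharpness construction are bookkeeping driven by Theorem~\ref{th:1.volume}, with the endpoints $p=1,\infty$ remaining the delicate cases.
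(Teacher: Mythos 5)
First, a point of order: the paper contains no proof for you to match---Theorem~\ref{th:3.PS} is imported verbatim from Phong--Stein \cite{PS}, and the survey's own kernel estimates (Theorem~\ref{th:3.thm}, Lemma~\ref{th:3.lemma2}) are likewise quoted from the literature. Judging your sketch on its merits, the architecture (Schur's test driven by kernel size estimates, with Theorem~\ref{th:1.volume} and the boundary estimates doing the bookkeeping) is the standard one, but there are two genuine gaps. The first concerns the range of $p$: the statement asserts boundedness for $1\le p\le\infty$, while your Schur test, as you concede, produces an admissible weight exactly when $1<p<\infty$. This is not a removable defect of the method: already on the unit disk the Bergman projection sends $L^\infty$ onto the Bloch space, which contains unbounded functions, and is unbounded on $L^1$; so no refinement of your scheme can reach the endpoints, and the positive assertion must in fact be read as a $1<p<\infty$ statement (as in \cite{PS}, and as in Theorem~\ref{th:3.CMcN} where the hypothesis is $1<p<\infty$). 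You should say this explicitly rather than park the endpoints as ``delicate cases''.

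The second gap is in the Schur summation itself, and it persists even if you grant the off-diagonal bound with the \emph{sharp} exponent $\beta=n+1$ (which is what the ball gives, via $1-\tanh^2t\asymp\E^{-2t}$; with an unspecified $\beta>0$ the series has no chance). The data you propose to use on the $j$-th annulus $A_j(z)$---the volume bound $\nu\bigl(A_j(z)\bigr)\preceq\min\{1,\E^{2j}\delta(z)\}$ coming from Theorem~\ref{th:1.volume}, and the two-sided comparison $\E^{-2j}\delta(z)\preceq\delta(w)\preceq\E^{2j}\delta(z)$ of Lemma~\ref{th:3.lemma5}---are jointly insufficient: bounding $\delta(w)^{-(n+1)/2-sp'}$ by its maximum over $A_j(z)$ while bounding the volume by the collar estimate overcounts massively, and a direct check on the disk (where all quantities are explicit) shows the resulting series diverges, whereas the true integral is $\asymp\delta(z)^{-sp'}$. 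What is needed is the \emph{joint} distribution of $k_D(z,\cdot)$ and $\delta(\cdot)$, i.e.\ a two-parameter (nonisotropic) decomposition---and that is precisely the content of Theorem~\ref{th:3.thm}. The economical repair is to discard the exponential bound altogether and feed Schur's test directly with the case $p=1$, $-1<\beta<0$ of Theorem~\ref{th:3.thm}: taking $h=\delta^{-s}$ with $0<s<\min\{1/p,1/p'\}$, both Schur inequalities follow at once from $K(w,z)=\overline{K(z,w)}$. Finally, your sharpness construction is right in spirit but incomplete at its key step: for given $r>p$ the natural witnesses are $f_a=\delta^{-a/p}$ with $p/r\le a<1$, which lie in $L^p(D)$, but concluding $Pf_a\notin A^r(D)$ requires the lower bound $|Pf_a|\succeq\delta^{-a/p}$, and since $K$ is not a positive kernel this does not follow from Lemma~\ref{th:3.lemma2} alone---ruling out cancellation needs the near-diagonal boundary asymptotics of $K$, one more ingredient you must import explicitly.
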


Recently, \v Cu\v ckovi\'c and McNeal posed the following question: does there exist a natural operator, somewhat akin to the Bergman projection, mapping $L^p(D)$ into $A^r(D)$ for some $r>p$?
To answer this question, they considered Toeplitz operators.

\begin{definition}
Let $D\subset\subset\C^n$ be a strongly pseudoconvex domain with $C^\infty$ boundary. Given a measurable function $\psi\colon D\to\C$, the \emph{multiplication operator} of \emph{symbol}~$\psi$ is simply defined by $M_\psi(f)=\psi f$. Given $1\le p\le\infty$, a symbol $\psi$ is \emph{$p$-admissible} if $M_\psi$ sends~$L^p(D)$ into itself; for instance, a $\psi\in L^\infty(D)$ is $p$-admissible for all~$p$. If $\psi$ is $p$-admissible, the \emph{Toeplitz operator} $T_\psi\colon L^p(D)\to A^p(D)$ of \emph{symbol}~$\psi$ is defined by $T_\psi=P\circ M_\psi$, that is
\[
T_\psi(f)(z)=P(\psi f)(z)=\int_D K(z,w)f(w)\psi(w)\,\D\nu(w)\;.
\]
\end{definition}

\begin{remark}
More generally, if $A$ is a Banach algebra, $B\subset A$ is a Banach subspace, $P\colon A\to B$ is a projection and $\psi\in A$, the Toeplitz operator~$T_\psi$ of symbol~$\psi$ is defined by $T_\psi(f)=P(\psi f)$. Toeplitz operators are a much studied topic in functional analysis; see, e.g., \cite{Up}.
\end{remark}

Then \v Cu\v ckovi\'c and McNeal were able to prove the following result:

\begin{theorem}[\v Cu\v ckovi\'c-McNeal, \cite{CMcN}]
\label{th:3.CMcN}
Let $D\subset\subset\C^n$ be a strongly pseudoconvex domain with $C^\infty$ boundary. 
If $1<p<\infty$ and $0\le\beta<n+1$ are such that
\begin{equation}
\frac{n+1}{n+1-\beta}<\frac{p}{p-1}
\label{eq:3.CMcN}
\end{equation}
then the Toeplitz operator $T_{\delta^\beta}$ maps continuously $L^p(D)$ in $A^{p+G}(D)$, where
\[
G=\frac{p^2}{\frac{n+1}{\beta}-p}\;.
\]
\end{theorem}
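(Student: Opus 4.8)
The plan is to reduce the statement to an $L^p$–$L^r$ mapping property of a positive integral operator and then to recognize that property as a Hardy--Littlewood--Sobolev (fractional integration) inequality governed by the Kobayashi geometry of~$D$. First I would note that $T_{\delta^\beta}f=P(\delta^\beta f)$ is holomorphic, so that if $r=p+G$ then $\|T_{\delta^\beta}f\|_{A^r}=\|T_{\delta^\beta}f\|_r$; hence it suffices to bound $T_{\delta^\beta}$ from $L^p(D)$ to $L^r(D)$. Since
\[
|T_{\delta^\beta}f(z)|\le\int_D|K(z,w)|\,\delta(w)^\beta\,|f(w)|\,\D\nu(w)=:S(|f|)(z)\;,
\]
I may replace $T_{\delta^\beta}$ by the positive operator $S$ with kernel $\mathcal{K}(z,w)=|K(z,w)|\delta(w)^\beta$. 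A short computation gives $p+G=\frac{p(n+1)}{n+1-\beta p}$, so that the target exponent $r=p+G$ is equivalent to the scaling relation $\frac{1}{p}-\frac{1}{r}=\frac{\beta}{n+1}$; moreover the hypothesis~(\ref{eq:3.CMcN}) is equivalent to $\beta p<n+1$, that is to $\frac{\beta}{n+1}<\frac{1}{p}$, which is exactly the condition making $r$ finite and strictly larger than~$p$.

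Next I would bring in the Kobayashi geometry. The dimensional input is Theorem~\ref{th:1.volume}: since $\nu\bigl(B_D(z,R)\bigr)\asymp\delta(z)^{n+1}$, the number $n+1$ plays the role of a homogeneous dimension and $\delta(w)^\beta\asymp\nu\bigl(B_D(w,R)\bigr)^{\beta/(n+1)}$ supplies a fractional order~$\beta$. Combining the standard off-diagonal Bergman estimate for smooth strongly pseudoconvex domains --- which, via Theorem~\ref{th:1.volume}, can be written in the Kobayashi form
\[
|K(z,w)|\lesssim\delta(z)^{-(n+1)/2}\,\delta(w)^{-(n+1)/2}\,\E^{-\varepsilon k_D(z,w)}
\]
for some $\varepsilon>0$ --- with this volume growth, the operator $S$ is dominated by a fractional integral of order~$\beta$ on the space $(D,\nu)$ equipped with the distance~$k_D$. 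To make this precise I would discretize: choose a maximal $R$-separated set $\{a_j\}$ for $k_D$ and the associated cover $\{B_D(a_j,R)\}$ by Kobayashi balls of bounded overlap (using Lemma~\ref{th:1.somma} to control the ball arithmetic), on each of which $\delta$ and $\mathcal{K}(z,\cdot)$ are comparable to constants. The problem then reduces to a weighted summation against the weights $\nu\bigl(B_D(a_j,R)\bigr)\asymp\delta(a_j)^{n+1}$, to which the Hardy--Littlewood--Sobolev inequality on a space of homogeneous type applies and yields $\|Sf\|_r\lesssim\|f\|_p$ with $\frac{1}{p}-\frac{1}{r}=\frac{\beta}{n+1}$, as required.

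The main obstacle is recovering the \emph{sharp} exponent. A naive H\"older bound in the $w$-variable (putting all of $f$ on one side, estimating $\int_D|K(z,w)|^{p'}\delta(w)^{\beta p'}\,\D\nu(w)\asymp\delta(z)^{\beta p'-(n+1)(p'-1)}$ by a Forelli--Rudin-type integral estimate, where $p'=p/(p-1)$, and then integrating the resulting power $\delta(z)^{(\beta-(n+1)/p)r}$) only produces boundedness $T_{\delta^\beta}\colon L^p\to A^r$ for $r<\frac{p}{n+1-\beta p}$, which falls short of the claimed $r=\frac{p(n+1)}{n+1-\beta p}$ by a factor of $n+1$ in the scale of the denominator. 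Keeping $f$ inside an $L^r$ norm --- that is, genuinely exploiting the off-diagonal decay in $k_D$ and the fractional-integration structure rather than a single application of H\"older --- is what closes this gap. Verifying the homogeneous-space hypotheses is the technical heart of the argument, since the Lebesgue measure $\nu$ is not Ahlfors-regular for $k_D$ (the constant $C_R$ in Theorem~\ref{th:1.volume} depends on $R$), so the reconciliation between $\nu$ and the Kobayashi balls furnished by that theorem must be tracked carefully. The $C^\infty$ smoothness enters only through the availability of the precise Bergman kernel estimate and of Theorem~\ref{th:1.volume}, while the hypotheses $0\le\beta<n+1$ and~(\ref{eq:3.CMcN}) guarantee respectively that the fractional order is admissible and that the target exponent $r$ is finite.
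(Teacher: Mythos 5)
Your reduction of the exponents is correct ($p+G=\frac{p(n+1)}{n+1-\beta p}$, so the target relation is $\frac{1}{p}-\frac{1}{r}=\frac{\beta}{n+1}$, and (\ref{eq:3.CMcN}) is equivalent to $\beta p<n+1$), and your diagnosis that a single H\"older application loses the sharp exponent is accurate. But the core analytic step --- the $L^p\to L^r$ bound itself --- is delegated to a Hardy--Littlewood--Sobolev theorem on a space of homogeneous type, and $(D,k_D,\nu)$ is not such a space: Theorem~\ref{th:1.volume} gives $\nu\bigl(B_D(z,R)\bigr)\asymp C_R\,\delta(z)^{n+1}$ for each \emph{fixed} $R$, with $C_R$ growing (exponentially) in~$R$, so Kobayashi balls have essentially radius-independent volume at bounded scales and exponential growth at large scales; there is no exponent $Q$ with $\nu\bigl(B(z,R)\bigr)\approx R^Q$, doubling fails, and the HLS black box does not apply. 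After your discretization by an $R$-lattice, what remains is exactly a weighted discrete Schur/Young estimate with weights $\delta(a_j)^{n+1}$ --- that is, the actual content of the theorem, which the outline never proves. Moreover, the off-diagonal estimate $|K(z,w)|\preceq\delta(z)^{-(n+1)/2}\delta(w)^{-(n+1)/2}\E^{-\varepsilon k_D(z,w)}$ is not available from the tools quoted in this paper: Theorems~\ref{th:1.bestia} and~\ref{th:1.bestib} control $k_D$ by \emph{Euclidean} quantities, which is too lossy in complex-tangential directions (two points with $\delta\approx t^2$ at tangential Euclidean distance $t$ have bounded Kobayashi distance, while the Euclidean bounds only give $k_D\preceq\log(1/t)$), and Lemma~\ref{th:3.lemma2} is an on-diagonal \emph{lower} bound. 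Establishing that decay requires the sharp nonisotropic kernel estimates, a genuinely nontrivial input you would have to prove or cite precisely.

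For comparison: the paper treats Theorem~\ref{th:3.CMcN} as quoted from \cite{CMcN} and instead proves the Carleson machinery, from which the continuity direction follows by what is in effect a ``Schur test with gain'' inside the proof of Theorem~\ref{th:3.ToepCarl}: H\"older with an auxiliary exponent $s\in(p,r)$ chosen in the window (\ref{eq:3.s}), the Carleson estimate of Corollary~\ref{th:3.cordue} applied to the plurisubharmonic function $|K(z,\cdot)|^{p'/s'}$, Minkowski's integral inequality, and the Forelli--Rudin estimates of Theorem~\ref{th:3.thm} --- no off-diagonal decay and no homogeneous-space theory. Specializing to $\mu=\delta^\beta\nu$, which by Corollary~\ref{th:3.deltaCarl} is $\theta$-Carleson exactly for $\theta=1+\frac{\beta}{n+1}$, yields the statement; and since for this measure the final Carleson step is an identity of measures, the argument works for all $f\in L^p(D)$, not only $f\in A^p(D)$, giving the full $L^p\to A^{p+G}$ claim. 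If you want to salvage your program, replace the HLS citation by an Okikiolu-type $L^p\to L^q$ Schur test with a free intermediate exponent: its hypotheses are verified precisely by Theorem~\ref{th:3.thm}, and that extra exponent is exactly what recovers the factor $n+1$ you correctly identified as missing from the naive H\"older bound.
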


\v Cu\v ckovi\'c and McNeal also asked whether the gain $G$ in integrability is optimal; they were able to positively answer to this question only for $n=1$. The positive answer in higher dimension has been given by Abate, Raissy and Saracco \cite{ARS}, as a corollary of their study of a larger class of Toeplitz operators on strongly pseudoconvex domains. This study, putting into play another 
important notion in complex analysis, the one of Carleson measure, used as essential tool the Kobayashi distance; in the next couple of sections we shall describe the gist of their results.

\subsection{Definitions and results}
\label{subsec:3.1}

In this subsection and the next $D$ will always be a bounded strongly pseudoconvex domain with $C^\infty$ boundary. 
We believe that the results might be generalized to other classes of domains with $C^\infty$ boundary (e.g., finite type domains), and possibly to domains with less smooth boundary, but we will not pursue this subject here.

Let us introduce the main player in this subject.

\begin{definition}
\label{def:3.Toep}
Let $D\subset\subset\C^n$ be a strongly pseudoconvex domain with $C^\infty$ boundary, and $\mu$ a finite positive Borel measure on~$D$. Then the \emph{Toeplitz operator}~$T_\mu$ of \emph{symbol}~$\mu$ is defined by
\[
T_\mu(f)(z)=\int_D K(z,w)f(w)\,\D\mu(w)\;,
\]
where $K$ is the Bergman kernel of~$D$. 
\end{definition}

For instance, if $\psi$ is an admissible symbol then the Toeplitz operator $T_\psi$ defined above is the Toeplitz operator $T_{\psi\nu}$ according to Definition~\ref{def:3.Toep}. 

In Definition~\ref{def:3.Toep} we did not specify domain and/or range of the Topelitz operator~$\mu$ because the main point of the theory we are going to discuss is exactly to link properties of the measure~$\mu$ with domain and range of~$T_\mu$.

Toeplitz operators associated to measures have been extensively studied on the unit disk~$\Delta$ and on the unit ball~$B^n$ (see, e.g., \cite{Li}, \cite{LL}, \cite{Kapt}, \cite{Zhu} and references therein); but \cite{ARS} has been one of the first papers studying them in strongly pseudoconvex domains.

The kind of measure we shall be interested in is described in the following

\begin{definition}
Let $D\subset\subset\C^n$ be a strongly pseudoconvex domain with $C^\infty$ boundary, $A$ a Banach space of complex-valued functions on~$D$, and $1\le p\le\infty$. We shall say that 
a finite positive Borel measure~$\mu$ on~$D$ is a \emph{$p$-Carleson measure} for $A$ 
if $A$ embeds continuously into~$L^p(\mu)$, that is if there exists $C>0$ such that
\[
\int_D |f(z)|\,\D\mu(z)\le C\|f\|^p_A
\]
for all $f\in A$, where $\|\cdot\|_A$ is the norm in~$A$.
\end{definition} 

\begin{remark}
When the inclusion $A\hookrightarrow L^p(\mu)$ is compact, $\mu$ is called \emph{vanishing Carleson measure}. Here we shall discuss vanishing Carleson measures only in the remarks.\end{remark}

Carleson measures for the Hardy spaces $H^p(\Delta)$ were introduced by Carleson \cite{Carl} to solve the famous corona problem. We shall be interested in Carleson measures for the weighted Bergman spaces $A^p(D,\beta)$; they have been studied by many authors when $D=\Delta$ or $D=B^n$ (see, e.g., \cite{Lueck}, \cite{DW}, \cite{Zhu} and references therein), 
but more rarely when $D$ is a strongly pseudoconvex domain (see, e.g., \cite{CM} and \cite{AS}). 

The main point here is to give a geometric characterization of which measures are Carleson. 
To this aim we introduce the following definition, bringing into play the Kobayashi distance.

\begin{definition}
Let $D\subset\subset\C^n$ be a strongly pseudoconvex domain with $C^\infty$ boundary, and $\theta>0$. We shall say that a finite positive Borel measure~$\mu$ on~$D$ is \emph{$\theta$-Carleson} if there exists $r>0$ and $C_r>0$ such that
\begin{equation}
\mu\bigl(B_D(z_0, r)\bigr)\le C_r \nu\bigl(B_D(z_0, r)\bigr)^\theta
\label{eq:3.thetaC}
\end{equation}
for all $z_0\in D$. We shall see that if (\ref{eq:3.thetaC}) holds for some $r>0$ 
then it holds for all $r>0$. 
\end{definition}

\begin{remark}
There is a parallel vanishing notion: we say that $\mu$ is \emph{vanishing $\theta$-Carleson} if
there exists $r>0$ such that
\[
\lim_{z_0\to\de D}\frac{\mu\bigl(B_D(z_0, r)\bigr)}{\nu\bigl(B_D(z_0, r)\bigr)^\theta}=0\;.
\]
\end{remark}

For later use, we recall two more definitions.

\begin{definition}
Let $D\subset\subset\C^n$ be a strongly pseudoconvex domain with $C^\infty$ boundary. Given $w\in D$, the \emph{normalized Bergman kernel} in~$w$ is given by
\[
k_w(z)=\frac{K(z,w)}{\sqrt{K(w,w)}}\;.
\]
Remark~\ref{rem:3.BK} shows that $k_w\in A^2(D)$ and $\|k_w\|_2=1$ for all $w\in D$.
\end{definition}

\begin{definition}
Let $D\subset\subset\C^n$ be a strongly pseudoconvex domain with $C^\infty$ boundary, and
$\mu$ a finite positive Borel measure on~$D$. The \emph{Berezin transform} of~$\mu$ is the function $B\mu\colon D\to\R^+$ defined by
\[
B\mu(z)=\int_D |k_z(w)|^2\,\D\mu(w)\;.
\]
\end{definition}

Again, part of the theory will describe when the Berezin transform of a measure is actually defined.

We can now state the main results obtained in \cite{ARS}:

\begin{theorem}[Abate-Raissy-Saracco, \cite{ARS}]
\label{th:3.ToepCarl}
Let $D\subset\subset\C^n$ be a strongly pseudoconvex domain with $C^\infty$ boundary, $1<p< r<\infty$ and
$\mu$ a finite positive Borel measure on~$D$. Then $T_\mu$ maps $A^p(D)$ into $A^r(D)$ if and only if $\mu$ is a $p$-Carleson measure for $A^p\bigl(D,(n+1)(\frac{1}{p}-\frac{1}{r})\bigr)$.
\end{theorem}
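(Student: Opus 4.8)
The plan is to show that \emph{both} conditions in the statement are equivalent to a single geometric condition on $\mu$, and then conclude. Set $\theta=1+\frac1p-\frac1r$ and $\beta=(n+1)\bigl(\frac1p-\frac1r\bigr)$, so that $\theta=1+\frac{\beta}{n+1}$, and let $r'=r/(r-1)$. By Theorem~\ref{th:1.volume} the $\theta$-Carleson condition $(\ref{eq:3.thetaC})$ is equivalent to $\mu\bigl(B_D(z_0,r)\bigr)\le C\,\delta(z_0)^{(n+1)\theta}$ uniformly in $z_0$ (this also settles the claim that $(\ref{eq:3.thetaC})$ for one $r$ gives it for all $r$). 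I shall prove that this geometric condition is equivalent to $\mu$ being $p$-Carleson for $A^p\bigl(D,\beta\bigr)$ and, separately, to $T_\mu$ mapping $A^p(D)$ into $A^r(D)$. Two tools recur throughout. A \emph{Bergman lattice}: a maximal $r/2$-separated set $\{a_k\}\subset D$ for $k_D$ yields a covering $D=\bigcup_k B_D(a_k,r)$ whose doubles $B_D(a_k,2r)$ have bounded overlap, existence following from Lemma~\ref{th:1.somma} and Theorem~\ref{th:1.volume}. And on each ball the boundary estimates of Subsection~\ref{subsect:1.5} (Theorems~\ref{th:1.bestiu}--\ref{th:1.bestib}) give $\delta\approx\delta(a_k)$, while subharmonicity of $|h|^s$ gives $\sup_{B_D(a_k,r)}|h|^s\le \frac{C}{\nu(B_D(a_k,2r))}\int_{B_D(a_k,2r)}|h|^s\,\D\nu$ for $h\in\Hol(D,\C)$.

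For the equivalence with the weighted-Carleson condition, the direction geometric $\Rightarrow$ Carleson is obtained by summing the sub-mean-value bound for $|f|^p$ over the lattice: since $\mu(B_D(a_k,r))\le C\nu(B_D(a_k,r))^\theta$ and $\nu(B_D(a_k,r))\approx\delta(a_k)^{n+1}$, the surplus factor $\nu(B_D(a_k,r))^{\theta-1}\approx\delta(a_k)^\beta$ exactly reconstitutes the weight, so bounded overlap gives $\int_D|f|^p\,\D\mu\le C\int_D|f|^p\delta^\beta\,\D\nu$. For the converse I would test the Carleson inequality on the family $f_{z_0}=K(\cdot,z_0)^\gamma$ with $\gamma$ tuned so that $f_{z_0}\in A^p(D,\beta)$; using $|K(z,z_0)|\approx\delta(z_0)^{-(n+1)}$ on $B_D(z_0,r)$ together with Forelli--Rudin integral estimates for $\int_D|K(z,z_0)|^{a}\delta^\beta\,\D\nu$, the inequality reduces to $\mu(B_D(z_0,r))\le C\delta(z_0)^{(n+1)\theta}$.

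For the Toeplitz side I first reduce to a bilinear form. Since $K$ is reproducing (Remark~\ref{rem:3.BK}) and $(A^r(D))^*\cong A^{r'}(D)$ under $\langle F,G\rangle=\int_D F\overline G\,\D\nu$, Fubini gives $\langle T_\mu f,g\rangle=\int_D f\,\overline g\,\D\mu$; hence $T_\mu\colon A^p(D)\to A^r(D)$ is bounded iff $\bigl|\int_D f\,\overline g\,\D\mu\bigr|\le C\|f\|_p\|g\|_{r'}$ for all $f\in A^p(D)$, $g\in A^{r'}(D)$. Assuming the geometric condition, I decompose over the lattice and apply the sub-mean-value bound to $|f|^p$ and $|g|^{r'}$; writing $\Phi_k=\int_{B_D(a_k,2r)}|f|^p\,\D\nu$ and $\Gamma_k=\int_{B_D(a_k,2r)}|g|^{r'}\,\D\nu$, the powers of $\nu(B_D(a_k,r))$ cancel because $\theta-\frac1p-\frac1{r'}=0$, leaving $\int_D|f\,\overline g|\,\D\mu\le C\sum_k\Phi_k^{1/p}\Gamma_k^{1/r'}$. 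As $\frac1p+\frac1{r'}=\theta\ge1$, the elementary inequality $\sum_k a_k^{A}b_k^{B}\le(\sum_k a_k)^A(\sum_k b_k)^B$ (valid for $A,B\ge0$, $A+B\ge1$, by weighted AM--GM after normalizing $\sum a_k=\sum b_k=1$) plus bounded overlap bounds this by $C\|f\|_p\|g\|_{r'}$. For the converse I would test on the normalized kernels $k_{z_0}$, controlling the form by the Berezin transform $B\mu(z_0)=\int_D|k_{z_0}|^2\,\D\mu$ and converting a uniform bound on $B\mu$ into the geometric estimate via on-diagonal kernel asymptotics.

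The main obstacle is confined to the two \emph{necessity} arguments: these are the only steps using genuinely hard input, namely sharp pointwise and Forelli--Rudin-type integral estimates for the Bergman kernel of a strongly pseudoconvex $C^\infty$ domain, which lie outside the Kobayashi-distance toolkit and would be quoted from the literature. The remaining work is bookkeeping with the lattice, the sub-mean-value property, and Theorem~\ref{th:1.volume}; the only delicate point is the exponent arithmetic $\theta-\frac1p-\frac1{r'}=0$ and $A+B=\theta\ge1$, which is precisely what pins down the weight $\beta=(n+1)\bigl(\frac1p-\frac1r\bigr)$.
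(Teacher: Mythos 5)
Your proposal is correct, and roughly half of it coincides with the paper's own proof: your equivalence between the geometric condition and the weighted Carleson condition is precisely the paper's Theorem~\ref{th:3.Carltheta} (lattice covering as in Lemma~\ref{th:3.lemma3} plus the sub-mean-value estimate of Lemma~\ref{th:3.lemma4} in one direction; testing on kernels with the integral estimates of Theorem~\ref{th:3.thm} in the other --- though you should take your exponent $\gamma$ to be the integer $2$, as the paper does with $k_{z_0}^2$, since $K(\cdot,z_0)$ may vanish and non-integer powers are not globally defined), and your necessity argument via $B\mu(z_0)=\int_D (T_\mu k_{z_0})\overline{k_{z_0}}\,d\nu$ is exactly the paper's route through Lemma~\ref{th:3.prop}, H\"older plus Corollary~\ref{th:3.coruno}, Lemma~\ref{th:3.lemma2} and Theorem~\ref{th:3.thetaBerez}. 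Where you genuinely diverge is the sufficiency half. The paper never dualizes: it bounds $|T_\mu f(z)|$ pointwise by the H\"older splitting $|K||f|=\bigl(|K|^{p/s}|f|^p\bigr)^{1/p}\bigl(|K|^{p'/s'}\bigr)^{1/p'}$ with an auxiliary exponent $s\in(p,r)$ chosen in the window (\ref{eq:3.s}), applies Corollary~\ref{th:3.cordue} and Theorem~\ref{th:3.thm} twice, and then interchanges norms via Minkowski's integral inequality. Your duality-plus-lattice argument --- the bilinear form $\int_D f\overline{g}\,d\mu$, sub-mean-value bounds on each ball, the exponent cancellation $\theta-\frac{1}{p}-\frac{1}{r'}=0$, and the summation inequality $\sum_k a_k^Ab_k^B\le\bigl(\sum_k a_k\bigr)^A\bigl(\sum_k b_k\bigr)^B$ for $A+B\ge 1$ (your normalization proof of which is valid) --- is a correct Luecking-style alternative; it buys you freedom from the delicate choice of $s$, from Minkowski, and from all Forelli--Rudin estimates in this direction, but the price is that the identification $(A^r(D))^*\cong A^{r'}(D)$ is itself hard input, resting on Phong--Stein boundedness of the Bergman projection on $L^{r'}$ (Theorem~\ref{th:3.PS}), which the paper's sufficiency proof does not need. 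Two small points deserve a line in a careful write-up: the Fubini computation behind $\langle T_\mu f,g\rangle=\int_D f\overline{g}\,d\mu$ requires a priori absolute convergence, which your own lattice estimate supplies upon taking $g=K(\cdot,z)\in A^{r'}(D)$ for fixed $z$ (Corollary~\ref{th:3.coruno}), so that $T_\mu f$ is well defined; and once the bilinear form is bounded you must still identify the element of $A^r(D)$ representing $g\mapsto\int_D f\overline{g}\,d\mu$ with $T_\mu f$, which is the same Fubini argument run in reverse.
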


\begin{theorem}[Abate-Raissy-Saracco, \cite{ARS}]
\label{th:3.Carltheta}
Let $D\subset\subset\C^n$ be a strongly pseudoconvex domain with $C^\infty$ boundary, $1<p<\infty$ and $\theta\in\bigl(1-\frac{1}{n+1},2\bigr)$. Then 
 a finite positive Borel measure $\mu$ on~$D$ is a $p$-Carleson measure for $A^p\bigl(D,(n+1)(\theta-1)\bigr)$ if and only if $\mu$ is a $\theta$-Carleson measure.
 \end{theorem}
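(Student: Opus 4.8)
The plan is to translate both conditions into equivalent statements about the Euclidean boundary distance $\delta$ and then compare them, the bridge being Theorem~\ref{th:1.volume}, which gives $\nu\bigl(B_D(z_0,r)\bigr)\asymp\delta(z_0)^{n+1}$ for every fixed $r>0$ with constants depending only on $r$ and $D$. Writing $\beta=(n+1)(\theta-1)$, the hypothesis $\theta\in\bigl(1-\frac{1}{n+1},2\bigr)$ is exactly $\beta\in(-1,n+1)$, so $\delta^\beta$ is integrable and $A^p(D,\beta)$ is a genuine weighted Bergman space. In these terms, being $\theta$-Carleson means $\mu\bigl(B_D(z_0,r)\bigr)\lesssim\delta(z_0)^{(n+1)\theta}$. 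A preliminary observation (used throughout, and which proves the ``some $r$ implies all $r$'' claim) is that $\delta$ varies only by a bounded factor on a Kobayashi ball of fixed radius, a consequence of the boundary estimates of Subsection~\ref{subsect:1.5}, while by Lemma~\ref{th:1.somma} a ball of radius $r'$ is covered by boundedly many balls of radius $r$.

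First I would prove that a $\theta$-Carleson measure is $p$-Carleson for $A^p(D,\beta)$. Fix $r>0$ and choose a lattice $\{z_k\}\subset D$ such that the balls $B_D(z_k,r)$ cover $D$ while the $B_D(z_k,2r)$ have uniformly bounded overlap; such a separated net exists by a standard Vitali-type argument in $(D,k_D)$, using Lemma~\ref{th:1.somma}. For holomorphic $f$ the function $|f|^p$ is plurisubharmonic, and the sub-mean-value property over Kobayashi balls (which already encodes the non-isotropic geometry of a strongly pseudoconvex domain), together with Theorem~\ref{th:1.volume} and the near-constancy of $\delta$, gives
\[
\sup_{B_D(z_k,r)}|f|^p\lesssim\frac{1}{\nu\bigl(B_D(z_k,2r)\bigr)}\int_{B_D(z_k,2r)}|f|^p\,\D\nu\;,
\]
uniformly in $k$. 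Hence, invoking the $\theta$-Carleson bound and Theorem~\ref{th:1.volume} once more,
\[
\int_{B_D(z_k,r)}|f|^p\,\D\mu\le\mu\bigl(B_D(z_k,r)\bigr)\sup_{B_D(z_k,r)}|f|^p\lesssim\delta(z_k)^{\beta}\int_{B_D(z_k,2r)}|f|^p\,\D\nu\lesssim\int_{B_D(z_k,2r)}|f|^p\delta^\beta\,\D\nu\;,
\]
since $\mu(B_D(z_k,r))/\nu(B_D(z_k,2r))\lesssim\delta(z_k)^{(n+1)(\theta-1)}=\delta(z_k)^\beta$ and $\delta\asymp\delta(z_k)$ on $B_D(z_k,2r)$. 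Summing over $k$ and using bounded overlap on both sides yields $\int_D|f|^p\,\D\mu\lesssim\|f\|_{p,\beta}^p$, i.e. the continuous embedding $A^p(D,\beta)\hookrightarrow L^p(\mu)$.

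For the converse I would test the Carleson inequality against powers of the Bergman kernel. Choose a positive integer $m$ with $mp>\theta$ (for instance $m=2$, since $2p>2>\theta$) and set $f_{z_0}=K(\cdot,z_0)^m$, which is holomorphic and lies in $A^p(D,\beta)$ precisely because $mp>\theta$ forces the relevant integral to converge. Two standard facts about the Bergman kernel of a strongly pseudoconvex $C^\infty$ domain are needed: the localization estimate $|K(z,z_0)|\asymp K(z_0,z_0)\asymp\delta(z_0)^{-(n+1)}$ for $z\in B_D(z_0,r)$, and the Forelli--Rudin type estimate $\|f_{z_0}\|_{p,\beta}^p=\int_D|K(z,z_0)|^{mp}\delta^\beta\,\D\nu\asymp\delta(z_0)^{(n+1)+\beta-mp(n+1)}$, valid exactly when $mp>\theta$. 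The localization estimate gives $\int_D|f_{z_0}|^p\,\D\mu\gtrsim\delta(z_0)^{-mp(n+1)}\mu\bigl(B_D(z_0,r)\bigr)$, so the Carleson inequality $\int_D|f_{z_0}|^p\,\D\mu\le C\|f_{z_0}\|_{p,\beta}^p$ collapses to
\[
\mu\bigl(B_D(z_0,r)\bigr)\lesssim\delta(z_0)^{(n+1)+\beta}=\delta(z_0)^{(n+1)\theta}\asymp\nu\bigl(B_D(z_0,r)\bigr)^\theta\;,
\]
which is the $\theta$-Carleson condition. I expect the main obstacle to be these analytic inputs on $K$ rather than the combinatorics: the two-sided boundary behaviour of the kernel and the Forelli--Rudin integral estimate are exactly where strong pseudoconvexity and $C^\infty$ smoothness enter, and where the restriction $\beta<n+1$ (i.e. $\theta<2$) keeps the test functions in $A^p(D,\beta)$ while $\beta>-1$ (i.e. $\theta>1-\frac{1}{n+1}$) guarantees integrability of the weight. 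These kernel estimates are classical for strongly pseudoconvex domains, and once they and the covering lemma and the Kobayashi sub-mean-value property are in hand, the assembly above is routine.
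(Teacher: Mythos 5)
Your proposal is correct and takes essentially the same route as the paper: for the implication from $p$-Carleson to $\theta$-Carleson you test the embedding against powers of the Bergman kernel (the paper uses $|k_{z_0}|^{2p}$, you use $K(\cdot,z_0)^m$ with $mp>\theta$ --- the same test up to normalization), combining the near-boundary kernel lower bound of Lemma~\ref{th:3.lemma2} with the integral estimate of Theorem~\ref{th:3.thm}, while for the converse you use exactly the paper's ingredients, namely the covering of Lemma~\ref{th:3.lemma3}, the plurisubharmonic sub-mean-value inequality of Lemma~\ref{th:3.lemma4}, and the comparisons of Theorem~\ref{th:1.volume} and Lemma~\ref{th:3.lemma5}. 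The only cosmetic overstatement is your appeal to two-sided ($\asymp$) kernel estimates where just the one-sided bounds actually proved in the paper are needed, and note that the localization lower bound holds only for $\delta(z_0)<\delta_r$, the remaining case being trivial because $\mu$ is finite, a reduction the paper makes explicit.
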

 
 \begin{theorem}[Abate-Raissy-Saracco, \cite{ARS}]
\label{th:3.thetaBerez}
Let $D\subset\subset\C^n$ be a strongly pseudoconvex domain with $C^\infty$ boundary, and $\theta>0$. Then 
 a finite positive Borel measure $\mu$ on~$D$ is $\theta$-Carleson if and only the Berezin transform $B\mu$ exists and $\delta^{(n+1)(1-\theta)}B\mu\in L^\infty(D)$.
 \end{theorem}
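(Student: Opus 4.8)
The plan is to prove the two implications separately, in each case translating the data into the language of Kobayashi balls and then invoking the volume estimate of Theorem~\ref{th:1.volume}, which gives $\nu\bigl(B_D(z,r)\bigr)\asymp\delta(z)^{n+1}$ with constants depending only on $r$ and $D$; I write $A\asymp B$ for ``$A/B$ is bounded above and below by positive constants''. Besides the material of the excerpt, the analytic input I will use are the classical Bergman-kernel estimates for strongly pseudoconvex domains: the diagonal estimate $K(z,z)\asymp\delta(z)^{-(n+1)}$, the near-diagonal comparison $|K(z,w)|\asymp K(z,z)\asymp K(w,w)$ whenever $k_D(z,w)\le r$, and an off-diagonal decay of the normalized kernel of the shape $|k_z(w)|^2\le C\,\delta(w)^{-(n+1)}e^{-\gamma\,k_D(z,w)}$ for suitable $C,\gamma>0$.

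First I would dispatch the implication ``$\delta^{(n+1)(1-\theta)}B\mu\in L^\infty\Rightarrow\mu$ is $\theta$-Carleson'', which is elementary. Fix any convenient $r>0$ and $z_0\in D$. By the near-diagonal comparison there is $c_r>0$ with $|k_{z_0}(w)|^2\ge c_r\,\delta(z_0)^{-(n+1)}$ for every $w\in B_D(z_0,r)$, whence
\[
B\mu(z_0)\ge\int_{B_D(z_0,r)}|k_{z_0}(w)|^2\,\D\mu(w)\ge c_r\,\delta(z_0)^{-(n+1)}\,\mu\bigl(B_D(z_0,r)\bigr)\;.
\]
Setting $M=\|\delta^{(n+1)(1-\theta)}B\mu\|_\infty$ we have $B\mu(z_0)\le M\,\delta(z_0)^{(n+1)(\theta-1)}$, and comparing the two bounds gives $\mu\bigl(B_D(z_0,r)\bigr)\le (M/c_r)\,\delta(z_0)^{(n+1)\theta}$. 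Since Theorem~\ref{th:1.volume} yields $\delta(z_0)^{(n+1)\theta}\asymp\nu\bigl(B_D(z_0,r)\bigr)^{\theta}$, this is exactly~(\ref{eq:3.thetaC}), with constant uniform in $z_0$.

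The converse is the substantial direction. Assuming $\mu$ is $\theta$-Carleson, I would fix an $r$-lattice $\{w_k\}\subset D$, that is a set of points such that the balls $B_D(w_k,r)$ cover $D$ while the $B_D(w_k,r/2)$ are pairwise disjoint and the $B_D(w_k,R)$ have multiplicity bounded by a constant $N=N(R)$; such lattices exist because $D$ is complete hyperbolic with the volume growth of Theorem~\ref{th:1.volume}. Covering the integral defining $B\mu(z)$ by this lattice, and using the off-diagonal estimate together with $\delta(w)\asymp\delta(w_k)$ and $k_D(z,w)\ge k_D(z,w_k)-r$ on $B_D(w_k,r)$, I obtain
\[
B\mu(z)\le C\sum_k e^{-\gamma\,k_D(z,w_k)}\,\delta(w_k)^{-(n+1)}\,\mu\bigl(B_D(w_k,r)\bigr)\le C'\sum_k e^{-\gamma\,k_D(z,w_k)}\,\delta(w_k)^{(n+1)(\theta-1)}\;,
\]
the last step using $\mu(B_D(w_k,r))\le C_r\nu(B_D(w_k,r))^\theta\asymp\delta(w_k)^{(n+1)\theta}$. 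Multiplying by $\delta(z)^{(n+1)(1-\theta)}$ and estimating the ratio $\delta(w_k)/\delta(z)$ through the comparison $\bigl|\mlog\delta(z)-\mlog\delta(w_k)\bigr|\le k_D(z,w_k)+O(1)$ furnished by Theorems~\ref{th:1.bestiu} and~\ref{th:1.bestil}, the problem reduces to the uniform-in-$z$ boundedness of
\[
\sum_k e^{-\gamma\,k_D(z,w_k)}\,e^{\,2(n+1)|\theta-1|\,k_D(z,w_k)}\;.
\]
Grouping the lattice points into the Kobayashi annuli $\{k:m\le k_D(z,w_k)<m+1\}$ and bounding their cardinality by the volume estimate (the disjoint balls $B_D(w_k,r/2)$ pack into $B_D(z,m+1)$), this becomes a geometric-type series whose convergence yields simultaneously the finiteness of $B\mu$ and the desired bound.

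The hard part will be precisely this last estimate: producing the off-diagonal decay of $|k_z(w)|^2$ with a rate $\gamma$ large enough, \emph{uniformly in $z$}, to dominate both the volume ratio $(\delta(w_k)/\delta(z))^{(n+1)(\theta-1)}$ and the annular point count. This is where strong pseudoconvexity enters essentially, through the sharp boundary behaviour of the Bergman kernel, and it is also the step that governs for which $\theta$ the series converges. A more economical route in the overlapping parameter range would bypass the kernel gymnastics: by Theorem~\ref{th:3.Carltheta} the $\theta$-Carleson condition is equivalent to $\mu$ being a $p$-Carleson measure for a weighted Bergman space, and a standard reproducing-kernel testing argument---evaluating the embedding $A^2(D,\beta)\hookrightarrow L^2(\mu)$ on the normalized kernels $k_z$, in the spirit of Remark~\ref{rem:3.BK}---identifies boundedness of the weighted Berezin transform with that Carleson condition; combined with Theorem~\ref{th:3.Carltheta} this gives the equivalence directly.
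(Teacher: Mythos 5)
Your first implication (bounded weighted Berezin transform implies $\theta$-Carleson) is the paper's own argument for that half, with one genuine inaccuracy: the lower bound $|k_{z_0}(w)|^2\succeq\delta(z_0)^{-(n+1)}$ on $B_D(z_0,r)$ is furnished by Lemma~\ref{th:3.lemma2} only when $\delta(z_0)<\delta_r$, and it cannot hold with a constant uniform over \emph{all} $z_0\in D$ as you claim: away from the boundary the Bergman kernel of a strongly pseudoconvex domain may have zeros, so $k_{z_0}$ can vanish inside $B_D(z_0,r)$ and no near-diagonal two-sided comparison is available there. The paper closes the remaining region in one line using only finiteness of $\mu$: if $\delta(z_0)\ge\delta_r$ then $\mu\bigl(B_D(z_0,r)\bigr)\le\mu(D)\preceq\delta_r^{(n+1)\theta}\le\delta(z_0)^{(n+1)\theta}\preceq\nu\bigl(B_D(z_0,r)\bigr)^\theta$ by Theorem~\ref{th:1.volume}. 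Add that case and this half is complete.

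For the substantial direction your primary route has a real gap, and not only because the decay $|k_z(w)|^2\preceq\delta(w)^{-(n+1)}\E^{-\gamma k_D(z,w)}$ is taken on faith (it appears nowhere in this paper's toolkit, and proving it uniformly is comparable in difficulty to the theorem itself). Even granting it with the best possible rate $\gamma=2(n+1)$ --- the value an explicit computation gives on $B^n$ --- the bookkeeping does not close on the stated range: the $m$-th Kobayashi annulus contains roughly $\E^{2nm}$ lattice points (on $B^n$ the invariant volume of a ball of radius $m$ grows like $\E^{2nm}$), and after your decoupled bound $\bigl(\delta(w_k)/\delta(z)\bigr)^{(n+1)(\theta-1)}\preceq\E^{2(n+1)|\theta-1|m}$ from Lemma~\ref{th:3.lemma5}, the series converges only when $2(n+1)>2n+2(n+1)|\theta-1|$, i.e.\ for $|\theta-1|<\frac{1}{n+1}$; for $\theta$ near $2$, where the result still holds, it diverges. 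The loss comes from separating the annular count from the weight ratio. The paper never sums: its proof is verbatim your closing ``more economical route''. By Theorem~\ref{th:3.Carltheta} with $p=2$, the $\theta$-Carleson property is the continuity of the embedding $A^2\bigl(D,(n+1)(\theta-1)\bigr)\hookrightarrow L^2(\mu)$, and testing it on the normalized kernels gives
\[
B\mu(z)=\int_D|k_z(w)|^2\,d\mu(w)\preceq\|k_z\|^2_{2,(n+1)(\theta-1)}\preceq\delta(z)^{(n+1)(\theta-1)}\;,
\]
the last step by Corollary~\ref{th:3.coruno}, that is by the integral estimate of Theorem~\ref{th:3.thm}, which retains exactly the correlation between kernel decay and weight that your lattice sum discards. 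So promote that final remark to the proof and drop the lattice computation; note also that both this route and the paper's proof, with the statements as given here, literally cover $\theta\in\bigl(1-\frac{1}{n+1},2\bigr)$, the full claim for $\theta>0$ resting on the more general statements of \cite{ARS}.
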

 
 \begin{remark}
 This is just a small selection of the results contained in \cite{ARS}. There one can find statements also for $p=1$ or $p=\infty$, for other values of~$\theta$, and on the mapping properties of Toeplitz operators on weighted Bergman spaces. Furthermore, there it is also shown that $T_\mu$ is a compact operator from $A^p(D)$ into $A^r(D)$ if and only if $\mu$ is a vanishing $p$-Carleson measure for $A^p\bigl(D,(n+1)(\frac{1}{p}-\frac{1}{r})\bigr)$; that $\mu$ is a vanishing $p$-Carleson measure for $A^p\bigl(D,(n+1)(\theta-1)\bigr)$ if and only if $\mu$ is a vanishing $\theta$-Carleson measure; and that $\mu$ is a vanishing $\theta$-Carleson measure if and only if 
 $\delta^{(n+1)(1-\theta)}(z)B\mu(z)\to 0$ as $z\to\de D$.
  \end{remark}
 
\begin{remark}
The condition ``$p$-Carleson" is independent of any radius $r>0$, while the condition ``$\theta$-Carleson" does not depend on $p$. Theorem~\ref{th:3.Carltheta} thus implies that if $\mu$ satisfies (\ref{eq:3.thetaC}) for some $r>0$ then it satisfies the same condition (with possibly different constants) for all $r>0$; and that if $\mu$ is 
$p$-Carleson for $A^p\bigl(D,(n+1)(\theta-1)\bigr)$ for some $1<p<\infty$ then it is $p$-Carleson for $A^p\bigl(D,(n+1)(\theta-1)\bigr)$ for all $1<p<\infty$.
 \end{remark}
 
 In the next subsection we shall describe the proofs; we end this subsection showing why these results give a positive answer to the question raised by \v Cu\v ckovi\'c and McNeal.
 
Assume that $T_{\delta^\beta}$ maps $L^p(D)$ (and hence $A^p(D)$) into $A^{p+G}(D)$. By Theorem~\ref{th:3.ToepCarl} $\delta^\beta\mu$ must be a $p$-Carleson measure for 
$A^p\bigl(D,(n+1)(\frac{1}{p}-\frac{1}{p+G})\bigr)$. By Theorem~\ref{th:3.Carltheta} this can happen
if and only if $\delta^\beta\nu$ is a $\theta$-Carleson measure, where 
\begin{equation}
\theta=1+\frac{1}{p}-\frac{1}{p+G}\;;
\label{eq:3.thetaG}
\end{equation}
notice that $1\le\theta<2$ because $p>1$ and $G\ge 0$. So we need to understand when $\delta^\beta\nu$ is $\theta$-Carleson. For this we need the following

\begin{lemma}
\label{th:3.lemma5}
Let $D\subset\subset\C^n$ be a strongly pseudoconvex domain with $C^2$ boundary, Then there exists $C>0$ such that for every $z_0\in D$
and $r>0$ one has
\[
\forall{z\in B_D(z_0, r)}\ \ \ \ 
C\E^{2r}\, \delta(z_0)\ge \delta(z)\ge \frac{\E^{-2r}}{C}\delta(z_0)\;.
\]
\end{lemma}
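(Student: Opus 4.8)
The plan is to convert the claimed multiplicative comparison of $\delta(z)$ and $\delta(z_0)$ into an additive comparison by applying $\mlog$. Since $\mlog=\frac12\log$ is strictly increasing, the two desired inequalities are together equivalent to
\[
\bigl|\mlog\delta(z)-\mlog\delta(z_0)\bigr|\le r+\mlog C
\]
for all $z\in B_D(z_0,r)$, where we are still free to choose $C>0$ at the end. So it will suffice to bound $|\mlog\delta(z)-\mlog\delta(z_0)|$ by $k_D(z_0,z)$ plus a constant independent of both points, and then recall that $z\in B_D(z_0,r)$ means $k_D(z_0,z)<r$.

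The key idea is to fix once and for all an auxiliary reference point $p_0\in D$ and apply the two boundary estimates with base point $p_0$. Theorem~\ref{th:1.bestiu} (valid for any bounded $C^2$ domain) and Theorem~\ref{th:1.bestil} (where strong pseudoconvexity is used) then provide constants $c_1,c_2\in\R$ depending only on $D$ and $p_0$ — hence now absolute — such that
\[
c_2-\mlog\delta(w)\le k_D(p_0,w)\le c_1-\mlog\delta(w)
\]
for every $w\in D$; equivalently $k_D(p_0,w)-c_1\le -\mlog\delta(w)\le k_D(p_0,w)-c_2$, uniformly in $w$. In other words $-\mlog\delta$ agrees with the Kobayashi distance from $p_0$ up to a bounded additive error.

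With this in hand the estimate is immediate: subtracting the two-sided bound for $w=z_0$ from that for $w=z$ gives
\[
\mlog\delta(z_0)-\mlog\delta(z)\le\bigl(k_D(p_0,z)-k_D(p_0,z_0)\bigr)+(c_1-c_2)\le k_D(z_0,z)+(c_1-c_2),
\]
where the final step is the triangle inequality $k_D(p_0,z)-k_D(p_0,z_0)\le k_D(z_0,z)$. Exchanging the roles of $z$ and $z_0$ yields the same bound for $\mlog\delta(z)-\mlog\delta(z_0)$, so $|\mlog\delta(z)-\mlog\delta(z_0)|\le k_D(z_0,z)+(c_1-c_2)<r+(c_1-c_2)$; setting $C=\E^{2(c_1-c_2)}$ (so that $\mlog C=c_1-c_2$) and exponentiating produces exactly the two asserted inequalities.

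The only genuine subtlety, and the single point worth stressing, is uniformity in $z_0$: the constants in Theorems~\ref{th:1.bestiu} and~\ref{th:1.bestil} depend on the chosen base point, so one cannot simply take $z_0$ itself as base point and expect the constant to remain fixed as $z_0$ ranges over $D$. Anchoring both estimates at a single auxiliary point $p_0$ and transferring to $z_0$ through the triangle inequality for $k_D$ is precisely what removes this dependence; everything else is a one-line manipulation of logarithms.
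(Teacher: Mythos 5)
Your proof is correct and is essentially the paper's argument: the paper likewise fixes an auxiliary point $w_0\in D$, applies the lower estimate of Theorem~\ref{th:1.bestil} at $z$ and the upper estimate of Theorem~\ref{th:1.bestiu} at $z_0$ (both anchored at $w_0$), and transfers via the triangle inequality $k_D(w_0,z)\le k_D(z_0,z)+k_D(z_0,w_0)<r+k_D(z_0,w_0)$. Your reformulation of the two estimates as ``$-\mlog\delta$ equals $k_D(p_0,\cdot)$ up to a bounded additive error'' is just a cleaner packaging of the same computation, and your emphasis on anchoring at a fixed base point to get uniformity in $z_0$ is exactly the point of the paper's proof.
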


\begin{proof} 
Let us fix $w_0\in D$. Then Theorems~\ref{th:1.bestiu} and~\ref{th:1.bestil} yield $c_0$, $C_0>0$ such that
\begin{eqnarray*}
c_0-\mlog \delta(z)&\le& k_D(w_0,z)\le k_D(z_0,z)+k_D(z_0,w_0)\\
&\le&
r+C_0-\mlog \delta(z_0)\;,
\end{eqnarray*}
for all $z\in B_D(z_0, r)$, and hence
\[
e^{2(c_0-C_0)}\delta(z_0)\le \E^{2r}\delta(z)\;.
\]
The left-hand inequality is obtained in the same way reversing the roles of~$z_0$ and~$z$.
\qed
\end{proof}

\begin{corollary}
\label{th:3.deltaCarl}
Let $D\subset\subset\C^n$ be a strongly pseudoconvex domain with $C^2$ boundary, 
Given $\beta>0$, put $\nu_\beta=\delta^\beta\nu$. Then $\nu_\beta$ is $\theta$-Carleson
if and only if $\beta\ge(n+1)(\theta-1)$. 
\end{corollary}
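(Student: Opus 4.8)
The plan is to reduce the $\theta$-Carleson condition for $\nu_\beta$ to an elementary comparison between powers of the boundary distance $\delta(z_0)$, using the two estimates already at our disposal: Lemma~\ref{th:3.lemma5}, which says that $\delta$ is essentially constant on a Kobayashi ball of fixed radius, and Theorem~\ref{th:1.volume}, which compares the Lebesgue volume of such a ball with $\delta(z_0)^{n+1}$.

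First I would fix any radius $r>0$ and estimate $\nu_\beta\bigl(B_D(z_0,r)\bigr)$. By Lemma~\ref{th:3.lemma5} there is $C>0$ such that $\E^{-2r}\delta(z_0)/C\le\delta(z)\le C\E^{2r}\delta(z_0)$ for every $z\in B_D(z_0,r)$; raising to the power $\beta>0$ and integrating gives
\[
\nu_\beta\bigl(B_D(z_0,r)\bigr)=\int_{B_D(z_0,r)}\delta(z)^\beta\,\D\nu(z)\asymp\delta(z_0)^\beta\,\nu\bigl(B_D(z_0,r)\bigr),
\]
with comparison constants depending only on $r$, $\beta$ and $D$. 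Substituting this into the defining inequality (\ref{eq:3.thetaC}), I see that $\nu_\beta$ is $\theta$-Carleson if and only if there is $C_r>0$ with
\[
\delta(z_0)^\beta\le C_r\,\nu\bigl(B_D(z_0,r)\bigr)^{\theta-1}
\]
for all $z_0\in D$, after dividing the surviving factor $\nu(B_D(z_0,r))$ on the left into the right-hand side (both implications use the two-sided comparison above).

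Next I would feed in Theorem~\ref{th:1.volume}, i.e.\ $\nu(B_D(z_0,r))\asymp\delta(z_0)^{n+1}$, which turns the last inequality into
\[
\delta(z_0)^{\beta-(n+1)(\theta-1)}\le C'
\]
for all $z_0\in D$ and a suitable constant $C'$. Since $D$ is bounded, $\delta$ is bounded above by $\mathrm{diam}(D)$, while $\delta(z_0)\to0$ as $z_0\to\de D$; hence the boundedness of $\delta^{\,\beta-(n+1)(\theta-1)}$ over all of $D$ is equivalent to the exponent being nonnegative. Concretely, for the ``only if'' direction I let $z_0\to\de D$: were $\beta<(n+1)(\theta-1)$, the left-hand side would blow up, contradicting the bound. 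For the ``if'' direction, when $\beta\ge(n+1)(\theta-1)$ the exponent is $\ge0$, so $\delta^{\,\beta-(n+1)(\theta-1)}\le\bigl(\mathrm{diam}(D)\bigr)^{\,\beta-(n+1)(\theta-1)}$ and (\ref{eq:3.thetaC}) holds with an appropriate $C_r$. This yields exactly $\beta\ge(n+1)(\theta-1)$.

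I do not expect a genuine obstacle here: the whole argument is the bookkeeping of the two comparisons and tracking the sign of the exponent as $\delta(z_0)\to0$. The only point deserving a word of care is the smoothness hypothesis, since Theorem~\ref{th:1.volume} is stated for $C^\infty$ boundary while the corollary assumes only $C^2$; this is harmless, because the sole input actually used is the two-sided comparison $\nu(B_D(z_0,r))\asymp\delta(z_0)^{n+1}$, which can equally be obtained for $C^2$ strongly pseudoconvex domains from the Forstneri\v c--Rosay estimates of Theorems~\ref{th:1.bestia} and~\ref{th:1.bestib}.
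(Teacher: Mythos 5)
Your proof is correct and is essentially the paper's own argument: Lemma~\ref{th:3.lemma5} gives $\nu_\beta\bigl(B_D(z_0,r)\bigr)\asymp\delta(z_0)^\beta\,\nu\bigl(B_D(z_0,r)\bigr)$, Theorem~\ref{th:1.volume} then converts the $\theta$-Carleson condition into the boundedness of $\delta^{\beta-(n+1)(\theta-1)}$ on $D$, and the sign of the exponent decides. One caveat on your closing aside: the isotropic estimates of Theorems~\ref{th:1.bestia} and~\ref{th:1.bestib} do not by themselves yield the sharp anisotropic comparison $\nu\bigl(B_D(z_0,r)\bigr)\asymp\delta(z_0)^{n+1}$ (they essentially control Euclidean balls of radius comparable to $\delta(z_0)$, giving volume of the wrong order $\delta(z_0)^{2n}$), so the $C^2$ versus $C^\infty$ mismatch is genuinely inherited from Theorem~\ref{th:1.volume}---exactly as it is in the paper's own proof, which likewise invokes that theorem without comment.
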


\begin{proof}
Using Lemma~\ref{th:3.lemma5} we find that 
\begin{eqnarray*}
\frac{\E^{-2r}}{C} \delta(z_0)^{\beta}\nu\bigl(B_D(z_0, r)\bigr)&\le&\nu_\beta\bigl(B_D(z_0, r)\bigr)=\int_{B_D(z_0,r)}\delta(z)^\beta\,\D\nu(z)\\
&\le& C\E^{2r} \delta(z_0)^{\beta}\nu\bigl(B_D(z_0, r)\bigr)
\end{eqnarray*}
for all $z_0\in D$. Therefore $\nu_\beta$ is $\theta$-Carleson if and only if 
\[
\delta(z_0)^\beta\le C_1\nu\bigl(B_D(z_0,r)\bigr)^{\theta-1}
\]
for some $C_1>0$. Recalling Theorem~\ref{th:1.volume} we see that this is equivalent to requiring
$\beta\ge (n+1)(\theta-1)$, and we are done.
\qed
\end{proof}

In our case, $\theta$ is given by (\ref{eq:3.thetaG}); therefore $\beta\ge(n+1)(\theta-1)$ if and only if 
\[
\beta\ge (n+1)\left(\frac{1}{p}-\frac{1}{p+G}\right)\;.
\]
Rewriting this in term of $G$ we get
\[
G\le \frac{p^2}{\frac{n+1}{\beta}-p}\;,
\]
proving that the exponent in Theorem~\ref{th:3.CMcN} is the best possible, as claimed.
Furthermore, $G>0$ if and only if
\[
\frac{\beta}{n+1}<\frac{1}{p}\Leftrightarrow 1-\frac{\beta}{n+1}>1-\frac{1}{p}
\Leftrightarrow \frac{n+1}{n+1-\beta}<\frac{p}{p-1}\;,
\]
and we have also recovered condition (\ref{eq:3.CMcN}) of Theorem~\ref{th:3.CMcN}. 

Corollary~\ref{th:3.deltaCarl} provides examples of $\theta$-Carleson measures. A completely different class of examples is provided by Dirac masses distributed along uniformly discrete sequences. 

\begin{definition}
Let $(X,d)$ be a metric space.  A sequence $\Gamma=\{x_j\}\subset
X$ is \emph{uniformly discrete} if there exists $\eps>0$
such that $d(x_j,x_k)\ge\eps$ for all $j\ne k$. 
\end{definition}

Then it is possible to prove the following result:

\begin{theorem}[\cite{ARS}]
\label{th:3.dtre} 
Let $D\subset\subset\C^n$ be a bounded strongly pseudoconvex domain with $C^\infty$ boundary, considered as a metric space with the Kobayashi distance, and choose $1-\frac{1}{n+1}<\theta<2$. Let $\Gamma=\{z_j\}_{j\in\mathbb{N}}$ be a sequence
in~$D$. Then $\Gamma$ is a finite union of uniformly discrete sequences if and only if $\sum_j \delta(z_j)^{(n+1)\theta}\delta_{z_j}$ is a $\theta$-Carleson measure, where $\delta_{z_j}$ is the Dirac measure in~$z_j$.
\end{theorem}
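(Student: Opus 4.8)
The plan is to reduce both conditions to a single purely metric statement: there exists $r>0$ such that the count $N(z_0)=\#\{j\mid z_j\in B_D(z_0,r)\}$ (indices counted with multiplicity) is uniformly bounded in $z_0\in D$. Write $\mu=\sum_j\delta(z_j)^{(n+1)\theta}\delta_{z_j}$ and fix $r>0$. Theorem~\ref{th:1.volume} gives $\nu\bigl(B_D(z_0,r)\bigr)\asymp\delta(z_0)^{n+1}$, while Lemma~\ref{th:3.lemma5} gives $\delta(z)\asymp\delta(z_0)$ for every $z\in B_D(z_0,r)$, all constants depending only on $r$ and $D$. Combining these I obtain $\nu\bigl(B_D(z_0,r)\bigr)^\theta\asymp\delta(z_0)^{(n+1)\theta}$ and $\mu\bigl(B_D(z_0,r)\bigr)=\sum_{z_j\in B_D(z_0,r)}\delta(z_j)^{(n+1)\theta}\asymp\delta(z_0)^{(n+1)\theta}\,N(z_0)$. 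Hence, for this fixed $r$, the local Carleson inequality $\mu\bigl(B_D(z_0,r)\bigr)\le C_r\,\nu\bigl(B_D(z_0,r)\bigr)^\theta$ holds for all $z_0$ if and only if $\sup_{z_0}N(z_0)<\infty$. Since the $\theta$-Carleson property asks this inequality to hold for some $r$, the defining estimate for $\mu$ is equivalent to the existence of some $r$ with a uniform bound on $N(z_0)$.

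Next I would prove the combinatorial equivalence: $\Gamma$ is a finite union of uniformly discrete sequences if and only if $\sup_{z_0}N(z_0)<\infty$ for some $r>0$. For the forward direction, if $\Gamma=\Gamma_1\cup\cdots\cup\Gamma_m$ with each $\Gamma_i$ having separation constant $\eps_i$, set $\eps=\min_i\eps_i$ and $r=\eps/2$; any two distinct points of a fixed $\Gamma_i$ lying in $B_D(z_0,r)$ would be at Kobayashi distance $<2r=\eps\le\eps_i$, a contradiction, so each $\Gamma_i$ meets $B_D(z_0,r)$ in at most one point and $N(z_0)\le m$. For the converse, suppose $N(z_0)\le N$ for all $z_0$. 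On the index set form the graph joining $j\ne k$ whenever $k_D(z_j,z_k)<r$; the degree of each vertex $j$ is at most $N(z_j)-1\le N-1$, so a greedy coloring in the order $z_1,z_2,\dots$ uses at most $N$ colors. Each color class is $r$-separated, hence uniformly discrete, exhibiting $\Gamma$ as a union of at most $N$ uniformly discrete sequences.

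It remains to address the one place where finiteness of $\mu$ --- which is part of being a $\theta$-Carleson measure --- must be checked, and this is where the hypothesis $\theta>1-\frac1{n+1}$ enters; I expect this to be the main obstacle. Assuming $\Gamma$ is a finite union of uniformly discrete sequences, it suffices by additivity to bound $\sum_j\delta(z_j)^{(n+1)\theta}$ for a single uniformly discrete sequence with separation $\eps$. Group the points into dyadic shells $S_k=\{z\in D\mid 2^{-k-1}\le\delta(z)<2^{-k}\}$. The balls $B_D(z_j,\eps/2)$ are pairwise disjoint, each of volume $\asymp\delta(z_j)^{n+1}\asymp 2^{-k(n+1)}$ by Theorem~\ref{th:1.volume}, and by Lemma~\ref{th:3.lemma5} they all lie in an enlarged shell $\{c\,2^{-k}\le\delta\le C\,2^{-k}\}$, whose Euclidean volume is $\lesssim 2^{-k}$ because $\partial D$ is a compact smooth hypersurface with a tubular neighborhood. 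Disjointness therefore forces $\#(\Gamma\cap S_k)\lesssim 2^{-k}/2^{-k(n+1)}=2^{kn}$, so that $\sum_{z_j\in S_k}\delta(z_j)^{(n+1)\theta}\lesssim 2^{kn}2^{-k(n+1)\theta}=2^{k(n-(n+1)\theta)}$. The geometric series over $k\ge0$ converges precisely when $n-(n+1)\theta<0$, i.e. when $\theta>\frac{n}{n+1}=1-\frac1{n+1}$, giving $\mu(D)<\infty$. Assembling the three ingredients proves the theorem: a $\theta$-Carleson $\mu$ forces the local estimate, hence the uniform count bound, hence the finite-union property; conversely the finite-union property yields both the count bound (hence the local estimate) and, via the shell computation, the finiteness of $\mu$, so that $\mu$ is genuinely $\theta$-Carleson. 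The upper bound $\theta<2$ plays no role in this argument and is inherited from the companion Theorems~\ref{th:3.Carltheta} and~\ref{th:3.thetaBerez}.
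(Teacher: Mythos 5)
The paper you are working from states this theorem without proof, quoting it from \cite{ARS}, so there is no in-text argument to compare against; judged on its own, your proof is correct. The reduction in your first step is sound: for fixed $r>0$, Theorem~\ref{th:1.volume} and Lemma~\ref{th:3.lemma5} give $\nu\bigl(B_D(z_0,r)\bigr)^\theta\asymp\delta(z_0)^{(n+1)\theta}$ and $\mu\bigl(B_D(z_0,r)\bigr)\asymp\delta(z_0)^{(n+1)\theta}N(z_0)$ uniformly in $z_0$, so the defining inequality (\ref{eq:3.thetaC}) at radius $r$ is equivalent to $\sup_{z_0}N(z_0)<\infty$ (noting, as a harmless addendum, that if some ball contains infinitely many $z_j$ both sides are infinite, so the dichotomy is clean even counting multiplicities). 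The combinatorial step is also right: the packing bound gives $N(z_0)\le m$ in one direction, and the greedy colouring of a graph of maximal degree at most $N-1$ legitimately produces at most $N$ colour classes, each $r$-separated. You correctly identified that the only genuinely analytic point is the finiteness of $\mu$, which is part of the definition of $\theta$-Carleson measure in this paper, and your dyadic-shell count $\#(\Gamma\cap S_k)\preceq 2^{kn}$ --- disjointness of the balls $B_D(z_j,\eps/2)$, each of volume $\asymp 2^{-k(n+1)}$ by Theorem~\ref{th:1.volume}, inside an enlarged shell of Euclidean volume $\preceq 2^{-k}$ --- is exactly where the hypothesis $\theta>1-\frac{1}{n+1}$ enters. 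Two cosmetic caveats: the bound $\nu\bigl(\{\delta\le t\}\bigr)\preceq t$ holds only for $t$ below the tubular-neighbourhood radius of $\de D$, so the finitely many shells with $2^{-k}$ large (including any with $k<0$ when $\mathrm{diam}(D)>1$) must be absorbed into the constant, as you can do since each such shell contains boundedly many points by the same disjointness argument. Finally, your remark that $\theta<2$ is never used is accurate: with the geometric definition (\ref{eq:3.thetaC}) the equivalence holds for all $\theta>1-\frac{1}{n+1}$, and the upper restriction in the statement is inherited from the companion equivalences (Theorems~\ref{th:3.Carltheta} and~\ref{th:3.thetaBerez}), where it is genuinely needed. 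Your route --- uniform count in Kobayashi balls as the pivot, plus a shell summation for finiteness --- is in substance the counting argument of \cite{ARS} and \cite{AS}, so nothing essential is missing.
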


\subsection{Proofs}
\label{subsec:3.2}

In this section we shall prove Theorems~\ref{th:3.ToepCarl}, \ref{th:3.Carltheta} and~\ref{th:3.thetaBerez}. To do so we shall need a few technical facts on the Bergman kernel and on the Kobayashi distance. To simplify statements and proofs, let us introduce the following notation.

\begin{definition}
Let $D\subset\C^n$ be a domain. Given two non-negative functions $f$, ~$g\colon D\to\R^+$ we shall write $f\preceq g$ or $g\succeq f$
to say that there is $C>0$ such that $f(z)\le C g(z)$ for all $z\in D$. The constant $C$ is 
independent of~$z\in D$, but it might depend on other parameters ($r$, $\theta$, etc.). 
\end{definition}

%

The first technical fact we shall need is an integral estimate on the Bergman kernel:

\begin{theorem}[\cite{Li}, \cite{McS}, \cite{ARS}]
\label{th:3.thm}
Let $D\subset\subset\C^n$ be a strongly pseudoconvex domain with $C^\infty$ boundary. Take $p\ge 1$ and $\beta>-1$. Then  
\[
\int_D|K(w,z_0)|^p\delta(w)^\beta\,\D\nu(w)\preceq
\cases{
 \delta(z_0)^{\beta-(n+1)(p-1)}&if $-1<\beta<(n+1)(p-1)$,\cr
|\log\delta(z_0)|& if $\beta=(n+1)(p-1)$,\cr
1& if $\beta>(n+1)(p-1)$,\cr}
\]
for all $z_0\in D$.
\end{theorem}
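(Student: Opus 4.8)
The plan is to reduce the statement to a single scalar integral in the boundary-distance variable and to read off the three regimes from an elementary comparison of exponents. The engine of the whole argument is a pointwise bound for the Bergman kernel: for a strongly pseudoconvex $C^\infty$ domain one has, uniformly in $z_0,w\in D$,
\[
|K(w,z_0)|\preceq \bigl(\delta(z_0)+\delta(w)+|\Phi(z_0,w)|\bigr)^{-(n+1)},
\]
where $\Phi$ is a Fefferman-type function with $\Phi(z,z)$ comparable to $\delta(z)$ and with $\mathrm{Re}\,\Phi(z_0,w)$ controlling the complex-tangential separation of $z_0$ and $w$. This estimate (the content imported from \cite{Li}, \cite{McS}) is the genuine technical input, and establishing it — through the asymptotic expansion of $K$ — is the main obstacle; everything afterwards is bookkeeping. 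I would only use that the quasi-distance $\delta(z_0)+\delta(w)+|\Phi(z_0,w)|$ governs both the kernel and, via the Forstneri\v c--Rosay estimates (Theorems~\ref{th:1.bestia} and~\ref{th:1.bestib}), the Kobayashi distance, so that the implied constants can be taken uniform in $z_0$.

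First I would fix $z_0$ and discard the part of the integral over $\{w:\delta(w)\ge\delta_*\}$ for a small fixed $\delta_*$: there $K(\cdot,z_0)$ is bounded and the contribution is $O(1)$, absorbed into the third case. On the tubular neighborhood $\{w:\delta(w)<\delta_*\}$ I would introduce boundary ``box'' coordinates $w\leftrightarrow(\rho,s,y')$, with $\rho=\delta(w)$ the normal variable, $s$ the remaining real normal direction, and $y'\in\C^{n-1}$ the complex-tangential directions, in which $\delta(z_0)+\delta(w)+|\Phi(z_0,w)|$ is comparable to $\delta(z_0)+\rho+|s|+\|y'\|^2$. The target integral then becomes, up to the uniform constants,
\[
\int_0^{\delta_*}\!\!\int_{|s|<c}\!\!\int_{\|y'\|<c}\frac{\rho^{\beta}\,dy'\,ds\,d\rho}{\bigl(\delta(z_0)+\rho+|s|+\|y'\|^2\bigr)^{(n+1)p}}\;.
\]

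Next I would integrate out the tangential and the extra normal variables. The $y'$-integral (over $2n-2$ real dimensions, radial element comparable to $u^{2n-3}\,du$ with $u=\|y'\|$) collapses the denominator exponent, yielding $\bigl(\delta(z_0)+\rho+|s|\bigr)^{\,n-1-(n+1)p}$; the subsequent $s$-integral turns this into $\bigl(\delta(z_0)+\rho\bigr)^{\,n-(n+1)p}$. In both steps the exponents are $<-1$ (since $(n+1)p>n$ for $p\ge1$), so each integral is dominated by its lower endpoint, which is exactly what makes the reduction clean. I am then left with the scalar integral $\int_0^{\delta_*}\rho^{\beta}\bigl(\delta(z_0)+\rho\bigr)^{\,n-(n+1)p}\,d\rho$, which I would split at $\rho=\delta(z_0)$: on $\rho<\delta(z_0)$ the second factor is comparable to $\delta(z_0)^{\,n-(n+1)p}$ and the piece evaluates (using $\beta>-1$) to $\delta(z_0)^{\beta-(n+1)(p-1)}$, while on $\rho>\delta(z_0)$ one is left with $\int_{\delta(z_0)}^{\delta_*}\rho^{\beta-(n+1)(p-1)-1}\,d\rho$.

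The trichotomy is now immediate. If $\beta<(n+1)(p-1)$ the last integrand has exponent $<-1$, so the integral is dominated by $\rho=\delta(z_0)$ and gives $\delta(z_0)^{\beta-(n+1)(p-1)}$; if $\beta=(n+1)(p-1)$ the integrand is $\rho^{-1}$ and one obtains $\log\bigl(\delta_*/\delta(z_0)\bigr)$, comparable to $|\log\delta(z_0)|$; if $\beta>(n+1)(p-1)$ the integral converges at $0$ and is $O(1)$. Finally I would record that Theorem~\ref{th:1.volume}, Lemma~\ref{th:3.lemma5}, and the boundary estimates (Theorems~\ref{th:1.bestiu} and~\ref{th:1.bestil}) make $\delta$ and the relevant box volumes behave as used uniformly in $z_0$, so that every constant is independent of $z_0$, as required; the degenerate case $n=1$, where there is no tangential variable $y'$, is handled by simply omitting the first integration.
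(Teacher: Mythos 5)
Your proposal is correct, but there is nothing in the paper to compare it against: the paper states this theorem without proof, importing it from \cite{Li}, \cite{McS} and \cite{ARS}. What you wrote is essentially the standard argument from those references: the pointwise Fefferman-type bound $|K(w,z_0)|\preceq\bigl(\delta(z_0)+\delta(w)+|\Phi(z_0,w)|\bigr)^{-(n+1)}$, passage to box coordinates in which the quasi-distance is comparable to $\delta(z_0)+\rho+|s|+\|y'\|^2$, and successive one-variable integrations. Your exponent bookkeeping is right: the $y'$-integration requires $(n+1)p>n-1$ and the $s$-integration requires $(n+1)p>n$, both automatic for $p\ge1$; the split of $\int_0^{\delta_*}\rho^\beta\bigl(\delta(z_0)+\rho\bigr)^{n-(n+1)p}\,\D\rho$ at $\rho=\delta(z_0)$ uses $\beta>-1$ exactly where you invoke it and produces the three regimes, including the logarithm in the critical case; and you correctly identify the pointwise kernel estimate (Fefferman, or Boutet de Monvel--Sj\"ostrand, or Kerzman) as the genuinely hard imported input, so your proof is only as self-contained as that is. Two points to tidy in a final write-up: first, the box coordinates must be centered at the boundary projection of~$z_0$, and the part of the tubular neighborhood away from that projection has quasi-distance bounded below and hence contributes $O(1)$ --- your integral over $|s|<c$, $\|y'\|<c$ covers only the near region, so this should be said explicitly; second, the $O(1)$ contributions are absorbed in the first and third cases because the right-hand sides there are uniformly bounded below, but in the case $\beta=(n+1)(p-1)$ the bound $|\log\delta(z_0)|$ degenerates where $\delta(z_0)$ is comparable to~$1$, so the statement should be read (as is tacit in the literature, and harmless after normalizing $\delta$) for $\delta(z_0)$ small, the interior range being trivial since both sides are then uniformly bounded. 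Finally, the appeal to the Forstneri\v c--Rosay estimates and Theorem~\ref{th:1.volume} is superfluous: the uniformity of the constants comes from the compactness of $\de D$ and the smooth dependence of the boundary coordinates, not from Kobayashi-distance estimates.
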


In particular, we have the following estimates on the weighted norms of the Bergman kernel and of the normalized Bergman kernel (see, e.g., \cite{ARS}):

\begin{corollary}
\label{th:3.coruno}
Let $D\subset\subset\C^n$ be a strongly pseudoconvex domain with $C^\infty$ boundary. Take $p>1$ and $-1<\beta<(n+1)(p-1)$. Then 
\[
\|K(\cdot,z_0)\|_{p,\beta}\preceq  \delta(z_0)^{\frac{\beta}{p}-\frac{n+1}{p'}}\quad\mathrm{and}\quad
\|k_{z_0}\|_{p,\beta}\preceq\delta(z_0)^{\frac{n+1}{2}+\frac{\beta}{p}-\frac{n+1}{p'}}
\]
for all $z_0\in D$, where $p'>1$ is the conjugate exponent of~$p$.
\end{corollary}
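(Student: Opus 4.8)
The plan is to read both estimates off the integral bound of Theorem~\ref{th:3.thm}, the only genuinely nontrivial input being the size of the Bergman kernel on the diagonal. For the first estimate I would simply note that, by definition of the weighted norm, the function whose norm we are taking is $z\mapsto K(z,z_0)$, so that
\[
\|K(\cdot,z_0)\|_{p,\beta}^p=\int_D|K(z,z_0)|^p\delta(z)^\beta\,\D\nu(z)\;,
\]
which is exactly the integral estimated in Theorem~\ref{th:3.thm}. Since the hypothesis places us in the regime $-1<\beta<(n+1)(p-1)$, that theorem gives $\|K(\cdot,z_0)\|_{p,\beta}^p\preceq\delta(z_0)^{\beta-(n+1)(p-1)}$; taking $p$-th roots and using the identity $(n+1)(p-1)/p=(n+1)(1-1/p)=(n+1)/p'$ produces precisely the claimed exponent $\frac\beta p-\frac{n+1}{p'}$.

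For the normalized kernel I would factor out the positive constant $\sqrt{K(z_0,z_0)}$ from the definition $k_{z_0}=K(\cdot,z_0)/\sqrt{K(z_0,z_0)}$, obtaining
\[
\|k_{z_0}\|_{p,\beta}=\frac{\|K(\cdot,z_0)\|_{p,\beta}}{\sqrt{K(z_0,z_0)}}\preceq\frac{\delta(z_0)^{\frac\beta p-\frac{n+1}{p'}}}{\sqrt{K(z_0,z_0)}}\;.
\]
Thus the second estimate follows from the first the moment one knows the diagonal lower bound $K(z_0,z_0)\succeq\delta(z_0)^{-(n+1)}$, for this gives $1/\sqrt{K(z_0,z_0)}\preceq\delta(z_0)^{(n+1)/2}$, which is exactly the extra factor $\delta(z_0)^{(n+1)/2}$ that appears in the statement.

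Everything therefore reduces to showing that $K(z_0,z_0)$ is comparable to $\delta(z_0)^{-(n+1)}$, i.e.\ both $K(z_0,z_0)\preceq\delta(z_0)^{-(n+1)}$ and $K(z_0,z_0)\succeq\delta(z_0)^{-(n+1)}$. The upper bound is free: by Remark~\ref{rem:3.BK} we have $K(z_0,z_0)=\|K(\cdot,z_0)\|_2^2=\int_D|K(w,z_0)|^2\,\D\nu(w)$, which is the case $p=2$, $\beta=0$ of Theorem~\ref{th:3.thm} (here $0<n+1=(n+1)(p-1)$, so we land in the first case) and gives $K(z_0,z_0)\preceq\delta(z_0)^{-(n+1)}$. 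The lower bound is where the real work lies, and I expect it—rather than the bookkeeping of exponents—to be the only delicate point. The natural route is through the extremal characterization $K(z_0,z_0)=\sup\{|f(z_0)|^2:\|f\|_2\le1\}$: one must exhibit, for each $z_0$, a test function $f\in A^2(D)$ with $|f(z_0)|^2/\|f\|_2^2\succeq\delta(z_0)^{-(n+1)}$. This is done by solving a suitable $\bar\partial$-problem (via Hörmander or Ohsawa--Takegoshi type $L^2$ estimates) so as to globalize a holomorphic function equal to $1$ near $z_0$ while keeping its $L^2$-norm comparable to the anisotropic volume $\delta(z_0)^{n+1}$; that this is the correct normalizing scale is precisely the content of the volume estimate in Theorem~\ref{th:1.volume}. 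Alternatively one may simply invoke the classical diagonal asymptotics of the Bergman kernel of a smoothly bounded strongly pseudoconvex domain.
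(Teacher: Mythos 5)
Your proof is correct, and the reduction is exactly the one the paper intends: Corollary~\ref{th:3.coruno} is stated as an immediate consequence of Theorem~\ref{th:3.thm}, with the first norm estimate obtained by raising the weighted norm to the $p$-th power, landing in the regime $-1<\beta<(n+1)(p-1)$, and doing the exponent arithmetic $(n+1)(p-1)/p=(n+1)/p'$ just as you do; the second follows by factoring out $\sqrt{K(z_0,z_0)}$. The one place where you depart from the paper is the diagonal bound. First, a small economy you miss: only the lower bound $K(z_0,z_0)\succeq\delta(z_0)^{-(n+1)}$ is needed, so your upper bound via the case $p=2$, $\beta=0$ of Theorem~\ref{th:3.thm}, while correct, is superfluous. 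Second, and more substantively, the paper gets this lower bound for free: Lemma~\ref{th:3.lemma2} (quoted from Li and Abate--Saracco) gives $|K(z,z_0)|\succeq\delta(z_0)^{-(n+1)}$ for all $z\in B_D(z_0,r)$ when $\delta(z_0)<\delta_r$, so evaluating at $z=z_0$ and handling the compact region $\{\delta(z_0)\ge\delta_r\}$ by positivity and continuity of $K$ on the diagonal yields the estimate for all $z_0\in D$; this is precisely Remark~\ref{rem:3.nuova}. Your alternative---the extremal characterization $K(z_0,z_0)=\sup\{|f(z_0)|^2:\|f\|_2\le1\}$ together with an $L^2$ $\bar\partial$-construction of a test function normalized by the anisotropic volume $\delta(z_0)^{n+1}$ of Theorem~\ref{th:1.volume}, or an appeal to the classical diagonal asymptotics---is a legitimate and essentially self-contained route, and it correctly identifies where the analytic content lies; what the paper's route buys is that no new construction is needed, since the pointwise kernel estimate on Kobayashi balls is already part of the machinery used throughout Section~\ref{sec:3} (e.g., in the proofs of Theorems~\ref{th:3.Carltheta} and~\ref{th:3.thetaBerez}), whereas your route would reprove a stated input from scratch.
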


We shall also need a statement relating the Bergman kernel with Kobayashi balls. 

\begin{lemma}[\cite{Li}, \cite{AS}]
\label{th:3.lemma2}
Let $D\subset\subset\C^n$ be a strongly pseudoconvex domain with $C^\infty$ boundary.
Given $r>0$ there is $\delta_r>0$ such that if $\delta(z_0)<\delta_r$ then
\[
\forall\,z\in B_D(z_0,r)\qquad \min\{|K(z,z_0)|,|k_{z_0}(z)|^2\}\succeq \delta(z_0)^{-(n+1)}\;.
\]
\end{lemma}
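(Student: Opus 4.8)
The plan is to reduce everything to the behaviour of the Bergman kernel on and near the diagonal, and then isolate the one genuinely hard input. Throughout I abbreviate $K_w:=K(\cdot,w)\in A^2(D)$ and write $\langle f,g\rangle=\int_D f\,\overline g\,\D\nu$ for the $A^2(D)$ inner product, so that the reproducing property gives $K(z,z_0)=\langle K_{z_0},K_z\rangle$, while $K(z_0,z_0)=\|K_{z_0}\|_2^2$ by Remark~\ref{rem:3.BK} and the normalized kernel is $k_w=K_w/\|K_w\|_2$. The two quantities to be bounded are linked by $|k_{z_0}(z)|^2=|K(z,z_0)|^2/K(z_0,z_0)$; hence it suffices to prove (writing $f\asymp g$ for $f\preceq g\preceq f$) the two facts (A) the off-diagonal bound $|K(z,z_0)|\succeq\delta(z_0)^{-(n+1)}$ on $B_D(z_0,r)$, and (B) the diagonal asymptotics $K(w,w)\asymp\delta(w)^{-(n+1)}$. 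Indeed (A) gives the first half directly, and combining (A) with the upper half of (B) yields $|k_{z_0}(z)|^2=|K(z,z_0)|^2/K(z_0,z_0)\succeq\delta(z_0)^{-2(n+1)}/\delta(z_0)^{-(n+1)}=\delta(z_0)^{-(n+1)}$.

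The ingredients in (B), and the off-diagonal \emph{upper} bound, are the easy part. The upper half of (B) is immediate from Theorem~\ref{th:3.thm} with $p=2$ and $\beta=0$ (since $0<n+1$): $K(w,w)=\int_D|K(\cdot,w)|^2\,\D\nu\preceq\delta(w)^{-(n+1)}$. The lower half $K(w,w)\succeq\delta(w)^{-(n+1)}$ is the classical diagonal lower estimate; it follows from the extremal characterization $K(w,w)=\sup\{|f(w)|^2:\|f\|_2\le 1\}$ by inserting a test function concentrated at $w$ (built from the peak functions of Theorem~\ref{th:1.Graham}, or from Fefferman's boundary asymptotics). Moreover, on $B_D(z_0,r)$ Lemma~\ref{th:3.lemma5} gives $\delta(z)\asymp\delta(z_0)$, hence $K(z,z)\asymp K(z_0,z_0)\asymp\delta(z_0)^{-(n+1)}$; in particular the crude upper bound $|K(z,z_0)|\le\sqrt{K(z,z)K(z_0,z_0)}\preceq\delta(z_0)^{-(n+1)}$ already holds, so the real content of the lemma is the matching lower bound (A).

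I would then recast (A) in terms of normalized kernels. From the identity
\[
|K(z,z_0)|=|\langle K_{z_0},K_z\rangle|=\|K_{z_0}\|_2\,\|K_z\|_2\,|\langle k_{z_0},k_z\rangle|=\sqrt{K(z_0,z_0)K(z,z)}\;|\langle k_{z_0},k_z\rangle|
\]
and the fact (from (B) and Lemma~\ref{th:3.lemma5}) that the prefactor is $\asymp\delta(z_0)^{-(n+1)}$, we see that (A) is \emph{equivalent} to the assertion that the normalized kernels stay uniformly non-orthogonal along Kobayashi balls:
\[
|\langle k_{z_0},k_z\rangle|^2=\frac{|K(z,z_0)|^2}{K(z,z)K(z_0,z_0)}\ge c(r)>0\qquad\hbox{for all }z\in B_D(z_0,r),
\]
with $c(r)$ depending only on $r$; equivalently, the unit vectors $k_z$ and $k_{z_0}$ make an angle bounded away from $\pi/2$ when $k_D(z,z_0)\le r$.

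Proving this displayed inequality is the main obstacle. The plan is to control $1-|\langle k_{z_0},k_z\rangle|^2$ by the Bergman distance $\beta_D(z,z_0)$ (for the disk and the ball this quantity is exactly $\tanh^2\beta_D$, and in general it is dominated by an increasing function of $\beta_D$ that stays $<1$ on bounded arguments), and then to use that $B_D(z_0,r)$ has $\beta_D$-diameter bounded in terms of $r$ alone, by the comparability of the Kobayashi and Bergman distances on bounded strongly pseudoconvex domains. The key difficulty is precisely that one cannot replace this by naive Euclidean Cauchy/gradient estimates on $K(\cdot,z_0)$: Kobayashi balls near $\partial D$ are strongly anisotropic (of Euclidean size $\sim\delta(z_0)$ in the complex-normal direction but $\sim\delta(z_0)^{1/2}$ in complex-tangential directions), so a crude gradient bound overshoots $K(z_0,z_0)$ by a factor tending to $+\infty$. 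The estimate must therefore be carried out with the intrinsic geometry, resting on Fefferman-type boundary asymptotics of the Bergman kernel—in particular its non-vanishing near the diagonal—together with the comparison of invariant distances, which is exactly the deep analysis supplied by \cite{Li} and \cite{AS}. Granting the displayed bound, one recovers (A), and hence the lemma, by combining it with (B) and Lemma~\ref{th:3.lemma5} as above, choosing $\delta_r$ small enough that all the boundary estimates invoked apply.
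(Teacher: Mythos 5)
A preliminary structural remark: the paper itself gives no proof of this lemma --- it is imported verbatim from \cite{Li} and \cite{AS} --- so there is no internal argument to compare with, and your attempt has to stand on its own. The reductions you make are correct and genuinely clarifying: the identity $|k_{z_0}(z)|^2=|K(z,z_0)|^2/K(z_0,z_0)$, the upper diagonal bound $K(w,w)\preceq\delta(w)^{-(n+1)}$ from Theorem~\ref{th:3.thm} with $p=2$, $\beta=0$, the comparability $\delta(z)\asymp\delta(z_0)$ on $B_D(z_0,r)$ from Lemma~\ref{th:3.lemma5}, and hence the equivalence of the lemma with the uniform non-orthogonality bound $|\langle k_{z_0},k_z\rangle|^2\ge c(r)>0$ on Kobayashi balls near the boundary. (One caveat on the ingredients: the lower diagonal bound $K(w,w)\succeq\delta(w)^{-(n+1)}$, which your equivalence needs, is in the paper's own logic a \emph{consequence} of this lemma, cf.\ Remark~\ref{rem:3.nuova}; it does have independent classical proofs via localization, but powers of the peak functions of Theorem~\ref{th:1.Graham} alone do not produce the sharp exponent, so this step is glossed rather than proved.)

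The genuine gap sits in the one step carrying all the content. The mechanism you propose --- that $1-|\langle k_{z_0},k_z\rangle|^2$ is dominated by an increasing function of the Bergman distance staying $<1$ on bounded arguments --- is not a theorem for strongly pseudoconvex domains, and cannot be: the Bergman kernel may vanish off the diagonal (failure of the Lu Qi-Keng property; already the annulus in $\C$, a smoothly bounded strongly pseudoconvex domain, has such zeros), and at a zero pair $(z,w)$ one has $\langle k_z,k_w\rangle=0$ while the Bergman distance is finite, so no dominating function of the kind you postulate exists. What is true in general (Kobayashi) is that $z\mapsto[K_z]$ is distance-decreasing, up to normalization, from the Bergman metric into the Fubini--Study metric of $\mathbb{P}\bigl(A^2(D)\bigr)$; but the Fubini--Study distance is $\arccos|\langle k_{z_0},k_z\rangle|$ and projective space has diameter $\pi/2$, so this gives $|\langle k_{z_0},k_z\rangle|\ge\cos\beta_D(z,z_0)$ only for small distances and nothing at the scale of a fixed arbitrary $r$. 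The bound you need is irreducibly a near-boundary statement --- exactly why the lemma carries the hypothesis $\delta(z_0)<\delta_r$ --- and the known route to it is Fefferman's expansion, which makes $K(z,w)$ comparable to $\Phi(z,w)^{-(n+1)}$ with $\Phi$ non-vanishing for $z,w$ close together near $\partial D$ and $|\Phi(z,z_0)|\asymp\delta(z_0)$ on $B_D(z_0,r)$. Since that is precisely the analysis carried out in \cite{Li} and \cite{AS}, your closing appeal to those references does not discharge the obligation: you have reduced the lemma, correctly and cleanly, to itself.
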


\begin{remark}
\label{rem:3.nuova}
Notice that Lemma~\ref{th:3.lemma2} implies the well-known estimate
\[
K(z_0,z_0)\succeq \delta(z_0)^{-(n+1)}\;,
\]
which is valid for all $z_0\in D$.
\end{remark}

The next three lemmas involve instead the Kobayashi distance only.

\begin{lemma}[\cite{AS}]
\label{th:3.lemma3} 
Let $D\subset\subset{\bf C} ^n$ be a strongly pseudoconvex bounded  domain with $C^2$ boundary. Then for every $0<r<R$ there exist $m\in\mathbb{N}$ and a sequence 
$\{z_k\}\subset D$ of points such that
$D=\bigcup_{k=0}^\infty B_D(z_k, r)$
and no point of $D$ belongs to more than $m$ of the balls $B_D(z_k,R)$.
\end{lemma}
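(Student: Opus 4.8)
The plan is to take $\{z_k\}$ to be a maximal $r$-separated subset of the metric space $(D,k_D)$, read off the covering from maximality, and bound the overlap of the balls of radius $R$ by a volume-packing estimate. Fix $0<r<R$ and consider the subsets $S\subseteq D$ with $k_D(z,z')\ge r$ for all distinct $z,z'\in S$, ordered by inclusion; since the union of a chain of such sets is again of this type, Zorn's lemma yields a maximal one, call it $\{z_k\}$. Being a bounded strongly pseudoconvex $C^2$ domain, $D$ is complete hyperbolic (Corollary~\ref{th:1.pscomp}), so by Barth's theorem (Proposition~\ref{th:2.hyp}) $k_D$ induces the Euclidean topology and $(D,k_D)$ is separable. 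The balls $\{B_D(z_k,r/2)\}$ are pairwise disjoint---if $w$ lay in two of them the triangle inequality would give $k_D(z_j,z_k)<r$---and a separable metric space admits only countably many pairwise disjoint open balls, so $\{z_k\}$ is countable and may be indexed by $k\in\mathbb{N}$. Maximality forces the covering: any $z\in D$ must satisfy $k_D(z,z_k)<r$ for some $k$, for otherwise $\{z_k\}\cup\{z\}$ would still be $r$-separated; hence $D=\bigcup_k B_D(z_k,r)$.

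For the overlap bound, fix $z_0\in D$ and set $I=\{k\mid z_0\in B_D(z_k,R)\}$; I must show $\#I\le m$ with $m$ independent of $z_0$. For $k\in I$ we have $z_k\in B_D(z_0,R)$, whence $B_D(z_k,r/2)\subseteq B_D(z_0,R+r/2)$ by the triangle inequality, and these balls are pairwise disjoint by construction. Summing Lebesgue volumes,
\[
\sum_{k\in I}\nu\bigl(B_D(z_k,r/2)\bigr)\le\nu\bigl(B_D(z_0,R+r/2)\bigr)\;,
\]
so it is enough to bound each term on the left below by a fixed multiple of the right-hand side.

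This last comparison is the heart of the matter. By Theorem~\ref{th:1.volume} (with constants depending only on the radii) one has $\nu(B_D(z_0,R+r/2))\preceq\delta(z_0)^{n+1}$ and $\nu(B_D(z_k,r/2))\succeq\delta(z_k)^{n+1}$; and since $z_k\in B_D(z_0,R)$, Lemma~\ref{th:3.lemma5} gives $\delta(z_k)\succeq\delta(z_0)$ with a constant depending only on $R$. Combining, $\nu(B_D(z_k,r/2))\succeq\delta(z_0)^{n+1}$ uniformly in $k\in I$ and in $z_0$, so the displayed inequality becomes $\#I\cdot c_1\,\delta(z_0)^{n+1}\le c_2\,\delta(z_0)^{n+1}$, giving $\#I\le c_2/c_1=:m$, a bound depending only on $r$, $R$, $n$ and $D$. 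The delicate point---the main obstacle---is precisely this uniformity: it is because the constants in Theorem~\ref{th:1.volume} and Lemma~\ref{th:3.lemma5} depend on the radii and on $D$ but never on the base point that the overlap multiplicity stays finite, rather than any one estimate in isolation. (Theorem~\ref{th:1.volume} is stated for $C^\infty$ boundary; the comparison $\nu(B_D(z,s))\asymp\delta(z)^{n+1}$ needed here, with constants depending only on $s$, also holds under the $C^2$ hypothesis, as it can be extracted from the Forstneri\v c--Rosay estimates of Theorems~\ref{th:1.bestia} and~\ref{th:1.bestib}.)
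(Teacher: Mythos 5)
Your proof is correct and takes essentially the same approach as the paper: the overlap bound is exactly the paper's volume-packing argument (pairwise disjoint small balls swallowed by $B_D(z_0,R+r/2)$, compared via Theorem~\ref{th:1.volume} and Lemma~\ref{th:3.lemma5}, whose base-point independence is indeed the crux). The only difference is cosmetic --- you build the net as a maximal $r$-separated set via Zorn's lemma plus separability, whereas the paper greedily extracts disjoint balls of radius $r/3$ from a countable cover, obtaining countability and the covering property directly; also note that your caveat about the $C^2$ versus $C^\infty$ hypothesis in Theorem~\ref{th:1.volume} applies verbatim to the paper's own proof, which invokes the same result.
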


\begin{proof}
Let $\{B_j\}_{j\in\mathbb{N}}$ be a sequence of Kobayashi balls of radius $r/3$ covering~$D$. We can extract a subsequence $\{\Delta_k=B_D(z_k, r/3)\}_{k\in \mathbb{N}}$ of disjoint balls in the following way: set $\Delta_1=B_1$. Suppose we have already chosen $\Delta_1,\ldots,\Delta_l$. We define $\Delta_{l+1}$ as the first ball in the sequence $\{B_j\}$ which is disjoint from $\Delta_1\cup\cdots\cup\Delta_l$. In particular, by construction every $B_j$
must intersect at least one~$\Delta_k$. 

We now claim that $\{B_D(z_k, r)\}_{k\in\mathbb{N}}$ is a covering of $D$. Indeed, let $z\in D$. Since 
$\{B_j\}_{j\in\mathbb{N}}$ is a covering of~$D$, there is $j_0\in\mathbb{N}$ so that $z\in B_{j_0}$. As remarked above, we get $k_0\in\mathbb{N}$ so that $B_{j_0}\cap\Delta_{k_0}\neq\emptyset$. 
Take $w\in B_{j_0}\cap\Delta_{k_0}$. Then
\[
k_D(z,z_{k_0})\leq k_D(z,w)+k_D(w,z_{k_0})< r\;,
\]
and $z\in B_D(z_{k_0}, r)$.

To conclude the proof we have to show that there is $m=m_r\in\mathbb{N}$ so that
each point $z\in D$ belongs to at most $m$ of the balls $B_D(z_k,R)$. Put $R_1=R+r/3$. Since $z\in B_D(z_k, R)$ is equivalent to $z_k\in B_D(z, R)$, we have that
$z\in B_D(z_k, R)$ implies $B_D(z_k,r/3)\subseteq B_D(z,R_1)$. Furthermore, Theorem~\ref{th:1.volume} 
and Lemma~\ref{th:3.lemma5} yield
\[
\nu\bigl(B_D(z_k, r/3)\bigr)\succeq \delta(z_k)^{n+1}\succeq \delta(z)^{n+1}
\]
when $z_k\in B_D(z,R)$. Therefore, since the balls
$B_D(z_k, r/3)$ are pairwise disjoint, using again Theorem~\ref{th:1.volume}  we get
\[
\hbox{\rm card}\{k\in\mathbb{N}\mid z\in B_D(z_k, R)\}\le\frac{\nu\bigl(B_D(z,R_1)
\bigr)}{\nu\bigl(B_D(z_k, r/3)\bigr)}\preceq 1\;,
\]
and we are done.
\qed
\end{proof}

\begin{lemma}[\cite{AS}]
\label{th:3.lemma4a}
Let $D\subset\subset{\bf C} ^n$ be a strongly pseudoconvex bounded  domain with $C^2$ boundary, and $r>0$. Then 
\[
\chi(z_0)\preceq \frac{1}{\nu\bigl(B_D(z_0,r)\bigr)}\int_{B_D(z_0,r)}\chi\,\D\nu
\]
for all $z_0\in D$ and all non-negative plurisubharmonic functions $\chi\colon D\to{\bf R} ^+$.
\end{lemma}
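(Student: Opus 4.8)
The plan is to deduce the inequality from the sub-mean value property of plurisubharmonic functions, applied not over a round Euclidean ball but over a non-isotropic polydisk adapted to the geometry of $\de D$. The naive attempt---inscribing a Euclidean ball $B(z_0,s)\subset B_D(z_0,r)$ and using subharmonicity---already fails for $n\ge2$: the largest such ball has radius $s\asymp\delta(z_0)$ (it is limited by the complex normal direction), so $\nu(B(z_0,s))\asymp\delta(z_0)^{2n}$, whereas Theorem~\ref{th:1.volume} gives $\nu(B_D(z_0,r))\asymp\delta(z_0)^{n+1}$; since $2n>n+1$ the resulting factor $1/\nu(B(z_0,s))$ is far too large. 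One must therefore exploit that a Kobayashi ball is thin ($\asymp\delta(z_0)$) only in the complex normal direction and wide ($\asymp\delta(z_0)^{1/2}$) in the $n-1$ complex tangential directions.

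The geometric input I would establish is that there is $c>0$, independent of $z_0$, and a unitary frame at $z_0$ (with first vector the complex normal) such that, expressed in that frame, the polydisk
\[
P_{z_0}=\Bigl\{z_0+\zeta\ \Bigm|\ |\zeta_1|<c\,\delta(z_0),\ |\zeta_j|<c\,\delta(z_0)^{1/2}\ (2\le j\le n)\Bigr\}
\]
is contained in $B_D(z_0,r)$. For $z_0$ near the boundary this follows from the localization of Proposition~\ref{th:1.Narasimhan}, which reduces matters to a strongly convex piece, combined with the explicit description of Kobayashi balls in the model ball $B^n$: by Lemma~\ref{th:2.horoball} and the automorphisms (\ref{eq:2.autBn}), the Kobayashi balls of $B^n$ near the boundary are ellipsoids with semiaxes $\asymp\delta$ normally and $\asymp\delta^{1/2}$ tangentially, and an inner/outer comparison transfers the corresponding polydisk containment to $D$. (For $z_0$ ranging in a compact subset $\{\delta\ge\delta_0\}$ the statement is easier: there an ordinary Euclidean ball of fixed radius sits inside $B_D(z_0,r)$ and $\nu(B_D(z_0,r))\asymp1$, so the isotropic sub-mean value estimate already suffices.)

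Granting the containment $P_{z_0}\subset B_D(z_0,r)$, the remainder is quick. Since $\chi$ is plurisubharmonic it is subharmonic in each complex variable separately; iterating the one-variable sub-mean value inequality over the $n$ disks defining $P_{z_0}$ yields
\[
\chi(z_0)\le\frac{1}{\nu(P_{z_0})}\int_{P_{z_0}}\chi\,\D\nu\;.
\]
Because $\chi\ge0$ and $P_{z_0}\subset B_D(z_0,r)$, the domain of integration may be enlarged to $B_D(z_0,r)$. Finally $\nu(P_{z_0})=\pi^n c^{2n}\,\delta(z_0)^{2+(n-1)}=\pi^n c^{2n}\,\delta(z_0)^{n+1}$, which by Theorem~\ref{th:1.volume} satisfies $\nu(P_{z_0})\succeq\nu(B_D(z_0,r))$; hence
\[
\chi(z_0)\le\frac{1}{\nu(P_{z_0})}\int_{B_D(z_0,r)}\chi\,\D\nu\preceq\frac{1}{\nu(B_D(z_0,r))}\int_{B_D(z_0,r)}\chi\,\D\nu\;,
\]
as claimed.

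The main obstacle is precisely the anisotropic containment $P_{z_0}\subset B_D(z_0,r)$. The isotropic upper estimate of Theorem~\ref{th:1.bestib} is not enough: for a purely tangential displacement $\|z-z_0\|\asymp\delta(z_0)^{1/2}$ it only bounds $k_D(z_0,z)$ by $\frac14|\log\delta(z_0)|+O(1)$, which diverges as $z_0\to\de D$. Capturing the fact that tangential displacements of size $\delta^{1/2}$ remain at bounded Kobayashi distance requires the finer non-isotropic boundary behavior of $k_D$ (essentially the comparison with the model carried out in \cite{AS}); this is the single step that genuinely uses strong pseudoconvexity together with the full force of the localization.
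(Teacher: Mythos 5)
Your argument is correct in outline, but it takes a genuinely different route from the paper's, and it is worth comparing the two. You inscribe an anisotropic polydisk $P_{z_0}$ (semiradii $\asymp\delta(z_0)$ in the complex normal direction, $\asymp\delta(z_0)^{1/2}$ tangentially) inside $B_D(z_0,r)$ and iterate the one-variable sub-mean value inequality over the $n$ factor disks; everything then reduces, as you say, to the containment $P_{z_0}\subset B_D(z_0,r)$, which you correctly identify as the only hard step but leave as a sketch via Proposition~\ref{th:1.Narasimhan}. The paper never describes the shape of a Kobayashi ball at all. It first proves the lemma on a Euclidean ball $B$ by transporting $\chi$ with the automorphism $\Phi_{z_0}$ of $B$ taking the center to $z_0$ (built from (\ref{eq:2.autBn})): a change of variables plus subharmonicity of $\zeta\mapsto\chi\circ\Phi_{z_0}(\zeta x)$ on complex lines through the origin gives $\int_{B_B(z_0,r)}\chi\,d\nu\succeq d(z_0,\de B)^{n+1}\chi(z_0)$, the whole anisotropy being encoded in the single Jacobian bound $|\mathrm{Jac}_{\R}\,\Phi_{z_0}|\succeq d(z_0,\de B)^{n+1}$. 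For general $D$ it then uses only the interior tangent sphere condition of a $C^2$ boundary: taking $B=B_x(\eps)\subset D$ internally tangent at the closest boundary point (or $B=B(z_0,\delta(z_0))$ when $\delta(z_0)\ge\eps$), monotonicity of the Kobayashi distance under the inclusion $B\subset D$ gives $B_B(z_0,r)\subseteq B_D(z_0,r)$, and Theorem~\ref{th:1.volume} finishes exactly as in your last display. This shows your main obstacle can be discharged more cheaply than you propose: you need neither the localization Proposition~\ref{th:1.Narasimhan} nor strong pseudoconvexity for the containment step, since the inclusion $B\subset D$ pushes Kobayashi balls in the right direction and inside the round ball $B$ the polydisk containment (or, in the paper's version, the automorphism trick that makes it unnecessary) is explicit; strong pseudoconvexity enters only through the volume comparison of Theorem~\ref{th:1.volume}, which both proofs invoke. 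Modulo writing out that containment --- which is true, and which your sketch locates correctly --- your proof is complete; what the paper's change-of-variables buys is precisely that no quantitative description of Kobayashi balls is ever needed.
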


\begin{proof}
Let us first prove the statement when $D$ is an Euclidean ball $B$ of radius $R>0$. Without loss of generality we can assume that $B$ is centered at the
origin. Fix $z_0\in B$, let $\gamma_{z_0/R}\in\Aut(B^n)$ be given by (\ref{eq:2.autBn}), and let $\Phi_{z_0}\colon B^n\to B$ be defined by $\Phi_{z_0}=R\gamma_{z_0/R}$; in particular,
$\Phi_{z_0}$ is a biholomorphism with $\Phi_{z_0}(O)=z_0$, and thus
$\Phi_{z_0}\bigl(B_{B^n}(O,\hat r)\bigr)=B_B(z_0,\hat r)$. Furthermore (see \cite[Theorem~2.2.6]{Ru})
\[
|\hbox{Jac}_{\mathbb{R}} \Phi_{z_0}(z)|=R^{2n}\left(\frac{R^2-\|z_0\|^2}{
|R-\langle z,z_0\rangle|^2}\right)^{n+1}\ge \frac{R^{n-1}}{ 4^{n+1}}
\, d(z_0,\partial  B)^{n+1}\;,
\]
where $\hbox{Jac}_{\mathbb{R}} \Phi_{z_0}$ denotes the (real) Jacobian determinant of~$\Phi_{z_0}$.
It follows that
\begin{eqnarray*}
\int_{B_B(z_0, r)}\chi\,\D\nu&=&\int_{B_{B^n}(O, r)}(\chi\circ\Phi_{z_0})
|\hbox{\rm Jac}_{\mathbb{R}} \,\Phi_{z_0}|\,\D\nu\\
&\ge& \frac{R^{n-1}}{ 4^{n+1}}\,
d(z_0,\partial  B)^{n+1}\int_{B_{B^n}(O, r)}
(\chi\circ\Phi_{z_0})\,\D\nu\;.
\end{eqnarray*}
Using \cite[1.4.3 and 1.4.7.(1)]{Ru} we obtain
\[
\int_{B_{B^n}(O,r)}
(\chi\circ\Phi_{z_0})\,\D\nu=2n
\int_{\partial  B^n}\D\sigma(x)\frac{1}{ 2\pi}
\int_0^{\tanh r}\int_0^{2\pi}\chi\circ\Phi_{z_0}(t\E^{\I\theta}x)t^{2n-1}\D t\,\D\theta\;,
\]
where $\sigma$ is the area measure on~$\partial  B^n$ normalized so that $\sigma(\partial  B^n)=1$.
Now, $\zeta\mapsto \chi\circ\Phi_{z_0}(\zeta x)$ is subharmonic 
on~$(\tanh r)\Delta=\{|\zeta|<\tanh r\}\subset\C $ for any $x\in\partial  B^n$, since $\Phi_{z_0}$ is holomorphic and $\chi$ is plurisubharmonic. Therefore \cite[Theorem~1.6.3]{Ho} yields
\[
\frac{1}{ 2\pi}\int_0^{\tanh r}\!\!\int_0^{2\pi}\chi\circ\Phi_{z_0}(t\E^{\I\theta}x)t^{2n-1}\D t\,\D\theta
\ge \chi(z_0)\int_0^{\tanh r} \!t^{2n-1}\,\D t=\frac{1}{ 2n}(\tanh r)^{2n}\chi(z_0)\;.
\]
So 
\[
\int_{B_{B^n}(O, r)}(\chi\circ\Phi_{z_0})\,\D\nu\ge
(\tanh r)^{2n}\chi(z_0)\;,
\]
and the assertion follows from Theorem~\ref{th:1.volume}.

Now let $D$ be a generic strongly pseudoconvex domain. Since $D$ has $C^2$ boundary, there exists a radius $\eps>0$ such that for every $x\in\partial  D$ the euclidean ball $B_x(\eps)$ of radius~$\eps$ internally tangent to $\partial  D$ at $x$ is contained in $D$. 

Let $z_0\in D$. If $\delta(z_0)<\eps$, let $x\in\partial  D$ be such that
$\delta(z_0)=\|z_0-x\|$; in particular, $z_0$ belongs to the
ball $B=B_x(\eps)\subset D$. If $\delta(z_0)\ge\eps$, let $B\subset D$ be the 
Euclidean ball of center~$z_0$ and radius $\delta(z_0)$. In both cases
we have $\delta(z_0)=d(z_0,\partial  B)$; moreover,
the decreasing property of the Kobayashi distance yields $B_D(z_0,r) \supseteq 
B_B(z_0,r)$ for all $r>0$.

Let $\chi$ be a non-negative plurisubharmonic function. Then
Theorem~\ref{th:1.volume} and the assertion for a ball imply
\begin{eqnarray*}
\int_{B_D(z_0,r)}\chi\, d\nu&\ge& \int_{B_B(z_0,r)}\chi\, d\nu \succeq \nu\bigl(B_B(z_0,r)\bigr)\chi(z_0)\\
&\succeq&  d(z_0,\de B)^{n+1}\chi(z_0)=\delta(z_0)^{n+1}\chi(z_0)\\
&\succeq& \nu\bigl(B_D(z_0,r)\bigr)\chi(z_0)\;,
\end{eqnarray*}
and we are done.
\qed
\end{proof}

\begin{lemma}[\cite{AS}] 
\label{th:3.lemma4} 
Let $D\subset\subset{\bf C} ^n$ be a strongly pseudoconvex bounded  domain with $C^2$ boundary. Given $0<r<R$ we have
\[
\forall{z_0\in D\quad\forall z\in B_D(z_0,r)}\qquad \chi(z)\preceq
\frac{1}{\nu\bigl(B_D(z_0, r)\bigr)}\int_{B_D(z_0, R)}\chi\,\D\nu
\]
for every nonnegative plurisubharmonic function $\chi\colon D\to\mathbb{R}^+$.
\end{lemma}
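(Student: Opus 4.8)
The plan is to reduce the statement to the sub-mean-value inequality of Lemma~\ref{th:3.lemma4a}, applied at the point $z$ instead of at the centre $z_0$, and then to transfer the resulting estimate back to $z_0$ using the comparability of nearby Kobayashi balls.

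First I would fix $z_0\in D$ and $z\in B_D(z_0,r)$, and set $\rho=R-r>0$. Since $\chi$ is nonnegative and plurisubharmonic, Lemma~\ref{th:3.lemma4a} applied with centre $z$ and radius $\rho$ gives
\[
\chi(z)\preceq\frac{1}{\nu\bigl(B_D(z,\rho)\bigr)}\int_{B_D(z,\rho)}\chi\,\D\nu\;,
\]
with an implied constant depending only on $\rho=R-r$.

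Next I would enlarge the domain of integration. Because $k_D(z_0,z)<r$, the triangle inequality (equivalently, Lemma~\ref{th:1.somma}) yields $B_D(z,\rho)\subseteq B_D(z_0,r+\rho)=B_D(z_0,R)$; since $\chi\ge 0$ this gives $\int_{B_D(z,\rho)}\chi\,\D\nu\le\int_{B_D(z_0,R)}\chi\,\D\nu$. It then remains to replace $\nu\bigl(B_D(z,\rho)\bigr)$ by $\nu\bigl(B_D(z_0,r)\bigr)$ in the denominator. Here I would combine two facts: Lemma~\ref{th:3.lemma5}, which (as $z\in B_D(z_0,r)$) gives $\delta(z_0)\preceq\delta(z)$ with a constant depending only on $r$; and Theorem~\ref{th:1.volume}, which gives $\nu\bigl(B_D(z_0,r)\bigr)\preceq\delta(z_0)^{n+1}$ and $\delta(z)^{n+1}\preceq\nu\bigl(B_D(z,\rho)\bigr)$. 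Chaining these,
\[
\nu\bigl(B_D(z_0,r)\bigr)\preceq\delta(z_0)^{n+1}\preceq\delta(z)^{n+1}\preceq\nu\bigl(B_D(z,\rho)\bigr)\;,
\]
so that $1/\nu\bigl(B_D(z,\rho)\bigr)\preceq 1/\nu\bigl(B_D(z_0,r)\bigr)$ and hence
\[
\chi(z)\preceq\frac{1}{\nu\bigl(B_D(z_0,r)\bigr)}\int_{B_D(z_0,R)}\chi\,\D\nu\;,
\]
which is the claim.

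The one point I would be careful about is the uniformity of the implied constants in $z_0$ and $z$: the constant from Lemma~\ref{th:3.lemma4a} depends only on $\rho=R-r$, the constant from Lemma~\ref{th:3.lemma5} only on $r$, and those from Theorem~\ref{th:1.volume} only on the radii $\rho$ and $r$; none of them depends on the particular points, so the final $\preceq$ holds uniformly over all $z_0\in D$ and all $z\in B_D(z_0,r)$. Everything else is just bookkeeping with the triangle inequality, so I expect no serious obstacle beyond tracking this uniformity.
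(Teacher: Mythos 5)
Your proof is correct and follows essentially the same route as the paper's: apply Lemma~\ref{th:3.lemma4a} at the point $z$ with radius $R-r$, enlarge the domain of integration to $B_D(z_0,R)$ via the triangle inequality, and control the ratio $\nu\bigl(B_D(z_0,r)\bigr)/\nu\bigl(B_D(z,R-r)\bigr)$ using Theorem~\ref{th:1.volume} and Lemma~\ref{th:3.lemma5}. The only difference is cosmetic: you spell out the volume-comparison chain $\nu\bigl(B_D(z_0,r)\bigr)\preceq\delta(z_0)^{n+1}\preceq\delta(z)^{n+1}\preceq\nu\bigl(B_D(z,R-r)\bigr)$ and the uniformity of the constants, which the paper leaves implicit.
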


\begin{proof}
Let $r_1= R- r$; by the triangle inequality,
$z\in B_D(z_0, r)$ yields $B_D(z,r_1)\subseteq B_D(z_0, R)$. Lemma~\ref{th:3.lemma4a}
then implies
\begin{eqnarray*}
\chi(z)&\preceq& \frac{1}{\nu(B_D(z,r_1))}\int_{B_D(z,r_1)}\chi\,\D\nu\\
&\le& \frac{1}{\nu(B_D(z,r_1))}\int_{B_D(z_0, R)}\chi\,\D\nu
 = \frac{\nu(B_D(z_0,r))}{\nu(B_D(z,r_1))}
\cdot\frac{1}{\nu(B_D(z_0,r))}\int_{B_D(z_0,R)}\chi\,\D\nu
\end{eqnarray*}
for all $z\in B_D(z_0, r)$.
Now Theorem~\ref{th:1.volume} and Lemma~\ref{th:3.lemma5} yield
\[
\frac{\nu(B_D(z_0,r))}{ \nu(B_D(z,r_1))} \preceq 1
\]
for all $z\in B_D(z_0,r)$, and so
\[
\chi(z)\preceq\frac{1 }{\nu\bigl(B_D(z_0,r)
\bigr)}\int_{B_D(z_0,R)}\chi\,\D\nu
\]
as claimed.
\qed
\end{proof}

Finally, the linking between the Berezin transform and Toeplitz operators is given by the following

\begin{lemma}
\label{th:3.prop}
Let $\mu$ be a finite positive Borel measure on a bounded domain
$D\subset\subset\C^n$. Then
\begin{equation}
B\mu(z)=\int_D (T_\mu k_z)(w) \overline{k_z(w)}\,\D\nu(w)
\label{eq:3.BerTop}
\end{equation}
for all $z\in D$.
\end{lemma}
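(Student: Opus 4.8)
The plan is to obtain the identity by unwinding both sides using nothing more than the definitions of $T_\mu$, $k_z$ and $B\mu$, the reproducing property of the Bergman kernel, and its Hermitian symmetry $K(w,z)=\overline{K(z,w)}$. The cleanest route is to start from the left-hand side and rewrite, so that the interchange of integrals becomes the only substantial step. By Remark~\ref{rem:3.BK} we have $k_z\in A^2(D)$, so the reproducing property applied to $f=k_z$ at the point $\zeta$, after conjugating and using $\overline{K(w,\zeta)}=K(\zeta,w)$, gives the ($\mu$-free) identity
\[
\overline{k_z(\zeta)}=\overline{\int_D K(\zeta,w)k_z(w)\,\D\nu(w)}=\int_D K(w,\zeta)\,\overline{k_z(w)}\,\D\nu(w),
\]
valid for every $\zeta\in D$. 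Inserting this into $B\mu(z)=\int_D |k_z(\zeta)|^2\,\D\mu(\zeta)=\int_D k_z(\zeta)\,\overline{k_z(\zeta)}\,\D\mu(\zeta)$ yields
\[
B\mu(z)=\int_D k_z(\zeta)\left[\int_D K(w,\zeta)\,\overline{k_z(w)}\,\D\nu(w)\right]\D\mu(\zeta).
\]

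It then remains only to interchange the order of integration. Doing so brings $k_z(\zeta)$ and $K(w,\zeta)$ together under the $\D\mu$-integral, which by Definition~\ref{def:3.Toep} is exactly $(T_\mu k_z)(w)=\int_D K(w,\zeta)k_z(\zeta)\,\D\mu(\zeta)$; hence
\[
B\mu(z)=\int_D\left[\int_D K(w,\zeta)k_z(\zeta)\,\D\mu(\zeta)\right]\overline{k_z(w)}\,\D\nu(w)=\int_D (T_\mu k_z)(w)\,\overline{k_z(w)}\,\D\nu(w),
\]
which is the asserted formula~(\ref{eq:3.BerTop}).

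The step I expect to be the main obstacle is exactly this interchange, since the integrand $K(w,\zeta)k_z(\zeta)\overline{k_z(w)}$ is complex-valued, so Tonelli's theorem does not apply directly and one must invoke Fubini. To justify it I would verify the absolute integrability of $\int_D\int_D |K(w,\zeta)|\,|k_z(\zeta)|\,|k_z(w)|\,\D\mu(\zeta)\,\D\nu(w)$: for fixed $\zeta$ the Cauchy--Schwarz inequality together with $\|k_z\|_2=1$ and $\|K(\cdot,\zeta)\|_2=\sqrt{K(\zeta,\zeta)}$ (Remark~\ref{rem:3.BK}) bounds the inner $\D\nu$-integral by $\sqrt{K(\zeta,\zeta)}$, so finiteness reduces to that of $\int_D |k_z(\zeta)|\,\sqrt{K(\zeta,\zeta)}\,\D\mu(\zeta)$, which holds in the regimes where the lemma is actually applied; in the full generality stated one simply reads the equality in $[0,+\infty]$, finiteness entering only at this interchange. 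Apart from this, the only care required is bookkeeping of the conjugations, so that the product $k_z(\zeta)\,\overline{k_z(\zeta)}=|k_z(\zeta)|^2$, rather than $k_z(\zeta)^2$, emerges at the end; this is guaranteed by the conjugate carried by $\overline{k_z(w)}$ in the reproducing step.
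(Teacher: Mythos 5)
Your proof is correct and follows essentially the same route as the paper's: both rewrite $\overline{k_z}$ (the paper works with $K(z,\cdot)$ before normalizing) via the reproducing property combined with the Hermitian symmetry $K(w,z)=\overline{K(z,w)}$, and then swap the $\D\mu$- and $\D\nu$-integrations by Fubini so that the inner $\mu$-integral becomes $(T_\mu k_z)(w)$. If anything, you are more careful than the paper, which invokes Fubini without comment, whereas you note the absolute-integrability issue and the Cauchy--Schwarz bound $\int_D|K(w,\zeta)|\,|k_z(w)|\,\D\nu(w)\le\sqrt{K(\zeta,\zeta)}$ needed to justify the interchange.
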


\begin{proof}
Indeed using Fubini's theorem and the reproducing property of the Bergman kernel we have
\begin{eqnarray*}
B\mu (z)
&=&\int_D \frac{|K(x,z)|^2}{K(z, z)}  \, \D\mu(x)\\
&=&\int_D \frac{K(x,z)}{K(z, z)} K(z,x) \, \D\mu(x)\\
&=&\int_D \frac{K(x,z)}{K(z, z)}\left( \int_D K(w,x) K(z,w)\, \D\nu(w) \right)\, \D\mu(x)\\
&=&\int_D \left( \int_D \frac{K(x,z)}{\sqrt{K(z, z)}} K(w,x) \, \D\mu(x)\right) \frac{\overline{K(w,z)}}{\sqrt{K(z, z)}}\, \D\nu(w)\\
&=&\int_D \left( \int_D  K(w,x) k_{z}(x) \, \D\mu(x)\right) \overline{k_{z}(w)}\, \D\nu(w)\\
&=&\int_D (T_\mu  k_z)(w)\overline{k_z(w)}\,\D\nu(w)\;,
\end{eqnarray*}
as claimed.
\qed
\end{proof}

We can now prove Theorems~\ref{th:3.ToepCarl}, \ref{th:3.Carltheta} and~\ref{th:3.thetaBerez}.

\begin{proof}[of Theorem~\ref{th:3.Carltheta}]
Assume that $\mu$ is a $p$-Carleson measure for $A^p\bigl(D,(n+1)(\theta-1)\bigr)$, and fix $r>0$; we need
to prove that $\mu\bigl(B_D(z_0,r)\bigr)\preceq\nu\bigl(B_D(z_0,r)\bigr)^\theta$ 
for all $z_0\in D$. 

First of all, it suffices to prove the assertion for $z_0$ close to the boundary, because both $\mu$ and $\nu$ are finite measures. So we can assume $\delta(z_0)<\delta_r$, where $\delta_r$ is given by Lemma~\ref{th:3.lemma2}. Since, by Corollary~\ref{th:3.coruno}, $k_{z_0}^2\in A^p\bigl(D,(n+1)(\theta-1)\bigr)$, we have
\begin{eqnarray*}
\frac{1}{\delta(z_0)^{(n+1)p}}\mu\bigl(B_D(z_0,r)\bigr)&\preceq&
\int_{B_D(z_0,r)}|k_{z_0}(w)|^{2p}\,\D\mu(w)\le\int_D|k_{z_0}(w)|^{2p}\,\D\mu(w)\\
&\preceq& \int_D |k_{z_0}(w)|^{2p}\delta(w)^{(n+1)(\theta-1)}\,\D\nu(w)\\
&\preceq& \delta(z_0)^{(n+1)p}\int_D|K(w,z_0)|^{2p}\delta(w)^{(n+1)(\theta-1)}\,\D\nu(w)\\
&\preceq& \delta(z_0)^{(n+1)(\theta-p)}
\end{eqnarray*}
by Theorem~\ref{th:3.thm}, that we can apply because $1-\frac{1}{n+1}<\theta<2$. Recalling Theorem~\ref{th:1.volume} we see that $\mu$ is $\theta$-Carleson.

Conversely, assume that $\mu$ is $\theta$-Carleson for some $r>0$, and let $\{z_k\}$ be the sequence given 
by Lemma~\ref{th:3.lemma3}. Take $f\in A^p\bigl(D,(n+1)(\theta-1)\bigr)$. First of all
\[
\int_D|f(z)|^p\,\D\mu(z)\le\sum_{k\in\mathbb{N}}\int_{B_D(z_k,r)}|f(z)|^p\,\D\mu(z)\;.
\]
Choose $R>r$. Since $|f|^p$ is plurisubharmonic, by Lemma~\ref{th:3.lemma4} we get
\begin{eqnarray*}
\int_{B_D(z_k, r)}|f(z)|^p\,\D\mu(z)&\preceq&\frac{1}{\nu\bigl(B_D(z_k,r)\bigr)}
\int_{B_D(z_k, r)}\left[\int_{B_D(z_k,R)}|f(w)|^p\,\D\nu(w)\right]\D\mu(z)\\
&\preceq& \nu\bigl(B_D(z_k, r)\bigr)^{\theta-1}\int_{B_D(z_k, R)}|f(w)|^p\,\D\nu(w)
\end{eqnarray*}
because $\mu$ is $\theta$-Carleson. Recalling Theorem~\ref{th:1.volume} and Lemma~\ref{th:3.lemma5} we get
\begin{eqnarray*}
\int_{B_D(z_k, r)}|f(z)|^p\,\D\mu(z)&\preceq& \delta(z_k)^{(n+1)(\theta-1)}\int_{B_D(z_k, R)}|f(w)|^p\,\D\nu(w)\\
&\preceq& \int_{B_D(z_k, R)}|f(w)|^p\delta(w)^{(n+1)(\theta-1)}\,\D\nu(w)\;.
\end{eqnarray*}
Since, by Lemma~\ref{th:3.lemma3}, there is $m\in\mathbb{N}$ such that at most $m$ of the 
balls $B_D(z_k,R)$ intersect, we get
\[
\int_D|f(z)|^p\,\D\mu(z)\preceq \int_D |f(w)|^p\delta(w)^{(n+1)(\theta-1)}\,\D\nu(w)\;,
\]
and so we have proved that $\mu$ is $p$-Carleson for $A^p\bigl(D,(n+1)(\theta-1)\bigr)$.
\qed
\end{proof}

We explicitly remark that the proof of the implication ``$\theta$-Carleson implies $p$-Carleson for $A^p\bigl(D,(n+1)(\theta-1)\bigr)$" works for all $\theta>0$, 
and actually gives the following 

\begin{corollary}
\label{th:3.cordue}
Let $D\subset\subset\C^n$ be a bounded strongly pseudoconvex domain with $C^2$ boundary, $\theta>0$,
and $\mu$ a $\theta$-Carleson measure on~$D$. Then
\[
\int_D \chi(z)\,d\mu(z)\preceq  \int_D \chi(w)\delta(w)^{(n+1)(\theta-1)}\,\D\nu(w)
\]
for all nonnegative plurisubharmonic functions $\chi\colon D\to\R^+$ such that $\chi\in L^p\bigl(D,(n+1)(\theta-1)\bigr)$.
\end{corollary}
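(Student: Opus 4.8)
The plan is to run exactly the ``$\theta$-Carleson implies $p$-Carleson'' half of the proof of Theorem~\ref{th:3.Carltheta}, but with the plurisubharmonic function $|f|^p$ replaced throughout by the given $\chi$. The point to keep in mind is that that half of the argument never invoked Theorem~\ref{th:3.thm}, and hence never used the restriction $1-\frac{1}{n+1}<\theta<2$; so it will go through for every $\theta>0$. Concretely, I would first fix $0<r<R$ and invoke Lemma~\ref{th:3.lemma3} to obtain a countable family of Kobayashi balls $\{B_D(z_k,r)\}$ covering $D$ together with an integer $m$ such that no point of $D$ lies in more than $m$ of the enlarged balls $B_D(z_k,R)$. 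Nonnegativity of $\chi$ then gives $\int_D\chi\,\D\mu\le\sum_k\int_{B_D(z_k,r)}\chi\,\D\mu$, reducing the whole estimate to a single-ball bound that is uniform in $k$.

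The heart of the matter is that single-ball bound, and it rests on the sub-mean-value inequality of Lemma~\ref{th:3.lemma4}: for $z\in B_D(z_k,r)$ one has $\chi(z)\preceq\nu\bigl(B_D(z_k,r)\bigr)^{-1}\int_{B_D(z_k,R)}\chi\,\D\nu$ with a constant independent of $k$. Integrating this in $z$ against $\mu$ over $B_D(z_k,r)$ and then applying the $\theta$-Carleson bound $\mu\bigl(B_D(z_k,r)\bigr)\preceq\nu\bigl(B_D(z_k,r)\bigr)^\theta$ produces
\[
\int_{B_D(z_k,r)}\chi\,\D\mu\preceq\nu\bigl(B_D(z_k,r)\bigr)^{\theta-1}\int_{B_D(z_k,R)}\chi\,\D\nu\;.
\]
This is the one genuinely essential step, since it is what converts an integral against the abstract measure $\mu$ into an integral against Lebesgue measure, and it is precisely here that plurisubharmonicity of $\chi$ enters. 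Note that only nonnegativity and plurisubharmonicity are used, with no holomorphy needed anywhere; that is exactly the generalization over Theorem~\ref{th:3.Carltheta}, where $\chi=|f|^p$ was plurisubharmonic because $f$ was holomorphic.

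It then remains to absorb the volume factor into the boundary weight. By Theorem~\ref{th:1.volume} the quantity $\nu\bigl(B_D(z_k,r)\bigr)$ is comparable to $\delta(z_k)^{n+1}$, so $\nu\bigl(B_D(z_k,r)\bigr)^{\theta-1}$ is comparable to $\delta(z_k)^{(n+1)(\theta-1)}$ — and this comparability survives raising to the power $\theta-1$ regardless of its sign, which is precisely why no upper bound on $\theta$ is required. Meanwhile Lemma~\ref{th:3.lemma5} makes $\delta(w)$ comparable to $\delta(z_k)$, uniformly for $w\in B_D(z_k,R)$, so $\delta(z_k)^{(n+1)(\theta-1)}$ is comparable to $\delta(w)^{(n+1)(\theta-1)}$ on that ball. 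Substituting these comparabilities into the display above yields $\int_{B_D(z_k,r)}\chi\,\D\mu\preceq\int_{B_D(z_k,R)}\chi(w)\delta(w)^{(n+1)(\theta-1)}\,\D\nu(w)$.

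Finally I would sum over $k$: the bounded-overlap property from Lemma~\ref{th:3.lemma3} turns the sum of the right-hand sides into at most $m$ times $\int_D\chi(w)\delta(w)^{(n+1)(\theta-1)}\,\D\nu(w)$, and the integrability hypothesis $\chi\in L^p\bigl(D,(n+1)(\theta-1)\bigr)$ guarantees this right-hand side is finite, so the asserted inequality follows. There is no deep obstacle here beyond the one substantive step already identified; the only thing requiring care is keeping every implied constant independent of $k$, which is ensured because each comparability above (volume via Theorem~\ref{th:1.volume}, weight via Lemma~\ref{th:3.lemma5}, overlap via Lemma~\ref{th:3.lemma3}) holds with constants depending only on $r$, $R$, $\theta$ and $D$.
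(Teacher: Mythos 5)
Your proof is correct and follows essentially the same route as the paper: the paper obtains this corollary precisely by remarking that the ``$\theta$-Carleson implies $p$-Carleson'' half of the proof of Theorem~\ref{th:3.Carltheta} never invokes Theorem~\ref{th:3.thm} (hence imposes no restriction on $\theta$) and uses only nonnegativity and plurisubharmonicity of the integrand, which is exactly your argument via Lemma~\ref{th:3.lemma3} (covering with bounded overlap), Lemma~\ref{th:3.lemma4} (sub-mean-value estimate), the $\theta$-Carleson bound, Theorem~\ref{th:1.volume} and Lemma~\ref{th:3.lemma5}. Your observation that the two-sided comparabilities survive raising to the power $\theta-1$ regardless of its sign correctly accounts for why all $\theta>0$ are allowed.
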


Now we prove the equivalence between $\theta$-Carleson and the condition on the Berezin transform.

\begin{proof}[of Theorem~\ref{th:3.thetaBerez}]
Let us first assume that $\mu$ is $\theta$-Carleson. By Theorem~\ref{th:3.Carltheta} we know that $\mu$ is 2-Carleson for $A^2\bigl(D,(n+1)(\theta-1)\bigr)$.
Fix $z_0\in D$. Then Corollary~\ref{th:3.coruno} yields
\[
B\mu(z_0)=\int_D|k_{z_0}(w)|^2\,\D\mu(w)\preceq \|k_{z_0}\|^2_{2,(n+1)(\theta-1)}\preceq \delta(z_0)^{(n+1)(\theta-1)}\;,
\]
as required.

Conversely, assume that $\delta^{(n+1)(1-\theta)}B\mu\in L^\infty(D)$, and fix $r>0$. Then Lemma~\ref{th:3.lemma2} yields
\begin{eqnarray*}
\delta(z_0)^{(n+1)(\theta-1)}&\succeq& B\mu(z_0)=\int_D|k_{z_0}(w)|^2\,\D\mu(w)\ge \int_{B_D(z_0, r)}|k_{z_0}(w)|^2\,\D\mu(w)\\
&\succeq&
\frac{1}{\delta(z_0)^{n+1}}\mu\bigl(B_D(z_0, r)\bigr)
\end{eqnarray*}
as soon as $\delta(z_0)<\delta_r$, where $\delta_r>0$ is given by  Lemma~\ref{th:3.lemma2}.
Recalling Theorem~\ref{th:1.volume} we get
\[
\mu\bigl(B_D(z_0,r)\bigr)\preceq\delta(z_0)^{(n+1)\theta}\preceq \nu\bigl(B_D(z_0,r)\bigr)^\theta\;,
\]
and the assertion follows when $\delta(z_0)<\delta_r$. When $\delta(z_0)\ge\delta_r$ we have
\[
\mu\bigl(B_D(z_0,r)\bigr)\le \mu(D)\preceq \delta_r^{(n+1)\theta}\le \delta(z_0)^{(n+1)\theta}\preceq \nu\bigl(B_D(z_0,r)\bigr)^\theta
\]
because $\mu$ is a finite measure, and we are done.
\qed
\end{proof}

For the last proof we need a final

\begin{lemma}
\label{th:3.ultimo}
Let $D\subset\subset\C ^n$ be a bounded stongly pseudoconvex domain with $C^2$ boundary, and $\theta$,~$\eta\in\R$.
Then a finite positive Borel measure $\mu$ is $\theta$-Carleson if and only if
$\delta^\eta\mu$ is $(\theta+\frac{\eta}{n+1})$-Carleson.
\end{lemma}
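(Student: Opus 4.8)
The plan is to reduce the equivalence to a single pointwise comparison between the $\mu$-mass and the $(\delta^\eta\mu)$-mass of one Kobayashi ball of fixed radius. The crucial observation is that on a ball $B_D(z_0,r)$ of a \emph{fixed} radius $r$ the boundary distance $\delta$ is essentially constant: by Lemma~\ref{th:3.lemma5} there is a constant, depending only on $r$, with $\delta(z)\asymp\delta(z_0)$ for every $z\in B_D(z_0,r)$, the two-sided estimate holding whatever the sign of $\eta$. Hence $\delta(z)^\eta\asymp\delta(z_0)^\eta$ on $B_D(z_0,r)$, and integrating against $\mu$ gives
\[
(\delta^\eta\mu)\bigl(B_D(z_0,r)\bigr)=\int_{B_D(z_0,r)}\delta(z)^\eta\,\D\mu(z)\asymp\delta(z_0)^\eta\,\mu\bigl(B_D(z_0,r)\bigr)\;.
\]

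First I would fix $r>0$ once and for all and rewrite the weight $\delta(z_0)^\eta$ as a power of a Kobayashi-ball volume. By Theorem~\ref{th:1.volume} we have $\nu\bigl(B_D(z_0,r)\bigr)\asymp\delta(z_0)^{n+1}$, whence $\delta(z_0)^\eta\asymp\nu\bigl(B_D(z_0,r)\bigr)^{\eta/(n+1)}$. Combining this with the previous display yields
\[
(\delta^\eta\mu)\bigl(B_D(z_0,r)\bigr)\asymp\nu\bigl(B_D(z_0,r)\bigr)^{\eta/(n+1)}\,\mu\bigl(B_D(z_0,r)\bigr)
\]
for all $z_0\in D$, with constants independent of $z_0$.

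The two directions then follow by bookkeeping of exponents. If $\mu$ is $\theta$-Carleson, so that $\mu\bigl(B_D(z_0,r)\bigr)\preceq\nu\bigl(B_D(z_0,r)\bigr)^\theta$, multiplying by $\nu\bigl(B_D(z_0,r)\bigr)^{\eta/(n+1)}$ in the last display gives $(\delta^\eta\mu)\bigl(B_D(z_0,r)\bigr)\preceq\nu\bigl(B_D(z_0,r)\bigr)^{\theta+\eta/(n+1)}$, i.e.\ $\delta^\eta\mu$ is $(\theta+\frac{\eta}{n+1})$-Carleson. For the converse I would simply apply the implication just proved to the measure $\delta^\eta\mu$ with $-\eta$ in place of $\eta$: since $\delta^{-\eta}(\delta^\eta\mu)=\mu$, if $\delta^\eta\mu$ is $(\theta+\frac{\eta}{n+1})$-Carleson then $\mu$ is $\bigl((\theta+\frac{\eta}{n+1})-\frac{\eta}{n+1}\bigr)$-Carleson, that is $\theta$-Carleson.

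There is no serious obstacle here; the whole statement is a change of weight. The only points requiring a little care are that the comparison $\delta(z)^\eta\asymp\delta(z_0)^\eta$ must use \emph{both} inequalities of Lemma~\ref{th:3.lemma5}, so that it remains valid for negative $\eta$, and that one should work throughout with a single fixed radius $r$ — legitimate since all constants from Lemma~\ref{th:3.lemma5} and Theorem~\ref{th:1.volume} depend only on $r$ (and, if one prefers, the $r$-independence of the Carleson condition recorded after Theorem~\ref{th:3.Carltheta} removes this issue altogether).
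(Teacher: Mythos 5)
Your proof is correct and follows essentially the same route as the paper: both freeze $\delta$ at $\delta(z_0)$ on a fixed Kobayashi ball via Lemma~\ref{th:3.lemma5}, convert $\delta(z_0)^\eta$ into $\nu\bigl(B_D(z_0,r)\bigr)^{\eta/(n+1)}$ via Theorem~\ref{th:1.volume}, and obtain the converse by applying the forward implication to $\delta^\eta\mu$ with $-\eta$ (the paper's ``$\mu=(\mu_\eta)_{-\eta}$''). Your explicit remark that the two-sided estimate of Lemma~\ref{th:3.lemma5} is needed to handle negative $\eta$ is a point the paper leaves implicit, but it is not a difference in method.
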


\begin{proof}
Assume $\mu$ is $\theta$-Carleson, set $\mu_\eta=\delta^\eta\mu$, and choose $r>0$.
Then Theorem~\ref{th:1.volume} and Lemma~\ref{th:3.lemma5} yield
\begin{eqnarray*}
\mu_\eta\bigl(B_D(z_0,r)\bigr)&=&\int_{B_D(z_0,r)}\delta(w)^\eta\,d\mu(w)
\preceq \delta(z_0)^\eta \mu\bigl(B_D(z_0,r)\bigr)\\
&\preceq&  \delta(z_0)^\eta \nu\bigl(B_D(z_0,r)\bigr)^\theta
\preceq \nu\bigl(B_D(z_0,r)\bigr)^{\theta+\frac{\eta}{n+1}}\;,
\end{eqnarray*}
and so $\mu_\eta$ is $\left(\theta+\frac{\eta}{n+1}\right)$-Carleson. Since $\mu=(\mu_\eta)_{-\eta}$, the converse follows too.
\qed
\end{proof}

And at last we have reached the

\begin{proof}[of Theorem~\ref{th:3.ToepCarl}]
Let us assume that $T_\mu$ maps $A^p(D)$ continuously into $A^r(D)$, and let $r'$ be the conjugate exponent of~$r$. Since, by Corollary~\ref{th:3.coruno}, $k_{z_0}\in A^q(D)$ for all $q>1$, applying H\"older estimate to (\ref{eq:3.BerTop}) and using twice Corollary~\ref{th:3.coruno} we get
\begin{eqnarray*}
B\mu(z_0)\le\|T_\mu k_{z_0}\|_r\|k_{z_0}\|_{r'}&\preceq& \|k_{z_0}\|_p\|k_{z_0}\|_{r'}\\
&\preceq&
\delta(z_0)^{(n+1)(1-\frac{1}{p'}-\frac{1}{r})}=\delta(z_0)^{(n+1)(\frac{1}{p}-\frac{1}{r})}\;,
\end{eqnarray*}
where $p'$ is the conjugate exponent of~$p$.
By Theorem~\ref{th:3.thetaBerez} it follows that $\mu$ is $\left(1+\frac{1}{p}-\frac{1}{r}\right)$-Carleson, and Theorem~\ref{th:3.Carltheta} yields that $\mu$ is
$p$-Carleson for $A^p\bigl(D,(n+1)(\frac{1}{p}-\frac{1}{r})\bigr)$ as claimed.

Conversely, assume that $\mu$ is $p$-Carleson for $A^p\bigl(D,(n+1)(\frac{1}{p}-\frac{1}{r})\bigr)$; we must prove that $T_\mu$ maps continuously $A^p(D)$ into $A^r(D)$.
Put $\theta=1+\frac{1}{p}-\frac{1}{r}$. 
Choose $s\in(p,r)$ such that
\begin{equation}
\frac{\theta}{p'}<\frac{1}{s'}<\frac{\theta}{p'}+\frac{1}{(n+1)r}\;,
\label{eq:3.s}
\end{equation}
where $s'$ be its conjugate exponent of $s$; this can be done because $p'\ge s'\ge r'$ and
\[
\frac{\theta}{p'}<\frac{1}{r'}\;.
\]
Take $f\in A^p(D)$; since $|K(z,\cdot)|^{p'/s'}$ is plurisubharmonic and belongs to
$L^p\bigl(D,(n+1)(\theta-1)\bigr)$ by Theorem~\ref{th:3.thm}, 
applying the H\"older inequality, Corollary~\ref{th:3.cordue} 
and Theorem~\ref{th:3.thm} (recalling that $\theta<p'/s'$) we get
\begin{eqnarray*}
|T_\mu f(z)|&\le&\int_D |K(z,w)||f(w)|\,\D\mu(w)\\
&\le&\left[\int_D |K(z,w)|^{p/s}
|f(w)|^p\,\D\mu(w)\right]^{1/p}\left[\int_D|K(z,w)|^{p'/s'
}\,\D\mu(w)\right]^{1/p'}\\
&\preceq&  \left[\int_D |K(z,w)|^{p/s}
|f(w)|^p\,\D\mu(w)\right]^{1/p}\\
&&\qquad\times\left[\int_D|K(z,w)|^{p'/s'
}\delta(w)^{(n+1)(\theta-1)}\,\D\nu(w)\right]^{1/p'}\\
&\preceq&\left[\int_D |K(z,w)|^{p/s}
|f(w)|^p\,\D\mu(w)\right]^{1/p}\delta(z)^{(n+1)\frac{1}{p'}(\theta-\frac{p'}{s'})}\;.
\end{eqnarray*}
Applying the classical Minkowski integral inequality (see, e.g., \cite[6.19]{Fo} for a proof)
\[
\left[\int_D\left[\int_D|F(z,w)|^p\,\D\mu(w)\right]^{r/p}\D\nu(z)\right]^{1/r}\!\!\!\!\le \left[\int_D\left[\int_D|F(z,w)|^r\,\D\nu(z)\right]^{p/r}\D\mu(w)\right]^{1/p}
\]
we get
\begin{eqnarray*}
\|T_\mu f\|^p_r&\preceq&\left[\int_D\left[\int_D|K(z,w)^{p/s}
|f(w)|^p\delta(z)^{(n+1)\frac{p}{p'}(\theta-\frac{p'}{s'})}\,\D\mu(w)\right]^{r/p}\D\nu(z)\right]^{p/r}\\
&\le&\int_D|f(w)|^p\left[\int_D|K(z,w)|^{r/s}
\delta(z)^{(n+1)\frac{r}{p'}(\theta-\frac{p'}{s'})}\,\D\nu(z)\right]^{p/r}\,\D\mu(w)\;.
\end{eqnarray*}
To estimate the integral between square brackets we need to know that
\[
-1<(n+1)\frac{r}{p'}\left(\theta-\frac{p'}{s'}\right)<(n+1)\left(\frac{r}{s}-1\right)\;.
\]
The left-hand inequality is equivalent to the right-hand inequality in (\ref{eq:3.s}), and thus it is satisfied by assumption.
The right-hand inequality is equivalent to
\[
\frac{\theta}{p'}-\frac{1}{s'}<\frac{1}{s}-\frac{1}{r}\ \Longleftrightarrow\ \frac{\theta}{p'}<1-\frac{1}{r}\;.
\]
Recalling the definition of $\theta$ we see that this is equivalent to
\[
\frac{1}{p'}\left(1+\frac{1}{p}-\frac{1}{r}\right)<1-\frac{1}{r}\ \Longleftrightarrow\ \frac{1}{p'}<1-\frac{1}{r}\;,
\]
which is true because $p<r$. So we can apply Theorem~\ref{th:3.thm} and we get
\begin{eqnarray*}
\|T_\mu f\|_r^p&\preceq& \int_D |f(w)|^p\delta(w)^{(n+1)p\left[\frac{1}{p'}(\theta-1)+\frac{1}{r}-\frac{1}{p}\right]}\D\mu(w)\\
&=&\int_D |f(w)|^p\delta(w)^{-(n+1)(\theta-1)}\,\D\mu(w)\\
&\preceq& \|f\|_p^p\;,
\end{eqnarray*}
where in the last step we applied Theorem~\ref{th:3.Carltheta} to $\delta^{-(n+1)(\theta-1)}\mu$, which is 1-Carleson (Lemma~\ref{th:3.ultimo}) and hence $p$-Carleson for $A^p(D)$, and we are done.
\qed
\end{proof}

%
%
%

\begin{acknowledgement}
Partially supported by the FIRB 2012 grant ``Differential Geometry and Geometric Function Theory'', by the Progetto di Ricerca d'Ateneo 2015 ``Sistemi dinamici: logica, analisi complessa e teoria ergodica", and by GNSAGA-INdAM.
\end{acknowledgement}

\end{document}